\providecommand{\makenomenclature}{\makeglossary}
\def\RSthmtxt{theorem~}\newref{thm}{name = \RSthmtxt}}
\def\RSlemtxt{lemma~}\newref{lem}{name = \RSlemtxt}}
\theoremstyle{plain}
\newtheorem{thm}{\protect\theoremname}[section]
\theoremstyle{plain}
\newtheorem*{rem*}{\protect\remarkname}
\theoremstyle{definition}
\newtheorem{defn}[thm]{\protect\definitionname}
\theoremstyle{definition}
\newtheorem{example}[thm]{\protect\examplename}
\theoremstyle{plain}
\newtheorem{assumption}[thm]{\protect\assumptionname}
\theoremstyle{plain}
\newtheorem{claim}[thm]{\protect\claimname}
\theoremstyle{plain}
\newtheorem{rem}[thm]{\protect\remarkname}
\theoremstyle{plain}
\newtheorem{lem}[thm]{\protect\lemmaname}
\newlist{casenv}{enumerate}{4}
\setlist[casenv]{leftmargin=*,align=left,widest={iiii}}
\setlist[casenv,1]{label={{\itshape\ \casename} \arabic*.},ref=\arabic*}
\setlist[casenv,2]{label={{\itshape\ \casename} \roman*.},ref=\roman*}
\setlist[casenv,3]{label={{\itshape\ \casename\ \alph*.}},ref=\alph*}
\setlist[casenv,4]{label={{\itshape\ \casename} \arabic*.},ref=\arabic*}
\theoremstyle{plain}
\newtheorem{prop}[thm]{\protect\propositionname}
\theoremstyle{remark}
\newtheorem{notation}[thm]{\protect\notationname}
\theoremstyle{plain}
\newtheorem{cor}[thm]{\protect\corollaryname}
\theoremstyle{definition}
\newtheorem*{example*}{\protect\examplename}
\theoremstyle{plain}
\newtheorem*{question*}{\protect\questionname}
\def\nomname{Nomenclature}
\providecommand{\assumptionname}{Assumption}
\providecommand{\casename}{Case}
\providecommand{\claimname}{Claim}
\providecommand{\corollaryname}{Corollary}
\providecommand{\definitionname}{Definition}
\providecommand{\examplename}{Example}
\providecommand{\lemmaname}{Lemma}
\providecommand{\notationname}{Notation}
\providecommand{\propositionname}{Proposition}
\providecommand{\questionname}{Question}
\providecommand{\remarkname}{Remark}
\providecommand{\theoremname}{Theorem}
\colorlet{thmcolor}{lightgray}
\begin{document}
\pagenumbering{Alph}

\global\long\def\d{\text{d}}
\global\long\def\rn{\mathbb{R}^{n}}
\global\long\def\q{\mathbb{Q}}
\global\long\def\m{\mu}
\global\long\def\n{\nu}
\global\long\def\ro{\rho}
\global\long\def\x{\times}
\global\long\def\r{\mathbb{R}}
\global\long\def\mm{\mathcal{M}}
\global\long\def\iff{\Longleftrightarrow}
\global\long\def\dm{\frac{\d\mu}{\d\left|\mu\right|}}
\global\long\def\dn{\frac{\d\nu}{\d\left|\nu\right|}}
\global\long\def\b{\bigskip}
\global\long\def\f{\mathcal{F}}

\let\myIndex\theindex\renewcommand{\theindex}{\cleardoublepage\myIndex\addcontentsline{toc}{section}{\indexname}}
\renewcommand{\nomname}{List of Symbols}

\title{Duality Theorems and Vector Measures in Optimal
Transportation Theory}
\author{Shlomi Gover}
\date{Submitted April 2020}
\maketitle

\thispagestyle{empty}

\newpage{}

\thispagestyle{empty}
The research thesis was done under the supervision of Professor Gershon Wolansky.

\bigskip

The generous financial help of the Crown Family Doctoral Fellowship is gratefully acknowledged.
\newpage{}

\thispagestyle{empty}
\tableofcontents{}
\thispagestyle{empty}

\newpage{}
\section*{}
\addcontentsline{toc}{section}{Abstract}
\pagenumbering{arabic}
\setcounter{page}{1}
\begin{abstract}

The optimal transportation problem, first suggested by Gaspard Monge in the 18th century and later revived in the 1940s by Leonid Kantorovich, deals with the question of transporting a certain measure to another, using transport maps or transport plans that minimize the total cost of transportation. This problem is very popular since it has a variety of applications in economics, physics, computer science and more.

One of the main tools in this theory is the duality theorem, which states that the optimal total cost equals the value of a different optimization problem called the dual problem.

In this work, I show how the problem and duality theorem can be generalized to an abstract formulation, in which I omit the use of measures. I show how this generalization implies a wide range of different optimal transport problems, and even other problems from game theory, linear programming and functional analysis. 

In particular, I show how the optimal transport problem can be generalized to deal with vector measures as a result of the abstract theorem and discuss the properties of this problem: I present its corresponding duality theorem and formulate conditions for the existence of transport plans, transport maps and solutions to the dual problem.

\end{abstract}
\newpage{}

\section*{Conventions}
\addcontentsline{toc}{section}{Conventions}

\begin{itemize}
\item $\mathbb{N}=\{1,2,3,\ldots\}$ \index{N@$\mathbb{N}$} is the set of natural numbers, $\mathbb{R}$\index{R@$\mathbb{R}$} is the set of real numbers. $\r_+$\index{R@$\mathbb{R}_+$} is the set of non-negative real numbers. $\rn$ is the $n$-dimensional Euclidean space.
\item For any $a,b\in\r$, $a\wedge b=\min\{a,b\}$\index{1@$\wedge$}. $[a]_+=\max\{a,0\}$\index{1@$[\cdot]_+$}.
\item For two topological spaces $X$ and $Y$, $C\left(X,Y\right)$\index{C@$C(X,Y)$}
is the space of continuous functions $f:X\rightarrow Y$. We denote
$C\left(X\right):=C\left(X,\r\right)$\index{C@$C(X)$}. $C_b(X)\subset C(X)$\index{Cb@$C_b(X)$} is the subset of bounded continuous functions. When $S$ is a normed space, the supremum norm on $C(X,S)$ is defined by $\|f\|_{\infty}:=\sup_{x\in X}\|f(x)\|_S$.\index{C1@$\|\cdot\|_{\infty}$}
\item For a measurable space $X$, $\mathcal{P}\left(X\right)$/$\mm_{+}\left(X\right)$
/$\mm\left(X\right)$
is the set of
probability/non-negative/signed measures on $X$ respectively.\index{P@$\mathcal{P}$}\index{m+@$\mathcal{M}_+$}\index{m@$\mm$}
\item For a topological space $X$ equipped with its Borel $\sigma$-algebra, \index{support, supp}the support of a measure $\mu\in\mm_+\left(X\right)$
is the smallest closed set with measure $\mu\left(X\right)$, denoted
by $\text{supp}\ \mu$.
\item An integral of a function wrt a measure is denoted by $\int_{X}f\left(x\right)\d\mu\left(x\right)$
or $\int f\d\mu$ (when the latter is used whenever the meaning
for the underlying space and variable are clear).
\item For every $x\in X$, $\delta_{x}\in\mathcal{P}\left(X\right)$
is the delta measure satisfying $\delta_x(A):=\begin{cases}
1 & ;\ x\in A\\
0 & ;\ x\notin A
\end{cases}$\index{d@$\delta$}
\item For $\mu,\nu\in\mm_{+}\left(X\right)$ we say $\mu$ is absolutely continuous wrt $\nu$ and denote $\mu<<\nu$\index{1@$<<$}
if $\nu(A)=0\Rightarrow\mu(A)=0$ for every measurable $A\subset X$. In this case, by the Radon-Nikodym theorem (Theorem 3.2.2 in \cite{bogachev2007measure}), there exists a function $\frac{\d\mu}{\d\nu}:X\rightarrow\r$
(called the Radon-Nikodym derivative\index{Radon-Nikodym derivative}) such that $\mu\left(A\right)=\int_{A}\frac{\d\mu}{\d\nu}\left(x\right)\d\nu\left(x\right)$
for every measurable $A\subset X$.
\item For a topological vector space $X$, we denote by $X^{*}$ \index{1@$^*$} 
the \index{dual space}dual space of $X$ which consists of the continuous
linear functionals $f:X\rightarrow\r$. The duality relation between
two elements $x\in X,\ x^{*}\in X^{*}$ is denoted by $\left\langle x,x^{*}\right\rangle$,
$\left\langle x^{*},x\right\rangle$ \index{1@$\left\langle \cdot,\cdot\right\rangle$}or $x^{*}\left(x\right)$. The weak* topology\index{weak* topology} on $X^*$ is the topology generated by the functionals $\{\left\langle x,\cdot\right\rangle:X^*\rightarrow\r;\ x\in X\}$.
\item For a function $f:X\rightarrow Y$ and $B\subset Y$, we denote the
preimage $f^{-1}\left(B\right):=\left\{ x\in X;\ f\left(x\right)\in B\right\}$.
\item Given two subsets $A,B$ of a vector space and a scalar $\alpha$, we denote
\[
A+B=\left\{ a+b;\ a\in A,b\in B\right\} ,\ A-B=\left\{ a-b;\ a\in A,b\in B\right\} ,\ \alpha A=\left\{ \alpha a,\ a\in A\right\} 
\]
\item For any two sets $A\subset B$. The characteristic function $\chi_{A}:B\rightarrow\r$\index{x@$\chi$}
is defined by 
\[
\chi_{A}\left(x\right):=\begin{cases}
1 & ;\ x\in A\\
0 & ;\ x\notin A
\end{cases}
\]
\item A \index{preorder}preorder on set $A$ is a subset
$R\subset A\x A$ satisfying reflexivity- $\left(a,a\right)\in R\ \forall a\in A$,
and transitivity- $\left(a,b\right),\left(b,c\right)\in R\Rightarrow\left(a,c\right)\in R\ \forall a,b,c\in A$.
\item For a subset $A$ of a metric space $(X,d)$ we denote the diameter\index{diam@$\text{diam}$/diameter} of $A$: $$\text{diam}(A)=\sup\{d(x,y);\ x,y\in A\}$$.
\end{itemize}

\newpage{}

\section{Introduction}

\subsection{The Monge-Kantorovich problem}

In the 18th century, Gaspard Monge introduced a new problem
of transporting soil from one place to the other in the Euclidean
space of 2 or 3 dimensions \cite{Monge1781}, in such a way that the total cost (distance
times mass) of transporting will be minimal. A  modern formulation
of the problem is as follows: Let $X,Y$ be two Polish spaces (complete, metric and separable),
$\mu\in\mathcal{P}\left(X\right)$ and $\nu\in\mathcal{P}\left(Y\right)$
two Borel probability measures, and $c:X\times Y\rightarrow\r$
a lower semi-continuous non-negative cost function. A \index{transport map}transport
map taking $\mu$ to $\nu$ is a Borel measurable map $T:X\rightarrow Y$
satisfying $T_{\#}\mu=\nu$\index{T@$T_\#$} (meaning
$\mu\left(T^{-1}\left(E\right)\right)=\nu\left(E\right)$ for every
Borel measurable $E\subset Y$). The problem of Monge is to minimize the
total cost $\int_{X}c\left(x,Tx\right)\d\mu\left(x\right)$
over all transport maps taking $\mu$ to $\nu$. In this non-linear problem, even if there is some transport map, a minimizer transport map may fail to exist (see Example 4.9 in \cite{Villani2016}).
In the relaxation of the problem, first made by Leonid Kantorovich in the 1940s \cite{Kantorovich2006}, we replace the transport maps with transport plans \index{transport plan} $\pi\in\mathcal{P}\left(X\x Y\right)$,
which are probability measures with \index{marginal}marginals $\mu\in\mathcal{P}(X),\nu\in\mathcal{P}(Y)$ (meaning $\pi\left(A\times Y\right)=\mu\left(A\right)$
and $\pi\left(X\times B\right)=\nu\left(B\right)$ for every Borel measurable
$A\subset X,\ B\subset Y$). 
In this relaxation, we minimize the total cost $\int_{X\times Y}c\left(x,y\right)\d\pi\left(x,y\right)$
over all transport plans.  This is a relaxation to the problem of
Monge since every transport map induces a transport plan supported
on its graph $\pi=(\text{Id}_X\x T)_{\#}\m$ giving the same total cost $\int_Xc(x,Tx)\d\m=\int_{X\x Y}c(x,y)\d\pi(x,y)$.
The set of transport plans, denoted by $\Pi\left(\mu,\nu\right)$\index{pimn@$\Pi\left(\mu,\nu\right)$},
is always non-empty (it contains the product measure of $\m$ and $\n$) and is weak*-compact, thus the total cost (which is a weak* lower semicontinuous functional on $\Pi(\m,\n)$) attains a minimum on this set (see Theorem 4.1 in \cite{Villani2016}). The minimum in the relaxed problem may be strictly
smaller than the minimum (or infimum) in the problem of Monge when the optimal
plan "splits mass". The following theorem states a sufficient condition for equality between the two problems:
\begin{thm}
\label{thm:TheoremA+B}(Theorems A and B in \cite{PRATELLI20071})
If $\mu$ is non-atomic then there exist transport map $T:X\rightarrow Y$
such that $T_{\#}\mu=\nu$ and 
\[
\inf_{T:T_{\#}\mu=\nu}\int_{X}c\left(x,Tx\right)\d\mu\left(x\right)=\min_{\pi\in\Pi\left(\mu,\nu\right)}\int_{X\times Y}c\left(x,y\right)\d\pi\left(x,y\right)
\]
\end{thm}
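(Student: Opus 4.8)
The plan is to prove the two inequalities separately, the reverse one being the substantial part. The easy direction is immediate from the discussion preceding the theorem: every transport map $T$ induces the plan $(\text{Id}_X\x T)_\#\m\in\Pi(\m,\n)$ with identical cost, so the Monge infimum is taken over a subset of achievable costs and hence
\[
\inf_{T:T_\#\m=\n}\int_X c(x,Tx)\,\d\m(x)\;\geq\;\min_{\pi\in\Pi(\m,\n)}\int_{X\x Y}c\,\d\pi .
\]
Existence of at least one transport map (so that the left-hand infimum is not over the empty set) will fall out of the construction below, but can also be seen directly: since $\m$ is non-atomic it is isomorphic mod $0$ to Lebesgue measure on $[0,1]$, and any Borel probability measure on a Polish space is a pushforward of Lebesgue measure, so a $T$ with $T_\#\m=\n$ exists.

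For the reverse inequality it suffices to produce, for every $\varepsilon>0$, a transport map $T$ with $\int_X c(x,Tx)\,\d\m\leq\min_\pi\int c\,\d\pi+\varepsilon$. Two reductions simplify the cost. First, by tightness of $\m$ and $\n$ on the Polish spaces $X,Y$ I restrict attention to compact sets carrying all but $\varepsilon$ of the mass, controlling the contribution of the discarded mass separately. Second, on these compacta I replace $c$ by its inf-convolutions $c_k(z):=\inf_{z'}\big[c(z')+k\,d(z,z')\big]$ (with $d$ the product metric), which are bounded Lipschitz and increase pointwise to the lower semi-continuous $c$; proving the equality for each $c_k$ gives $\min_\pi\int c_k\,\d\pi=\inf_T\int c_k(x,Tx)\,\d\m\leq\inf_T\int c(x,Tx)\,\d\m$, and letting $k\to\infty$ the left-hand side converges to $\min_\pi\int c\,\d\pi$ (monotone convergence, using weak*-compactness of $\Pi(\m,\n)$ to pass the limit through a convergent sequence of optimal plans). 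This reduces everything to a bounded continuous cost on compact spaces.

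The heart of the argument is then a discretization-plus-realization construction, valid because $\m$ is non-atomic. Fix an optimal plan $\pi^*$ and $\delta>0$, and partition $X=\bigsqcup_i A_i$ and $Y=\bigsqcup_j B_j$ into finitely many Borel pieces of diameter $<\delta$; set $m_{ij}:=\pi^*(A_i\x B_j)$, so that $\sum_j m_{ij}=\m(A_i)$ and $\sum_i m_{ij}=\n(B_j)$. Using non-atomicity of $\m$, I carve each $A_i$ into disjoint Borel subsets $A_{ij}\subset A_i$ with $\m(A_{ij})=m_{ij}$ (a non-atomic measure realizes every value in $[0,\m(A_i)]$, so the splitting is possible and $\bigsqcup_j A_{ij}=A_i$). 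For each fixed $j$ the source $\bigsqcup_i A_{ij}$ carries $\m$-mass $\sum_i m_{ij}=\n(B_j)$ and is non-atomic, so by the isomorphism theorem for standard probability spaces there is a Borel map sending $\m$ restricted there onto $\n|_{B_j}$; gluing these over $j$ produces a single Borel $T$ with $T_\#\m=\n$ exactly. Since each $A_{ij}\subset A_i$ and $T(A_{ij})\subset B_j$ have diameter $<\delta$, uniform continuity of $c$ on the compact $X\x Y$ makes $c(x,Tx)$ on $A_{ij}$ and $c$ on $A_i\x B_j$ agree up to a modulus $\omega(\delta)$; comparing $\int_{A_{ij}}c(x,Tx)\,\d\m$ (mass $m_{ij}$) with $\int_{A_i\x B_j}c\,\d\pi^*$ (also mass $m_{ij}$) and summing gives $\big|\int_X c(x,Tx)\,\d\m-\int c\,\d\pi^*\big|\leq\omega(\delta)$, which is $<\varepsilon$ for $\delta$ small.

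I expect the main obstacle to be the realization step rather than the bookkeeping: turning the discretized marginal data $\{m_{ij}\}$ into an honest Borel map with exact pushforward $\n$ requires both the non-atomic carving of each $A_i$ and the measure-space isomorphism theorem, and this is precisely where non-atomicity of $\m$ is indispensable, since an atom cannot be split to match a conditional distribution that spreads its mass across several $B_j$. Secondary care is needed in the two reductions — from lower semi-continuous to continuous cost and from Polish to compact spaces — where one must ensure the errors on discarded mass and the monotone limits are controlled uniformly so that the final $\varepsilon$ estimate survives.
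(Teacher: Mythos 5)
The paper itself offers no proof of this statement --- it is quoted from Pratelli --- so your proposal can only be measured against the cited result. Your core construction is the right one and is essentially the known Ambrosio--Pratelli scheme: the easy inequality via $(\text{Id}_X\x T)_\#\mu$, existence of some transport map from non-atomicity plus the isomorphism theorem, the carving of each $A_i$ into pieces $A_{ij}$ with $\mu(A_{ij})=\pi^*(A_i\x B_j)$, the realization of $\nu|_{B_j}$ by a Borel map on $\bigcup_i A_{ij}$, and the $\omega(\delta)$ cost estimate are all correct, and they do prove the theorem for a \emph{bounded uniformly continuous} cost on compact spaces.

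The genuine gap is your reduction from lower semicontinuous $c$ to that nice case. The chain you write --- equality for each $c_k$ gives $\min_\pi\int c_k\,\d\pi=\inf_T\int c_k(x,Tx)\,\d\mu\leq\inf_T\int c(x,Tx)\,\d\mu$, then let $k\to\infty$ --- produces only $\min_\pi\int c\,\d\pi\leq\inf_T\int c(x,Tx)\,\d\mu$, which is the trivial direction you already established in your first paragraph; it says nothing about the needed inequality $\inf_T\leq\min_\pi$ for $c$ itself. Monotone approximation from below cannot deliver the hard direction: a map nearly optimal for $c_k$ carries no upper bound on its $c$-cost, since $c\geq c_k$ with a possibly enormous (even infinite) gap, and lower semicontinuity provides lower, not upper, control of $c$ at the graph points $(x,Tx)$. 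This is exactly why the l.s.c.\ case (Pratelli's Theorem B) requires a genuinely different argument beyond the continuous case (Theorem A) rather than a pointwise approximation. A secondary soft spot of the same kind: ``controlling the contribution of the discarded mass separately'' in the tightness step is only routine once $c$ has been truncated to a bounded cost; with unbounded $c$ the $\varepsilon$ of re-routed mass can carry unbounded or infinite cost, so the order of your two reductions matters and the unbounded/infinite-value case (where $\min_\pi\int c\,\d\pi=\infty$ makes the equality trivial given existence of some map) should be split off explicitly. As it stands, your proposal proves the bounded continuous compact case and leaves the stated l.s.c.\ theorem unproven.
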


Under some further assumptions, it is known that
the infimum in the problem of Monge is attained, see for example Theorem 2.44 in \cite{Villani2003} and  Theorems 9.3, 9.4 in \cite{Villani2016}.

The optimal total cost of the relaxed problem equals
to an optimal value of a different optimization problem called the dual problem\index{dual problem}. This duality is an
important tool in transportation theory and it is one of the main ideas on
which this present work is based on. 
\begin{thm}
\label{thm:Duality}(Kantorovich duality - Theorem 5.10 in \cite{Villani2016}) Assume $c(x,y)\geq a(x)+b(y)\ \forall x,y$ for some $a\in L^1(\m),b\in L^1(\n)$ and denote \index{f1c@$\Phi\left(c\right)$}
\[
\Phi\left(c\right):=\left\{ \left(\psi,\varphi\right)\in L^1\left(\mu\right)\x L^1\left(\nu\right),\ \psi\left(x\right)+\varphi\left(y\right)\leq c\left(x,y\right)\ \right\} 
\]
then 
\begin{equation}
\min_{\pi\in\Pi\left(\mu,\nu\right)}\int_{X\times Y}c\left(x,y\right)\d\pi\left(x,y\right)
=
\sup_{\left(\varphi,\psi\right)\in\Phi\left(c\right)}\int_{X}\psi\left(x\right)\d\mu\left(x\right)+\int_{Y}\varphi\left(y\right)\d\nu\left(y\right)
\label{eq:ClaDuality}
\end{equation}
Moreover, if $c(x,y)\leq \alpha(x)+\beta(y)$ for some $\alpha\in L^1(\m),\beta\in L^1(\n)$ then the supremum is attained.
\end{thm}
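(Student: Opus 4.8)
The plan is to prove the two inequalities separately, and throughout I write $a\oplus b$ for the function $(x,y)\mapsto a(x)+b(y)$. The inequality $\min\geq\sup$ is the elementary \emph{weak duality} direction: for any $\pi\in\Pi\left(\mu,\nu\right)$ and any admissible pair $(\psi,\varphi)\in\Phi(c)$, integrating the pointwise inequality $\psi(x)+\varphi(y)\leq c(x,y)$ against $\pi$ and invoking the marginal identities $\pi(A\times Y)=\mu(A)$, $\pi(X\times B)=\nu(B)$ gives
\[
\int_{X}\psi\,\d\mu+\int_{Y}\varphi\,\d\nu=\int_{X\times Y}\bigl(\psi(x)+\varphi(y)\bigr)\,\d\pi\leq\int_{X\times Y}c\,\d\pi.
\]
The hypothesis $c\geq a\oplus b$ with $a\in L^{1}(\mu),b\in L^{1}(\nu)$ guarantees that the cost integral is well-defined in $(-\infty,+\infty]$ and that $\Phi(c)$ is non-empty, since it contains $(a,b)$. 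Taking the infimum over $\pi$ and the supremum over $(\psi,\varphi)$ yields $\sup\leq\min$.

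The substantial direction is the absence of a duality gap, $\min\leq\sup$. First I would treat the case where $X,Y$ are compact and $c$ is continuous, via the Fenchel--Rockafellar duality theorem on $E=C\left(X\times Y\right)$. I would set $\Theta(u)=0$ when $u\geq-c$ everywhere and $+\infty$ otherwise, and $\Xi(u)=\int_{X}\psi\,\d\mu+\int_{Y}\varphi\,\d\nu$ when $u=\psi\oplus\varphi$ for some $\psi\in C(X),\varphi\in C(Y)$ and $+\infty$ otherwise; both are convex, and $\Theta$ is continuous at a suitable constant function at which $\Xi$ is finite, so the qualification hypothesis holds. Identifying $E^{*}$ with the signed Radon measures on $X\times Y$ via the Riesz representation theorem, I would compute the Legendre transforms: $\Theta^{*}(-\pi)=\int c\,\d\pi$ when $\pi$ is a non-negative measure and $+\infty$ otherwise, while $\Xi^{*}(\pi)$ is finite precisely when the marginals of $\pi$ are $\mu$ and $\nu$. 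Fenchel--Rockafellar then identifies $\inf[\Theta+\Xi]$, which equals the negative of the dual supremum over continuous potentials, with a \emph{maximum} over measures equal to $-\min_{\pi\in\Pi(\mu,\nu)}\int c\,\d\pi$; combined with weak duality this sandwiches the $L^{1}$ supremum and simultaneously yields the attainment of an optimal plan.

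To remove the restrictions I would proceed in two approximation steps. For lower semicontinuous $c\geq a\oplus b$ on compact spaces, I would write $c$ as the increasing pointwise limit of continuous functions $c_{k}$ (possible since $c$ is lsc and bounded below), apply the compact-continuous case to each $c_{k}$, and pass to the limit using monotone convergence on the primal side together with the weak*-compactness of $\Pi(\mu,\nu)$. The passage from compact to general Polish $X,Y$ is the main obstacle: there the dual of $C_{b}$ strictly exceeds the space of measures, so Fenchel--Rockafellar cannot be applied directly. I would instead exploit the inner regularity and tightness of $\mu,\nu$ on Polish spaces to exhaust $X$ and $Y$ by compact sets, establish the duality on the restrictions, and control the discarded mass using the integrability supplied by $a,b$. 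This approximation is the delicate analytic core, and it is where the full strength of the Polish-space hypotheses enters.

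Finally, for the attainment of the supremum under the two-sided bound $c\leq\alpha\oplus\beta$ with $\alpha\in L^{1}(\mu),\beta\in L^{1}(\nu)$, I would restrict the dual problem to $c$-conjugate pairs. Replacing any admissible $(\psi,\varphi)$ first by $(\varphi^{c},\varphi)$ and then by $(\varphi^{c},(\varphi^{c})^{c})$, where $\varphi^{c}(x)=\inf_{y}[c(x,y)-\varphi(y)]$, only increases the objective while preserving admissibility, so it suffices to maximize over $c$-concave potentials. The two-sided bound pins these potentials between $L^{1}$ functions assembled from $a,b,\alpha,\beta$, yielding uniform integrability; from a maximizing sequence I would then extract, via this integrability and a compactness argument, a limiting pair in $\Phi(c)$ attaining the supremum.
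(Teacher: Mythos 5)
Your outline is mathematically sound, but it is not the paper's route: it is essentially the classical proof of Kantorovich duality from Villani's books --- weak duality by integrating $\psi\oplus\varphi\leq c$ against the marginals, Fenchel--Rockafellar on $C(X\x Y)$ for compact spaces and continuous cost, monotone approximation of lower semicontinuous costs, compact exhaustion for the Polish case, and double $c$-conjugation plus uniform integrability for attainment of the supremum. The paper, by contrast, never reproves this statement: it imports it verbatim from \cite{Villani2016}, and what it proves itself are weaker variants obtained from the abstract Hahn--Banach-based duality theorem (Theorem \ref{Thm:AbsDuality}): the compact case via the data of Example \ref{exa:Kant} (Remark \ref{rem:KantDual}), and a non-compact version by working in the weighted space $(a+b)\cdot C_{b}(X\x Y)$ and using Proposition \ref{prop:Villani bounded} to convert the optimal functional into a $\sigma$-additive transport plan, at the price of the extra hypotheses $\frac{1}{a+b},\frac{c}{a+b}\in C_{b}(X\x Y)$ and continuity of $c$. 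What each approach buys: the paper's route gives attainment on the measure side for free (the abstract maximum is always attained) and a clean finiteness criterion ($h$-positivity), but it cannot reach general lsc costs or the $L^{1}$ potentials appearing in the statement; your route recovers the full statement, but the two steps you yourself flag as delicate --- the Polish exhaustion and the extraction of a maximizing pair --- are precisely where the real work lies, so as written they are sketches rather than proofs.

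Two concrete cautions if you carry this out within the present paper's framework. First, the paper's own Fenchel--Rockafellar theorem (Theorem \ref{thm:FRduality}) is stated for \emph{finite-valued} convex $\Theta,\Xi$, whereas your $\Theta$ and $\Xi$ take the value $+\infty$; you need the extended-valued version of \cite{Villani2003}, a discrepancy the paper itself points out in the remark following Theorem \ref{thm:FRduality}. Second, in the attainment step, measurability of $\varphi^{c}(x)=\inf_{y}\left[c(x,y)-\varphi(y)\right]$ and of its double conjugate is not automatic for an arbitrary $L^{1}$ competitor $\varphi$; the standard repair is to restrict from the outset to $c$-convex potentials and use the continuity of $c$ together with the two-sided bounds $a\oplus b\leq c\leq\alpha\oplus\beta$ to secure both measurability and the equi-integrability needed to pass to a limit along a maximizing sequence.
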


\begin{rem}
The supremum in the dual problem will not change when restricting $\psi,\varphi$ to be continuous. By the proof of the above theorem, one can conclude that if $X,Y$ are
compact and $c$ is continuous, then there exists continuous maximizers.
\end{rem}

In the \textquotedbl semi-discrete case\textquotedbl\index{semi-discrete},  where $Y$
is a finite set, the optimal transport problem can be used to model
a market with a set of consumers willing to buy a single product
from a set of vendors $Y$:

\begin{itemize}
\item $X$ is the set of consumers, $\mu\in\mathcal{P}(X)$ is the distribution
of demand over $X$.
\item $Y=\{1,\ldots,m\}$ is a finite set of vendors selling a single product, $\nu\in\mathcal{P}(Y)\subset\r^m$
is the supply of the product for the different vendors.
\item The cost function $c\left(x,i\right):=c_{i}\left(x\right)$ represents
how much a consumer $x$ pays to vendor $i\in Y$.
\item A transport map $T:X\rightarrow Y$ determines which vendor sells
to each consumer and needs to satisfy "demand=supply" for each vendor, meaning $\mu\left(T^{-1}\left(i\right)\right)=\nu_{i}\ \forall i$
(namely $T_{\#}\mu=\nu$). The optimal transport map minimizes
the total cost of all consumers:
\[
\int_{X}c\left(x,Tx\right)\d \m(x)=\sum_{i}\int_{T^{-1}\left(i\right)}c_{i}\left(x\right)\d\mu(x)
\]
\item A transport plan will allow consumers to divide their purchase between
the different vendors.
\end{itemize}

\subsection{Generalization to vector measures}

Inspired by Monge and Kantorovich, many similar problems have been presented since, many of which carry the same structure of minimizing
the integral of a given function over some set of measures.
In this work, I introduce some examples of such problems,  one of them is the optimal transport problem for vector measures, which generalizes the classical problem of Monge and Kantorovich.  One motivation
for this generalization is the  transport of an incompressible fluid:
Such transport should preserve the volume since the fluid cannot
be stretched nor compressed while being transported. Hence we need
to consider only transport maps satisfying $T_{\#}\mu=\nu$ and $T_{\#}\lambda=\lambda$,
where $\lambda$ is the Lebesgue measure. More generally, we consider
all transport maps satisfying $T_{\#}\mu_{i}=\nu_{i}$ for all $i=1,\ldots,n$,
where $\left(\mu_{1},\ldots,\mu_{n}\right),\left(\nu_{1},\ldots,\nu_{n}\right)$
are two given vector measures.

Another motivation for this generalization is in the semi-discrete
case where this problem can model a market with several products: Now $\nu$ is a set of $n$
measures on the set of vendors $Y=\{1,\ldots,m\}$ which represent the supply of the product set $P=\{1,\ldots,n\}$
for each vendor. Explicitly, for any $ i\in Y$ and $j\in P$, $\n_{ij}$ is the supply of product $j$ with vendor $i$ and $\mu_j$ represents the demand for product $j$. The constraints on the transport maps are now
$
\mu_j(T^{-1}(i))=\nu_{ij}
$
for every product $j$ and vendor $i$, or equivalently $T_{\#}\mu_{j}=\nu_{j}:=(\n_{1j},\ldots,\n_{mj})$ for every $j$.

\subsection{Results}

One of the main results of this work is the abstract duality theorem (Theorem
\ref{Thm:AbsDuality}) which generalizes the classical Kantorovich
duality. This theorem can be applied to the problem of vector measures (Theorem
\ref{thm:GMK}) and other problems, for which I give examples in Chapters 4,5 and 6.
 For the problem of vector measures, I give equivalent conditions to the existence of transport plans (Theorem \ref{thm:Blackwell}), using a generalization of a theorem by Blackwell (appear in \cite{blackwell1951}) to infinite-dimensional measures, and sufficient conditions to the existence of optimal transport map (Theorems \ref{thm:ExTransMap} and \ref{thm:ExTransMap generalcase}).
 
\subsection{Outline of the work}
In Chapter 2, I introduce
two abstract duality theorems (Theorems \ref{Thm:AbsDuality} and \ref{thm:Convex Abstract}), prove them and discuss their connection with the
classical duality theorem. I also compare
these theorems to other known results. In Chapter 3, I formulate
the optimal transport problem for vector measures (finite and infinite-dimensional), discuss its relaxation and show its corresponding duality
theorem as a consequence of the abstract theorem. Moreover, I formulate
conditions for the existence of transport plans and transport maps for this problem.
In Chapter 4, I give a few examples of various transport
problems, which are also implied by the abstract theorems.
In Chapter 5, I show how the abstract theorems can be applied to other problems in linear programming, game theory and functional analysis.
In Chapter 6, I present a new "semi-dynamic" transport problem and again use the abstract theorems to formulate the duality corresponding to this problem.

\newpage{}

\section{The abstract duality theorem}

In this chapter, I present a generalization of the celebrated Kantorovich
duality theorem (Theorem \ref{thm:Duality}) and review its connection with other known theorems. The abstract theorem deals with linear functionals on subspaces
and their extensions, instead of continuous functions and measures.
Its proof is strongly based on the Hahn-Banach theorem, which guarantees
the existence of such extensions, and can be found in any standard
functional analysis book (for example Theorem 5.53 in \cite{Aliprantis2006Hitch} and Theorem 13.1.2 in \cite{shalit2017first}).
The theorem uses the following notions:
\begin{defn}
$\ $
\begin{enumerate}
\label{def:sublinear}
\item A \index{convex cone}\emph{convex cone} is a subset of a vector
space which is closed under addition and multiplication by a non-negative scalar.
\item \index{sublinear} A functional $P:K\rightarrow\r\cup\left\{ -\infty\right\} $
defined on a convex cone $K$ is called \emph{sublinear }if it is
subadditive\index{subadditive}:
\[
P\left(x+y\right)\leq P\left(x\right)+P\left(y\right)\ \forall x,y\in K
\]
 and positively-homogeneous\index{positively-homogeneous}:
\[
P\left(\alpha x\right)=\alpha P\left(x\right)\ \forall\alpha\geq0,\ x\in K
\]
while using the conventions $r+\left(-\infty\right)=-\infty\ \forall r\in \r$, $\lambda\cdot\left(-\infty\right)=-\infty\ \forall \lambda >0$.
\end{enumerate}
\end{defn}

\begin{thm}
\index{Hahn-Banach theorem}\label{thm:Hahn-Banach}(Hahn-Banach) Let $X$
be a real vector space, $P:X\rightarrow\mathbb{R}$ a sublinear functional
and $f:Y\rightarrow\mathbb{R}$ a linear functional defined on a linear
subspace $Y\subseteq X$ such that $f\left(y\right)\leq P\left(y\right)$
for every $y\in Y$. Then there exists a linear functional $F:X\rightarrow\r$
such that $F\left(y\right)=f\left(y\right)\ \forall y\in Y$ and $F\left(x\right)\leq P\left(x\right)\ \forall x\in X$.
\end{thm}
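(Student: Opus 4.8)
The plan is to follow the classical two-stage argument: first establish a one-dimensional extension step, then invoke Zorn's lemma to extend all the way to $X$. For the one-step extension, suppose $Y \neq X$ and fix some $z \in X \setminus Y$; the goal is to extend $f$ to the subspace $Y' := Y + \r z$ by setting $F(y + tz) := f(y) + t\alpha$ for a constant $\alpha \in \r$ still to be chosen, while preserving the domination $F \leq P$ on $Y'$. Splitting into the cases $t > 0$ and $t < 0$ and using positive homogeneity of $P$, the required inequality $f(y) + t\alpha \leq P(y+tz)$ for all $y \in Y$ reduces to the single condition
\[
\sup_{u \in Y}\bigl[\,f(u) - P(u - z)\,\bigr] \;\leq\; \alpha \;\leq\; \inf_{v \in Y}\bigl[\,P(v + z) - f(v)\,\bigr].
\]

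The crux is to show this interval is nonempty, i.e. that the left-hand supremum does not exceed the right-hand infimum. For arbitrary $u, v \in Y$ this amounts to $f(u) + f(v) \leq P(v+z) + P(u-z)$, which I would establish by the chain $f(u)+f(v) = f(u+v) \leq P(u+v) = P\bigl((v+z)+(u-z)\bigr) \leq P(v+z) + P(u-z)$, using linearity of $f$, the hypothesis $f \leq P$ on $Y$, and subadditivity of $P$ in turn. Hence any $\alpha$ in the (nonempty) interval yields a linear extension of $f$ to $Y'$ that is still dominated by $P$. This verification of nonemptiness, which is the one place where subadditivity of $P$ is genuinely used, is the only substantive point in the whole proof; everything else is bookkeeping.

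Finally I would pass from a single coordinate to all of $X$ by a maximality argument. Consider the collection of all pairs $(Z, g)$, where $Z$ is a subspace with $Y \subseteq Z \subseteq X$ and $g : Z \to \r$ is linear with $g|_Y = f$ and $g \leq P$ on $Z$, partially ordered by extension. This collection is nonempty, since it contains $(Y, f)$, and every chain admits an upper bound given by the union of the domains together with the well-defined common extension, so Zorn's lemma furnishes a maximal element $(Z^*, F)$. If $Z^*$ were a proper subspace of $X$, the one-step extension above applied to some $z \in X \setminus Z^*$ would produce a strictly larger dominated extension, contradicting maximality; therefore $Z^* = X$, and $F$ is the desired functional.
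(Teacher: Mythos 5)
Your proof is correct and complete: the reduction of the one-step extension to the interval condition, the nonemptiness of that interval via the chain $f(u)+f(v)=f(u+v)\leq P(u+v)\leq P(v+z)+P(u-z)$, and the Zorn's lemma maximality argument are all sound (the $t=0$ case, which you leave implicit, is just the hypothesis $f\leq P$ on $Y$). The paper does not prove this theorem itself but cites the standard references (Theorem 5.53 in \cite{Aliprantis2006Hitch}, Theorem 13.1.2 in \cite{shalit2017first}), and your argument is exactly the classical proof found there, so the two approaches coincide.
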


\begin{defn}
A \emph{topological vector space} (or in short tvs)\index{topological vector space/tvs} is a vector space equipped with a topology under which the operations of addition and multiplication by a scalar are continuous.
\end{defn}
\subsection{Settings and notations}

The following assumptions and notations are needed to state our theorem and will
be used along this chapter.
\begin{itemize}
\item $V$ is a non-empty topological vector space. $V^{*}$
is its topological dual. $u\in V$ is a given element.
\item $\q\subset V$ and $K$ are two non-empty convex cones.
\item For every $x,y\in V$ we denote $x\geq y\Leftrightarrow x-y\in\q$
(a preorder on $V$).
\item $h:K\rightarrow V$ is $\q$-superlinear\index{Q@$\mathbb{Q}$-superlinear}, meaning $h\left(\lambda k\right)=\lambda h\left(k\right)$
for all $\lambda\geq0$ (in particular $h\left(0\right)=0$) and $h\left(k+k'\right)\geq h\left(k\right)+h\left(k'\right)$.
\item $k^{*}:K\rightarrow\mathbb{R}$ is a sublinear functional.
We say $k^{*}$ is $h$-positive\index{h@$h$-positive functional} if $k^{*}\left(k\right)\geq0$ whenever
$h(k)\geq0$ (meaning $k\in h^{-1}\left(\q\right)$).
\item $\Pi\left(k^{*},h\right)\subset V^{*}$\index{pikh@$\Pi\left(k^{*},h\right)$}
is the set of $v^{*}\in V^{*}$ such that $v^{*}\left(q\right)\geq0\ \forall q\in\q$
(this is an equality whenever $\q$ is a subspace) and $v^{*}\circ h\left(k\right)\leq k^{*}\left(k\right)\ \forall k\in K$
(this is an equality whenever $K,h$ and $k^{*}$ are linear).
\item For any $v\in V$, we denote $\Phi\left(v,h\right):=\left\{ k\in K\ ;\ h\left(k\right)\geq v\right\} =h^{-1}\left(v+\q\right)$\index{f1vh@$\Phi\left(v,h\right)$}.
\end{itemize}

We will need the following assumption which correlates $h$ with $\q$ and the topology of $V$ with $k^*$:
\begin{assumption}
\label{assu:Sublinear}There exists $s:V\rightarrow K$
such that $s\left(v\right)\in\Phi\left(v,h\right)\ \forall v\in V$ and
$k^{*}\circ s$ is bounded from above on a neighborhood of $0$.
\end{assumption}

In certain cases, we may use the following stronger/equivalent assumptions:

\begin{claim}
If $V$ is a normed space and $\cap_{v\in V:\left|\left|v\right|\right|=1}\Phi\left(v,h\right)\neq\emptyset$,
then Assumption \ref{assu:Sublinear} holds.
\end{claim}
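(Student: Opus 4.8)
The plan is to build the required selection map $s$ explicitly from a single witness in the nonempty intersection, relying only on positive homogeneity. First I would fix an element $k_{0}\in\bigcap_{v\in V:\|v\|=1}\Phi(v,h)$, which exists by hypothesis, and propose the radial ansatz $s(v):=\|v\|\,k_{0}$. This lands in $K$ for every $v$ because $K$ is a convex cone and $k_{0}\in K$; in particular $s(0)=0\in K$.

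Next I would check the constraint $s(v)\in\Phi(v,h)$, i.e.\ $h(s(v))\geq v$, for all $v$. For $v\neq0$ the normalized vector $v/\|v\|$ has unit norm, so $k_{0}\in\Phi(v/\|v\|,h)$ says precisely $h(k_{0})-v/\|v\|\in\q$. Since $\q$ is a cone and $\|v\|>0$, scaling gives $\|v\|\,h(k_{0})-v\in\q$, and positive homogeneity of $h$ rewrites the first term as $h(\|v\|k_{0})=h(s(v))$, yielding $h(s(v))\geq v$. For $v=0$ I would use $h(0)=0$ together with $0\in\q$ (valid since $\q$ is a nonempty cone) to conclude $h(s(0))=0\geq0$.

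It then remains to verify that $k^{*}\circ s$ is bounded above near $0$. Positive homogeneity of the sublinear functional $k^{*}$ gives $k^{*}(s(v))=\|v\|\,k^{*}(k_{0})$ for $v\neq0$ and $k^{*}(s(0))=0$, so on the closed unit ball $\{v:\|v\|\leq1\}$---a neighborhood of the origin---one has $k^{*}(s(v))\leq\max\{0,k^{*}(k_{0})\}$, a finite bound because $k^{*}$ is real-valued. This establishes Assumption \ref{assu:Sublinear}.

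The whole argument is mechanical once the radial formula is in place, so I do not expect a serious obstacle. The only points requiring a moment's attention are keeping positive homogeneity (which governs $\|v\|\,k_{0}$) separate from any use of additivity, and covering the degenerate case $v=0$, where membership of the zero vector in $\q$ is exactly what rescues the constraint.
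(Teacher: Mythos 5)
Your proposal is correct and takes essentially the same route as the paper, which also fixes $k\in\cap_{v:\|v\|=1}\Phi(v,h)$ and sets $s(v)=\|v\|\,k$, stating the membership $s(v)\in\Phi(v,h)$ without further comment. Your added verifications (positive homogeneity of $h$ and $k^{*}$, the fact that $0\in\q$, and the degenerate case $v=0$) are precisely the details the paper's one-line proof leaves implicit.
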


\begin{proof}
Let $k\in\cap_{v\in V:\left|\left|v\right|\right|=1}\Phi\left(v,h\right)$
then $k\left|\left|v\right|\right|\in\Phi\left(v,h\right)$ for every
$v\in V$, so we can choose $s\left(v\right)=k\left|\left|v\right|\right|$.
\end{proof}

\begin{claim}
\label{claim:AssFD}
If $V$ is equipped with the discrete topology, Assumption \ref{assu:Sublinear} is equivalent to $\Phi(v,h)\neq\emptyset\ \forall v\in V$.
\end{claim}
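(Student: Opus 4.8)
The plan is to verify the two implications separately, using only the definition $\Phi(v,h)=h^{-1}(v+\q)$ together with the fact that $k^{*}$ takes values in $\r$ (so every single value of $k^{*}$ is automatically finite). The topology will enter only through the backward direction.

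For the forward implication, I would suppose Assumption \ref{assu:Sublinear} holds, so that there is a map $s:V\rightarrow K$ with $s(v)\in\Phi(v,h)$ for every $v\in V$. Then $s(v)$ itself witnesses that $\Phi(v,h)\neq\emptyset$ for each $v$, and this direction needs nothing about the topology at all.

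For the converse, I would assume $\Phi(v,h)\neq\emptyset$ for every $v\in V$. First I would invoke the axiom of choice to select, for each $v$, an element $s(v)\in\Phi(v,h)$; this defines a map $s:V\rightarrow K$ fulfilling the first requirement of Assumption \ref{assu:Sublinear}. It then remains to check that $k^{*}\circ s$ is bounded from above on some neighborhood of $0$. Here is where I would use that $V$ carries the discrete topology: the singleton $\{0\}$ is open, hence is itself a neighborhood of $0$. Restricted to $\{0\}$, the functional $k^{*}\circ s$ attains the single value $k^{*}(s(0))\in\r$, which is finite and therefore trivially bounded above. This verifies the boundedness condition and completes the equivalence.

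The only point requiring attention — and it is precisely what makes the equivalence hold in the discrete setting — is the observation that in the discrete topology any neighborhood of $0$ can be shrunk down to $\{0\}$, so the boundedness hypothesis on $k^{*}\circ s$ becomes vacuous. In a general topology this same hypothesis carries the analytic content of Assumption \ref{assu:Sublinear}; once discreteness is used to trivialize it, no genuine obstacle remains, and the proof reduces to the choice-function argument above.
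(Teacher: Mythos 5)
Your proof is correct and follows essentially the same route as the paper's: the forward direction is immediate, and for the converse you exploit that $\{0\}$ is a neighborhood of $0$ in the discrete topology, on which $k^{*}\circ s$ is trivially bounded since $k^{*}$ is real-valued. The only cosmetic difference is that the paper explicitly sets $s(0)=0$ (using $h(0)=0$, so $0\in\Phi(0,h)$), whereas you select an arbitrary element of $\Phi(0,h)$ by choice --- both work for the same reason.
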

\begin{proof}
It is clear that Assumption \ref{assu:Sublinear} implies $\Phi(v,h)\neq\emptyset\ \forall v\in V$ (not only when $V$ is equipped with the discrete topology). For the other direction, if $V$ is equipped with the discrete topology, choose $s(0)=0$ (which belongs to $\Phi(0,h)$ since $h(0)=0$) and $s(v)$ to be some element in $\Phi(v,h)$ for any $v\neq0$. In the discrete topology $\{0\}$ is a neighborhood of $0$ and $k^*\circ s$ is bounded on this neighborhood.
\end{proof}

\begin{rem}
\label{rem:AssFD}
If $V$ is finite-dimensional, every linear functional $f:V\rightarrow\r$ is continuous. Since in our theorems the only part of the topology is to determine the continuous linear functionals, we may assume $V$ is equipped with the discrete topology. Therefore by the above claim when $V$ is finite-dimensional we may replace Assumption \ref{assu:Sublinear} with the assumption $\Phi(v,h)\neq\emptyset\ \forall v\in V$.
\end{rem}

\bigskip
The following diagram illustrates the different objects defined above and their connections:
\begin{center}
   \includegraphics[scale=0.3]{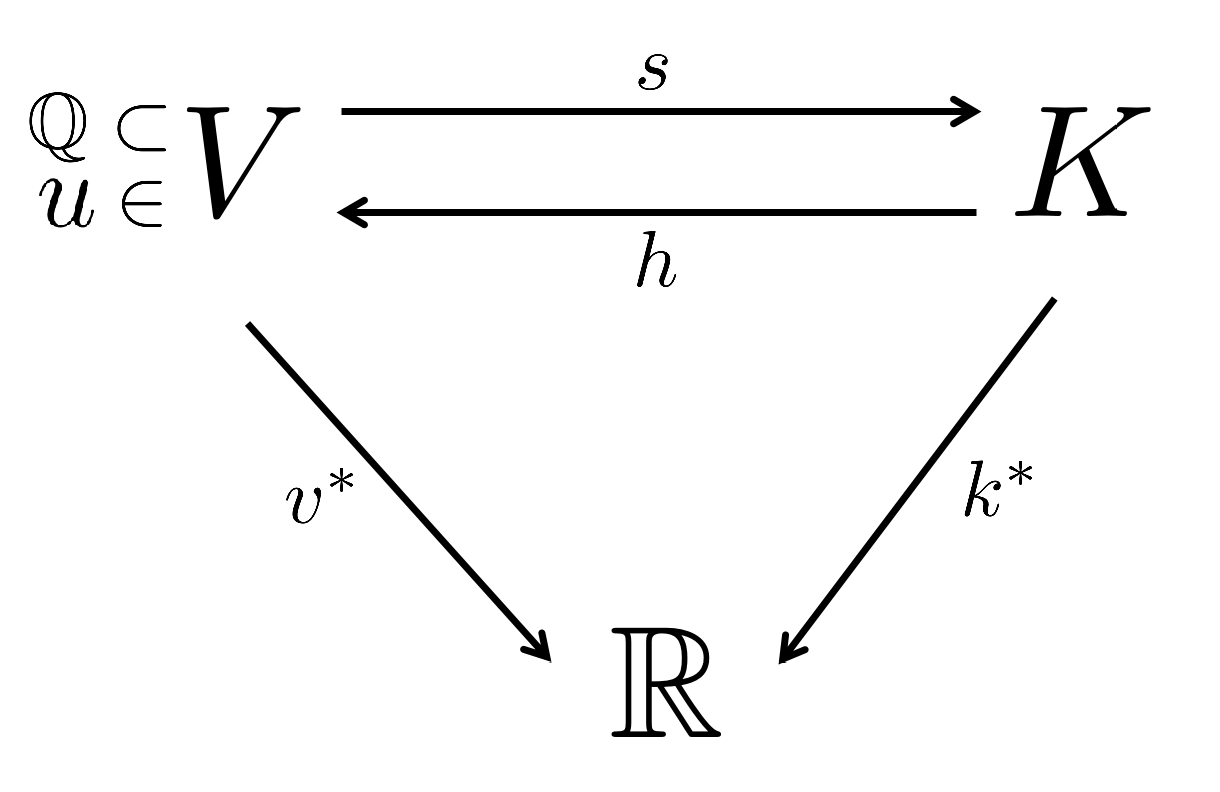} 
\end{center}
 
 \fbox{
\parbox{15cm}{
\begin{example}
\label{exa:Kant}For two compact metric measure spaces $\left(X,\mu\right),\left(Y,\nu\right)$,
the connection with the optimal transport problem can be obtained by
taking:
\begin{itemize}
\item $V=C\left(X\x Y\right)$ equipped with the supremum norm, $V^{*}=\mm\left(X\x Y\right)$.
\item $\q=$ The cone of non-negative functions in $V$.
\item $K=C\left(X\right)\x C\left(Y\right)$, $h\left(\psi,\varphi\right):=\psi+\varphi$,
$k^{*}\left(\psi,\varphi\right):=\int\psi\d\mu+\int\varphi\d\nu$.
\end{itemize}

Then $\Pi\left(k^{*},h\right)=\Pi\left(\mu,\nu\right)$ is the set
of measures on $X\x Y$ with marginals $\mu,\nu$, and 
\[
\Phi\left(v,h\right):=\left\{ \left(\psi,\varphi\right)\in K\ ;\ \psi\left(x\right)+\varphi\left(y\right)\geq v\left(x,y\right)\ \forall x,y\right\} 
\]
Assumption \ref{assu:Sublinear} holds by taking $s(v)=(\|v\|_{\infty},0)$.
\end{example}
}
}

\subsection{Statement of the theorem}

In our theorem, the primal problem is to maximize a linear functional
acting on a given element in the space, among all functionals that are non-negative on $\q$ and
dominated by
$k^{*}$. The theorem states the existence of a maximizer and formulates
its dual problem:

\bigskip

\colorbox{thmcolor}{
\parbox{\linewidth}{
\begin{thm}
\index{abstract duality}(Abstract duality theorem) \label{Thm:AbsDuality}
Under Assumption \ref{assu:Sublinear} 

\begin{equation}
\max_{v^{*}\in\Pi\left(k^{*},h\right)}v^{*}\left(u\right)=\inf_{k\in\Phi\left(u,h\right)}k^{*}\left(k\right)\label{eq:absdual}
\end{equation}
and the equality is finite (rather than $-\infty=-\infty$) iff $k^{*}$
is $h$-positive.
\end{thm}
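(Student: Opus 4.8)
The plan is to read this as a strong-duality statement: prove weak duality by hand, and then manufacture the maximizer by feeding the correct value function to Hahn--Banach (Theorem \ref{thm:Hahn-Banach}). Weak duality is immediate. If $v^{*}\in\Pi(k^{*},h)$ and $k\in\Phi(u,h)$, then $h(k)-u\in\q$ forces $v^{*}(h(k))\geq v^{*}(u)$, while the defining inequality of $\Pi(k^{*},h)$ gives $v^{*}(h(k))\leq k^{*}(k)$; hence $v^{*}(u)\leq k^{*}(k)$ for every such pair, so $\sup_{v^{*}}v^{*}(u)\leq\inf_{k}k^{*}(k)$. The real content is the reverse inequality together with attainment.

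The central object I would introduce is the value function
\[
P(v):=\inf_{k\in\Phi(v,h)}k^{*}(k),\qquad v\in V,
\]
valued in $\r\cup\{-\infty\}$, well defined because $s(v)\in\Phi(v,h)$ keeps the index set non-empty. I would first check that $P$ is sublinear: positive homogeneity follows by rescaling $k$ using $h(\lambda k)=\lambda h(k)$ and $k^{*}(\lambda k)=\lambda k^{*}(k)$; subadditivity follows because for $k\in\Phi(v,h)$, $k'\in\Phi(w,h)$ the $\q$-superlinearity $h(k+k')\geq h(k)+h(k')\geq v+w$ gives $k+k'\in\Phi(v+w,h)$, and then $k^{*}(k+k')\leq k^{*}(k)+k^{*}(k')$. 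The whole dichotomy hinges on $P(0)$: since $0\in\Phi(0,h)$ we always have $P(0)\leq 0$, and $P(0)=0$ exactly when $k^{*}$ is $h$-positive; otherwise some $k_{0}$ with $h(k_{0})\geq0$, $k^{*}(k_{0})<0$ satisfies $k+\lambda k_{0}\in\Phi(v,h)$ for all $\lambda\geq0$, driving $k^{*}(k+\lambda k_{0})\to-\infty$ and forcing $P\equiv-\infty$. In this non-$h$-positive case $\Pi(k^{*},h)$ is empty (any such $v^{*}$ would give $0\leq v^{*}(h(k_{0}))\leq k^{*}(k_{0})<0$), so both sides are $-\infty$; this settles the ``iff'' clause.

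Assume now $k^{*}$ is $h$-positive, so $P(0)=0$. Applying subadditivity to $0=v+(-v)$ yields $P(v)\geq -P(-v)\geq -k^{*}(s(-v))>-\infty$, and $P(v)\leq k^{*}(s(v))<\infty$, so $P$ is a genuine real-valued sublinear functional. I then set $f(tu):=tP(u)$ on the line $\r u$; the required $f\leq P$ there reduces (for $t<0$) to $P(u)+P(-u)\geq0$, i.e.\ subadditivity. Hahn--Banach extends $f$ to a linear $F:V\to\r$ with $F(u)=P(u)$ and $F\leq P$ everywhere. Membership $F\in\Pi(k^{*},h)$ is then formal: for $k\in K$ we have $k\in\Phi(h(k),h)$ since $h(k)-h(k)=0\in\q$, so $F(h(k))\leq P(h(k))\leq k^{*}(k)$; and for $q\in\q$, since $0\in\Phi(-q,h)$ we get $P(-q)\leq k^{*}(0)=0$, whence $F(q)\geq -P(-q)\geq0$.

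The step I expect to be the genuine obstacle, and the only place Assumption \ref{assu:Sublinear} enters, is showing $F\in V^{*}$, i.e.\ continuity. Because $P(v)\leq k^{*}(s(v))$ and $k^{*}\circ s$ is bounded above on a neighborhood of $0$, the linear $F\leq P$ is bounded above on that neighborhood; passing to a balanced subneighborhood turns this into a two-sided bound, and a linear functional bounded on a neighborhood of the origin in a tvs is continuous. With $F\in\Pi(k^{*},h)$ and $F(u)=P(u)=\inf_{k\in\Phi(u,h)}k^{*}(k)$, weak duality shows this value is actually the maximum, establishing \eqref{eq:absdual}.
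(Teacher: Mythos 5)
Your proposal is correct and follows essentially the same route as the paper: the value function $P$ is exactly the paper's $p$, its sublinearity is Lemma \ref{lem:psublinear}, the Hahn--Banach extension of $f(tu)=tP(u)$ from $\mathrm{span}\{u\}$ with continuity extracted from Assumption \ref{assu:Sublinear} is the paper's implication $2\Rightarrow3$ in Lemma \ref{lem:EqCon}, and your membership check $F\in\Pi(k^{*},h)$ together with weak duality is Lemma \ref{lem:Char of Pi}. The only (harmless) variations are organizational: you inline weak duality rather than proving the full characterization $\Pi(k^{*},h)=\{v^{*}\leq p\}$, and you settle the degenerate case by the direct ray argument $k+\lambda k_{0}$ instead of the equivalence chain of Lemma \ref{lem:EqCon}.
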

}}

\begin{rem}
An equivalent formulation of the theorem is 
\[
\min_{v^{*}\in\Pi\left(k^{*},h\right)}v^{*}\left(u\right)=\sup_{k\in\Phi\left(-u,h\right)}-k^{*}\left(k\right)
\]
and in case $k^{*}$ and $K$ are linear 
\[
\min_{v^{*}\in\Pi\left(k^{*},h\right)}v^{*}\left(u\right)=\sup_{-k\in\Phi\left(-u,h\right)}k^{*}\left(k\right)
\]
\end{rem}

\begin{rem}
\label{rem:KantDual}By taking $V,K,h,k^{*}$ as in Example \ref{exa:Kant},
and using the above remark, Theorem \ref{Thm:AbsDuality} implies a weaker version of the Kantorovich duality (Theorem \ref{thm:Duality}) for compact $X,Y$.
\end{rem}

\subsection{Preliminary lemmas}

To prove Theorem \ref{Thm:AbsDuality}, we will need the following
lemmas about the functional $p:V\rightarrow\mathbb{R}\cup\left\{ -\infty\right\} $
defined by
\[
p\left(v\right):=\inf_{k\in\Phi\left(v,h\right)}k^{*}\left(k\right)
\]
Notice that $p<\infty$ under Assumption \ref{assu:Sublinear} since it implies $\Phi\left(v,h\right)\not=\emptyset\ \forall v\in V$.
\begin{lem}
\label{lem:psublinear}$p$ is sublinear.
\end{lem}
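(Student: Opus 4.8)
The plan is to verify the two defining properties of a sublinear functional from Definition \ref{def:sublinear} directly from the formula for $p$: positive homogeneity and subadditivity. Two structural facts drive everything. First, since $h$ is positively homogeneous and $\q$ is a cone, the feasible sets scale: $\Phi(\lambda v,h)=\lambda\,\Phi(v,h)$ for $\lambda>0$. Second, $\q$-superlinearity of $h$ together with transitivity of the preorder $\geq$ (i.e.\ closedness of $\q$ under addition) gives a ``feasibility is preserved under addition'' property: if $k\in\Phi(v,h)$ and $k'\in\Phi(v',h)$ then $k+k'\in\Phi(v+v',h)$.

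For positive homogeneity, fix $\lambda>0$. I would first check the scaling identity: the relation $h(k)\geq\lambda v$ means $h(k)-\lambda v\in\q$, and applying positive homogeneity of $h$ together with the cone property of $\q$ shows this is equivalent to $h(k/\lambda)-v=\tfrac1\lambda\bigl(h(k)-\lambda v\bigr)\in\q$, i.e.\ $k/\lambda\in\Phi(v,h)$; hence $\Phi(\lambda v,h)=\lambda\,\Phi(v,h)$. Then, using positive homogeneity of $k^{*}$,
\[
p(\lambda v)=\inf_{k\in\lambda\Phi(v,h)}k^{*}(k)=\inf_{k'\in\Phi(v,h)}k^{*}(\lambda k')=\lambda\inf_{k'\in\Phi(v,h)}k^{*}(k')=\lambda\, p(v).
\]
The case $\lambda=0$ is degenerate: $0\in\Phi(0,h)$ because $h(0)=0$ and $0\in\q$, so $p(0)\leq k^{*}(0)=0$, with $p(0)=0$ exactly when $k^{*}$ is $h$-positive (otherwise $p(0)=-\infty$); I would record this and read it through the stated arithmetic conventions for $-\infty$.

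For subadditivity I would take arbitrary $k\in\Phi(v,h)$ and $k'\in\Phi(v',h)$, which exist by Assumption \ref{assu:Sublinear}. Since $K$ is closed under addition, $k+k'\in K$; $\q$-superlinearity gives $h(k+k')\geq h(k)+h(k')$, while $h(k)-v\in\q$ and $h(k')-v'\in\q$ sum (in the cone $\q$) to give $h(k)+h(k')\geq v+v'$, so by transitivity of $\geq$ we get $h(k+k')\geq v+v'$, that is $k+k'\in\Phi(v+v',h)$. Subadditivity of $k^{*}$ then yields $p(v+v')\leq k^{*}(k+k')\leq k^{*}(k)+k^{*}(k')$; taking the infimum over the two independent indices $k$ and $k'$ splits the right-hand side into $p(v)+p(v')$, giving $p(v+v')\leq p(v)+p(v')$.

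The proof is essentially a direct verification, so there is no deep obstacle; the only real care is in the $-\infty$ bookkeeping. Concretely, when (say) $p(v)=-\infty$ the infimum-splitting step must be justified by hand: pick a sequence $k_{n}\in\Phi(v,h)$ with $k^{*}(k_{n})\to-\infty$ and a fixed $k'\in\Phi(v',h)$, note $k_{n}+k'\in\Phi(v+v',h)$ with $k^{*}(k_{n}+k')\leq k^{*}(k_{n})+k^{*}(k')\to-\infty$, so that $p(v+v')=-\infty$, in agreement with the convention $r+(-\infty)=-\infty$. The other point needing attention is simply to keep straight that all the order manipulations use only that $\q$ is a convex cone (reflexivity, transitivity, positive scaling), never antisymmetry of the preorder.
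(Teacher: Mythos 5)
Your proof is correct and follows essentially the same route as the paper: the scaling identity $\Phi(\lambda v,h)=\lambda\Phi(v,h)$ for positive homogeneity, and the observation that $k\in\Phi(v,h)$, $k'\in\Phi(v',h)$ imply $k+k'\in\Phi(v+v',h)$ via $\q$-superlinearity of $h$, followed by taking infima, for subadditivity. Your extra bookkeeping (the $\lambda=0$ case and the explicit $-\infty$ splitting argument) is a welcome refinement of details the paper leaves implicit, but it does not change the substance of the argument.
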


\begin{proof}
Need to show subadditivity and positive-homogeneity:
\begin{itemize}
\item Positive-homogeneity: Let $\lambda>0,\ v\in V$ then 
\[
k\in\Phi\left(\lambda v,h\right)\Leftrightarrow h\left(k\right)-\lambda v\in\q\Leftrightarrow h\left(\lambda^{-1}k\right)-v\in\q\Leftrightarrow\lambda^{-1}k\in\Phi\left(v,h\right)
\]
hence
\[
p\left(\lambda v\right):=\inf_{k\in\Phi\left(\lambda v,h\right)}k^{*}\left(k\right)=\inf_{\lambda^{-1}k\in\Phi\left(v,h\right)}k^{*}\left(k\right)=\inf_{k\in\Phi\left(v,h\right)}k^{*}\left(\lambda k\right)=\lambda\inf_{k\in\Phi\left(v,h\right)}k^{*}\left(k\right)=:\lambda p\left(v\right)
\]
\item Subadditivity: Let $v,v'\in V$ and $k\in\Phi\left(v,h\right)$, $k'\in\Phi\left(v',h\right)$ (by Assumption \ref{assu:Sublinear} $\Phi\left(v,h\right),\Phi\left(v',h\right)\neq\emptyset$) then
\[
h\left(k+k'\right)\geq h\left(k\right)+h\left(k'\right)\geq v+v'
\]
thus $k+k'\in\Phi\left(v+v',h\right)$ and
\[
\begin{array}{ccccc}
p\left(v+v'\right)=\inf_{\Phi\left(v+v',h\right)}k^{*}\leq  k^{*}\left(k+k'\right)  \leq  k^{*}\left(k\right)+k^{*}\left(k'\right)\end{array}
\]
We finish our proof by taking the infimum over all $k\in\Phi\left(v,h\right)$
and $k'\in\Phi\left(v',h\right)$ on the RHS of the inequality above.
\end{itemize}
\end{proof}
\begin{lem}
\label{lem:Char of Pi}$\Pi\left(k^{*},h\right)=\left\{ v^{*}\in V^{*};\ v^{*}(v)\leq p(v)\ \forall v\in V\right\} $.
\end{lem}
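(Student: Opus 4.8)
The plan is to prove the two inclusions separately, relying throughout on reflexivity of the preorder (so that $v \geq v$, equivalently $0 \in \q$) together with the identities $h(0) = 0$ and $k^{*}(0) = 0$, the latter being immediate from positive-homogeneity.

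For the inclusion $\subseteq$, I would start with an arbitrary $v^{*} \in \Pi(k^{*}, h)$ and fix some $v \in V$ and $k \in \Phi(v, h)$. By definition of $\Phi$ we have $h(k) - v \in \q$, so the first defining property of $\Pi(k^{*}, h)$ (non-negativity on $\q$) gives $v^{*}(h(k)) \geq v^{*}(v)$, while the second property ($v^{*} \circ h \leq k^{*}$) gives $v^{*}(h(k)) \leq k^{*}(k)$. Chaining these yields $v^{*}(v) \leq k^{*}(k)$, and taking the infimum over $k \in \Phi(v, h)$ produces $v^{*}(v) \leq p(v)$. As $v$ was arbitrary, $v^{*}$ lies in the right-hand set.

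For the reverse inclusion $\supseteq$, I would take $v^{*} \in V^{*}$ satisfying $v^{*}(v) \leq p(v)$ for all $v$ and recover the two defining properties of $\Pi(k^{*}, h)$. To obtain $v^{*} \circ h \leq k^{*}$, I would specialize to $v = h(k)$: reflexivity gives $k \in \Phi(h(k), h)$, whence $p(h(k)) \leq k^{*}(k)$, and combining with the hypothesis $v^{*}(h(k)) \leq p(h(k))$ yields $v^{*}(h(k)) \leq k^{*}(k)$. To obtain non-negativity on $\q$, I would specialize to $v = -q$ for $q \in \q$; the key observation is that $0 \in \Phi(-q, h)$, because $0 \in K$ and $h(0) - (-q) = q \in \q$. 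Hence $p(-q) \leq k^{*}(0) = 0$, and the hypothesis then gives $-v^{*}(q) = v^{*}(-q) \leq p(-q) \leq 0$, i.e.\ $v^{*}(q) \geq 0$.

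The only subtle step, which I would flag as the main obstacle, is the recovery of non-negativity on $\q$ in the reverse inclusion: unlike the domination property, it is not obtained by a direct substitution but requires noticing that the zero element of $K$ already lies in $\Phi(-q, h)$ and that $k^{*}$ vanishes at $0$. Everything else reduces to routine manipulation of the two defining inequalities.
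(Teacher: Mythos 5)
Your proof is correct and takes essentially the same route as the paper's: the inclusion $\subseteq$ by chaining $v^{*}(v)\leq v^{*}(h(k))\leq k^{*}(k)$ and taking the infimum over $k\in\Phi(v,h)$, and the inclusion $\supseteq$ via the two substitutions $v=h(k)$ (using $k\in\Phi(h(k),h)$) and $v=-q$ (using $0\in\Phi(-q,h)$ together with $k^{*}(0)=0$). The step you flag as subtle is handled in the paper in exactly the same way, so there is nothing to add.
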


\begin{proof}
$ $
\begin{itemize}
\item Let $v^{*}\in V^{*}$ such that $v^{*}\left(v\right)\leq p\left(v\right)\ \forall v\in V$,
we need to show $v^{*}\in\Pi\left(k^{*},h\right)$: For every $q\in\q$,
\[
h\left(0\right)=0\geq-q\Rightarrow0\in\Phi\left(-q,h\right)
\]
 hence
\[
-v^{*}\left(q\right)=v^{*}\left(-q\right)\leq p\left(-q\right)=\inf_{k\in\Phi\left(-q,h\right)}k^{*}\left(k\right)\leq k^{*}\left(0\right)
\]
so since $k^*$ is sublinear $k^*(0)=0$ and we conclude $v^*(q)\geq0\ \forall q\in\q$.
$k\in\Phi\left(h\left(k\right),h\right)$ for every $k\in K$
therefore
\[
v^{*}\left(h\left(k\right)\right)\leq p\left(h\left(k\right)\right)=\inf_{k'\in\Phi\left(h\left(k\right),h\right)}k^{*}\left(k'\right)\leq k^{*}\left(k\right)\ \forall k\in K
\]
so overall $v^*\in\Pi(k^*,h)$.
\item Let $v^{*}\in\Pi\left(k^{*},h\right)$, we need to show $v^{*}\left(v\right)\leq p\left(v\right)\ \forall v\in V$: Let $v\in V$, for any $k\in K$ such that $h\left(k\right)\geq v$
\[
v^{*}\left(v\right)\leq v^{*}\left(h\left(k\right)\right)\leq k^{*}\left(k\right)
\]
so by taking the infimum over all such $k\in K$ we get 
\[
v^{*}\left(v\right)\leq\inf_{k\in\Phi\left(v,h\right)}k^{*}\left(k\right)=p\left(v\right)
\]
\end{itemize}
\end{proof}
\begin{lem}   
\label{lem:EqCon}Under Assumption \ref{assu:Sublinear} the following are equivalent:

\begin{enumerate}
\item $p\left(v\right)>-\infty$ for some $v\in V$.
\item $p$ is finite on $V$
\item For every $v\in V$ there exists $v^{*}\in\Pi\left(k^{*},h\right)$
such that $v^{*}\left(v\right)=p\left(v\right)$.
\item $\Pi\left(k^{*},h\right)\neq\emptyset$.
\item $k^{*}$ is $h$-positive.
\end{enumerate}
\end{lem}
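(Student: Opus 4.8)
The plan is to establish the cycle of implications $(5) \Rightarrow (1) \Rightarrow (2) \Rightarrow (3) \Rightarrow (4) \Rightarrow (5)$, leaning throughout on the sublinearity of $p$ (Lemma \ref{lem:psublinear}) and on its characterization of $\Pi(k^{*},h)$ (Lemma \ref{lem:Char of Pi}). Of the five arrows, four are short bookkeeping; the single analytic step is $(2) \Rightarrow (3)$, where a Hahn-Banach extension produces the maximizing functional and where the topology of $V$ finally enters.

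For the easy arrows I would argue as follows. To get $(5) \Rightarrow (1)$ I evaluate $p$ at the origin: since $0 \in \Phi(0,h)$ and every $k \in \Phi(0,h)$ satisfies $h(k) \geq 0$, $h$-positivity of $k^{*}$ forces $k^{*}(k) \geq 0$ on $\Phi(0,h)$, so $p(0) \geq 0 > -\infty$. For $(1) \Rightarrow (2)$ I use subadditivity together with the fact that Assumption \ref{assu:Sublinear} already gives $p < \infty$ everywhere: if $p(v_0) > -\infty$ while $p(v) = -\infty$ for some $v$, then $p(v_0) \leq p(v_0 - v) + p(v) = -\infty$ (using $p(v_0 - v) < \infty$ and the convention $r + (-\infty) = -\infty$), a contradiction, so $p$ is finite throughout. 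The arrow $(3) \Rightarrow (4)$ is immediate since $V$ is non-empty: applying $(3)$ to any $v$ exhibits an element of $\Pi(k^{*},h)$. Finally, for $(4) \Rightarrow (5)$, given $v^{*} \in \Pi(k^{*},h)$ and $k$ with $h(k) \geq 0$, the two defining properties $v^{*}(q) \geq 0$ on $\q$ and $v^{*}\circ h \leq k^{*}$ yield $0 \leq v^{*}(h(k)) \leq k^{*}(k)$, i.e.\ $k^{*}$ is $h$-positive.

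The substantive step is $(2) \Rightarrow (3)$. Fixing $v_0 \in V$, I would define a linear functional $f$ on the line $\r v_0$ by $f(t v_0) := t\,p(v_0)$ and verify $f \leq p$ there. For $t \geq 0$ this is positive-homogeneity of $p$ (with $p(0) = 0$, which follows from sublinearity and the finiteness granted by $(2)$); for $t < 0$ it reduces, after dividing by $t$, to $p(v_0) + p(-v_0) \geq 0$, which holds because $0 = p(0) = p(v_0 + (-v_0)) \leq p(v_0) + p(-v_0)$ by subadditivity. Hahn-Banach (Theorem \ref{thm:Hahn-Banach}) then extends $f$ to a linear $F : V \to \r$ with $F \leq p$ on all of $V$ and $F(v_0) = p(v_0)$.

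The one genuine obstacle is continuity: Hahn-Banach only returns a linear functional, whereas membership in $\Pi(k^{*},h) \subset V^{*}$ demands $F \in V^{*}$, and this is precisely what Assumption \ref{assu:Sublinear} is designed to supply. Using the map $s$, I would bound $F(v) \leq p(v) \leq k^{*}(s(v))$, which is bounded above by some $M$ on a neighborhood $U$ of $0$; passing to a balanced neighborhood inside $U \cap (-U)$ and using linearity gives $|F| \leq M$ there, and a linear functional bounded above on a neighborhood of $0$ is continuous, so $F \in V^{*}$. Then $F(v) \leq p(v)$ for all $v$ places $F$ in $\Pi(k^{*},h)$ by Lemma \ref{lem:Char of Pi}, and $F(v_0) = p(v_0)$ establishes $(3)$ for the arbitrary point $v_0$. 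This passage from the purely algebraic extension back to the topology of $V$ is the crux of the lemma, and it is exactly where the boundedness clause of Assumption \ref{assu:Sublinear} earns its keep.
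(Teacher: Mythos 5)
Your proof is correct and follows essentially the same route as the paper: the identical cycle of implications (merely presented starting from $(5)$ rather than $(1)$), with the same Hahn--Banach extension of $f(\alpha v_0):=\alpha p(v_0)$ on $\mathrm{span}\{v_0\}$ dominated by $p$, and the same use of Assumption \ref{assu:Sublinear} via $F\leq p\leq k^{*}\circ s$ to upgrade the algebraic extension to an element of $V^{*}$, then Lemma \ref{lem:Char of Pi} to place it in $\Pi(k^{*},h)$. Your explicit two-sided bound on $U\cap(-U)$ just reproves the continuity criterion the paper cites from the literature, so the arguments coincide in substance.
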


\begin{proof}
$\ $

\begin{itemize}
\item $1\Rightarrow2$: Let $v'\in V$. Since $p$
is sublinear by Lemma \ref{lem:psublinear} then $$-\infty<p\left(v\right)\leq p\left(v'\right)+p\left(v-v'\right)\Rightarrow p\left(v'\right)>-\infty$$
\item $2\Rightarrow3$: Let $v\in V$. Define
a linear functional $f$ on the subspace $\mathrm{span}\left\{ v\right\} \subset V$
by $f\left(\alpha v\right):=\alpha p\left(v\right)\ \forall\alpha\in\mathbb{R}$.
We will show that $f\leq p$ on $\mathrm{span}\left\{ v\right\} $: By positive-homogeneity of $p$ we know that $p\left(0\right)=0$ thus
\[
\begin{array}{ccccccccc}
\alpha>0 & \Rightarrow & f\left(\alpha v\right) & = & \alpha p\left(v\right) & = & p\left(\alpha v\right)\\
\alpha=0 & \Rightarrow & f\left(\alpha v\right) & = & 0 & = & p\left(0\right) & = & p\left(\alpha v\right)\\
\alpha<0 & \Rightarrow & f\left(\alpha v\right) & = & \alpha p\left(v\right) & = & -p\left(-\alpha v\right) & \leq & p\left(\alpha v\right)
\end{array}
\]
where the last inequality is by the subadditivity of $p$. By the Hahn-Banach theorem (which we may use since $p$ is finite) there
exists a linear functional $v^{*}:V\rightarrow\r$ (not necessarily continuous) such that $v^{*}\left(v'\right)\leq p\left(v'\right)\ \forall v'\in V$
and $v^{*}\left(v'\right)=f\left(v'\right)\ \forall v'\in\mathrm{span}\left\{ v\right\} $
in particular $p\left(v\right)=v^{*}\left(v\right)$. By Assumption
\ref{assu:Sublinear}, there exists $s:V\rightarrow K$ such that $k^*\circ s$ is bounded from above on a neighborhood of $0$. Since $v^*(v')\leq p(v')\leq k^*\circ s(v')\ \forall v'\in V$, $v^*$ is also bounded from above on a neighborhood of $0$ therefore it is continuous - $v^*\in V^*$ (see for example Theorem 5.43 in \cite{Aliprantis2006Hitch}),
and by Lemma \ref{lem:Char of Pi} $v^{*}\in\Pi\left(k^{*},h\right)$.

\item $3\Rightarrow4$: Obvious.
\item $4\Rightarrow5$: Let $v^{*}\in\Pi\left(k^{*},h\right)$
then $0\leq v^{*}\left(h\left(k\right)\right)\leq k^{*}\left(k\right)$
for every $k\in h^{-1}\left(\q\right)=\Phi\left(0,h\right)$.
\item $5\Rightarrow1$: For every $k\in h^{-1}\left(\q\right)=\Phi\left(0,h\right)$,
$k^{*}\left(k\right)\geq0$ hence 
\[
p\left(0\right)=\inf_{k\in\Phi\left(0,h\right)}k^{*}\left(k\right)\geq0
\]
\end{itemize}
\end{proof}

\fbox{
\parbox{15cm}{
\begin{example}
\index{Hahn-Banach theorem}
We proved Lemma \ref{lem:EqCon} using the Hahn-Banach theorem. It is also possible to achieve the Hahn-Banach theorem using the implication $5\Rightarrow4$ in this lemma. To see that, we take
$V=X$ equipped with the discrete topology (or any other topology
under which $P$ is continuous at $0$), $Y$ a subspace of $X$, $f\leq P$ on $Y$, $K=X\x Y$, $h\left(x,y\right)=x+y$,
$k^{*}\left(x,y\right)=P\left(x\right)+f\left(y\right)$ and $\q=\left\{ 0\right\}$.
We verify Assumption \ref{assu:Sublinear} by taking $s:X\rightarrow X\x Y$
to be $s\left(x\right)=\left(x,0\right)$.
Then
\[
\Pi\left(k^{*},h\right)=\left\{ x^{*}\in X^{*};x^{*}\left(x+y\right)\leq P\left(x\right)+f\left(y\right)\forall\left(x,y\right)\in X\x Y\right\} 
\]

This is the set of all extensions of $f$ that are dominated by $P$.
 By Lemma \ref{lem:EqCon}, this set is not empty whenever
$k^{*}$ is $h$-positive which is always true since $f\leq P$ and therefore
\[
h\left(x,y\right)\in\q\Rightarrow x+y=0\Rightarrow f\left(-y\right)\leq P\left(-y\right)=P\left(x\right)\Rightarrow P\left(x\right)+f\left(y\right)\geq0
\]
\end{example}
}
}

\subsection{Proof of the theorem}

We are now ready to prove the abstract duality theorem:
\begin{proof}
(of Theorem \ref{Thm:AbsDuality})

\begin{casenv}
\item $k^{*}$ is $h$-positive:

By Lemma \ref{lem:EqCon} there exists $v^{*}\in\Pi\left(k^{*},h\right)$
such that $v^{*}\left(u\right)=p\left(u\right)$. We need to show $v^{*}$
is optimal in $\Pi\left(k^{*},h\right)$: By Lemma \ref{lem:Char of Pi}
for every $\pi\in\Pi\left(k^{*},h\right)$
\[
\pi\left(u\right)\leq p\left(u\right)=v^{*}\left(u\right)
\]
 and since $v^{*}\in\Pi\left(k^{*},h\right)$ we get
\[
v^{*}\left(u\right)=\max_{\pi\in\Pi\left(k^{*},h\right)}\pi\left(u\right)
\]
Altogether
\[
\inf_{k\in\Phi\left(u,h\right)}k^{*}\left(k\right)=\max_{\pi\in\Pi\left(k^{*},h\right)}\pi\left(u\right)
\]

\item $k^{*}$ is not $h$-positive:

By Lemma \ref{lem:EqCon} $p\equiv-\infty$ and $\Pi\left(k^{*},h\right)=\emptyset$
hence the duality holds $\left(-\infty=-\infty\right)$.
\end{casenv}
\end{proof}

\subsection{The non-compact case}

It is possible to use the abstract theorem (Theorem \ref{Thm:AbsDuality}) for optimal transport problems with non-compact spaces $X,Y$. We will now describe the tools needed for this application and derive the Kantorovich duality theorem (Theorem \ref{thm:Duality}) in a weaker form.

Later on, we will show how the abstract theorem may be applied to other problems with compact spaces. The method used here may be applied to these problems if one wishes to deal with the non-compact case. In this part of the section we assume $X,Y$ are locally compact, $\sigma$-compact Polish spaces.\\

When $X\x Y$ is not compact, the space of regular Borel measures on $X\x Y$ is no longer the dual of $C(X\x Y)$, but rather the dual of $C_{0}\left(X\x Y\right)$ \index{c@$C_0(X\x Y)$} - The space of functions vanishing at infinity (a function $f$ is vanishing at infinity if for each $\varepsilon>0$ there exists a compact set such that $|f|<\varepsilon$ on its complement). For compact $X\x Y$, $C_0(X\x Y)=C(X\x Y)$.\\

If we wish to consider bounded cost functions that are not necessarily in $C_0$, the dual space will be $C_b^{*}=$ the finitely-additive regular measures. However, the optimal transport problem for such cost functions can be represented as minimization problem over $\sigma$-additive measures as in the compact case, due to the following lemma.

\begin{lem}
\label{lem:Villani Functionals}(Lemma 1.25 in \cite{Villani2003})
A non-negative linear functional in $C_{b}\left(X\x Y\right)^{*}$
with marginals in $\mathcal{P}\left(X\right),\mathcal{P}\left(Y\right)$
belongs to $\mathcal{P}\left(X\x Y\right)$ .
\end{lem}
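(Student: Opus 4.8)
The plan is to show that the non-negative functional $\pi\in C_b(X\x Y)^*$ is \emph{tight} and then to identify it with a genuine Radon measure via the Riesz representation of $C_0(X\x Y)^*$. Let $p_X\colon X\x Y\to X$ and $p_Y\colon X\x Y\to Y$ denote the coordinate projections, so that the marginal hypothesis reads: the functionals $g\mapsto\pi(g\circ p_X)$ and $g\mapsto\pi(g\circ p_Y)$ are represented by measures $\m\in\mathcal P(X)$ and $\n\in\mathcal P(Y)$. Taking $g\equiv 1$ shows first that $\pi(1)=\m(X)=1$, so $\pi$ has total mass $1$.

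For tightness I would exploit the regularity of the marginals. Fix $\varepsilon>0$. Since $\m$ and $\n$ are Radon measures on Polish spaces they are tight, so there are compact sets $K_X\subset X$ and $K_Y\subset Y$ with $\m(K_X)>1-\varepsilon$ and $\n(K_Y)>1-\varepsilon$. Using local compactness and Urysohn's lemma, pick $\phi\in C_c(X)$ and $\psi\in C_c(Y)$ with $0\le\phi,\psi\le 1$, $\phi\equiv 1$ on $K_X$ and $\psi\equiv 1$ on $K_Y$, so that $\m(1-\phi)<\varepsilon$ and $\n(1-\psi)<\varepsilon$. Set $g(x,y):=\phi(x)\psi(y)$, a function in $C_c(X\x Y)$ with $0\le g\le 1$. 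The elementary inequality $1-ab\le(1-a)+(1-b)$ for $a,b\in[0,1]$ gives $1-g(x,y)\le(1-\phi(x))+(1-\psi(y))$ pointwise; since the two terms on the right are pulled back along $p_X$ and $p_Y$, monotonicity of the non-negative functional $\pi$ together with the marginal hypothesis yields
\[
\pi(1-g)\le\m(1-\phi)+\n(1-\psi)<2\varepsilon .
\]
Hence $\pi(g)>1-2\varepsilon$ with $g\in C_c(X\x Y)$, which is precisely the tightness of $\pi$.

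Finally I would restrict $\pi$ to $C_0(X\x Y)$ and invoke the Riesz representation theorem to obtain a finite Radon measure $\tilde\pi\in\mm_+(X\x Y)$ agreeing with $\pi$ on $C_0$. The tightness bound forces $\tilde\pi(X\x Y)\ge 1-2\varepsilon$ for every $\varepsilon$, while $\tilde\pi(X\x Y)\le\pi(1)=1$, so $\tilde\pi\in\mathcal P(X\x Y)$. To upgrade the agreement to all of $C_b$, for $f\in C_b(X\x Y)$ with $\|f\|_\infty\le M$ I would split $f=fg+f(1-g)$: the term $fg$ lies in $C_c$, where $\pi$ and $\tilde\pi$ coincide, while both functionals assign $f(1-g)$ a value of modulus at most $2M\varepsilon$. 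Letting $\varepsilon\to 0$ gives $\pi(f)=\tilde\pi(f)$, identifying $\pi$ with the countably additive probability measure $\tilde\pi$.

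The step I expect to be most delicate is the passage from finite to countable additivity, that is, ruling out mass escaping to infinity, since a priori an element of $C_b(X\x Y)^*$ may charge the ``boundary at infinity'' of the ambient compactification. The product cut-off $g=\phi\psi$ combined with the inequality $1-ab\le(1-a)+(1-b)$ is the device that reduces the two-dimensional tightness of $\pi$ to the already-known tightness of its marginals $\m$ and $\n$; everything else is routine once this estimate is in place.
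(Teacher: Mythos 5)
Your proof is correct: the paper gives no argument of its own here (it simply cites Lemma 1.25 of \cite{Villani2003}), and your tightness-plus-Riesz scheme --- transferring tightness from the marginals to the functional via the product cutoff $g=\phi\psi$ and the inequality $1-ab\le(1-a)+(1-b)$, then representing the restriction to $C_0(X\times Y)$ by a Radon measure and upgrading the agreement to all of $C_b(X\times Y)$ by splitting $f=fg+f(1-g)$ --- is essentially the cited argument. The only point worth flagging is that your Urysohn construction of compactly supported cutoffs $\phi\in C_c(X)$, $\psi\in C_c(Y)$ uses local compactness (in the general Polish setting one would instead run a Daniell-type monotone convergence argument), but this is harmless since the paper explicitly assumes $X,Y$ are locally compact and $\sigma$-compact in the section where the lemma is invoked.
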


The most frequently used cost functions in optimal transport are of the form $c(x,y)=d(x,y)^p$ for the metric $d$ and $p\geq 1$, which are bounded iff $X,Y$ are. In the case $X,Y$ are not bounded, we will need to use weighted function spaces:

\begin{defn}
Let $Z$ be a metric space and $B:Z\rightarrow\r_{+}$.
We define the \emph{weighted space} \index{weighted space}
\[
B\cdot C_{b}\left(Z\right)=\left\{ B\cdot g;\ g\in C_{b}\left(Z\right)\right\} 
\]
\end{defn}

\begin{claim}
$B\cdot C_{b}\left(Z\right)$ is a Banach space wrt the norm $\left|\left|f\right|\right|_B=\sup_{z}\frac{\left|f\left(z\right)\right|}{B\left(z\right)}$
\end{claim}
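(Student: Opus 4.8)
The plan is to reduce everything to the completeness of $C_{b}(Z)$ under the supremum norm, which is a standard fact, by exhibiting an isometric isomorphism between $(C_{b}(Z),\|\cdot\|_{\infty})$ and $(B\cdot C_{b}(Z),\|\cdot\|_{B})$. Throughout I assume $B>0$ on $Z$; this is implicit in the definition of $\|\cdot\|_{B}$, since otherwise the quotient $|f(z)|/B(z)$ and the map below are not well defined. Consider the multiplication map
\[
T:C_{b}(Z)\longrightarrow B\cdot C_{b}(Z),\qquad T(g)=B\cdot g.
\]
First I would record that $B\cdot C_{b}(Z)$ is a linear subspace of the space of real functions on $Z$, because $Bg_{1}+Bg_{2}=B(g_{1}+g_{2})$ and $\alpha(Bg)=B(\alpha g)$; consequently $T$ is linear. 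It is surjective by the very definition of $B\cdot C_{b}(Z)$, and injective because $B(z)>0$ forces $Bg\equiv0\Rightarrow g\equiv0$. Thus $T$ is a linear bijection with inverse $f\mapsto f/B$ (which indeed lands in $C_{b}(Z)$, as $f=Bg$ gives $f/B=g$).

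Second, I would verify the isometry identity. For $g\in C_{b}(Z)$,
\[
\|T(g)\|_{B}=\sup_{z\in Z}\frac{|B(z)g(z)|}{B(z)}=\sup_{z\in Z}|g(z)|=\|g\|_{\infty}.
\]
This single computation does most of the work: it shows that $\|f\|_{B}$ is finite for every $f=Bg\in B\cdot C_{b}(Z)$, since it equals the finite number $\|g\|_{\infty}$, and, because $T$ is a linear bijection carrying $\|\cdot\|_{\infty}$ exactly onto $\|\cdot\|_{B}$, it shows that $\|\cdot\|_{B}$ inherits the three norm axioms (positive-definiteness, homogeneity, the triangle inequality) from $\|\cdot\|_{\infty}$. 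Hence $(B\cdot C_{b}(Z),\|\cdot\|_{B})$ is a normed space and $T$ is an isometric isomorphism.

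Finally, I would invoke completeness. The space $(C_{b}(Z),\|\cdot\|_{\infty})$ is complete: a $\|\cdot\|_{\infty}$-Cauchy sequence converges uniformly, and its limit is again bounded and continuous. A surjective linear isometry transports completeness, so $B\cdot C_{b}(Z)$ is complete; concretely, given a $\|\cdot\|_{B}$-Cauchy sequence $(f_{n})$, the sequence $(T^{-1}f_{n})$ is $\|\cdot\|_{\infty}$-Cauchy, hence converges to some $g\in C_{b}(Z)$, and then $\|f_{n}-Tg\|_{B}=\|T^{-1}f_{n}-g\|_{\infty}\to0$. Therefore $B\cdot C_{b}(Z)$ is a Banach space. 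There is no serious obstacle here: the entire content is the isometric identification, and the only point deserving care is the standing hypothesis $B>0$, without which $\|\cdot\|_{B}$ is undefined and $T$ need not be injective; everything else is a transfer of the well-known Banach structure of $C_{b}(Z)$ through $T$.
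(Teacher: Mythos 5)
Your proof is correct and follows essentially the same route as the paper: the paper also verifies the norm axioms directly and deduces completeness from that of $C_{b}(Z)$ by observing that $Bg_{n}\rightarrow Bg$ in $\left|\left|\cdot\right|\right|_{B}$ is equivalent to $g_{n}\rightarrow g$ in the supremum norm, which is exactly your isometry $T(g)=B\cdot g$ made explicit. Your added remark that $B>0$ is needed for $\left|\left|\cdot\right|\right|_{B}$ and the injectivity of $T$ to make sense is a fair point the paper leaves implicit (it is guaranteed in the paper's subsequent use, where $\frac{1}{B}\in C_{b}(Z)$ is assumed).
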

\begin{proof}
One can easily verify $||\cdot||_B$ is indeed a norm. Completeness is implied by the completeness of $C_{b}\left(Z\right)$ since 
$Bg_n\rightarrow Bg$ in $B\cdot C_b(Z)$ is equivalent to $g_n\rightarrow g$ in the supremum norm.
\end{proof}

To deduce the Kantorovich duality for non-compact $X,Y$ and non-bounded $c$, we will need the following version of Lemma \ref{lem:Villani Functionals} for weighted spaces:

\begin{prop}
\label{prop:Villani bounded}
Let $B:Z\rightarrow\r_{+}$ such that $\frac{1}{B}\in C_b(Z)$. If $F\in\left(B\cdot C_{b}\left(X\x Y\right)\right)^{*}$ is a non-negative linear functional and there exist  $\mu\in\mathcal{P}\left(X\right),\ \nu\in\mathcal{P}\left(Y\right)$
such that 
\[
F\left(\psi+\varphi\right)=\int_X\psi\d\mu+\int_Y\varphi\d\nu
\]
for all $\psi\in C_{b}\left(X\right),\ \varphi\in C_{b}\left(Y\right)$
then there exists $\pi\in\Pi(\mu,\nu)$ such that $F\left(f\right)\geq\int f\d\pi$ for all non-negative $f\in B\cdot C_{b}\left(X\x Y\right)$.
\end{prop}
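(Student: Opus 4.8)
The plan is to reduce the weighted statement to the unweighted Lemma \ref{lem:Villani Functionals} by restricting $F$ to the ordinary space $C_b(X\times Y)$, and then to recover the full inequality on the (possibly unbounded) weighted space by a truncation-and-monotone-convergence argument. First I would observe that $C_b(X\times Y)\subseteq B\cdot C_b(X\times Y)$: for bounded continuous $g$ the quotient $g/B=g\cdot(1/B)$ is a product of two bounded continuous functions (using $1/B\in C_b$), hence lies in $C_b$, so $g\in B\cdot C_b$. Moreover this inclusion is continuous for the two norms, since $\|g\|_B=\sup_z |g(z)|/B(z)\leq \|1/B\|_\infty\,\|g\|_\infty$. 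Consequently the restriction $F_0:=F|_{C_b(X\times Y)}$ is a bounded linear functional on $(C_b(X\times Y),\|\cdot\|_\infty)$; it is non-negative because $F$ is; and its marginals are exactly $\mu$ and $\nu$, read off from the hypothesis $F(\psi+\varphi)=\int\psi\,\d\mu+\int\varphi\,\d\nu$ by setting $\varphi=0$ and then $\psi=0$.

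Applying Lemma \ref{lem:Villani Functionals} to $F_0$ then produces a measure $\pi\in\mathcal{P}(X\times Y)$ with $F_0(g)=\int g\,\d\pi$ for every $g\in C_b(X\times Y)$. Comparing marginals — using that a Borel probability measure on a Polish space is determined by its integrals against $C_b$ — gives that the $X$-marginal of $\pi$ equals $\mu$ and the $Y$-marginal equals $\nu$, i.e.\ $\pi\in\Pi(\mu,\nu)$. It then remains to pass from this equality on $C_b$ to the asserted inequality on all non-negative $f\in B\cdot C_b(X\times Y)$. Here I would truncate: for $f\geq 0$ set $f_n:=f\wedge n$, which is continuous and bounded, hence $f_n\in C_b(X\times Y)$, while $f-f_n=[f-n]_+\geq 0$ remains in the vector space $B\cdot C_b$. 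Non-negativity (hence monotonicity) of $F$ gives $F(f)=F(f_n)+F(f-f_n)\geq F(f_n)=\int f_n\,\d\pi$. Letting $n\to\infty$ and invoking the monotone convergence theorem ($0\leq f_n\uparrow f$) yields $F(f)\geq\int f\,\d\pi$, which is the claim; as a by-product this shows $\int f\,\d\pi\leq F(f)<\infty$, so the integral in the conclusion is always finite.

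The only genuine subtlety, and the step I would check most carefully, is the chain of inclusions and norm bounds that makes $F_0$ a bona fide element of $C_b(X\times Y)^*$ carrying the correct marginals: everything hinges on the hypothesis $1/B\in C_b$, which forces $B$ to be continuous and bounded below away from zero, so that multiplication by either $B$ or $1/B$ maps $C_b$ into itself and the restriction of $F$ is $\|\cdot\|_\infty$-bounded. Once that is secured, the invocation of Lemma \ref{lem:Villani Functionals} is immediate and the truncation passage is routine, so I expect no further obstacle.
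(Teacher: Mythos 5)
Your proof is correct and follows essentially the same route as the paper: restrict $F$ to $C_b(X\times Y)$ via the bound $\|g\|_B\leq\|\tfrac{1}{B}\|_{\infty}\|g\|_{\infty}$, invoke Lemma \ref{lem:Villani Functionals} to obtain $\pi$, and extend to general non-negative $f\in B\cdot C_b(X\times Y)$ by truncation, non-negativity of $F$, and monotone convergence. The only cosmetic difference is that you truncate $f$ at height $n$ (via $f\wedge n$, correctly noting $f-f\wedge n=B\cdot\left[g-\tfrac{n}{B}\right]_+\in B\cdot C_b$), whereas the paper truncates the weight, working with $g\cdot(B\wedge n)$ for $0\leq g\in C_b$; both produce the same monotone approximation from within $C_b$.
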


\begin{proof}
Since $||g||_B\leq||\frac{1}{B}||_{\infty}\cdot||g||_{\infty}\ \forall g\in C_{b}\left(X\x Y\right)$, $F$ acts continuously on the subset $C_{b}\left(X\x Y\right)\subset B\cdot C_{b}\left(X\x Y\right)$,
hence by Lemma \ref{lem:Villani Functionals} there exists some $\pi\in\mathcal{P}\left(X\x Y\right)$
such that $F\left(g\right)=\int g\d\pi\ \forall g\in C_{b}\left(X\x Y\right)$. For any $n\in\mathbb{N}$ and $0\leq g\in C_{b}\left(X\x Y\right)$
\[
\int g\cdot\left(B\wedge n\right)\d\pi=F\left(g\cdot\left(B\wedge n\right)\right)\leq F\left(g\cdot B\right)
\]
hence by the monotone convergence theorem $g\cdot B\in L^{1}\left(\pi\right)$ and 
\[
\int g\cdot B\d\pi=\lim_{n\rightarrow\infty}\int g\cdot\left(B\wedge n\right)\d\pi\leq F\left(g\cdot B\right)
\]
and therefore $F\left(f\right)\geq\int f\d\pi$ for all non-negative $f\in B\cdot C_{b}\left(X\x Y\right)$.
$\pi\in\Pi(\mu,\nu)$ since $F$ and $\pi$ coincide on $C_b(X\x Y)$.
\end{proof}

We may now conclude a weaker version of the Kantorovich duality theorem (Theorem \ref{thm:Duality}):

\begin{thm}
Let $\mu\in\mathcal{P}\left(X\right),\ \nu\in\mathcal{P}\left(Y\right)$ and $c\in C(X\x Y)$. Assume there exists non-negative $a\in L^{1}\left(\mu\right)$ and $b\in L^{1}\left(\nu\right)$ such that 
$
\frac{1}{a+b},\frac{c}{a+b}\in C_b(X\x Y)
$
then
\[
\min_{\pi\in\Pi\left(\mu,\nu\right)}\int_{X\x Y}c\d\pi=\sup_{\left(\psi,\varphi\right)\in\Phi\left(c\right)}\int_{X}\psi\d\mu+\int_{Y}\varphi\d\nu
\]
\end{thm}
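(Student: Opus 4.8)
The plan is to realize this statement as a single instance of the abstract duality theorem (Theorem \ref{Thm:AbsDuality}), taking the ambient space to be the weighted space $B\cdot C_{b}\left(X\x Y\right)$ with $B:=a+b$, and then to translate the abstract optimizer into a genuine transport plan via Proposition \ref{prop:Villani bounded}. First I would record a preliminary observation: since $\tfrac{1}{a+b}\in C_{b}\left(X\x Y\right)$ is everywhere positive, $B=a+b$ is continuous and bounded below by a positive constant; fixing $y_{0}$ and writing $a(x)=(a+b)(x,y_{0})-b(y_{0})$ then shows $a$, and symmetrically $b$, is continuous on its factor. With this in hand I would set $V=B\cdot C_{b}\left(X\x Y\right)$ with the norm $\|\cdot\|_{B}$, $u=c$ (which lies in $V$ because $\tfrac{c}{a+b}\in C_{b}$), $\q$ the cone of non-negative functions in $V$, $K=(1+a)\cdot C_{b}\left(X\right)\x(1+b)\cdot C_{b}\left(Y\right)$, $h\left(\psi,\varphi\right)=\psi+\varphi$ and $k^{*}\left(\psi,\varphi\right)=\int\psi\d\m+\int\varphi\d\n$. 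Each weighted factor is a Banach space by the Claim above; the bounds $|\psi|\le\|\psi\|_{1+a}(1+a)$ and $|\varphi|\le\|\varphi\|_{1+b}(1+b)$ together with $1+a\in L^{1}(\m)$, $1+b\in L^{1}(\n)$ make $k^{*}$ finite, while $\tfrac{|\psi+\varphi|}{a+b}\le C\tfrac{2+B}{B}$ is bounded, so $h$ indeed maps into $V$.

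Next I would verify the hypotheses of Theorem \ref{Thm:AbsDuality}. For Assumption \ref{assu:Sublinear} I would exhibit $s(v):=\left(\|v\|_{B}\,a,\ \|v\|_{B}\,b\right)\in\Phi(v,h)$ (equivalently note $(a,b)\in\cap_{\|v\|_{B}=1}\Phi(v,h)$, since $h(a,b)=a+b=B\ge v$ for every unit $v$), whence $k^{*}\circ s(v)=\|v\|_{B}\left(\int a\d\m+\int b\d\n\right)$ is bounded on the unit ball. For $h$-positivity: if $\psi(x)+\varphi(y)\ge0$ for all $x,y$, integrating first in $\m$ and then in $\n$ gives $\int\psi\d\m+\int\varphi\d\n\ge0$. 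The linear form of the theorem (the remark following Theorem \ref{Thm:AbsDuality}) then yields
\[
\min_{v^{*}\in\Pi(k^{*},h)}v^{*}(c)=\sup_{(\psi,\varphi)\in K,\ \psi+\varphi\le c}\left(\int\psi\d\m+\int\varphi\d\n\right)=:D_{K},
\]
a finite value at which the minimum is attained.

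The heart of the argument is to show the left-hand side equals $P:=\min_{\pi\in\Pi(\m,\n)}\int c\d\pi$. For ``$\le$'': each $\pi\in\Pi(\m,\n)$ defines $v_{\pi}^{*}(f)=\int f\d\pi\in V^{*}$ (using $|f|\le\|f\|_{B}\,B$ and $\int B\d\pi=\int a\d\m+\int b\d\n<\infty$), which lies in $\Pi(k^{*},h)$ and satisfies $v_{\pi}^{*}(c)=\int c\d\pi$. For ``$\ge$'': given $v^{*}\in\Pi(k^{*},h)$, Proposition \ref{prop:Villani bounded} produces $\pi\in\Pi(\m,\n)$ with $v^{*}(f)\ge\int f\d\pi$ for every non-negative $f\in V$; since $c$ need not be non-negative I would apply this to $c+C'B\ge0$ with $C':=\|c/(a+b)\|_{\infty}$, and then cancel the common term $v^{*}(B)=\int B\d\pi=\int a\d\m+\int b\d\n$ (legitimate because $(a,b)\in K$) to obtain $v^{*}(c)\ge\int c\d\pi\ge P$. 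Evaluating at the minimizing $v^{*}$ simultaneously exhibits a $\pi$ attaining $P$. I expect this non-negativity shift to be the main obstacle, precisely because Proposition \ref{prop:Villani bounded} delivers only a one-sided inequality while $c$ is merely bounded below by $-C'(a+b)$.

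Finally I would close the loop with the classical $L^{1}$ potentials. Trivially $D_{K}\le D_{L^{1}}:=\sup_{(\psi,\varphi)\in\Phi(c)}\left(\int\psi\d\m+\int\varphi\d\n\right)$, since every admissible pair in $K$ already lies in $\Phi(c)$. Weak duality gives $D_{L^{1}}\le P$: for $(\psi,\varphi)\in\Phi(c)$ and $\pi\in\Pi(\m,\n)$, integrating $\psi(x)+\varphi(y)\le c(x,y)$ against $\pi$ (valid since $|c|\le C'B\in L^{1}(\pi)$) yields $\int\psi\d\m+\int\varphi\d\n\le\int c\d\pi$. Combining with $P=D_{K}$ from the abstract theorem produces the chain $D_{L^{1}}\le P=D_{K}\le D_{L^{1}}$, which forces $P=D_{L^{1}}$ and is exactly the claimed identity, with the minimum attained as shown above.
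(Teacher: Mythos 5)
Your proposal is correct, and it follows the same overall architecture as the paper's proof --- realize the problem inside the weighted space $(a+b)\cdot C_{b}\left(X\x Y\right)$, apply Theorem \ref{Thm:AbsDuality}, and convert the optimal functional into a genuine transport plan via Proposition \ref{prop:Villani bounded} --- but your three deviations are worth recording, because each one makes explicit a step the paper glosses over. First, the paper takes $K=a\cdot C_{b}\left(X\right)\x b\cdot C_{b}\left(Y\right)$; when $a$ (or $b$) vanishes somewhere, or merely fails to have $\frac{1}{a}\in C_b(X)$, this cone need not contain $C_{b}\left(X\right)\x C_{b}\left(Y\right)$, and then the marginal constraints encoded in $\Pi\left(k^{*},h\right)$ are tested against too few functions for Proposition \ref{prop:Villani bounded} to apply as stated. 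Your weights $1+a,1+b$ --- legitimate since $1+a\in L^{1}\left(\mu\right)$, $1+b\in L^{1}\left(\nu\right)$ for probability measures, and $a,b$ are continuous by your preliminary observation --- guarantee $C_{b}\left(X\right)\x C_{b}\left(Y\right)\subset K$ and remove this issue, at the small price of re-checking that $h$ still maps into $V$, which you do. Second, the paper asserts $v^{*}\left(c\right)\geq\int c\d\pi$ directly from the proposition even though $c$ need not be non-negative; your shift to $c+C'B\geq0$ followed by cancellation of $v^{*}\left(B\right)=\int B\d\pi$ --- valid because $\left(a,b\right)\in K$ and $K,h,k^{*}$ are linear, so $v^{*}\circ h=k^{*}$ holds as an equality on $K$ --- is exactly the justification the paper omits, and you correctly identify it as the delicate point. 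Third, the paper's concluding supremum $\sup_{\psi+\varphi\leq c}$ is implicitly over weighted-space potentials, whereas the statement's $\Phi\left(c\right)$ lives in $L^{1}\left(\mu\right)\x L^{1}\left(\nu\right)$; your sandwich $D_{K}\leq D_{L^{1}}\leq P=D_{K}$, with the weak-duality inequality checked via $\left|c\right|\leq C'B\in L^{1}\left(\pi\right)$, makes the reconciliation explicit. In short: same route as the paper, but your version closes the gaps the paper's terse presentation leaves open, and also yields attainment of the primal minimum along the way.
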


\begin{proof}
In Theorem \ref{Thm:AbsDuality} take:
\begin{itemize}
    \item $V=(a+b)\cdot C_b (X\x Y)$ ($c\in V$ since $\frac{c}{a+b}\in C_b(X\x Y)$).
    \item $K=a\cdot C_b(X)\x b\cdot C_b(Y)$
    \item $h(\psi,\varphi)=\psi+\varphi$
    \item $k^*(\psi,\varphi)=\int_{X}\psi\d\mu+\int_{Y}\varphi\d\nu$ (finite on $K$ since $a\in L^1(\m)$ and $b\in L^1(\n)$).
    \item Assumption \ref{assu:Sublinear} holds for $s((a+b)g)=(a\|g\|_{\infty},b\|g\|_{\infty})$
\end{itemize}
Then $\Pi(k^*,h)$ is the set of non-negative linear functionals in $V^*$ with marginals $\mu,\nu$. By Theorem \ref{Thm:AbsDuality}
$$
\min_{v^*\in\Pi\left(k^*,h\right)}v^*(c)
=
\sup_{\psi+\varphi\leq c}\int_{X}\psi\d\mu+\int_{Y}\varphi\d\nu
$$
By Proposition \ref{prop:Villani bounded}, for each $v^*\in\Pi(k^*,h)$ there exists $\pi\in\Pi(\mu,\nu)$ such that $v^*(c)\geq\int_{X\x Y}c\d\pi$ thus
$$\min_{v^*\in\Pi(k^*,h)}v^*(c)\geq\min_{\pi\in \Pi(\mu,\nu)}\int_{X\x Y}c\d\pi$$
and since
$
\int _{X\x Y}c\d\pi
\geq
\sup_{\psi+\varphi\leq c}\int_{X}\psi\d\mu+\int_{Y}\varphi\d\nu
$
for any $\pi\in\Pi(\mu,\nu)$, we conclude
$$
\min_{\pi\in\Pi\left(\mu,\nu\right)}\int_{X\x Y}c\d\pi
=
\sup_{\psi+\varphi\leq c}\int_{X}\psi\d\mu+\int_{Y}\varphi\d\nu$$
\end{proof}

\fbox{
\parbox{15cm}{
\begin{example}
In particular, the above theorem hold when $X,Y\subset\rn$, $c\left(x,y\right)=\left|x-y\right|^{2}$
and $\mu,\nu$ have finite second moments: $\int_X|x|^2\d\mu(x)+\int_Y|y|^2\d\nu(y)<\infty$. To see that just take $a\left(x\right)=\left|x\right|^{2}+1$
and $b\left(y\right)=\left|y\right|^{2}+1$.
\end{example}
}
}

\subsection{The convex version of the abstract theorem}

One can reformulate Theorem \ref{Thm:AbsDuality} as a \index{minmax theorem}minmax
theorem using the following claim:
\begin{claim}
\label{claim:sublinearconvex}
With the same notation as in theorem
\ref{Thm:AbsDuality}:

\begin{equation}
\inf_{k\in\Phi\left(u,h\right)}k^{*}\left(k\right)=\inf_{\left(k,q\right)\in K\x\q}\sup_{v^{*}\in V^{*}}v^{*}\left(u-h\left(k\right)+q\right)+k^{*}\left(k\right)\label{eq:infmax}
\end{equation}
\begin{equation}
\sup_{v^{*}\in\Pi\left(k^{*},h\right)}v^{*}\left(u\right)=\sup_{v^{*}\in V^{*}}\inf_{\left(k,q\right)\in K\x\q}v^{*}\left(u-h\left(k\right)+q\right)+k^{*}\left(k\right)\label{eq:maxinf}
\end{equation}
\end{claim}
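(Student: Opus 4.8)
The plan is to read both identities as Lagrangian-duality reformulations of the two sides of Theorem \ref{Thm:AbsDuality}, with $v^{*}$ playing the role of a multiplier for the constraint $h(k)\geq u$, encoded through the slack variable $q\in\q$ via the expression $u-h(k)+q$. Both equalities reduce to evaluating an inner extremum of a \emph{linear} functional, once over all of $V^{*}$ and once over the cones $K$ and $\q$; no genuine minmax theorem is invoked, and each side collapses by a direct computation.

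For \eqref{eq:infmax} I would fix $(k,q)$ and pull the constant $k^{*}(k)$ out of the inner supremum, leaving $\sup_{v^{*}\in V^{*}}v^{*}(w)$ with $w:=u-h(k)+q$. Because $V^{*}$ is a vector space, closed under negation and positive scaling, this supremum takes only the values $0$ (when $w$ annihilates every $v^{*}\in V^{*}$) and $+\infty$ (otherwise). Identifying the first case with $w=0$ uses that $V^{*}$ separates points, which holds for the locally convex Hausdorff spaces of the applications, e.g.\ Example \ref{exa:Kant}. Taking the outer infimum then discards every $(k,q)$ with $w\neq0$ and restricts to the feasible pairs $h(k)=u+q$, $q\in\q$. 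These are exactly the pairs with $k\in\Phi(u,h)$ (given such $k$, set $q=h(k)-u\in\q$), on which the objective is $k^{*}(k)$, so the right-hand side reduces to $\inf_{k\in\Phi(u,h)}k^{*}(k)$.

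For \eqref{eq:maxinf} I would instead fix $v^{*}$ and expand the inner infimum, separating the two independent variables:
\[
\inf_{(k,q)\in K\x\q}\bigl[v^{*}(u)-v^{*}(h(k))+v^{*}(q)+k^{*}(k)\bigr]=v^{*}(u)+\inf_{k\in K}\bigl[k^{*}(k)-v^{*}(h(k))\bigr]+\inf_{q\in\q}v^{*}(q).
\]
Each of the two inner infima is over a convex cone, and its integrand is positively homogeneous and vanishes at $0$ (using $h(0)=0$ and $k^{*}(0)=0$), so it equals $0$ when the corresponding inequality holds on the whole cone and $-\infty$ otherwise: $\inf_{q\in\q}v^{*}(q)=0$ iff $v^{*}(q)\geq0\ \forall q\in\q$, and $\inf_{k\in K}[k^{*}(k)-v^{*}(h(k))]=0$ iff $v^{*}(h(k))\leq k^{*}(k)\ \forall k\in K$. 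These two conditions are precisely the defining properties of $\Pi(k^{*},h)$, so the bracketed quantity equals $v^{*}(u)$ for $v^{*}\in\Pi(k^{*},h)$ and $-\infty$ otherwise; taking the supremum over $V^{*}$ yields $\sup_{v^{*}\in\Pi(k^{*},h)}v^{*}(u)$.

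The only delicate point is the step in \eqref{eq:infmax} identifying $\{w:\sup_{v^{*}\in V^{*}}v^{*}(w)=0\}$ with $\{0\}$: this is the single place the argument needs more than the bare topological vector space structure, namely that $V^{*}$ separates points. The computation for \eqref{eq:maxinf}, by contrast, uses only that $K$ and $\q$ are cones and that $h$ and $k^{*}$ are positively homogeneous with $h(0)=0$, so it requires no topological hypothesis at all.
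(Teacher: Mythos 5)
Your proof is correct and follows essentially the same route as the paper's: for \eqref{eq:infmax} you evaluate the inner supremum by scaling (it equals $k^{*}(k)$ when $u-h(k)+q=0$ and $+\infty$ otherwise, with the feasible pairs exactly parametrizing $\Phi(u,h)$), and for \eqref{eq:maxinf} you collapse the inner infimum to $v^{*}(u)$ on $\Pi(k^{*},h)$ and to $-\infty$ off it via positive homogeneity, precisely as the paper does. Your explicit observation that identifying $\{w;\ \sup_{v^{*}\in V^{*}}v^{*}(w)<\infty\}$ with $\{0\}$ requires $V^{*}$ to separate points is a worthwhile refinement: the paper's proof asserts, for $w\neq0$, a sequence $v_{n}^{*}(w)\rightarrow\infty$ without comment, which tacitly presupposes the same separation property.
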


\begin{proof}
If $u-h\left(k\right)+q\neq0$
then there exists a sequence $\left\{ v_{n}^{*}\right\} \subset V^{*}$
such that $v_{n}^{*}\left(u-h\left(k\right)+q\right)\rightarrow\infty$
thus 
\[
\sup_{v^{*}\in V^{*}}v^{*}\left(u-h\left(k\right)+q\right)+k^{*}\left(k\right)=\infty
\]
otherwise $k\in\Phi(u,h)$, therefore
\[
\inf_{\left(k,q\right)\in K\x\q}\sup_{v^{*}\in V^{*}}v^{*}\left(u-h\left(k\right)+q\right)+k^{*}\left(k\right)=\inf_{k\in\Phi\left(u,h\right)}k^{*}\left(k\right)
\]
 
For the second equality, notice that for any $v^{*}\notin\Pi\left(k^{*},h\right)$
either there exists $k\in K$ such that $k^{*}\left(k\right)-v^{*}\circ h\left(k\right)<0$
or there exists $q\in\q$ such that $v^{*}\left(q\right)<0$. In either
case
\[
\inf_{\left(k,q\right)\in K\x\q}v^{*}\left(u-h\left(k\right)+q\right)+k^{*}\left(k\right)=-\infty
\]
hence
\[
\sup_{v^{*}\in V^{*}}\inf_{\left(k,q\right)\in K\x\q}v^{*}\left(u-h(k)+q\right)+k^{*}\left(k\right)=\sup_{v^{*}\in\Pi\left(k^{*},h\right)}\inf_{\left(k,q\right)\in K\x\q}v^{*}\left(u-h\left(k\right)+q\right)+k^{*}\left(k\right)
\]
Moreover, for any $v^{*}\in\Pi\left(k^{*},h\right)$
\[
\inf_{\left(k,q\right)\in K\x\q}v^{*}\left(q\right)+k^{*}\left(k\right)-v^{*}\left(h\left(k\right)\right)=0
\]
therefore
\[
\sup_{v^{*}\in V^{*}}\inf_{\left(k,q\right)\in K\x\q}v^{*}\left(u-h\left(k\right)+q\right)+k^{*}\left(k\right)=\sup_{v^{*}\in\Pi\left(k^{*},h\right)}v^{*}\left(u\right)
\]
\end{proof}
As a corollary from the above claim, an equivalent way of writing
the duality (\ref{eq:absdual}) is
\begin{equation}
\max_{v^{*}\in V^{*}}\inf_{\left(k,q\right)\in K\x\q}v^{*}\left(u-h\left(k\right)+q\right)+k^{*}\left(k\right)
=
\inf_{\left(k,q\right)\in K\x\q}\sup_{v^{*}\in V^{*}}v^{*}\left(u-h\left(k\right)+q\right)+k^{*}\left(k\right)\label{eq:minmax=00003Dmaxmin}
\end{equation}
If we replace $k^{*}$ with a convex functional defined on a convex
set $K$ (instead of a sublinear functional defined on a convex cone
as in Theorem \ref{Thm:AbsDuality}) and replace $h$ with a $\q$-concave\index{Q@$\q$-concave} operator, meaning $$h(tk_1+(1-t)k_0)\geq th(k_1)+(1-t)h(k_0)\ \forall k_0,k_1\in K,\ t\in[0,1]$$
(instead of a $\q$-superlinear operator as in Theorem \ref{Thm:AbsDuality})
then equation (\ref{eq:infmax}) will remain true, whereas equation
(\ref{eq:maxinf}) might fail, for example when $V=K=\r$, $h(x)=x$, $\q=\{0\}$, $k^*(x)=1$, $u=0$ then
$$
\max_{v^{*}\in\Pi\left(k^{*},h\right)}v^{*}\left(u\right)=0<1=
\max_{v^{*}\in V^{*}}\inf_{\left(k,q\right)\in K\x\q}v^{*}\left(u-h\left(k\right)+q\right)+k^{*}\left(k\right)
$$
Therefore there may be a duality gap in (\ref{eq:absdual}) and the
duality theorem will fail. However, the minmax formula
(\ref{eq:minmax=00003Dmaxmin}) will remain true. The proof of (\ref{eq:minmax=00003Dmaxmin}) is based on a separation
theorem which is a corollary of the Hahn-Banach theorem\index{Hahn-Banach theorem} (Theorem \ref{thm:Hahn-Banach}):
\begin{thm}
\label{thm:HB Separation}(Theorem 5.67 in \cite{Aliprantis2006Hitch})
If $A,B$ are two convex disjoint subsets of a tvs and $A$
has a non-empty interior then there exists a non-zero continuous linear functional
$v^{*}$ separating $A,B$, i.e. there exists $r\in\r$ such that $v^{*}\left(a\right)\leq r\leq v^{*}\left(b\right)$
for every $a\in A,b\in B$.
\end{thm}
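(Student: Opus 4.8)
The plan is to deduce this geometric separation statement from the analytic Hahn--Banach theorem (Theorem \ref{thm:Hahn-Banach}) already available, via a Minkowski-gauge construction, reducing the separation of two sets to the separation of a single point from an open convex set. To this end set $W:=\mathrm{int}(A)-B$. Since $\mathrm{int}(A)$ is open, each translate $\mathrm{int}(A)-b$ is open, so $W=\bigcup_{b\in B}(\mathrm{int}(A)-b)$ is open; it is convex as a difference of convex sets and nonempty because $\mathrm{int}(A)\neq\emptyset$. The disjointness $A\cap B=\emptyset$ forces $0\notin W$: if $0=a-b$ with $a\in\mathrm{int}(A)\subseteq A$ and $b\in B$, then $a=b\in A\cap B$, a contradiction.

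Next I would build a continuous sublinear functional out of $W$. I fix $w_0\in W$ and put $U:=W-w_0$, an open convex neighbourhood of $0$ with $-w_0\notin U$. Its Minkowski functional $p(x):=\inf\{t>0:\ x\in tU\}$ is positively homogeneous and subadditive---hence sublinear in the sense of Definition \ref{def:sublinear}---and, crucially, continuous, precisely because $U$ is a neighbourhood of $0$ (this is where the nonempty-interior hypothesis enters). One checks $p(u)<1$ for $u\in U$ and $p(-w_0)\ge1$.

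Then I would apply Hahn--Banach. On the line $\mathrm{span}\{w_0\}$ define the linear functional $f(\lambda(-w_0)):=\lambda$; the inequalities $p(-w_0)\ge1$ and $p\ge0$ give $f\le p$ there. Theorem \ref{thm:Hahn-Banach} extends $f$ to a linear $F:V\to\r$ with $F\le p$ everywhere, and since $p$ is continuous, $F$ is bounded above near $0$ and therefore continuous, i.e. $F\in V^{*}$. Tracking constants, $F(-w_0)=1$ while $F(u)\le p(u)<1$ for $u\in U$; untranslating (using $F(w_0)=-1$) yields $F(w)<0$ for every $w\in W$, that is $F(a)<F(b)$ for all $a\in\mathrm{int}(A),\ b\in B$.

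Finally I would pass from $\mathrm{int}(A)$ to $A$: for convex $A$ with nonempty interior every point of $A$ is a limit of interior points (if $x\in A$ and $y\in\mathrm{int}(A)$ then $(1-t)x+ty\in\mathrm{int}(A)$ for $t\in(0,1]$), so continuity of $F$ upgrades the strict inequality to $F(a)\le F(b)$ for all $a\in A,\ b\in B$. Hence $\sup_{a\in A}F(a)\le\inf_{b\in B}F(b)$, and choosing any $r$ in this interval together with $v^{*}:=F$ gives $v^{*}(a)\le r\le v^{*}(b)$, with $v^{*}\neq0$ since $F(-w_0)=1$. The main obstacle is the continuity of the separating functional: the analytic Hahn--Banach theorem alone produces only an algebraically dominated, possibly discontinuous, extension, and it is exactly the nonempty-interior assumption, channelled through the continuity of the Minkowski gauge, that guarantees $v^{*}\in V^{*}$ rather than a merely algebraic linear functional.
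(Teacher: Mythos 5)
Your proposal is correct. Note, however, that the paper does not prove this statement at all: Theorem \ref{thm:HB Separation} is quoted verbatim from Theorem 5.67 in \cite{Aliprantis2006Hitch}, so there is no internal proof to compare against. What you have written is essentially the standard textbook derivation (and, in substance, the one in \cite{Aliprantis2006Hitch}), and it has the merit of making the paper self-contained by deducing the separation theorem from the analytic Hahn--Banach theorem already stated as Theorem \ref{thm:Hahn-Banach} --- exactly the dependence the paper asserts informally when it calls Theorem \ref{thm:HB Separation} ``a corollary of the Hahn-Banach theorem.'' The delicate points all check out: $U:=W-w_{0}$ is absorbing because every neighbourhood of zero in a tvs is absorbing (the paper's own Lemma \ref{lem:tvs absorbing}), so the gauge $p$ is finite-valued and Theorem \ref{thm:Hahn-Banach} applies as stated; $p(u)<1$ for $u\in U$ uses openness of $U$ together with continuity of scalar multiplication; the step ``$F\leq p$ with $p$ bounded above near $0$ implies $F\in V^{*}$'' is the same fact (Theorem 5.43 in \cite{Aliprantis2006Hitch}) the paper invokes in the proof of Lemma \ref{lem:EqCon}; and the passage from $\mathrm{int}(A)$ to $A$ via the segment $(1-t)x+ty\in\mathrm{int}(A)$, $t\in(0,1]$, is the standard line-segment lemma for convex sets in a tvs. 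Two small remarks: your argument tacitly uses $B\neq\emptyset$ (to pick $w_{0}\in W$), which is also implicit in the cited theorem, since in a general tvs nonzero continuous functionals need not exist at all; and you only need $p$ bounded above on a neighbourhood of $0$ (hence continuity of $F$), not continuity of $p$ everywhere, though the latter does follow from sublinearity.
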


To prove (\ref{eq:minmax=00003Dmaxmin}) we may weaken Assumption \ref{assu:Sublinear} into its "local version":

\begin{assumption}
\label{assu:convex}There exists $N\subset V$ a neighborhood of $0$ and  $s:N\rightarrow K$
such that $s(v)\in\Phi(v,h)\ \forall v\in N$ and
$k^{*}\circ s$ is bounded above on $N$.
\end{assumption}

By the following lemma, we may assume $N$ is absorbing: $\forall x\in V$ there is $\alpha>0$ such that $\{tx;\ t\in\left[0,\alpha\right]\}\subset N$.\index{absorbing}

\begin{lem}

\label{lem:tvs absorbing}
(Lemma 5.63 in \cite{Aliprantis2006Hitch}) In a tvs every neighborhood of zero is absorbing.
\end{lem}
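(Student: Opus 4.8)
The plan is to exploit the continuity of scalar multiplication, which is one of the defining axioms of a tvs. Fix an arbitrary $x\in V$ and consider the map $\phi:\r\rightarrow V$ given by $\phi(t)=tx$. The first step is to observe that $\phi$ is continuous: it is the composition of the continuous map $t\mapsto(t,x)$ sending $\r$ into $\r\x V$ with the scalar multiplication map $(t,v)\mapsto tv$, the latter being continuous by the tvs axioms. Hence $\phi$ is continuous as a map from $\r$ to $V$.

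Next I would evaluate at the origin. Since $\phi(0)=0\cdot x=0$ and $N$ is a neighborhood of $0$ in $V$, the continuity of $\phi$ at $t=0$ guarantees that $\phi^{-1}(N)$ is a neighborhood of $0$ in $\r$. Therefore there exists $\delta>0$ such that $(-\delta,\delta)\subset\phi^{-1}(N)$, which is to say $tx\in N$ whenever $|t|<\delta$.

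Finally, choosing any $\alpha\in(0,\delta)$ yields $\{tx;\ t\in[0,\alpha]\}\subset N$, which is precisely the absorbing condition for the point $x$. Since $x\in V$ was arbitrary, $N$ absorbs every point of $V$, completing the argument.

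There is essentially no serious obstacle here; the result is a soft consequence of a single axiom. The only point that deserves care is the justification that the restriction of scalar multiplication to a fixed vector is continuous as a function of the scalar alone, which follows immediately from the joint continuity of $(t,v)\mapsto tv$ composed with the continuous inclusion $t\mapsto(t,x)$.
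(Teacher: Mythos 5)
Your proof is correct, and since the paper states this lemma only by citation (Lemma 5.63 in \cite{Aliprantis2006Hitch}) without reproducing a proof, there is nothing to diverge from: your argument via continuity of the map $t\mapsto tx$ at $t=0$ is exactly the standard one given in that reference. Note also that your conclusion matches the paper's precise formulation of absorbing, namely that for each $x\in V$ there is $\alpha>0$ with $\{tx;\ t\in[0,\alpha]\}\subset N$, so no further adjustment is needed.
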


The implication \ref{assu:Sublinear} $\Rightarrow$ \ref{assu:convex} is immediate, the other direction is also true under some restrictions:

\begin{claim}
\label{claim:2.24 implies 2.4}
    If $\q$ is a convex cone and $h$ is positively-homogeneous then \ref{assu:convex} $\Rightarrow$ \ref{assu:Sublinear}.
\end{claim}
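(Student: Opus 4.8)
The plan is to upgrade the \emph{local} section $s:N\to K$ supplied by Assumption \ref{assu:convex} into a \emph{global} section $\tilde s:V\to K$ as demanded by Assumption \ref{assu:Sublinear}, while leaving $s$ untouched on $N$ so that the boundedness of $k^{*}\circ s$ is inherited with no extra work. First I would invoke Lemma \ref{lem:tvs absorbing} to assume $N$ is absorbing; this guarantees that for every $v\in V$ there is some $t=t(v)>0$ with $tv\in N$ (and for $v\notin N$ necessarily $t<1$, so that $1/t>1$).

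Next I would define the extension by $\tilde s(v):=s(v)$ for $v\in N$, and $\tilde s(v):=\tfrac1t\,s(tv)$ for $v\notin N$, where $t=t(v)$ is chosen with $tv\in N$. Since $s(tv)\in K$ and $K$ is closed under multiplication by non-negative scalars, $\tfrac1t s(tv)\in K$, so $\tilde s$ really maps into $K$.

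The key verification, and the heart of the argument, is that $\tilde s(v)\in\Phi(v,h)=h^{-1}(v+\q)$ for every $v$. For $v\in N$ this is immediate from the choice of $s$. For $v\notin N$, the relation $s(tv)\in\Phi(tv,h)$ means $h(s(tv))-tv\in\q$; multiplying by $\tfrac1t>0$ and using that $\q$ is a convex cone gives $\tfrac1t h(s(tv))-v\in\q$, and then positive-homogeneity of $h$ lets me rewrite $\tfrac1t h(s(tv))=h(\tfrac1t s(tv))=h(\tilde s(v))$, so that $h(\tilde s(v))-v\in\q$, i.e. $\tilde s(v)\in\Phi(v,h)$. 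This is precisely where the two extra hypotheses of the claim, that $\q$ is a convex cone and that $h$ is positively-homogeneous, are used.

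Finally, since $\tilde s$ agrees with $s$ on the neighborhood $N$ of $0$, the composition $k^{*}\circ\tilde s=k^{*}\circ s$ is bounded above on $N$ by Assumption \ref{assu:convex}; hence $\tilde s$ witnesses Assumption \ref{assu:Sublinear}, completing the implication. The main point to watch is that for $v\notin N$ the scaling factor $1/t$ exceeds $1$, so the construction genuinely relies on $K$ being a cone rather than merely convex; note also that no homogeneity of $k^{*}$ is required, because the boundedness is only needed on $N$, where the original section $s$ is reused verbatim.
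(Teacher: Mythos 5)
Your proposal is correct and follows essentially the same route as the paper's proof: extend $s$ by $\tilde{s}(v)=\frac{1}{t}s(tv)$ using that $N$ is absorbing (Lemma \ref{lem:tvs absorbing}), verify $h(\tilde{s}(v))-v\in\q$ via positive-homogeneity of $h$ and the cone property of $\q$, and inherit the bound on $k^{*}\circ\tilde{s}$ since $\tilde{s}=s$ on $N$. Your extra observations (that $\tilde{s}$ lands in $K$ because $K$ is a cone, and that no homogeneity of $k^{*}$ is needed) are accurate refinements the paper leaves implicit.
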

\begin{proof}

     Let $s:N\rightarrow K$ from Assumption \ref{assu:convex}. Extend $s$ to $\tilde{s}:V\rightarrow K$ by choosing  $\tilde{s}(v):=\frac{1}{t}s(tv)$ for some $t>0$ such that $tv\in N$ (exists since $N$ is absorbing by lemma \ref{lem:tvs absorbing}) for each $v\in V\setminus N$ then $$
    h\circ \tilde{s}(v)-v
    =
    h(\frac{1}{t}s(tv))-v
    =
    \frac{1}{t}\left[h(s(tv))-tv\right]\in\q
    $$
    and since $\tilde{s}=s$ on $N$, $k^*\circ\tilde{s}$ is bounded above on $N$.
\end{proof}
The convex version of the abstract theorem is the following:

\bigskip

\colorbox{thmcolor}{
\parbox{\linewidth}{
\begin{thm}
\label{thm:Convex Abstract}Let $k^{*}$ be a convex functional defined
on a convex set $K$, $\q\subset V$ a convex subset which is also closed under addition, and $h:K\rightarrow V$ a $\q$-concave operator. Then under
Assumption \ref{assu:convex}
\[
\max_{v^{*}\in V^{*}}\inf_{\left(k,q\right)\in K\x\q}v^{*}\left(q-h\left(k\right)\right)+k^{*}\left(k\right)=\inf_{k\in h^{-1}(\q)}k^{*}\left(k\right)
\]
\end{thm}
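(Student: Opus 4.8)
The plan is to read both sides through the value functional $p:V\to\r\cup\{-\infty\}$ defined exactly as in the sublinear case, $p(w):=\inf_{k\in\Phi(w,h)}k^*(k)$, and to produce the optimal $v^*$ as a continuous subgradient of $p$ at $0$. Note first that the right-hand side is precisely $p(0)=\inf_{k\in h^{-1}(\q)}k^*(k)$, and that weak duality for the left-hand side is immediate: for any $v^*\in V^*$ and any $k\in h^{-1}(\q)$ the choice $q:=h(k)\in\q$ gives $v^*(q-h(k))+k^*(k)=k^*(k)$, so the inner infimum is at most $p(0)$ and hence $\max_{v^*}\inf_{(k,q)}[\,\cdots\,]\leq p(0)$. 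It therefore remains to exhibit one $v^*\in V^*$ with $\inf_{(k,q)\in K\times\q}\bigl[v^*(q-h(k))+k^*(k)\bigr]\geq p(0)$.

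First I would check that $p$ is convex, this being the place where the two hypotheses on $\q$ enter. Given $k_i\in\Phi(w_i,h)$ for $i=0,1$ and $t\in[0,1]$, set $k_t:=tk_1+(1-t)k_0\in K$ and $w_t:=tw_1+(1-t)w_0$; writing $h(k_t)-w_t$ as the sum of $h(k_t)-\bigl(th(k_1)+(1-t)h(k_0)\bigr)\in\q$ (by $\q$-concavity of $h$) and $t\bigl(h(k_1)-w_1\bigr)+(1-t)\bigl(h(k_0)-w_0\bigr)\in\q$ (a convex combination of elements of $\q$), it lies in $\q$ because $\q$ is convex and closed under addition; hence $k_t\in\Phi(w_t,h)$ and $p(w_t)\leq k^*(k_t)\leq tk^*(k_1)+(1-t)k^*(k_0)$ by convexity of $k^*$, and taking infima yields convexity of $p$. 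Assumption \ref{assu:convex} then gives a neighborhood $N$ of $0$ and $s:N\to K$ with $s(v)\in\Phi(v,h)$ and $k^*\circ s\leq M$ on $N$, so $p\leq M$ on $N$ and in particular $p(0)<\infty$. If $p(0)=-\infty$ the theorem follows from weak duality alone, so I may assume $p(0)$ finite, whence $p$, being a convex functional bounded above on a neighborhood of $0$, is continuous at $0$ (a standard continuity property of convex functions, in the spirit of the criterion used in the proof of Lemma \ref{lem:EqCon}).

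The heart of the argument is to produce a continuous subgradient of $p$ at $0$, i.e. a $v^*\in V^*$ with $p(w)\geq p(0)+v^*(w)$ for all $w$. For this I would separate, in $V\times\r$, the boundary point $(0,p(0))$ from the open convex set $\operatorname{int}(\operatorname{epi}p)$ (nonempty since $p\leq M$ on $N$ forces $N\times(M,\infty)\subset\operatorname{epi}p$), using the Hahn-Banach separation theorem (Theorem \ref{thm:HB Separation}). This produces a nonzero $(\phi,\lambda)\in V^*\times\r$; the delicate point, which I expect to be the main obstacle, is to rule out $\lambda\leq0$: letting $t\to+\infty$ along the epigraph forces $\lambda\geq0$, while $\lambda=0$ would make $\phi$ bounded on one side over the absorbing neighborhood $N$ (Lemma \ref{lem:tvs absorbing}), forcing $\phi=0$ and contradicting $(\phi,\lambda)\neq0$. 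Normalizing $\lambda=1$ and setting $v^*:=-\phi$ gives the subgradient inequality. Finally, for any $(k,q)\in K\times\q$ put $w:=h(k)-q$; then $h(k)-w=q\in\q$, so $k\in\Phi(w,h)$ and $p(w)\leq k^*(k)$, whence $v^*(q-h(k))+k^*(k)=k^*(k)-v^*(w)\geq p(w)-v^*(w)\geq p(0)$. Taking the infimum over $(k,q)$ gives $\inf_{(k,q)}[v^*(q-h(k))+k^*(k)]\geq p(0)$, which together with weak duality closes the equality and shows the maximum is attained at this $v^*$.
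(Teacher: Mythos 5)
Your proof is correct and is essentially the paper's argument in different packaging: the paper's set $A=\left\{ \left(h\left(k\right)-q,\lambda\right);\ k\in K,\ q\in\q,\ k^{*}\left(k\right)<\lambda\right\}$ is exactly the strict epigraph of your value function $p$ (convexity of $A$ there is your convexity proof for $p$), and both arguments apply Theorem \ref{thm:HB Separation} in $V\x\r$ with the interior supplied by $N\x\left(M,\infty\right)$ from Assumption \ref{assu:convex}, ruling out a vertical separating functional via Lemma \ref{lem:tvs absorbing} in the same way. The only cosmetic differences are that you separate the single point $\left(0,p\left(0\right)\right)$ rather than the ray $\left\{ 0\right\}\x\left(-\infty,m\right)$, and phrase the conclusion as subdifferentiability of $p$ at $0$.
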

}}

\begin{proof}
We will use the Hahn-Banach separation theorem, similarly to the proof
of the Fenchel-Rockafellar duality in \cite{Villani2003} (see Theorem \ref{thm:FRduality} in here). Denote $m=\inf_{k\in h^{-1}(\q)}k^{*}\left(k\right)$,
$m<\infty$ by Assumption \ref{assu:convex}.
\begin{casenv}
\item $m=-\infty$. There exist a sequence $k_{n}\in h^{-1}(\q)$
such that $k^{*}\left(k_{n}\right)\underset{n\rightarrow\infty}{\longrightarrow}-\infty$.
Equivalently, there exists a sequence $\left(k_{n},q_{n}\right)\in K\x\q$
such that $h\left(k_{n}\right)=q_{n}$ and $k^{*}\left(k_{n}\right)\underset{n\rightarrow\infty}{\longrightarrow}-\infty$. Therefore for any $v^*\in V^*$
\[
\inf_{\left(k,q\right)\in K\x\q}v^{*}\left(q-h\left(k\right)\right)+k^{*}\left(k\right)\leq v^{*}\left(q_n-h\left(k_{n}\right)\right)+k^{*}\left(k_{n}\right)=k^{*}\left(k_{n}\right)\underset{n\rightarrow\infty}{\longrightarrow}-\infty
\]
thus the duality hold.
\item $m\in\mathbb{R}$. The inequality $\leq$ is immediate, since by restricting the infimum to $k,q$ satisfying $h(k)=q$ (which implies $k\in h^{-1}(\q)$) the infimum is increased.

We need to show the other direction, which means to prove the existence
of some $v^{*}\in V^{*}$ such that 
\begin{equation}
v^{*}\left(q-h\left(k\right)\right)+k^{*}\left(k\right)\geq m\ \forall \left(k,q\right)\in K\x\q\label{eq:hard direction convex}
\end{equation}
We define two subsets of $V\x\r$:
\[
\begin{array}{ccc}
A & := & \left\{ \left(h\left(k\right)-q,\lambda\right)\in V\x\r;\ k\in K,\ q\in\q,\ k^{*}\left(k\right)<\lambda\right\} \\
\\
B & := & \left\{ 0\right\} \x\left(-\infty,m\right)
\end{array}
\]
$B$ is convex and we will show that so is $A$: Let $t\in\left[0,1\right],\ k_{1},k_{2}\in K,\ q_{1},q_{2}\in\q,\ \lambda_{1},\lambda_{2}\in\r$
such that $\left(h\left(k_{1}\right)-q_{1},\lambda_{1}\right),\left(h\left(k_{2}\right)-q_{2},\lambda_{2}\right)\in A$,
meaning $k^{*}\left(k_{1}\right)<\lambda_{1},k^{*}\left(k_{2}\right)<\lambda_{2}$.
By the concavity of $h$,
\[
th\left(k_{1}\right)+\left(1-t\right)h\left(k_{2}\right)\leq h\left(tk_{1}+\left(1-t\right)k_{2}\right)
\]
namely there exists some $q\in\q$ such that 
\[
th\left(k_{1}\right)+\left(1-t\right)h\left(k_{2}\right)=h\left(tk_{1}+\left(1-t\right)k_{2}\right)-q
\]
Denote $k_t:=tk_{1}+\left(1-t\right)k_{2}$, $q_t:=q+tq_{1}+\left(1-t\right)q_{2}$ and $\lambda_t:=t\lambda_{1}+\left(1-t\right)\lambda_{2}$ then
\[
\begin{array}{ccccc}
t\left(h\left(k_{1}\right)-q_{1},\lambda_{1}\right)+\left(1-t\right)\left(h\left(k_{2}\right)-q_{2},\lambda_{2}\right) & = \\ \left(h\left(tk_{1}+\left(1-t\right)k_{2}\right)-q-\left[tq_{1}+\left(1-t\right)q_{2}\right],t\lambda_{1}+\left(1-t\right)\lambda_{2}\right)
 & = & \left(h\left(k_{t}\right)-q_{t},\lambda_{t}\right)
\end{array}
\]

By the convexity of $k^{*}$: $$k^{*}\left(k_{t}\right)\leq tk^*(k_1)+(1-t)k^*(k_2)<\lambda_{t}$$
so $\left(h\left(k_{t}\right)-q_{t},\lambda_{t}\right)\in A$ and we conclude $A$ is convex. By
definition of $m$, $A,B$ are disjoint so to use Theorem \ref{thm:HB Separation},
we will need to show $A$ has a non-empty interior. By Assumption
\ref{assu:convex}, there exists a neighborhood $N\subset V$ of $0$ and $s:N\rightarrow K$
 such that $k^{*}\left(s\left(v\right)\right)<M$
for every $v\in N$ and some $M\in\r$. Moreover, for any $v\in N$ there exists $q\in\q$
such that $v=h\circ s\left(v\right)-q$. Thus $N\x\left(M,\infty\right)$
is an open subset of $A$ thus $A$ has a non-empty interior. By Theorem \ref{thm:HB Separation},
there exists a continuous non-zero linear functional $\left(u^*,\alpha\right)\in V^{*}\x\r$
that separates $A$ and $B$, i.e. for any $\left(h\left(k\right)-q,\lambda\right)\in A$
and $\left(0,\mu\right)\in B$
\begin{equation}
u^*\left(h\left(k\right)-q\right)+\alpha\lambda\geq \alpha\mu\label{eq:convexInq}
\end{equation}
Since $\lambda$ is not bounded above, $\alpha$ must be non-negative.
If $\alpha=0$ then 
$
u^*\left(h\left(k\right)-q\right)\leq0
$
for any $k,q$. By Assumption \ref{assu:convex} $u^*\leq0$ on $N$ and since $N$ is absorbing (by Lemma \ref{lem:tvs absorbing}) $u^*=0$ on $V$ by contradiction to $(u^*,\alpha)\neq0$, so $\alpha>0$. By continuity, we may consider (\ref{eq:convexInq}) with
$\lambda=k^{*}\left(k\right)$ and $\mu=m$ and get
\[
u^*\left(h\left(k\right)-q\right)+\alpha k^{*}\left(k\right)\geq \alpha m\ \forall k,q
\]
Divide by $\alpha$ and conclude
\[
\frac{u^*\left(q-h\left(k\right)\right)}{-\alpha}+ k^{*}\left(k\right)\geq m
\]
so $v^{*}=-\frac{u^{*}}{\alpha}$ will satisfy (\ref{eq:hard direction convex}).
\end{casenv}
\end{proof}

\begin{rem}
If $K,\q$ are convex cones and $k^*,-h$ are sublinear then:
\begin{itemize}
    \item By Claim \ref{claim:sublinearconvex}, Theorem \ref{thm:Convex Abstract} implies the duality (\ref{eq:absdual}) in Theorem \ref{Thm:AbsDuality}.
    \item By Claim \ref{claim:2.24 implies 2.4}, under the assumptions of Theorem \ref{Thm:AbsDuality}, Assumption \ref{assu:convex} (which is used is Theorem \ref{thm:Convex Abstract}) is equivalent to Assumption \ref{assu:Sublinear} (which is used in Theorem \ref{Thm:AbsDuality}).
\end{itemize}

However, Theorem \ref{thm:Convex Abstract} does not make Theorem \ref{Thm:AbsDuality} obsolete, since Theorem \ref{Thm:AbsDuality} gives a characterization of the functionals $k^*$ for which the duality is finite. Such characterization does not appear in Theorem \ref{thm:Convex Abstract}.
\end{rem}

\begin{rem}
\label{rem:minmaxNonNeg}
$\ $
\begin{itemize}
    \item If $\q$ is a convex cone, Theorem \ref{thm:Convex Abstract} can be written as
$$
\max_{v^{*}|_{\q}\geq 0}\inf_{k\in K}(k^{*}-v^{*}\circ h)\left(k\right)   
=
\inf_{k\in h^{-1}(\q)}k^*(k)
$$
    \item The duality in Theorem \ref{thm:Convex Abstract} does not contain an element $u\in V$ since $h$ is concave (and not superlinear as in Theorem \ref{Thm:AbsDuality}) and thus can be changed by a constant. Formally, replacing $h$ with $h-u$ for a given $u\in V$  leads to
    \[
    \max_{v^{*}\in V^{*}}\inf_{\left(k,q\right)\in K\x\q}v^{*}\left(q-h\left(k\right)+u\right)+k^{*}\left(k\right)=\inf_{k\in\Phi(u,h)}k^{*}\left(k\right)
    \]
\end{itemize}

\end{rem}

\bigskip
The following known claim about subdifferentiabilty of convex functions is an immediate corollary of Theorem \ref{thm:Convex Abstract} (for a similar result - see Theorem 7.12 in \cite{Aliprantis2006Hitch}):
\begin{cor}
If a convex function $f:X\rightarrow\r$ on a tvs $X$ is upper semi-continuous at some point $y\in X$ then it is subdifferentiable there, namely there exists some $x^*\in X^*$ such that for any $x\in X$
$$
x^*(x-y)\leq f(x)-f(y)
$$
\end{cor}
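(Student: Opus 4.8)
The plan is to obtain the corollary as a direct specialization of the convex abstract theorem (Theorem~\ref{thm:Convex Abstract}), in the degenerate case where the cone $\q$ collapses to $\{0\}$ and the operator $h$ is the identity. In this situation the constraint set $h^{-1}(\q)$ becomes a single point, so the right-hand side of the duality reduces to a single value of $f$, while the left-hand side becomes a maximization over $X^{*}$ of an infimal-convolution-type expression whose optimizer is exactly the desired subgradient. Concretely, I would invoke the $u$-shifted form recorded in the second bullet of Remark~\ref{rem:minmaxNonNeg}.

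Take $V=K=X$, $h=\mathrm{id}_X$, $\q=\{0\}$, $u=y$ and $k^{*}=f$. Then $\q$ is convex and closed under addition, $k^{*}=f$ is convex on the convex set $K=X$ by hypothesis, and $h$ is affine, hence trivially $\q$-concave (the defect $h(tk_1+(1-t)k_0)-th(k_1)-(1-t)h(k_0)$ is $0\in\q$). Since $\Phi(y,h)=h^{-1}(y+\{0\})=\{y\}$, the right-hand side of the shifted duality in Remark~\ref{rem:minmaxNonNeg} is $\inf_{k\in\Phi(y,h)}k^{*}(k)=f(y)$, while the left-hand side is
\[
\max_{v^{*}\in X^{*}}\ \inf_{k\in X}\ \bigl(v^{*}(y-k)+f(k)\bigr).
\]

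The main point to check is Assumption~\ref{assu:convex} for the shifted operator $h-u$, and this is exactly where upper semi-continuity (rather than full continuity) enters. For $h=\mathrm{id}_X$ and $\q=\{0\}$ the only admissible selection is $s(v)=y+v$, so the requirement reduces to $v\mapsto f(y+v)$ being bounded above on some neighborhood $N$ of $0$. But upper semi-continuity of $f$ at $y$ gives, for $\varepsilon=1$, a neighborhood $N$ of $0$ with $f(y+v)<f(y)+1$ for all $v\in N$, which is precisely this bound; hence Assumption~\ref{assu:convex} holds. Theorem~\ref{thm:Convex Abstract} then guarantees the maximum on the left is attained at some $x^{*}\in X^{*}$ and equals $f(y)$. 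Thus $\inf_{k\in X}\bigl(f(k)-x^{*}(k-y)\bigr)=f(y)$, i.e.\ $f(x)-x^{*}(x-y)\geq f(y)$ for every $x\in X$, which rearranges to the claimed inequality $x^{*}(x-y)\leq f(x)-f(y)$. The only genuine obstacle is the verification of Assumption~\ref{assu:convex}; everything else is bookkeeping, and the substantive observation is simply that the assumption demands only an upper bound on the selection, which is matched precisely by the one-sided continuity in the hypothesis.
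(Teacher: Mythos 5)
Your proposal is correct and is essentially the paper's own proof: the paper applies Theorem~\ref{thm:Convex Abstract} with $K=V=X$, $\q=\{0\}$, $h(x)=x-y$ and $k^{*}(x)=f(x)-f(y)$, which is exactly your substitution with the shift $u=y$ absorbed into $h$ (and a constant subtracted from $k^{*}$), i.e.\ the $u$-shifted form recorded in Remark~\ref{rem:minmaxNonNeg}. Your verification of Assumption~\ref{assu:convex} via upper semi-continuity, using the selection $s(v)=y+v$ on a neighborhood of $0$, coincides in substance with the paper's choice $s(x)=x+y$, so the two arguments are the same.
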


\begin{proof}
Assume $f$ is upper semi-continuous at $y\in X$.
Use Theorem \ref{thm:Convex Abstract} with: $K=V=X$, $h(x)=x-y$,
$\q=\left\{ 0\right\}$ and $k^*(x)=f(x)-f(y)$. Choose $s(x):=x+y$ to verify Assumption \ref{assu:Sublinear}:
$h\circ s(x)-x=0\in\q$ and $k^*\circ s(v)=f(x+y)-f(y)$ is bounded above on a neighborhood of $0$ by upper semi-continuity of $f$. Therefore
$$
\max_{x^*\in X^*}\inf_{x\in X}x^*(y-x)+f(x)-f(y)=\inf_{x-y=0}f(x)-f(y)=0
$$
so for the maximizer $x^*$ and any $x\in X$ we get
$$x^*(y-x)+f(x)-f(y)\geq \inf_{z\in X}x^*(y-z)+f(z)-f(y)=0$$
\end{proof}

\subsection{Existence of minimizers}

In Theorem \ref{Thm:AbsDuality} we proved that $\max_{v^*\in\Pi(k^*,h)} v^*(c)$ is attained whenever it is finite. So it is only natural to ask whether the dual problem $\inf_{k\in\Phi(c,h)}k^*(k)$ is attained as well. We already mentioned that in the Kantorovich duality (Theorem \ref{thm:Duality}) the dual problem always has a solution, but for the general abstract problem, this might not be the case (see Example \ref{ex:nonexist} given in the next chapter).
\\
The following theorem states the existence of a solution to the dual problem for a "dense" subset of $h$-positive functionals on $K$. The idea behind the theorem is to replace $K$ with a smaller subset on which the dual problem is known to have a solution, and then use Theorem \ref{thm:Convex Abstract} with that subset.

\bigskip

\colorbox{thmcolor}{
\parbox{\linewidth}{
\begin{thm}
\label{thm:ExMin}
Let $\q\subset V$ and $K$ be two convex cones,  $h:K\rightarrow V$ a $\q$-superlinear operator and $k^*:K\rightarrow$ a $h$-positive sublinear functional. Assume there exists a convex $D\subset K$ such that for any $n\in\mathbb{N}$ Assumption \ref{assu:Sublinear} (or equivalently, by Claim \ref{claim:2.24 implies 2.4}, Assumption \ref{assu:convex}) holds when replacing $K$ with $n\cdot D$ (meaning $\exists s:V\rightarrow n\cdot D$) and $\inf_{(n\cdot D)\cap\Phi(c,h)}k^*(k)$ is attained. Then for any $\varepsilon>0$ there exists some $v^*\in V^*$ such that $v^*\circ h$ is $h$-positive, $\inf_{k\in\Phi(c,h)}v^*\circ h(k)$ is attained and
    $$||v^*\circ h-k^*||_D:=\sup_{k\in D}v^*\circ h(k)-k^*(k)\leq\varepsilon$$

\end{thm}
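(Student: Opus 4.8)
The plan is to run the duality of Theorem \ref{thm:Convex Abstract} on each rescaled body $n\cdot D$ in place of $K$, read off a dual maximizer $v^{*}_{n}$, and then select $n$ large. First I would fix $\varepsilon>0$ and $n\in\mathbb{N}$ and verify the hypotheses of Theorem \ref{thm:Convex Abstract} for the convex set $n\cdot D$: $k^{*}$ is sublinear, hence convex; $h$ is $\q$-superlinear, hence $\q$-concave; $\q$ is a convex cone, hence closed under addition; and by hypothesis Assumption \ref{assu:Sublinear} holds for $n\cdot D$, which by Claim \ref{claim:2.24 implies 2.4} gives Assumption \ref{assu:convex} for $n\cdot D$. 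Applying the $u$-shifted form of Remark \ref{rem:minmaxNonNeg} with $u=c$ (i.e. replacing $h$ by $h-c$), Theorem \ref{thm:Convex Abstract} produces some $v^{*}_{n}\in V^{*}$ with $v^{*}_{n}|_{\q}\geq0$ such that
\[
v^{*}_{n}(c)+\inf_{k\in n\cdot D}\bigl(k^{*}(k)-v^{*}_{n}\circ h(k)\bigr)=m_{n}:=\inf_{k\in(n\cdot D)\cap\Phi(c,h)}k^{*}(k),
\]
where the right-hand infimum is attained at some $k_{n}\in(n\cdot D)\cap\Phi(c,h)$ by hypothesis.

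The two structural conclusions follow directly from this $v^{*}_{n}$. Since $v^{*}_{n}|_{\q}\geq0$ and $h(k)\geq0$ means $h(k)\in\q$, we get $v^{*}_{n}\circ h(k)\geq0$ whenever $h(k)\geq0$, so $v^{*}_{n}\circ h$ is $h$-positive. For the attainment of $\inf_{k\in\Phi(c,h)}v^{*}_{n}\circ h(k)$ I would use complementary slackness at $k_{n}$: since $h(k_{n})\geq c$ we have $v^{*}_{n}(h(k_{n}))\geq v^{*}_{n}(c)$, while $k_{n}\in n\cdot D$ gives $k^{*}(k_{n})-v^{*}_{n}\circ h(k_{n})\geq\inf_{k\in n\cdot D}(k^{*}-v^{*}_{n}\circ h)(k)=m_{n}-v^{*}_{n}(c)$, and as $k^{*}(k_{n})=m_{n}$ this forces $v^{*}_{n}(h(k_{n}))\leq v^{*}_{n}(c)$; hence $v^{*}_{n}(h(k_{n}))=v^{*}_{n}(c)$. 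Then for every $k\in\Phi(c,h)$ over all of $K$ we have $h(k)\geq c$, so $v^{*}_{n}\circ h(k)\geq v^{*}_{n}(c)=v^{*}_{n}\circ h(k_{n})$, which shows the infimum is attained at $k_{n}$ with value $v^{*}_{n}(c)$.

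For the quantitative estimate I would exploit that $g_{n}:=v^{*}_{n}\circ h-k^{*}$ is positively homogeneous (both $v^{*}_{n}\circ h$ and $k^{*}$ are), so $\sup_{k\in n\cdot D}g_{n}(k)=n\sup_{k\in D}g_{n}(k)=n\,\|v^{*}_{n}\circ h-k^{*}\|_{D}$. Rearranging the displayed identity gives $\sup_{k\in n\cdot D}g_{n}(k)=v^{*}_{n}(c)-m_{n}$, and therefore
\[
\|v^{*}_{n}\circ h-k^{*}\|_{D}=\frac{v^{*}_{n}(c)-m_{n}}{n}.
\]
If this numerator is ever $\leq0$ we are immediately done (then the norm is $\leq0\leq\varepsilon$), so it remains to make the right-hand side $\leq\varepsilon$ in the case $v^{*}_{n}(c)-m_{n}>0$, which will follow once the numerator is shown to grow sub-linearly in $n$; in fact mere boundedness of $v^{*}_{n}(c)-m_{n}$ already yields the claim after dividing by $n$ and choosing $n$ large.

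The main obstacle is precisely this control of $v^{*}_{n}(c)-m_{n}$, and my plan is to compare the restricted problems with the full one. Since $k^{*}$ is $h$-positive, Theorem \ref{Thm:AbsDuality} (via Lemma \ref{lem:EqCon}) applied to all of $K$ yields an optimal dual $v^{*}_{\infty}\in\Pi(k^{*},h)$ with $v^{*}_{\infty}(c)=m_{\infty}:=\inf_{k\in\Phi(c,h)}k^{*}(k)$ and $v^{*}_{\infty}\circ h\leq k^{*}$ on $K$. This $v^{*}_{\infty}$ is feasible for every $n$-problem and dominated by $k^{*}$, which gives $m_{n}\geq m_{\infty}$ for all $n$; conversely, taking the maps of Assumption \ref{assu:Sublinear} for $n\cdot D$ in the canonical self-similar form $s_{n}(v)=n\,s_{1}(v/n)$ keeps the feasible sets $(n\cdot D)\cap\Phi(c,h)$ exhausting the relevant part of $K$, so that $m_{n}\downarrow m_{\infty}$ and in particular $m_{n}$ stays bounded. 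The remaining and most delicate point is to bound $v^{*}_{n}(c)$ from above; here I would feed the convex combination $(1-t)v^{*}_{n}+t\,v^{*}_{\infty}$ (again admissible, since $v^{*}_{n}|_{\q},v^{*}_{\infty}|_{\q}\geq0$) back into the maximality of $v^{*}_{n}$, using $k^{*}-v^{*}_{\infty}\circ h\geq0$ to squeeze $v^{*}_{n}(c)$ down toward $m_{\infty}$ and thereby keep $v^{*}_{n}(c)-m_{n}$ bounded. Dividing by $n$ then forces $\|v^{*}_{n}\circ h-k^{*}\|_{D}\to0$, so some $n$ makes it $\leq\varepsilon$, completing the proof.
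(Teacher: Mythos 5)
Your first three steps coincide with the paper's proof and are correct: running Theorem \ref{thm:Convex Abstract} (in the shifted form of Remark \ref{rem:minmaxNonNeg}, with $u=c$) on each $n\cdot D$, the homogeneity identity $n\|v^{*}_{n}\circ h-k^{*}\|_{D}=v^{*}_{n}(c)-m_{n}$, the $h$-positivity of $v^{*}_{n}\circ h$ from $v^{*}_{n}|_{\q}\geq0$, and the complementary-slackness computation giving $v^{*}_{n}(h(k_{n}))=v^{*}_{n}(c)$ and hence attainment of $\inf_{\Phi(c,h)}v^{*}_{n}\circ h$ at $k_{n}$. The reduction of the estimate to a uniform (or $o(n)$) bound on $v^{*}_{n}(c)-m_{n}$ is also the right reduction, and the lower bound $m_{n}\geq v^{*}_{\infty}(c)>-\infty$ is fine: your $v^{*}_{\infty}$ is the paper's $v^{*}_{0}$ from Lemma \ref{lem:EqCon}, and Assumption \ref{assu:Sublinear} for $K$ is indeed inherited from any $n\cdot D\subset K$.

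The gap is in the last step, and it is genuine. Your proposed mechanism---feeding $(1-t)v^{*}_{n}+t\,v^{*}_{\infty}$ into the maximality of $v^{*}_{n}$ and using $k^{*}-v^{*}_{\infty}\circ h\geq0$---cannot bound $v^{*}_{n}(c)$: the objective $J_{n}(v^{*}):=v^{*}(c)+\inf_{k\in n\cdot D}(k^{*}-v^{*}\circ h)(k)$ is concave in $v^{*}$, and the computation you indicate yields only $J_{n}((1-t)v^{*}_{n}+t\,v^{*}_{\infty})\geq(1-t)m_{n}+t\,v^{*}_{\infty}(c)$, which tested against $J_{n}\leq m_{n}$ reproduces $m_{n}\geq v^{*}_{\infty}(c)$ and nothing more; $v^{*}_{n}(c)$ enters $J_{n}$ only coupled to $-n\|v^{*}_{n}\circ h-k^{*}\|_{D}$, and maximality alone cannot separate the two. (Your auxiliary claim $m_{n}\downarrow m_{\infty}$ is also unjustified---it needs the sets $n\cdot D$ to be nested, e.g. $0\in D$, which is not assumed---but it is not needed either.) The missing idea, which is the paper's key move, is to test the $\|\cdot\|_{D}$ term at a fixed $k_{1}\in D\cap\Phi(c,h)$; this set is non-empty because the hypothesis for $n=1$ supplies $s(c)\in D\cap\Phi(c,h)$. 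Since $k_{1}\in D$ and $h(k_{1})\geq c$, one has $\|v^{*}_{n}\circ h-k^{*}\|_{D}\geq v^{*}_{n}(h(k_{1}))-k^{*}(k_{1})\geq v^{*}_{n}(c)-k^{*}(k_{1})$; inserting this into your identity $n\|v^{*}_{n}\circ h-k^{*}\|_{D}=v^{*}_{n}(c)-m_{n}$ and using $m_{n}\geq v^{*}_{\infty}(c)$ gives the self-referential bound $(n-1)\,v^{*}_{n}(c)\leq n\,k^{*}(k_{1})-v^{*}_{\infty}(c)$, so $v^{*}_{n}(c)$, and with it $v^{*}_{n}(c)-m_{n}$, is bounded uniformly for $n\geq2$; the paper then simply takes the explicit $n=\max\{2,\frac{2}{\varepsilon}(k^{*}(k_{1})+|v^{*}_{\infty}(c)|)\}$. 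With this evaluation step your outline becomes the paper's proof; without it, the argument does not close.
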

}}

\begin{rem}
If $D$ is the unit ball of a normed space and $h,k^*$ are linear on that space then $||\cdot||_D$ is the dual norm on the dual space. In the general case, $\|\cdot\|_D$ may be negative.
\end{rem}

\begin{proof}

Let $k_{1}\in D\cap\Phi(u,h)$, $\varepsilon>0$. Since $k^*$ is $h$-positive, by Lemma \ref{lem:EqCon} there exists $v^*_0\in\Pi(k^*,h)$. Choose
$$
n=\max\{2,\frac{2}{\varepsilon}\left(k^*(k_1)+|v^*_0(u)|\right)\}>1
$$
By the convex version of the abstract theorem (Theorem \ref{thm:Convex Abstract} - see also Remark \ref{rem:minmaxNonNeg}) using $n\cdot D$ instead of $K$ we get the duality
\begin{equation}
  \max_{v^*(q)\geq 0\ \forall q\in\q}[v^*(u)+\inf_{k\in n\cdot D}(k^*-v^*\circ h)(k)]
  =
   \max_{v^*\in V^*}\inf_{(k,q)\in (n\cdot D)\x\q}[v^*(u+q)+k^*(k)-v^*\circ h(k)]
  =
  \inf_{k\in\Phi(u,h)\cap n\cdot D} k^*(k)
  \label{eq:DualMinEx}
\end{equation}

We notice that
\[
  \inf_{k\in n\cdot D}(k^*-v^*\circ h)(k)  = 
  n\inf_{k\in D}(k^*-v^*\circ h)(k)  = 
  -n\sup_{k\in D}(v^*\circ h-k^*)(k)  = 
  -n||v^*\circ h-k^*||_D
\]

and the infimum on the RHS of (\ref{eq:DualMinEx}) is attained therefore equality (\ref{eq:DualMinEx}) can be written as
\begin{equation}
\max_{v^*(q)\geq 0\ \forall q\in\q}v^*(u)-n||v^*\circ h-k^*||_D= k^*(k_n)   
\label{eq:dualExMin2}
\end{equation}

where $k_n\in (n\cdot D)\cap\Phi(u,h)$ is the minimizer in (\ref{eq:DualMinEx}).
Denote the maximizer in (\ref{eq:dualExMin2}) by $ v^*_n$ and $k^*_n:= v^*_n\circ h$ so by definition $ v^*_n\in\Pi(k^*_n,h)$. Since $k_n\in\Phi(u,h)$ we get $v^*_n(u)\leq v^*_n\circ h(k_n)= k^*_n(k_n)$ and since $k_n\in n\cdot D$ we get
$$k^*_n\left(k_n\right)-k^*\left(k_n\right)\leq n\cdot\sup_{k\in D}k^*_n(k)-k^*(k)=n\cdot||k^*_n-k^*||_D$$
so combining both inequalities with (\ref{eq:dualExMin2}) leads to
$$
 k^*(k_n)  =    v^*_n(u)-n||k^*_n-k^*||_D 
  \leq  k^*_n(k_n)-n||k^*_n-k^*||_D  \leq  k^*(k_n)
$$

So the inequality above is actually an equality so we conclude $v_n^*(u)=k_n^*(k_n)$ and
$$k^*_n(k_n)\geq\inf_{k\in\Phi(u,h)}k^*_n(k)\geq  v^*_n(u)=k^*_n(k_n)\Rightarrow\inf_{k\in\Phi(u,h)}k^*_n(k)=k^*_n(k_n)$$

Hence the infimum is attained. We are left with showing $||k^*_n-k^*||_D\leq\varepsilon$. By definition of $\Pi(k^*,h)$ $v^*_0\circ h\leq k^*$ thus $$v^*_0(u)-n||v^*_0\circ h-k^*||\geq v^*_0(u)$$ hence for $v^*_n$ the maximizer of (\ref{eq:dualExMin2}) we have $v^*_n(u)-n||k^*_n-k^*||\geq v^*_0(u)$ as well which implies

$$k^*_n(k_1)=  v^*_n\circ h(k_1)\geq  v^*_n(u)\geq n||k^*_n-k^*||_D+v^*_0(u)\geq nk^*_n\left(k_1\right)-nk^*\left(k_1\right)+v^*_0(u)$$

Therefore
$$k^*_n(k_1)
\leq
\frac{nk^*\left(k_1\right)-v^*_0(u)}{n-1}
\leq
2k^*\left(k_1\right)+|v^*_0(u)|$$
($n\mapsto\frac{n}{n-1}$ is decreasing on $(1,\infty)$ and $n\geq 2$) so by the last two inequalities we conclude
$$
||k^*_n-k^*||_D
\leq
\frac{k^*_n(k_1)-v^*_0(u)}{n}
\leq
\frac{k^*_n(k_1)+|v^*_0(u)|}{n}
\leq
\frac{2}{n}\left(k^*(k_1)+|v^*_0(u)|\right)
\leq
\varepsilon$$
\end{proof}

We now give an example showing how the above theorem may be applied to the Kantorovich duality (Theorem \ref{thm:Duality}). Although this example does not have any real significance, since we know the dual problem in the Kantorovich duality always admits a solution in the compact case, it is a good demonstration of the theorem above. 
In the next chapter, we will apply this theorem similarly to the problem of vector measures and get a result of higher value (see Theorem \ref{thm:ExMinVec}).

\begin{example}
\label{ex:ExMin}
Let $(X,\m),(Y,\n)$ be two compact finite measure spaces.
We can show the existence of a solution to the dual problem in  (\ref{eq:ClaDuality}) using Theorem \ref{thm:ExMin} 

\begin{itemize}
    \item $K=C(X)\x C(Y)$
    \item $D=\{(\psi,\varphi)\in K;\ |\varphi(y)|\leq 1,|\varphi(y)-\varphi(z)|\leq d(y,z)\ \forall y,z\in Y\}$
    \item $k^*(\psi,\varphi):=\int_X \psi\d\m+\int_Y\varphi\d\n$.
     \item $V=C(X\x Y)$ equipped with the supremum norm.
    \item $h(\psi,\varphi)=\psi+\varphi$
    \item $\q$ is the subset of non-negative functions in $V$.
    \item For any $n\in\mathbb{N}$ take $s(v):=(||v||_{\infty},0)\in\Phi(v,h)\cap n\cdot D$ which satisfies Assumption \ref{assu:Sublinear}.
\end{itemize}

We want to show that $\inf_{(n\cdot D)\cap\Phi(c,h)}k^*$ is attained:
Let $(\psi_m,\varphi_m)\in\Phi(c,h)\cap (n\cdot D)$ be a minimizing sequence of $k^*$. We may assume $\psi_m(x)=\max_y\{c(x,y)-\varphi_m(y)\}$. $\psi_m,\varphi_m$ are uniformly bounded and Lipschitz continuous if $c$ is Lipschitz continuous. By the Arzela-Ascoli theorem\index{Arzela-Ascoli theorem} (Theorem A.4.6 in \cite{shalit2017first}) $(\psi_m,\varphi_m)$ has a subsequence converging uniformly to some $(\psi_0,\varphi_0)\in K$. The uniform convergence respects the order between functions so $(\psi_0,\varphi_0)\in\Phi(c,h)\cap(n\cdot D)$. By the uniqueness of the limit:
$$
\int_X\psi_m\d\mu+\int_Y\varphi_m\d\nu\rightarrow\int_X\psi_0\d\mu+\int_Y\varphi_0\d\nu=\inf_{(n\cdot D)\cap\Phi(c,h)}k^*
$$
So $\inf_{(n\cdot D)\cap\Phi(c,h)}k^*(k)$ is attained with $k=(\psi_0,\varphi_0)$. \\

By Theorem \ref{thm:ExMin}, for any $\varepsilon>0$ there exists $\pi\in\mm(X\x Y)=V^*$ such that

$$
\inf\left\{\int_{X\x Y}\psi(x)+\varphi(y)\d\pi(x,y);\ (\psi,\varphi)\in\Phi(c,h)\right\}
$$

is attained and

$$
||\pi\circ h-k^*||_D=\sup_{(\psi,\varphi)\in D}\int_{X\x Y}\psi(x)+\varphi(y)\d\pi(x,y)-\int_X\psi\d\m-\int_Y\varphi\d\n\leq\varepsilon
$$

The supremum is bounded and $\psi$ may be any continuous function thus $\pi$ has marginal $\mu$ on $X$. Denote the marginal of $\pi$ on $Y$ by $\nu'$ then

$$
\inf\left\{\int_X\psi\d\m+\int_Y\varphi\d\nu';\ (\psi,\varphi)\in\Phi(c,h)\right\}
$$

is attained and

$$
||\nu'-\n||_D=\sup_{|\varphi(y)|\leq 1,|\varphi(y)-\varphi(z)|\leq d(y,z)}\int\varphi\d (\nu'-\n)\leq\varepsilon
$$

the norm above sometimes called the "Kantorovich-Rubinstein" norm\index{Kantorovich-Rubinstein norm} and is known to metricize the weak convergence for probability measures (see Proposition 7.1.5 in \cite{Ambrosio2005}, Theorem 8.3.2 in \cite{bogachev2007measure}, Theorem 5.9 in \cite{santambrogio2015optimal} or Theorem 7.12 in \cite{Villani2003}).
\end{example}

\subsection{Related theorems}

The following duality theorem can be found in \cite{Rachev2006}: Let
\begin{itemize}
\item $K,V$ be two vector spaces with dual spaces $K^{*},V^{*}$.
\item $\Pi:K^{*}\rightarrow2^{V^{*}}$ for which $\Pi\left(K^{*}\right):=\cup_{k^{*}\in K^{*}}\Pi\left(k^{*}\right)\subset V^{*}$
is a non-empty convex cone.
\item $c:\Pi\left(K^{*}\right)\rightarrow\r\cup\left\{ \infty\right\} $
is additive and positively-homogeneous.
\item $C\left(k^{*}\right):=\inf\left\{ c\left(v^{*}\right);\ v^{*}\in\Pi\left(k^{*}\right)\right\} $
\end{itemize}
\begin{thm}
\label{thm:Rachev}
(Theorem 4.6.1 part $\left(ii\right)$ in \cite{Rachev2006}) If $C$
is weak*-lower semi-continuous then

\[
C\left(k^{*}\right)=\sup\left\{ k^{*}\left(k\right);k\in K,\ \left\langle k,x^{*}\right\rangle \leq C\left(x^{*}\right)\ \forall x^{*}\in K^{*}\right\} 
\]
\end{thm}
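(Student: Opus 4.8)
The plan is to read the identity as the statement that the \emph{sublinear} functional $C$ coincides with the support function of its set of linear minorants through the origin, and to observe that this is precisely Theorem \ref{Thm:AbsDuality} after the right change of variables, with the caveat that Assumption \ref{assu:Sublinear} is no longer available and its role is taken over by the weak*-lower-semicontinuity of $C$. Concretely, I would first record the dictionary: put $V=K^{*}$ with the weak* topology $\sigma(K^{*},K)$, so that $V^{*}=K$; take $\q=\{0\}$; let the abstract cone be $\Pi(K^{*})$ with $h$ the marginal map sending $v^{*}\in\Pi(x^{*})$ to $x^{*}$, let the abstract sublinear functional be $c$ itself, and let the given element $u$ be Rachev's point $k^{*}$. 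Then the functional $p(v):=\inf_{k\in\Phi(v,h)}k^{*}(k)$ of the preliminary lemmas becomes literally $p(x^{*})=\inf_{v^{*}\in\Pi(x^{*})}c(v^{*})=C(x^{*})$, the admissible set $\Pi(c,h)$ becomes $A:=\{k\in K;\ \langle k,x^{*}\rangle\le C(x^{*})\ \forall x^{*}\in K^{*}\}$ (here one checks that $\langle k,h(v^{*})\rangle\le c(v^{*})$ for all $v^{*}\in\Pi(K^{*})$ is the same as $\langle k,x^{*}\rangle\le C(x^{*})$ for all $x^{*}$, using $C(x^{*})=\inf_{v^{*}\in\Pi(x^{*})}c(v^{*})$), and the asserted equality is exactly $\max_{k\in A}\langle k,k^{*}\rangle=C(k^{*})$.

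Next I would verify that $C$ is sublinear, mirroring Lemma \ref{lem:psublinear}: positive-homogeneity and subadditivity of $C$ follow from the positive-homogeneity and additivity of $c$ together with the natural compatibility of $\Pi$ with scalings and sums of its argument (e.g. $\Pi(x_{1}^{*})+\Pi(x_{2}^{*})\subseteq\Pi(x_{1}^{*}+x_{2}^{*})$), which is also what makes $\Pi(K^{*})$ the convex cone assumed in the hypotheses. With sublinearity in hand the inequality $\ge$ is immediate: every $k\in A$ satisfies $\langle k,k^{*}\rangle\le C(k^{*})$ by taking $x^{*}=k^{*}$ in the defining constraint, so $\sup_{k\in A}\langle k,k^{*}\rangle\le C(k^{*})$.

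The heart of the argument is the reverse inequality, and this is where weak*-lower-semicontinuity enters; I expect this to be the main obstacle, since here one cannot invoke a boundedness-near-$0$ condition as Assumption \ref{assu:Sublinear} provided in the proof of Theorem \ref{Thm:AbsDuality}. I would argue by biconjugation in the dual pair $(K^{*},K)$. Since $C$ is sublinear, its conjugate $C^{*}(k)=\sup_{x^{*}}\big(\langle k,x^{*}\rangle-C(x^{*})\big)$ equals $0$ for $k\in A$ and $+\infty$ otherwise: positive-homogeneity of $C$ forces the supremum to blow up along the ray $\{\lambda x^{*};\ \lambda\ge0\}$ as soon as $\langle k,x^{*}\rangle>C(x^{*})$ for some $x^{*}$. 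Consequently $C^{**}(k^{*})=\sup_{k\in A}\langle k,k^{*}\rangle$, and since $C$ is convex, proper and weak*-lsc, the Fenchel--Moreau theorem yields $C=C^{**}$, which is exactly the claim. The delicate point, namely the analogue of excluding the vertical separating hyperplane (the case $\alpha=0$) in the proof of Theorem \ref{thm:Convex Abstract}, is precisely what weak*-lsc resolves. If one prefers to remain inside the paper's toolkit, the same step can be carried out by applying the Hahn--Banach separation theorem (Theorem \ref{thm:HB Separation}) to the weak*-closed convex epigraph of $C$ and a point $(k^{*},r)$ with $r<C(k^{*})$: the separating functional lives in $K\x\r$ because the topology is $\sigma(K^{*},K)$, its vertical component is nonnegative since the epigraph is unbounded upward, and ruling out the degenerate vertical case (again via lsc, by a limiting argument as $r\uparrow C(k^{*})$) produces a $k\in A$ with $\langle k,k^{*}\rangle>r$.

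Finally I would dispose of the degenerate values in parallel with the case analysis of Lemma \ref{lem:EqCon}: if $C(k^{*})=+\infty$ the constraint $A$ places no effective bound in the direction $k^{*}$ and the supremum is $+\infty$ as well, while the possibility $C\equiv-\infty$ makes $A$ empty so that both sides equal $-\infty$; both follow from the sublinearity of $C$ together with the non-degeneracy of the pairing between $K$ and $K^{*}$.
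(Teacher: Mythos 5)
You should know at the outset that the paper contains no proof of Theorem \ref{thm:Rachev}: it is quoted from \cite{Rachev2006}, and the surrounding text only runs your dictionary in the \emph{reverse} direction --- it specializes Rachev's data, taking $\Pi\left(k^{*}\right)=\left\{ v^{*}\in V^{*};\ v^{*}\circ h=k^{*},\ v^{*}(q)\geq0\ \forall q\in\q\right\}$ and $c=u\in V$, so as to exhibit the primal problem of Theorem \ref{Thm:AbsDuality} as an instance of $C$, and then lists what the abstract theorem buys that Rachev's does not (attainment of the maximum, sublinear $k^{*}$ on a cone, the finiteness characterization, Assumption \ref{assu:Sublinear} in place of semicontinuity). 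Judged on its own terms, your reconstruction is correct and is essentially the standard proof of such statements: sublinearity of $C$ makes $C^{*}$ the indicator of $A=\left\{ k\in K;\ \left\langle k,x^{*}\right\rangle \leq C\left(x^{*}\right)\ \forall x^{*}\right\}$ (positive homogeneity blows the supremum up along any ray violating the constraint), and Fenchel--Moreau in the dual pair $\left(K^{*},\sigma\left(K^{*},K\right)\right)$ converts weak*-lower semicontinuity plus convexity into $C=C^{**}=\sup_{k\in A}\left\langle k,\cdot\right\rangle$. Your observation that Assumption \ref{assu:Sublinear} cannot be salvaged here is also on point --- a weak* neighborhood of $0$ contains a finite-codimensional subspace, so boundedness of $c\circ s$ near $0$ is prohibitively strong --- and it explains why Rachev's conclusion is a supremum rather than the attained maximum your dictionary would otherwise promise via Theorem \ref{Thm:AbsDuality}.

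Three smaller points deserve repair. (i) Sublinearity of $C$ genuinely needs the graded compatibility $\Pi\left(x_{1}^{*}\right)+\Pi\left(x_{2}^{*}\right)\subseteq\Pi\left(x_{1}^{*}+x_{2}^{*}\right)$ and $\lambda\Pi\left(x^{*}\right)\subseteq\Pi\left(\lambda x^{*}\right)$ (and your marginal map $h$ needs the sets $\Pi\left(x^{*}\right)$ pairwise disjoint to be well defined); this is strictly stronger than the stated hypothesis that the union $\Pi\left(K^{*}\right)$ is a convex cone. You flag it, and it holds in Rachev's actual setting (preimages of a linear marginal map intersected with a fixed cone), but it must be assumed, not derived. (ii) In your fallback separation argument, Theorem \ref{thm:HB Separation} is the wrong instrument: it requires one of the two convex sets to have non-empty interior, and the epigraph of a merely lsc $C$ has empty interior in the weak* topology in general (non-empty interior of the epigraph amounts to weak*-continuity of $C$ at a point, which is very restrictive); what you need is strong separation of a point from a weak*-closed convex set in a locally convex space --- e.g. Theorem 5.79 in \cite{Aliprantis2006Hitch}, which the paper itself invokes in the Strassen subsection --- and that is also the engine inside Fenchel--Moreau. (iii) Your dispatch of the degenerate values is too quick: since $c$ may be unbounded below on some $\Pi\left(x^{*}\right)$, $C$ can be improper, and if $C$ takes the value $-\infty$ anywhere then, being convex and lsc, it takes no finite values, $A=\emptyset$, and the asserted identity actually \emph{fails} at any $k^{*}$ with $C\left(k^{*}\right)=+\infty$; so properness of $C$ (part of the standing assumptions in \cite{Rachev2006}) must be imposed, after which your $C\left(k^{*}\right)=+\infty$ case is covered automatically by $C=C^{**}$ rather than by the informal remark that $A$ places no effective bound in the direction $k^{*}$.
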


To see the connection with Theorem \ref{Thm:AbsDuality}, take:
\begin{itemize}
    \item For any $k^*\in K^*$:
    \[
    \Pi\left(k^{*}\right)=\left\{ v^{*}\in V^{*};\ v^{*}\circ h= k^{*},\ v^{*}(q)\geq0\ \forall q\in\q\right\} 
    \]
    \item $\Pi(K^*)$ is the cone of functionals that are non-negative on $\q$.
    \item $c$ is taken as an element of $V$, which is linear on $V^*$ thus additive and positively-homogeneous on $\Pi(K^*)$.
\end{itemize}
then
$C\left(k^{*}\right)=\inf_{v^*\in\Pi(k^*,h)}v^*(c)$. So it seems the theorem above is more general than Theorem \ref{Thm:AbsDuality},  but Theorem \ref{Thm:AbsDuality} has some advantages: 
\begin{itemize}
    \item The primal problem admits a solution, and by Theorem \ref{thm:ExMin} the dual problem admits solution on some "dense" set.
    \item Instead of semicontinuity of $V$ we have Assumption \ref{assu:Sublinear}.
    \item $k^*$ may be sublinear on a convex cone, rather than linear on a linear space.
    \item We have a characterization of the functionals $k^*$ for which the duality is finite.
\end{itemize}

Another theorem, which is closely related to Theorem \ref{thm:Convex Abstract}, is the extended Hahn-Banach-Lagrange theorem\index{Hahn-Banach-Lagrange theorem}. The original Hahn-Banach-Lagrange theorem (Theorem 1.11 in \cite{Simons2008} or Corollary 3.2 in \cite{DinhHB}) is a generalization of the Hahn-Banach theorem for finite sublinear functionals and non-topological vector spaces. The extended version allows infinite values and topology:

\begin{thm}
\label{thm:EHBL}
(Extended Hahn-Banach-Lagrange, Theorem 3.1 in \cite{DinhHB}) Let $S:Y\rightarrow\r\cup\{\infty\}$ be a lower semicontinuous sublinear functional on a Hausdorff locally convex t.v.s $Y$, $C$ a non-empty convex set, $f:C\rightarrow\r$ a convex functional and $g:C\rightarrow Y$ convex wrt to the relation $x\leq y\iff S(x-y)\leq 0$. If
\begin{equation*}
\exists x\in C,\ \alpha\in\r\ s.t.\ g(x)\in\text{int}\{y\in Y;\ S(y)\leq\alpha\}
\end{equation*}
then
$$\max_{y^*\leq S}\inf_C y^*\circ g+f=\inf_C S\circ g+f$$
whenever the RHS is finite.
\end{thm}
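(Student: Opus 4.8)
The plan is to deduce this theorem from the convex abstract theorem (Theorem \ref{thm:Convex Abstract}), in the cone reformulation of Remark \ref{rem:minmaxNonNeg}, by lifting $S$ to its epigraph so that the constraint $y^*\le S$ on the dual side becomes a nonnegativity constraint over a cone. Concretely, I would apply Theorem \ref{thm:Convex Abstract} with $V=Y\x\r$ (a tvs with dual $V^*=Y^*\x\r$), with $\q=\{(y,t)\in Y\x\r;\ S(y)\le t\}$ the epigraph of $S$, with the convex set $K=C\x\r$, with $h(x,r):=(g(x),\,r-f(x))$, and with the linear (hence convex) functional $k^*(x,r):=r$.

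First I would check the structural hypotheses. Since $S$ is sublinear, $\q=\text{epi}(S)$ is a convex cone: subadditivity gives closure under addition and positive homogeneity gives closure under nonnegative scaling, so in particular $\q$ is convex and closed under addition as required. The operator $h$ is $\q$-concave: writing $x_t=tx_1+(1-t)x_0$, the difference $h(x_t,r_t)-[th(x_1,r_1)+(1-t)h(x_0,r_0)]$ equals $\big(g(x_t)-[tg(x_1)+(1-t)g(x_0)],\ [tf(x_1)+(1-t)f(x_0)]-f(x_t)\big)$; its first coordinate has $S\le 0$ by the $S$-convexity of $g$ and its second coordinate is $\ge 0$ by convexity of $f$, so the pair lies in $\text{epi}(S)=\q$.

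Next I would identify the two sides. A functional $v^*=(y^*,\beta)\in V^*$ satisfies $v^*|_\q\ge 0$ iff $\beta\ge 0$ and $y^*(y)+\beta S(y)\ge 0$ for all $y$ with $S(y)<\infty$. Computing the inner objective, $(k^*-v^*\circ h)(x,r)=r(1-\beta)-y^*(g(x))+\beta f(x)$, whose infimum over $r\in\r$ is $-\infty$ unless $\beta=1$; with $\beta=1$ the $r$-terms cancel, the constraint collapses to $-y^*\le S$, and setting $\hat y^*=-y^*$ the left side of Remark \ref{rem:minmaxNonNeg} becomes exactly $\max_{\hat y^*\le S}\inf_C(\hat y^*\circ g+f)$. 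On the other hand $h(x,r)\in\q$ iff $r\ge S(g(x))+f(x)$, so $\inf_{k\in h^{-1}(\q)}k^*(k)=\inf_{x\in C}\big(S(g(x))+f(x)\big)=\inf_C(S\circ g+f)$; the finiteness hypothesis of the statement is precisely the case $m\in\r$ in the proof of Theorem \ref{thm:Convex Abstract}.

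Finally I would verify Assumption \ref{assu:convex}, and this is the step I expect to be the main obstacle, since it is where the interior (Slater-type) hypothesis must be converted into boundedness on a neighborhood together with the normalization $\beta=1$ of the epigraph multiplier. Given $x_0\in C$ and $\alpha\in\r$ with $g(x_0)\in\text{int}\{y;\ S(y)\le\alpha\}$, I would choose a symmetric neighborhood $W$ of $0$ in $Y$ with $S(g(x_0)+w)\le\alpha$ for all $w\in W$, set $N:=W\x(-1,1)$, and define the constant-in-$x$ map $s(w,\tau):=\big(x_0,\ \alpha+f(x_0)+\tau\big)$. Then $h(s(w,\tau))-(w,\tau)=(g(x_0)-w,\ \alpha)\in\text{epi}(S)$ because $S(g(x_0)-w)\le\alpha$ (as $-w\in W$), so $s(w,\tau)\in\Phi((w,\tau),h)$, while $k^*\circ s(w,\tau)=\alpha+f(x_0)+\tau\le\alpha+f(x_0)+1$ is bounded above on $N$. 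Hence Assumption \ref{assu:convex} holds, Theorem \ref{thm:Convex Abstract} applies, and the matched two sides yield exactly the claimed identity with the maximum attained.
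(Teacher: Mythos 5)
Your derivation is correct, but note that the paper never actually proves Theorem \ref{thm:EHBL}: the statement is quoted from \cite{DinhHB}, and the paper's own work in that passage runs in the opposite direction --- it specializes $S$ to the $\{0,\infty\}$-valued indicator of $-\q$ to obtain Farkas' lemma for cone-convex systems, shows Theorem \ref{thm:Convex Abstract} implies that special case, and leaves the equivalence with the full extended theorem to \cite{DinhHB}. You instead prove the full statement directly from Theorem \ref{thm:Convex Abstract} via the epigraph lifting $V=Y\x\r$, $\q=\text{epi}(S)$, $K=C\x\r$, $h(x,r)=(g(x),r-f(x))$, $k^*(x,r)=r$, and your checks are all sound: $\text{epi}(S)$ is a convex cone because $S$ is sublinear; the $\q$-concavity of $h$ holds since the difference $(d,e)$ you compute satisfies $S(d)\le 0\le e$ by the $S$-convexity of $g$ and the convexity of $f$; the free variable $r$ forces the epigraph multiplier $\beta=1$ in any dual functional with finite inner infimum, collapsing the constraint $v^*|_{\q}\ge0$ to $-y^*\le S$ (and since the right-hand side is finite, the maximizer supplied by Theorem \ref{thm:Convex Abstract} must have $\beta=1$, so attainment transfers to the stated maximum); and your construction $s(w,\tau)=(x_0,\alpha+f(x_0)+\tau)$ on $N=W\x(-1,1)$, with $h(s(w,\tau))-(w,\tau)=(g(x_0)-w,\alpha)\in\text{epi}(S)$ by the symmetry of $W$, converts the Slater-type hypothesis into Assumption \ref{assu:convex} exactly, with the finiteness hypothesis matching the case $m\in\r$. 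Compared with the paper's citation-based treatment, your route buys a self-contained proof that is in fact slightly stronger than the quoted theorem: nowhere do you use the lower semicontinuity of $S$, nor that $Y$ is Hausdorff locally convex (only that $Y\x\r$ is a tvs with dual $Y^*\x\r$), which substantiates, rather than merely asserts, the paper's remark that Assumption \ref{assu:convex} is weaker than condition (\ref{eq:SC1}). What the paper's route buys is only brevity, since its aim in that section is to situate Theorem \ref{thm:Convex Abstract} among known results rather than to reprove them.
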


\b

By taking
$$
S(x)=
\begin{cases} 
      0 & ;x\in -\q
      \\
      \infty & ;x\notin -\q 
   \end{cases}
$$
we get another related theorem (which is equivalent to the theorem above by \cite{DinhHB}):

\begin{thm}
(Farkas' lemma\index{Farkas' lemma} for cone-convex systems, Theorem 2.1 in \cite{DinhFarkas} and Theorem 3.1 in \cite{DinhHB}) Let $C$ be a non-empty and convex set, $\q$ a closed convex cone in a Hausdorff locally convex tvs $Y$, $f:C\rightarrow\r$ a proper convex functional and $g:C\rightarrow Y$ convex wrt to the relation $x\leq y\iff y-x\in \q$. If
\begin{equation}
\label{eq:SC1}
\exists x\in C,\ s.t.\ g(x)\in-\text{int}\q
\end{equation}
then
$$\max_{y^*|_{\q}\geq0}\inf_C y^*\circ g+f=\inf_{g^{-1}(-\q)} f$$
\end{thm}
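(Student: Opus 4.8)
The plan is to obtain this statement as a direct specialization of the Extended Hahn--Banach--Lagrange theorem (Theorem \ref{thm:EHBL}), using exactly the functional $S$ displayed just above it, namely $S$ equal to $0$ on $-\q$ and $+\infty$ off $-\q$. First I would verify that $S$ meets the hypotheses imposed on it in Theorem \ref{thm:EHBL}: that it is a lower semicontinuous sublinear functional on $Y$. Since $\q$ is a closed convex cone, so is $-\q$; positive homogeneity and subadditivity of $S$ then follow from $-\q$ being closed under multiplication by positive scalars and under addition (the only nontrivial subadditivity case, forcing $S(x+y)=0$, needs $x,y\in-\q\Rightarrow x+y\in-\q$). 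Lower semicontinuity is equivalent to every sublevel set $\{y\in Y;\ S(y)\le\alpha\}$ being closed; these equal $-\q$ for $\alpha\ge0$ and $\emptyset$ for $\alpha<0$, both closed.

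Next I would check that each ingredient of Theorem \ref{thm:EHBL} translates into the corresponding ingredient here. The order $x\le y\iff S(x-y)\le0$ becomes $x\le y\iff x-y\in-\q\iff y-x\in\q$, so $\q$-convexity of $g$ in Theorem \ref{thm:EHBL} is precisely the convexity hypothesis on $g$ in the present statement. The constraint $y^{*}\le S$ forces $y^{*}(y)\le0$ for every $y\in-\q$, i.e. $y^{*}(q)\ge0$ for every $q\in\q$, while imposing nothing where $S=\infty$; hence $\{y^{*}\le S\}=\{y^{*}|_{\q}\ge0\}$, so the left-hand maximization coincides with that of the asserted identity. The composite $S\circ g$ equals $0$ exactly when $g(x)\in-\q$ (that is, $x\in g^{-1}(-\q)$) and $+\infty$ otherwise, so $\inf_C(S\circ g+f)=\inf_{g^{-1}(-\q)}f$, matching the right-hand side. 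Finally, for $\alpha\ge0$ the relevant sublevel set is $-\q$, so $\text{int}\{y\in Y;\ S(y)\le\alpha\}=\text{int}(-\q)=-\text{int}\,\q$, and the qualification hypothesis of Theorem \ref{thm:EHBL} reduces precisely to the Slater condition (\ref{eq:SC1}).

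The one point needing care is the proviso ``whenever the RHS is finite'' attached to Theorem \ref{thm:EHBL}. The Slater point $x$ from (\ref{eq:SC1}) lies in $g^{-1}(-\q)$ and $f$ is real-valued on $C$, so $\inf_{g^{-1}(-\q)}f\le f(x)<\infty$; thus the only excluded case is $\inf_{g^{-1}(-\q)}f=-\infty$. There I would argue directly by \emph{weak duality}: for any $y^{*}$ with $y^{*}|_{\q}\ge0$ and any $x\in g^{-1}(-\q)$ one has $-g(x)\in\q$, whence $y^{*}(g(x))\le0$ and $y^{*}(g(x))+f(x)\le f(x)$; taking infima gives $\inf_C(y^{*}\circ g+f)\le\inf_{g^{-1}(-\q)}f=-\infty$, so the left-hand supremum is also $-\infty$ and the equality is trivial. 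In the remaining finite case the identity, including attainment of the maximum, is exactly the conclusion of Theorem \ref{thm:EHBL}.

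I expect the genuine obstacles to be confined to the interior/Slater translation (verifying $\text{int}(-\q)=-\text{int}\,\q$ and that the sublevel sets behave as claimed) together with this finiteness bookkeeping; everything else is routine substitution into Theorem \ref{thm:EHBL}.
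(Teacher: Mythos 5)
Your proposal is correct and follows essentially the route the paper itself indicates: the paper obtains this Farkas lemma precisely by substituting the sublinear functional $S$ (equal to $0$ on $-\q$ and $+\infty$ elsewhere) into the Extended Hahn--Banach--Lagrange theorem (Theorem \ref{thm:EHBL}), exactly as you do. Your additional verifications --- lower semicontinuity via sublevel sets, the identification $\text{int}(-\q)=-\text{int}\,\q$, and the weak-duality argument disposing of the case $\inf_{g^{-1}(-\q)}f=-\infty$ excluded by the finiteness proviso of Theorem \ref{thm:EHBL} --- are accurate fillings-in of details the paper leaves implicit.
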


Theorem \ref{thm:Convex Abstract} implies the above theorem by taking $V=Y,\ K=C,\ k^*=f,\ h=-g$. In Theorem \ref{thm:Convex Abstract}, $V$ may be a tvs that is not Hausdorff locally convex, and $\q$ may be a convex subset closed under addition that is not a convex cone. Moreover, condition (\ref{eq:SC1}) translates into: "$h(K)$ contains an interior point of $\q$", which is stronger than Assumption \ref{assu:convex}:
\begin{claim}
If $h(K)$ contains an interior point of $\q$ then Assumption \ref{assu:convex} holds.
\end{claim}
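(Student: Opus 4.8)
The plan is to verify Assumption \ref{assu:convex} directly by producing the neighborhood $N$ and the selection $s$ by hand; in fact the cleanest choice is a \emph{constant} selection. By hypothesis there is some $k_0\in K$ for which $q_0:=h(k_0)$ is an interior point of $\q$. I would fix an open neighborhood $U\subset V$ of $q_0$ with $U\subseteq\q$, and then set
\[
N:=q_0-U=\{q_0-u;\ u\in U\},\qquad s(v):=k_0\ \text{ for all }v\in N.
\]
The first thing to check is that $N$ is genuinely a neighborhood of $0$: since $V$ is a tvs, the map $w\mapsto q_0-w$ is a homeomorphism of $V$, so it carries the open set $U$ (which contains $q_0$) onto the open set $N$ (which contains $q_0-q_0=0$).

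With these choices the two requirements of Assumption \ref{assu:convex} fall out immediately. For membership in $\Phi(v,h)=h^{-1}(v+\q)$, I would take any $v\in N$ and write $v=q_0-u$ with $u\in U$; then
\[
h(s(v))-v=q_0-v=u\in U\subseteq\q,
\]
so $h(s(v))\geq v$, i.e. $s(v)\in\Phi(v,h)$. For the boundedness condition, $k^{*}\circ s$ is the constant function $v\mapsto k^{*}(k_0)$ on $N$, which is trivially bounded above. This establishes Assumption \ref{assu:convex}.

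There is essentially no hard step here: the whole content is the observation that interiority of $q_0$ inside $\q$ lets a single point $k_0$ serve as a selection over an entire neighborhood of $0$, because $q_0-v$ remains inside $\q$ for all small $v$. The only point that genuinely uses the topological-vector-space structure (rather than a purely algebraic argument) is the assertion that $N=q_0-U$ is a neighborhood of $0$, which is exactly continuity of subtraction; that is the step I would state most carefully. I would also note that $K$ need only be a convex set and $\q$ a convex subset closed under addition — neither a cone structure nor convexity of $k^{*}$ is invoked — so the argument stays within the generality of Theorem \ref{thm:Convex Abstract}.
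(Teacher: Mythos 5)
Your proof is correct and is essentially the same as the paper's: both take a constant selection $s(v)=k_0$ on the neighborhood $N=h(k_0)-U$ of $0$ (the paper writes $N_0:=h(k_0)-N$), verify $h(s(v))-v\in\q$ by the same one-line computation, and observe that $k^{*}\circ s$ is constant hence bounded above. Your explicit remark that $N$ is open by continuity of $w\mapsto q_0-w$ makes precise a point the paper only states parenthetically, but the argument is identical.
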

\begin{proof}
Let $k_0\in K$ such that $h(k_0)\in\text{int}\q$. There exists a neighborhood $N\subset\q$ of $h(k_0)$. Define $N_0:=h(k_0)-N$ (which is a neighborhood of $0$) and $s:N_0\rightarrow K$ by $s(v):=k_0$. For any $v\in N_0$, $h\circ s(v)-v=h(k_0)-v\in N\subset \q$ and $k^*\circ s=k^*(k_0)$ is constant on $N_0$ therefore bounded above.
\end{proof}

Notice that under Assumption (\ref{assu:convex}), $\q$ may have an empty interior.\\
The Farkas' lemma is a special case of the "fundamental duality formula"\index{fundamental duality formula}:
\begin{thm}
(Theorem 2.7.1 in \cite{zalinescu2002convex})
Let $X,Y$ be two Hausdorff locally convex tvs, $\Phi:X\x Y\rightarrow\r\cup\{\infty\}$ a proper convex function and assume
\begin{equation}
\label{eq:FDFass}
    \exists x_0\in X\ s.t.\ \Phi(x_0,0)<\infty\ and\ \Phi(x_0,\cdot)\ is\ continuous\ at\ 0
\end{equation}
Then
$$
\inf_{x\in X}\Phi(x,0)
=
\max_{y^*\in Y^*}\left(\inf_{(x,y)\in X\x Y}\Phi(x,y)-\left\langle y^*,y\right\rangle\right)
$$
\end{thm}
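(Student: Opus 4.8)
The plan is to obtain this statement as a direct specialization of the convex abstract duality theorem (Theorem \ref{thm:Convex Abstract}). First I would set up the following dictionary: take $V=Y$ and $\q=\{0\}\subset Y$, which is trivially a convex subset closed under addition. Let $K:=\{(x,y)\in X\x Y;\ \Phi(x,y)<\infty\}$ be the effective domain of $\Phi$, which is non-empty and convex because $\Phi$ is proper and convex, and define $k^{*}(x,y):=\Phi(x,y)$, now a genuinely real-valued convex functional on $K$, together with the projection $h(x,y):=y$. Since $h$ is linear and $\q=\{0\}$, the identity $h(tk_{1}+(1-t)k_{0})=th(k_{1})+(1-t)h(k_{0})$ shows at once that $h$ is $\q$-concave, so all the structural hypotheses of Theorem \ref{thm:Convex Abstract} are met once Assumption \ref{assu:convex} is verified.

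Next I would check that the two sides of Theorem \ref{thm:Convex Abstract} translate exactly into the two sides of the fundamental duality formula. On the right, $h^{-1}(\q)=\{(x,0)\in K\}$, so $\inf_{k\in h^{-1}(\q)}k^{*}(k)=\inf_{x\in X}\Phi(x,0)$, where the passage to the infimum over all of $X$ is legitimate because $\Phi(x,0)=\infty$ off the domain. On the left, the only admissible $q$ is $0$, so the inner quantity reads $v^{*}(q-h(k))+k^{*}(k)=\Phi(x,y)-\langle y^{*},y\rangle$; extending the inner infimum from $K$ to all of $X\x Y$ changes nothing, since $\Phi(x,y)-\langle y^{*},y\rangle=\infty$ outside the domain. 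Hence the left-hand side becomes $\max_{y^{*}\in Y^{*}}\inf_{(x,y)\in X\x Y}[\Phi(x,y)-\langle y^{*},y\rangle]$, matching the claim, with the maximum attained exactly as Theorem \ref{thm:Convex Abstract} guarantees (in both the finite and the $-\infty$ cases handled in its proof).

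The one substantive point is Assumption \ref{assu:convex}, and this is precisely where the qualification hypothesis (\ref{eq:FDFass}) is used. Writing $c:=\Phi(x_{0},0)<\infty$, the continuity of $\Phi(x_{0},\cdot)$ at $0$ provides a neighborhood $N\subset Y$ of $0$ on which $\Phi(x_{0},\cdot)\leq c+1$. I would then define the selector $s:N\to K$ by $s(v):=(x_{0},v)$; this lands in $K$ because $\Phi(x_{0},v)<\infty$ on $N$, it satisfies $h(s(v))=v$ so that $s(v)$ lies in the fiber $h^{-1}(v+\q)$ demanded by the assumption, and $k^{*}\circ s(v)=\Phi(x_{0},v)\leq c+1$ is bounded above on $N$. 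Thus Assumption \ref{assu:convex} holds and Theorem \ref{thm:Convex Abstract} applies verbatim.

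I expect the main obstacle to be bookkeeping rather than any genuine difficulty: one must consistently absorb the value $+\infty$ of $\Phi$ by shrinking the abstract set $K$ to the effective domain (keeping $k^{*}$ real-valued, as Theorem \ref{thm:Convex Abstract} requires) while simultaneously confirming that the two abstract infima agree with the infima over the full spaces $X$ and $X\x Y$ appearing in the statement. The topological half is routine, since (\ref{eq:FDFass}) is tailored to manufacture exactly the locally bounded selector $s$ that Assumption \ref{assu:convex} requires; no appeal to the Hausdorff or local-convexity hypotheses on $X,Y$ is actually needed beyond ensuring $V=Y$ is a tvs, as Theorem \ref{thm:Convex Abstract} is already stated at that level of generality.
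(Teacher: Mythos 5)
Your proof is correct, and it routes through Theorem \ref{thm:Convex Abstract} with a genuinely different dictionary than the paper's. The paper first performs the partial minimization at the level of functions: it sets $V=Y$, $\q=\{0\}$, $h=\mathrm{id}$, $K=\{y\in Y;\ \inf_x\Phi(x,y)<\infty\}$ and $k^*(y)=\inf_x\Phi(x,y)$, which forces it to prove a small preliminary fact --- that the marginal $y\mapsto\inf_x\Phi(x,y)$ is convex --- and to check $N\subset K$ via the bound $\inf_x\Phi(x,\cdot)\leq\Phi(x_0,\cdot)\leq\Phi(x_0,0)+1$ on a neighborhood $N$ of $0$, after which the selector is simply $s(y)=y$. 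You instead keep $\Phi$ intact, taking $K=\mathrm{dom}\,\Phi\subset X\x Y$, $k^*=\Phi$, and letting the abstract operator $h(x,y)=y$ perform the projection, with selector $s(v)=(x_0,v)$; this eliminates the convexity-of-the-marginal step entirely (the $\q$-concavity of your $h$ with $\q=\{0\}$ is just the linearity identity) at the cost of the mild bookkeeping you correctly flag, namely extending the abstract infima from $\mathrm{dom}\,\Phi$ to all of $X\x Y$ and $X$, which is harmless since $\Phi=\infty$ off the domain. Both verifications of Assumption \ref{assu:convex} use hypothesis (\ref{eq:FDFass}) in exactly the same way, and your closing remark that neither Hausdorffness nor local convexity of $X,Y$ is actually needed matches the paper's own observation about the generality of Theorem \ref{thm:Convex Abstract}; one could say the paper's version keeps the abstract data smaller and mirrors the classical value-function proof, while yours delegates the infimal projection to the duality machinery itself.
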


\bigskip

This theorem is also implied by Theorem \ref{thm:Convex Abstract}: $\Phi$ is convex thus so is $y\mapsto\inf_x\Phi(x,y)$ since for any $x_0,x_1\in X,\ y_0,y_1\in Y,\ t\in[0,1]$ we have

$$
\inf_x\Phi(x,ty_1+(1-t)y_0)
\leq
\Phi(tx_1+(1-t)x_0,ty_1+(1-t)y_0)
\leq
t\Phi(x_1,y_1)+(1-t)\Phi(x_0,y_0)
$$
and by taking the infimum over $x_0,x_1\in X$ we get
$$
\inf_x\Phi(x,ty_1+(1-t)y_0)
\leq
t\inf_x\Phi(x,y_1)+(1-t)\inf_x\Phi(x,y_0)
$$
Therefore we may choose $K=\{y\in Y;\ \inf_x\Phi(x,y)<\infty\}\subset Y=V,\ h(y)=y,\ \q=\{0\}$ and $k^*(y)=\inf_x\Phi(x,y)$.
We verify Assumption \ref{assu:convex}: By (\ref{eq:FDFass}) there is a neighborhood $N\subset Y$ of $0$ on which $\inf_x\Phi(x,\cdot)\leq\Phi(x_0,\cdot)\leq\Phi(x_0,0)+1$ thus $N\subset K$. Define $s:N\rightarrow K$ by $s(y):=y$, then $h\circ s(y)-y=0\in\q$ and
$$k^*\circ s(y)=\inf_x\Phi(x,y)\leq \Phi(x_0,y)\leq \Phi(x_0,0)+1\ \forall y\in N$$

\begin{rem}
The full version of this theorem gives possible alternatives to condition (\ref{eq:FDFass}), some are weaker than (\ref{eq:FDFass}).
\end{rem}

\bigskip

For more theorems similar to Theorem \ref{thm:Convex Abstract}, we refer to Theorem 10 in \cite{GRAD201086} and Theorem 3.1 in \cite{Daniele2007}. Both theorems use conditions which rely on the notions of quasi-interior and quasi-relative interior and may be harder to verify. Furthermore, the theorems assume the underlying spaces to be Hausdorff locally convex/normed (respectively).

\newpage{}

\section{Optimal transport for vector measures}

In this chapter, I will present a transport problem for vector measures,
which is a generalization of the scalar case and a special case of the abstract duality theorem.
I will also present some new results concerning the works of Blackwell in \cite{blackwell1951} and use
it to characterize the cases for which there exists a transport
plan. Moreover, I will give conditions to the existence of transport maps in the semi-discrete case/general case.

\bigskip

To the end of this chapter, unless stated otherwise, $X$ and $Y$ are
compact metric spaces equipped with their Borel $\sigma$-algebra.
As described in the introduction, given two Borel vector measures $\left\{ \mu_{i}\right\} _{i=1}^{n}\subset\mathcal{M}_+\left(X\right),\ \left\{ \nu_{i}\right\} _{i=1}^{n}\subset\mathcal{M}_+\left(Y\right)$
and a continuous cost function $c\in C\left(X\times Y\right)$,
we generalize the problem of Monge
\[
\inf_{T}\int_{X\times Y}c\left(x,Tx\right)\d\mu\left(x\right)
\]
by taking the infimum over all measurable maps $T:X\rightarrow Y$
such that $T_{\#}\mu_{i}=\nu_{i}\ \forall i$.

To define the relaxation of this problem we use the
following set of transport plans:
\begin{defn}
\label{def:VectorPlans}Let $\mu=\left\{ \mu_{i}\right\} _{i=1}^{n}\subset\mathcal{M}_+\left(X\right),\ \nu=\left\{ \nu_{i}\right\} _{i=1}^{n}\subset\mathcal{M}_+\left(Y\right)$
two vector measures and $\eta_i:X\x Y\rightarrow\r,\ i=1,\ldots,n$. 
We define $\Pi\left(\mu,\nu,\eta\right)$ to be the set of measures $\pi\in\mathcal{M}_+\left(X\x Y\right)$
such that $\eta_{i}\pi\in\Pi\left(\mu_{i},\nu_{i}\right)\ \forall i$ ($\eta_{i}\pi$ is the measure for which $\frac{\d\eta_{i}\pi}{\d\pi}=\eta_i$).
\end{defn}

If we assume $\mu_{i}<<\left|\mu\right|\ \forall i$
for some measure $\left|\mu\right|$ and $\eta_i=\frac{\d\mu_{i}}{\d\left|\mu\right|}$, the Kantorovich relaxation to the problem is
\[
\inf_{\pi\in\Pi\left(\mu,\nu,\eta\right)}\int_{X\times Y}c\left(x,y\right)\d\pi\left(x,y\right)
\]
since there is a natural
inclusion from the set of transport maps $T:X\rightarrow Y$ satisfying 
$T_{\#}\mu_{i}=\nu_{i}\ \forall i$ into the set of measures $\Pi\left(\mu,\nu,\eta\right)$,
defined by $T\mapsto(\text{Id}_X\x T)_\#|\m|$.
A quick check: if $T_{\#}\mu_{i}=\nu_{i}\ \forall i$ then for every
$i$ and every $\psi\in C\left(X\right),\varphi\in C\left(Y\right)$ 
\[
\begin{array}{ccccc}
\int_{X\x Y}\psi\left(x\right)\frac{\d\mu_{i}}{\d\left|\mu\right|}\left(x\right)\d(\text{Id}_X\x T)_\#|\m|(x,y) & = & \int_{X}\psi\left(x\right)\frac{\d\mu_{i}}{\d\left|\mu\right|}\left(x\right)\d\left|\mu\right|\left(x\right) & = & \int_{X}\psi\left(x\right)\d\mu_{i}\left(x\right)\\
\int_{X\x Y}\varphi\left(y\right)\frac{\d\mu_{i}}{\d\left|\mu\right|}\left(x\right)\d(\text{Id}_X\x T)_\#|\m|\left(x,y\right) & = & \int_{X}\varphi\left(Tx\right)\d\mu_{i}\left(x\right) & = & \int_{Y}\varphi\left(y\right)\d\nu_{i}\left(y\right)
\end{array}
\]
Therefore $(\text{Id}_X\x T)_\#|\m|\in\Pi(\mu,\nu,\eta)$.

\subsection{Infinite-dimensional vector measures}

To apply the optimal transport problem to measures with an infinite-dimensional range, we need to define what those measures are. We assume $(\Omega,\mathcal{A})$ is a measurable space and $S$ is a Banach space.

\begin{defn}
\index{S@$S$-valued measure} A \emph{$S$-valued measure} is a function $\mu:\mathcal{A}\rightarrow S$ that satisfies $\mu(\emptyset)=0$ and  $\mu(\dot\cup_{n=1}^{\infty} A_n)=\sum_{n=1}^{\infty}\mu(A_n)$ (where the series converges in norm) for every sequence of pairwise disjoint measurable subsets $\{A_n\}_{n=1}^{\infty}\subset\mathcal{A}$. 
\end{defn}

We wish to work with vector measures that have vector-valued densities wrt some non-negative measure. There are several ways to define such integrals, one of them is the "Bochner integral"\index{Bochner integral}. The definition of this integral is similar the the definition of the Lebesgue integral. Following is a short summary of  the Bochner integral theory:

\begin{defn}
Let $f:\Omega\rightarrow S$ and $|\mu|\in\mm_+(\Omega)$.
\begin{enumerate}
    \item $f$ is called a \emph{simple function}\index{simple function} if there exists $n\in\mathbb{N},\ \{A_i\}_{i=1}^n\subset\mathcal{A}$ and $\{s_i\}_{i=1}^n\subset S$ such that $$f(\omega)=\sum_{i=1}^n\chi_{A_i}(\omega)s_i\ \forall\omega\in\Omega$$
    \item The integral over $A\in\mathcal{A}$ of a simple functions wrt $|\mu|$ is
    $$
    \int_{A}\sum_{i=1}^n\chi_{A_i}s_i\d|\mu|
    =
    \sum_{i=1}^n|\mu|(A_i\cap A)s_i
    $$
    \item $f$ is called \emph{measurable}\index{measurable} if it is the pointwise limit of a sequence of simple functions.
    \item A measurable function $f$ is said to be \emph{Bochner integrable}\index{Bochner integrable} (wrt $|\mu|$) is there exists a sequence of simple functions $\{f_n:\Omega\rightarrow S\}_{n=1}^{\infty}$ such that
    $\lim_{n\rightarrow\infty}\int_{\Omega}\|f-f_n\|\d|\mu|=0$.
    In this case, its \emph{Bochner integral} over $A\in\mathcal {A}$ is defined by
    $
    \int_Af\d|\mu|=\lim_{n\rightarrow\infty}\int_Af_n\d|\mu|
    $.
    \item The \index{variation}\emph{variation} of a $S$-valued measure is a non-negative measure $V(\mu)\in\mm_+(\Omega)$ defined by
    $$V(\mu)(A):=\sup_{\{A_i\}_i}\sum_{B\in\{A_i\}_i}\|\mu(B)\|_S$$
    where the supremum is taken over all measurable finite partitions of $A$. If $V(\mu)$ is finite, $\mu$ is said to be a measure of \emph{bounded variation}.\index{bounded variation}
\end{enumerate}
\end{defn}

\begin{rem}
For any $|\mu|\in\mm_+(\Omega)$, $V(|\mu|)=|\mu|$.
\end{rem}

\begin{prop}
\label{prop:Bochner}
Let $f:\Omega\rightarrow S$ and $|\mu|\in\mm_+(\Omega)$.
\begin{enumerate}
    \item (Lemma 11.39 in \cite{Aliprantis2006Hitch}) If $f$ is measurable then $\|f\|:\Omega\rightarrow\r$ is measurable.
    \item (Theorem 2 in \cite{Diestel1977} Chapter II.2) Assume $f$ is measurable. Then $f$ is Bochner integrable iff $\int_{\Omega}\|f\|\d|\mu|<\infty$, and in this case (Theorem 4 part $(ii)$ in \cite{Diestel1977} Chapter II.2)
    $$
    \left\|\int_{A}f\d|\mu|\right\|\leq\int_{A}\|f\|\d|\mu|\ \forall A\in\mathcal{A}
    $$
    \item (Lemma 11.41 in \cite{Aliprantis2006Hitch}) The Bochner integral of a Bochner integrable function does not depend on the choice of sequence of simple functions.
    \item (Theorem 4 part $(iii)$ in \cite{Diestel1977} Chapter II.2) If $f$ is Bochner integrable, $\mu:\mathcal{A}\rightarrow S$ defined by $\m(A):=\int_Af\d|\m|\in S$ is a $S$-valued measure, and (Theorem 4 part $(iv)$ in \cite{Diestel1977} Chapter II.2) its variation is $V(\mu)(A)=\int_A\|f\|\d|\m|$ (in particular $V(\mu)<<|\mu|$).
    \item (Corollary 5 in \cite{Diestel1977} Chapter II.2) If $\int_Af\d|\mu|=0$ for all $A\in\mathcal{A}$ then $f=0$ $|\mu|$-almost everywhere.
    \item (Theorem 6 in \cite{Diestel1977} Chapter II.2 or Lemma 11.45 in \cite{Aliprantis2006Hitch}) If $f$ is Bochner integrable, $S'$ is another Banach space and $T:S\rightarrow S'$ is a bounded linear operator then $Tf:\Omega\rightarrow S'$ is Bochner integrable and $\int_{\Omega}Tf\d|\mu|=T\int_{\Omega}f\d|\mu|$
\end{enumerate}
\end{prop}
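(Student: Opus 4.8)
The plan is to establish the six assertions in order, since each leans on those preceding it. Everything reduces to two elementary facts: that a Bochner integrable $f$ admits simple functions $f_n$ with $\int_\Omega\|f-f_n\|\d|\mu|\to 0$ (the definition), and that $\|\int_A g\d|\mu|\|\le\int_A\|g\|\d|\mu|$ for every simple $g$, which follows immediately from the triangle inequality applied to the finite sum $\sum_i|\mu|(A_i\cap A)s_i$ defining the integral of $g=\sum_i\chi_{A_i}s_i$.

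For part 1, if $f=\lim_n f_n$ pointwise with each $f_n$ simple, then $\|f_n\|$ is a real-valued simple function and, by continuity of the norm, $\|f_n(\omega)\|\to\|f(\omega)\|$ for every $\omega$; thus $\|f\|$ is a pointwise limit of measurable functions, hence measurable. For part 2, the forward implication is immediate: for $n$ large, $\int_\Omega\|f\|\d|\mu|\le\int_\Omega\|f_n\|\d|\mu|+\int_\Omega\|f-f_n\|\d|\mu|<\infty$. The reverse implication is the heart of the matter and the step I expect to be the main obstacle. Starting from simple functions $g_n\to f$ pointwise (available because $f$ is measurable) and the hypothesis $\int_\Omega\|f\|\d|\mu|<\infty$, I would truncate, setting $f_n:=g_n\,\chi_{\{\|g_n\|\le 2\|f\|\}}$; this set is measurable by part 1, so each $f_n$ is simple, one checks $\|f-f_n\|\le 3\|f\|$ everywhere and $f_n\to f$ pointwise, and the scalar dominated convergence theorem then yields $\int_\Omega\|f-f_n\|\d|\mu|\to 0$. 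Passing to the limit in the simple-function inequality above gives the displayed bound $\|\int_A f\d|\mu|\|\le\int_A\|f\|\d|\mu|$.

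The remaining parts are then routine. For part 3, if $f_n,f_n'$ both approximate $f$ in the above sense, then $\|\int_A f_n\d|\mu|-\int_A f_n'\d|\mu|\|\le\int_A\|f_n-f_n'\|\d|\mu|\le\int_\Omega\|f_n-f\|\d|\mu|+\int_\Omega\|f-f_n'\|\d|\mu|\to 0$, so the two limits coincide. For part 4, countable additivity of $A\mapsto\int_A f\d|\mu|$ follows by applying dominated convergence to the partial sums over a disjoint decomposition; for the variation, any finite partition $\{A_i\}$ of $A$ satisfies $\sum_i\|\mu(A_i)\|=\sum_i\|\int_{A_i}f\d|\mu|\|\le\sum_i\int_{A_i}\|f\|\d|\mu|=\int_A\|f\|\d|\mu|$, giving $V(\mu)(A)\le\int_A\|f\|\d|\mu|$, and the reverse inequality I would obtain by approximating $f$ on $A$ by a simple $s=\sum_j\chi_{B_j}s_j$ in $L^1(|\mu|)$ and testing against the partition $\{B_j\cap A\}$. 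Part 5 is then immediate: $\int_A f\d|\mu|=0$ for all $A$ forces $V(\mu)\equiv 0$, i.e.\ $\int_A\|f\|\d|\mu|=0$ for every $A$, whence $\|f\|=0$ $|\mu|$-almost everywhere.

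Finally, for part 6, linearity of $T$ gives $T(\int_\Omega f_n\d|\mu|)=\int_\Omega Tf_n\d|\mu|$ for each simple $f_n$, while the bound $\int_\Omega\|Tf-Tf_n\|\d|\mu|\le\|T\|\int_\Omega\|f-f_n\|\d|\mu|\to 0$, which uses $\|Tf-Tf_n\|\le\|T\|\,\|f-f_n\|$, shows that $Tf$ is Bochner integrable; passing to the limit and using continuity of $T$ yields $\int_\Omega Tf\d|\mu|=\lim_n T(\int_\Omega f_n\d|\mu|)=T(\int_\Omega f\d|\mu|)$.
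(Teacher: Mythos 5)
Your proposal is correct, but it takes a different route from the paper in an obvious sense: the paper offers no proof at all for this proposition, citing it verbatim to the literature (Theorems 2 and 4 and Corollary 5 in Chapter II.2 of \cite{Diestel1977}, Lemmas 11.39, 11.41 and 11.45 in \cite{Aliprantis2006Hitch}), whereas you supply a complete self-contained argument. Your proofs essentially reconstruct the standard textbook ones: the truncation $f_n:=g_n\chi_{\{\|g_n\|\le 2\|f\|\}}$, combined with the bound $\|f-f_n\|\le 3\|f\|$ and scalar dominated convergence, is exactly the classical device used to prove the Bochner integrability criterion in \cite{Diestel1977}, and your treatments of parts 3--6 (uniqueness via the Cauchy estimate, countable additivity via continuity from above, the two-sided variation estimate with a simple approximant tested against the partition $\{B_j\cap A\}$, and the interchange with a bounded operator via $\|Tf-Tf_n\|\le\|T\|\,\|f-f_n\|$) are the standard ones and are all sound. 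Two small points are worth making explicit. First, in the forward direction of part 2 you bound $\int_\Omega\|f\|\d|\mu|\le\int_\Omega\|f_n\|\d|\mu|+\int_\Omega\|f-f_n\|\d|\mu|$, which tacitly requires $\int_\Omega\|f_n\|\d|\mu|<\infty$ for a simple $f_n$; this is automatic here because the paper works with finite measures, but it is the one place the argument would need care in a $\sigma$-finite setting. Second, note that the paper's definition of measurability (pointwise limit of simple functions) is precisely what licenses your choice of $g_n$, so no appeal to the Pettis measurability theorem is needed; in a treatment starting from weak measurability that extra step would be unavoidable. What your approach buys is a proof of the proposition from the paper's own definitions with no external input; what the paper's citation approach buys is brevity and sharper attribution, since these facts are foundational and reproved in any reference on vector measures.
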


Using part 4 of the above proposition we define the space of measures that will be relevant to our problem:

\begin{defn}
 $\mm(\Omega,S)$ \index{m@$\mm$} is the space $S$-valued measures $\m$ of the form $\m(A):=\int_A\eta\d|\m|\in S$ for any measurable subset $A\subset \Omega$ where $|\m|\in\mm_+(\Omega)$ and $\eta:\Omega\rightarrow S$ is Bochner integrable. In this case we denote $\mu=(\eta,|\m|)$ and $\eta=\dm$.
\end{defn}

By part 4 of Proposition \ref{prop:Bochner}, any measure in $\mm(\Omega,S)$ is of bounded variation, and moreover we conclude that it is always possible to take $|\mu|=V(\mu)$:
\begin{claim}
\label{claim:VarRep}
Let $\eta$ be Bochner integrable wrt $|\mu|\in\mm_+(\Omega)$ such that $\eta\neq0$ $|\mu|$-almost everywhere, then $\mu:=(\eta,|\mu|)=(\frac{\eta}{\|\eta\|},V(\mu))$, or equivalently $\frac{\dm}{\|\dm\|}=\frac{\d\mu}{\d V(\mu)}$.
\end{claim}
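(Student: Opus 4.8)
The plan is to show that the two proposed representations of $\mu$ define the same $S$-valued measure, and the cleanest route is to reduce the vector-valued identity to a family of scalar identities by pairing with the dual space $S^*$. Concretely, writing $\tilde\mu(A):=\int_A\frac{\eta}{\|\eta\|}\d V(\mu)$, I would prove $\tilde\mu(A)=\mu(A)$ for every measurable $A$ by establishing $s^*(\tilde\mu(A))=s^*(\mu(A))$ for every $s^*\in S^*$ and then invoking the fact that $S^*$ separates the points of $S$ (a standard consequence of Hahn-Banach).

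First I would check that $\tilde\mu$ is even well-defined. Since $\eta\neq0$ $|\mu|$-a.e. and, by part 4 of Proposition \ref{prop:Bochner}, $V(\mu)<<|\mu|$, the integrand $\frac{\eta}{\|\eta\|}$ is defined $V(\mu)$-a.e.; it is measurable by part 1, has constant norm $1$, and is therefore Bochner integrable with respect to the finite measure $V(\mu)$ by the integrability criterion in part 2. The crucial structural input is again part 4, which gives $V(\mu)(A)=\int_A\|\eta\|\d|\mu|$, i.e. the scalar Radon-Nikodym derivative $\frac{\d V(\mu)}{\d|\mu|}=\|\eta\|$.

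The core computation fixes a measurable $A$ and an arbitrary $s^*\in S^*$. Because $s^*$ is a bounded linear operator, part 6 of Proposition \ref{prop:Bochner} lets me pull it inside both Bochner integrals, turning $s^*(\mu(A))=\int_A s^*(\eta)\d|\mu|$ and $s^*(\tilde\mu(A))=\int_A\frac{s^*(\eta)}{\|\eta\|}\d V(\mu)$ into honest scalar integrals. Then I apply the scalar chain rule for Radon-Nikodym derivatives, substituting $\d V(\mu)=\|\eta\|\d|\mu|$, so that $\int_A\frac{s^*(\eta)}{\|\eta\|}\d V(\mu)=\int_A s^*(\eta)\d|\mu|$; here the scalar function $s^*(\eta)$ is $|\mu|$-integrable since $|s^*(\eta)|\leq\|s^*\|\,\|\eta\|$ and $\|\eta\|\in L^1(|\mu|)$ by part 2. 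This yields $s^*(\tilde\mu(A))=s^*(\mu(A))$ for all $s^*$, hence $\tilde\mu(A)=\mu(A)$, which is precisely $\mu=(\frac{\eta}{\|\eta\|},V(\mu))$.

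The main obstacle is conceptual rather than computational: there is no ready-made ``change of base measure'' rule for Bochner integrals, so the entire purpose of this argument is to avoid reproving Bochner integration theory and instead collapse the question to the scalar chain rule, using the operator-commutation property (part 6) and point separation by $S^*$. Once that reduction is in place, every remaining step is routine scalar measure theory.
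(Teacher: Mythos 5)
Your proof is correct, but it takes a genuinely different route from the paper. The paper works entirely at the level of simple functions: it takes simple approximants $\eta_n$ of $\eta$, shows by direct norm estimates that $\frac{\eta_n}{\|\eta_n\|}\rightarrow\frac{\eta}{\|\eta\|}$ in $L^1(V(\mu))$ (so that $\frac{\eta}{\|\eta\|}$ is Bochner integrable wrt $V(\mu)$ with explicit simple approximants), and then bounds $\left\|\int_A\eta_n\,\d|\mu|-\int_A\frac{\eta_n}{\|\eta_n\|}\,\d V(\mu)\right\|$ by $\int_A\|\eta_n-\eta\|\,\d|\mu|$ termwise over the partition defining $\eta_n$, passing to the limit. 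You instead collapse the vector identity to scalar ones: pairing with $s^*\in S^*$ via part 6 of Proposition \ref{prop:Bochner}, applying the scalar Radon--Nikodym chain rule $\d V(\mu)=\|\eta\|\,\d|\mu|$ (from part 4), and invoking Hahn--Banach point separation. Your approach is shorter and avoids reproving convergence estimates, at the cost of two external inputs (duality separation and the operator-interchange property); the paper's approach is more self-contained and, as a byproduct, exhibits concrete simple approximants of $\frac{\eta}{\|\eta\|}$ rather than deducing its integrability abstractly. One small imprecision: measurability of $\frac{\eta}{\|\eta\|}$ does not follow from part 1 alone, which only gives measurability of the scalar function $\|\eta\|$; you additionally need that the product of a measurable scalar function ($\frac{1}{\|\eta\|}$, defined $|\mu|$-a.e.) with a strongly measurable vector function is strongly measurable --- routine, e.g.\ by multiplying simple approximants, which is exactly what the paper's explicit construction supplies --- so this is a one-line fix, not a gap.
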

\begin{proof}
Since $\eta$ is Bochner integrable wrt $|\mu|$, there exists a sequence of simple functions $\eta_n$ such that
\begin{equation}
\label{eq:Bochner}
\lim_{n\rightarrow\infty}\int_{\Omega}\|\eta_n-\eta\|\d|\mu|=0
\end{equation}
We may assume $\eta_n\neq0$ everywhere (just choose $\eta_n=\frac{1}{n}s\neq0$ whenever $\eta_n=0$), therefore by part 4 of Proposition \ref{prop:Bochner}

\[
\begin{array}{ccc}
     \int_\Omega\|\frac{\eta_n}{\|\eta_n\|}-\frac{\eta}{\|\eta\|}\|\d V(\mu)
     & = &
     \int_\Omega\|\frac{\eta_n}{\|\eta_n\|}-\frac{\eta}{\|\eta\|}\|\|\eta\|\d|\mu|
     \\ & = &
     \int_\Omega\|\frac{\eta_n}{\|\eta_n\|}\|\eta\|-\eta\|\d|\mu|
     \\ & = &
     \int_\Omega\|\frac{\eta_n}{\|\eta_n\|}\|\eta\|-\eta_n+\eta_n-\eta\|\d|\mu|
     \\ & \leq &
     \int_\Omega\|\frac{\eta_n}{\|\eta_n\|}\|\eta\|-\eta_n\|+\|\eta_n-\eta\|\d|\mu|
     \\ & = &
     \int_\Omega|\frac{\|\eta\|}{\|\eta_n\|}-1|\cdot\|\eta_n\|+\|\eta_n-\eta\|\d|\mu|
     \\ & = &
     \int_\Omega|\|\eta\|-\|\eta_n\||+\|\eta_n-\eta\|\d|\mu|
     \\ & \leq &
     2\int_\Omega\|\eta_n-\eta\|\d|\mu|
     \underset{n\rightarrow\infty}{\longrightarrow}0
\end{array}
\]

Assume $\eta_n(\omega)=\sum_{i=1}^{M_n}\chi_{A_i^n}(\omega)s_i^n$ where $s_i^n\neq0\ \forall i$ and $\{A_i^n\}_{i=1}^{M_n}$ are pairwise disjoint for each $n\in\mathbb{N}$, then $\|\eta_n\|=\sum_{i=1}^{M_n}\chi_{A_i^n}\|s_i^n\|$ and $\frac{\eta_n}{\|\eta_n\|}=\sum_{i=1}^{M_n}\chi_{A_i^n}\frac{s_i^n}{\|s_i^n\|}$ which is simple thus $\frac{\eta}{\|\eta\|}$ is Bochner integrable wrt $V(\mu)$. Again by part 4 of Proposition \ref{prop:Bochner}, for any $A\in\mathcal{A}$ we have
\[
\begin{array}{ccc}
     \int_A\|\eta_n-\eta\|\d|\mu| & = & \sum_{i=1}^{M_n}\int_{A_i^n\cap A}\|\eta_n-\eta\|\d|\mu|
     \\ & \geq &
     \sum_{i=1}^{M_n}\left|\int_{A_i^n\cap A}\|\eta_n\|-\|\eta\|\d|\mu|\right|
     \\ & = &
     \sum_{i=1}^{M_n}\left|\int_{A_i^n\cap A}\|s_i^n\|-\|\eta\|\d|\mu|\right|
     \\ & \geq &
     \sum_{i=1}^{M_n}\left|\left(|\mu|(A_i^n\cap A)\|s_i^n\|-V(\mu)(A_i^n\cap A)\right)\right|
     \\ & = &
     \sum_{i=1}^{M_n}\left|\left(|\mu|(A_i^n\cap A)\|s_i^n\|-V(\mu)(A_i^n\cap A)\right)\right|\|\frac{s_i^n}{\|s_i^n\|}\|
     \\ & \geq &
     \left\|\sum_{i=1}^{M_n}|\mu|(A_i^n\cap A)s_i^n-\sum_{i=1}^{M_n}V(\mu)(A_i^n\cap A)\frac{s_i^n}{\|s_i^n\|}\right\|
     \\& = &
     \left\|\int_A\eta_n\d|\mu|-\int_A\frac{\eta_n}{\|\eta_n\|}\d V(\mu)\right\|
\end{array}
\]
Thus $\underset{n\rightarrow\infty}{\lim}\int_A\eta_n\d|\mu|-\int_A\frac{\eta_n}{\|\eta_n\|}\d V(\mu)=0$ and
$$
\int_A\frac{\eta}{\|\eta\|}\d V(\mu)
=
\underset{n\rightarrow\infty}{\lim}\int_A\frac{\eta_n}{\|\eta_n\|}\d V(\mu)
=
\underset{n\rightarrow\infty}{\lim}\int_A\eta_n\d |\mu|
=
\int_A\eta\d |\mu|
$$
\end{proof}

\begin{cor}
\label{cor:NorDen}
If $\mu=(\eta_1,|\mu_1|)=(\eta_2,|\mu_2|)$ then $\frac{\eta_1}{\|\eta_1\|}=\frac{\eta_2}{\|\eta_2\|}$ $V(\mu)$-almost everywhere.
\end{cor}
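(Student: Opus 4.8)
The plan is to show, for each $i$, that the normalized density $\frac{\eta_i}{\|\eta_i\|}$ coincides $V(\mu)$-almost everywhere with the Bochner density of $\mu$ with respect to $V(\mu)$, and then to invoke uniqueness of that density. The corollary is essentially immediate from Claim \ref{claim:VarRep} once two technical points are dealt with: the normalization is meaningless where $\eta_i$ vanishes, and the hypothesis of Claim \ref{claim:VarRep} (that the density be nonzero almost everywhere) need not hold for the given reference measure $|\mu_i|$.

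First I would dispose of the set on which the normalization is ill-defined. Set $N_i:=\{\omega\in\Omega;\ \eta_i(\omega)=0\}$. By part 4 of Proposition \ref{prop:Bochner} we have $V(\mu)(A)=\int_A\|\eta_i\|\d|\mu_i|$, so $V(\mu)(N_i)=\int_{N_i}\|\eta_i\|\d|\mu_i|=0$. Hence both $\frac{\eta_1}{\|\eta_1\|}$ and $\frac{\eta_2}{\|\eta_2\|}$ are defined $V(\mu)$-almost everywhere, and it suffices to establish the asserted equality off the $V(\mu)$-null set $N_1\cup N_2$.

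Next I would bring each representation into the form required by Claim \ref{claim:VarRep}. Since $\eta_i=0$ on $N_i$, the measure $\mu$ does not charge $N_i$, so replacing $|\mu_i|$ by its restriction $\hat\mu_i(A):=|\mu_i|(A\setminus N_i)$ leaves $\mu$ unchanged: $\mu(A)=\int_A\eta_i\d|\mu_i|=\int_A\eta_i\d\hat\mu_i$, while now $\eta_i\neq 0$ holds $\hat\mu_i$-almost everywhere (and $\eta_i$ remains Bochner integrable with respect to $\hat\mu_i\leq|\mu_i|$ by part 2 of Proposition \ref{prop:Bochner}). Applying Claim \ref{claim:VarRep} to $\mu=(\eta_i,\hat\mu_i)$ gives, for $i=1,2$,
$$\mu(A)=\int_A\frac{\eta_i}{\|\eta_i\|}\d V(\mu)\qquad\forall A\in\mathcal{A},$$
with $\frac{\eta_i}{\|\eta_i\|}$ Bochner integrable with respect to $V(\mu)$; here I use that the variation $V(\mu)$ is intrinsic to $\mu$ and hence the same for both representations.

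Finally I would conclude by uniqueness of the Bochner density. Subtracting the two identities yields $\int_A\big(\frac{\eta_1}{\|\eta_1\|}-\frac{\eta_2}{\|\eta_2\|}\big)\d V(\mu)=0$ for every $A\in\mathcal{A}$, so part 5 of Proposition \ref{prop:Bochner}, applied with reference measure $V(\mu)$, forces $\frac{\eta_1}{\|\eta_1\|}=\frac{\eta_2}{\|\eta_2\|}$ $V(\mu)$-almost everywhere. The only genuinely delicate step is the first together with the passage to $\hat\mu_i$: this is where one must recognize that $\{\eta_i=0\}$ is $V(\mu)$-null and that restricting the reference measure preserves $\mu$ while restoring the hypothesis of Claim \ref{claim:VarRep}; everything after that is routine.
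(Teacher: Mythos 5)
Your proposal is correct and takes essentially the same route as the paper: apply Claim \ref{claim:VarRep} to both representations to identify each normalized density $\frac{\eta_i}{\|\eta_i\|}$ as the density of $\mu$ with respect to $V(\mu)$, then conclude by the uniqueness statement in part 5 of Proposition \ref{prop:Bochner}. The only difference is your explicit restriction of $|\mu_i|$ to $\{\eta_i\neq 0\}$ (after checking that set is $V(\mu)$-null), which correctly secures the nonvanishing hypothesis of Claim \ref{claim:VarRep} that the paper's one-line proof leaves implicit.
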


\begin{proof}
By Claim \ref{claim:VarRep} $(\frac{\eta_1}{\|\eta_1\|},V(\mu))=(\frac{\eta_2}{\|\eta_2\|},V(\mu))$ so by part 5 of Proposition \ref{prop:Bochner} we are done.
\end{proof}

\begin{cor}

\label{cor:not in M}
If $\frac{\d\mu}{\d V(\mu)}$ does not exists, then $\dm$ does not exists for any $|\mu|\in\mm_+(\Omega)$, so $\mu\notin\mm(\Omega,S)$.
\end{cor}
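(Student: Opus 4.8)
The plan is to argue by contraposition: rather than starting from the non-existence of $\frac{\d\mu}{\d V(\mu)}$, I would show that \emph{if} $\dm$ exists for even a single $|\mu|\in\mm_+(\Omega)$, then $\frac{\d\mu}{\d V(\mu)}$ must exist as well. Negating this yields the first assertion of the corollary, and the final clause ``$\mu\notin\mm(\Omega,S)$'' is then immediate from the definition of $\mm(\Omega,S)$, since membership there means precisely that $\mu$ admits a Bochner-integrable density with respect to some non-negative measure.

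So suppose $\mu=(\eta,\lambda)$ for some $\lambda\in\mm_+(\Omega)$ and some $\eta$ that is Bochner integrable wrt $\lambda$. I would like to feed this representation into Claim \ref{claim:VarRep} to produce $\frac{\d\mu}{\d V(\mu)}=\frac{\eta}{\|\eta\|}$, but the claim requires $\eta\neq0$ almost everywhere, whereas in general $\eta$ may vanish on a set of positive $\lambda$-measure. The idea is to discard that set: let $E:=\{\omega;\ \eta(\omega)=0\}$ (measurable, since $\|\eta\|$ is measurable by part 1 of Proposition \ref{prop:Bochner}) and define $\lambda'\in\mm_+(\Omega)$ by $\lambda'(A):=\lambda(A\cap E^{c})$. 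Because $\eta\equiv0$ on $E$ one has $\int_A\eta\,\d\lambda'=\int_{A\cap E^{c}}\eta\,\d\lambda=\int_A\eta\,\d\lambda=\mu(A)$ for every $A$, so $\mu=(\eta,\lambda')$; and since $\int_\Omega\|\eta\|\,\d\lambda'\leq\int_\Omega\|\eta\|\,\d\lambda<\infty$, part 2 of Proposition \ref{prop:Bochner} shows $\eta$ is still Bochner integrable wrt $\lambda'$. Now $\eta\neq0$ holds $\lambda'$-almost everywhere by construction, so Claim \ref{claim:VarRep} applies to the representation $(\eta,\lambda')$ and yields $\mu=(\tfrac{\eta}{\|\eta\|},V(\mu))$, i.e. $\frac{\d\mu}{\d V(\mu)}$ exists, completing the contrapositive.

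The step deserving the most care is checking that passing from $\lambda$ to $\lambda'$ alters neither $\mu$ nor its variation. The equality $\mu=(\eta,\lambda')$ is exactly the displayed computation above. For the variation I would invoke part 4 of Proposition \ref{prop:Bochner}, which gives $V(\mu)(A)=\int_A\|\eta\|\,\d\lambda$; hence $V(\mu)(E)=0$ and $V(\mu)$ is concentrated on $E^{c}$, the set on which $\lambda$ and $\lambda'$ agree, so the variation is intrinsic to $\mu$ and unchanged by the restriction. With these verifications the conclusion is a direct application of Claim \ref{claim:VarRep}, and Corollary \ref{cor:NorDen} ensures that the resulting density $\frac{\eta}{\|\eta\|}$ is well defined $V(\mu)$-almost everywhere, independently of the chosen representation.
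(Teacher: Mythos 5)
Your proof is correct and follows the route the paper intends: the corollary is stated there without proof, as the contrapositive of Claim \ref{claim:VarRep} (if $\mu=(\eta,|\mu|)$ for some $|\mu|\in\mm_+(\Omega)$, then $\mu=(\frac{\eta}{\|\eta\|},V(\mu))$, so $\frac{\d\mu}{\d V(\mu)}$ exists). Your one genuine addition is the passage from $\lambda$ to $\lambda'(A):=\lambda(A\cap E^c)$ to secure the hypothesis $\eta\neq0$ almost everywhere, which Claim \ref{claim:VarRep} requires but an arbitrary representation need not satisfy; this correctly fills a small gap the paper glosses over, and your verification that the restriction changes neither $\mu$ nor $V(\mu)$ (the latter being in any case intrinsic to $\mu$ as a set function) is sound.
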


As opposed to the scalar case ($S=\r$),  $\mu<<|\mu|$ does not  guarantee the existence of $\dm$, even if $\mu$ is of bounded variation.
\begin{defn}
\label{Defn:RNProp}
A Banach space $S$ said to have the "Radon-Nikodym property"\index{Radom-Nikodym property} if $\dm$ exists and is Bochner integrable whenever the $S$-valued measure $\mu$ is of bounded variation and $\mu<<|\mu|$.
\end{defn}
 The classical Radon-Nikodym theorem states that $\r$ has the Radon-Nikodym property. By Theorem 1 in \cite{Diestel1977} Chapter III.3, any separable dual space has the Radon-Nikodym property. The following example demonstrates a case where $\mu<<|\mu|$ but $\dm$ does not exist.

\bigskip

\fbox{
\parbox{16cm}{
\begin{example}
Let $\Omega=[0,1]$ equipped with the Borel $\sigma$-algebra, $\ S=L^p[0,1]$. $\mu$ defined by $\mu(A):=\chi_A$ is absolutely continuous wrt the Lebesgue measure $\lambda\in\mathcal{P}(\Omega)$. But it is possible to show that there is no Bochner integrable $\eta:\Omega\rightarrow S$ such that  $\mu=(\eta,\lambda)$ thus by Corollary \ref{cor:not in M} $\mu\notin\mm(\Omega,S)$. For $p=1$, $\mu$ is of bounded variation $V(\mu)=\lambda$ but $S$ is not a dual space. For $p>1$, $S$ is a separable dual space but $\mu$ is not of bounded variation (For $p=\infty$, $\mu$ is not countably-additive so it is not a measure).
\end{example}
}
}

\bigskip

It is also possible to integrate scalar and $S^*$-valued functions wrt measures in $\mm(\Omega,S)$:

\begin{defn}
\label{def:integral S^*}
Let $f:\Omega\rightarrow S^*$, $g:\Omega\rightarrow\r$ and $\mu=(\eta,|\mu|)\in\mm(\Omega,S)$ then 
$\int_{\Omega} g\d\mu=\int_{\Omega}g\eta\d|\mu|$ 
and
$$
\int_{\Omega}f(\omega)\d\mu(\omega)
=
\int_{\Omega}\left\langle f(\omega),\eta(\omega)\right\rangle\d|\mu|(\omega)
$$
$f$ or $g$ are said to be $\mu$-integrable if their integral converges.
\end{defn}

When $S=\r$ and $\Omega$ is a compact metric space, the space of Borel measures on $\Omega$ can be identified with the space of linear functionals on $C(\Omega)$ (see for example Theorem 14.12 in \cite{Aliprantis2006Hitch}). Therefore these measures are uniquely determined by their values on $C(\Omega)$. In optimal transportation, we use this property by assuming the constraint
$$\int_{X\x Y}\psi(x)+\varphi(x)\d\pi(x,y)=\int_X\psi\d\mu+\int_Y\varphi\d\nu$$
characterizes the set of measures $\pi$ with marginals $\mu$ and $\nu$. To use the same characterization in the vector case we will show the following:
\begin{claim}
\label{claim:cont is enough}
Let $\Omega$ be a compact metric space equipped with its Borel $\sigma$-algebra and $\mu_i=(\eta_i,|\mu_i|)\in\mm(\Omega,S),\ i=1,2$ with $\eta_i\neq0$ $|\mu_i|$-almost everywhere, then the following are equivalent:
\begin{enumerate}
    \item $\mu_1=\mu_2$.
    \item $\int_{\Omega}f\d\mu_1=\int_{\Omega}f\d\mu_2$ for all $|\mu_1|$ and $|\mu_2|$ inegrable $f:\Omega\rightarrow S^*$.
    \item $\int_{\Omega}f\d\mu_1=\int_{\Omega}f\d\mu_2$ for all $f\in C(\Omega,S^*)$.
    \item $\int_{\Omega}f\d\mu_1=\int_{\Omega}f\d\mu_2$ for all $f\in C(\Omega,\f)$. Where $\f\subset S^*$ is some family of functionals separating the points of $S$.
    \item $\int_{\Omega}f\d\mu_1=\int_{\Omega}f\d\mu_2$ for all $f\in C(\Omega)$.
\end{enumerate}
\end{claim}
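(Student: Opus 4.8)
The plan is to prove the equivalence by the cycle $1\Rightarrow2\Rightarrow3\Rightarrow4\Rightarrow1$ together with the pair $1\Rightarrow5\Rightarrow1$, so that statement $5$ is tied to the others through statement $1$. The implications $1\Rightarrow2$, $2\Rightarrow3$, $3\Rightarrow4$ and $1\Rightarrow5$ are the easy ``forward'' direction; the two implications $4\Rightarrow1$ and $5\Rightarrow1$ carry the real content.

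For $1\Rightarrow2$ the point is that the integrals of Definition \ref{def:integral S^*} depend on $\mu$ only through the intrinsic pair $\left(\frac{\d\mu}{\d V(\mu)},V(\mu)\right)$. Concretely, using Claim \ref{claim:VarRep} I would rewrite $\int\langle f,\eta_i\rangle\d|\mu_i|=\int\langle f,\frac{\eta_i}{\|\eta_i\|}\rangle\d V(\mu_i)$; since $\mu_1=\mu_2$ forces $V(\mu_1)=V(\mu_2)$ and, by Corollary \ref{cor:NorDen}, $\frac{\eta_1}{\|\eta_1\|}=\frac{\eta_2}{\|\eta_2\|}$ almost everywhere, the two sides coincide (the hypothesis $\eta_i\neq0$ a.e. is exactly what licenses these two results). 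The identical computation with a scalar $g$ in place of $f$ gives $1\Rightarrow5$. The inclusions $2\Rightarrow3$ and $3\Rightarrow4$ are immediate: any $f\in C(\Omega,S^*)$ is bounded on the compact $\Omega$, so $|\langle f,\eta_i\rangle|\le\|f\|_\infty\|\eta_i\|$ is $|\mu_i|$-integrable and $f$ lies in the test class of statement $2$; and $C(\Omega,\f)\subseteq C(\Omega,S^*)$.

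The engine for both converses is a reduction to scalar signed measures followed by Riesz representation. Fixing a functional $\phi$, I consider the finite signed Borel measures $m_i^{\phi}:=\langle\phi,\eta_i\rangle|\mu_i|$; by Proposition \ref{prop:Bochner}(6) the bounded operator $\phi$ commutes with the Bochner integral, so $m_i^{\phi}(A)=\langle\phi,\mu_i(A)\rangle$ and $\int g\,\d m_i^{\phi}=\int\langle g\phi,\eta_i\rangle\d|\mu_i|=\int_\Omega(g\phi)\,\d\mu_i$ for every $g\in C(\Omega)$. For $5\Rightarrow1$ I take $\phi\in S^*$ arbitrary: applying $\phi$ to the $S$-valued identity of statement $5$ gives $\int g\,\d m_1^{\phi}=\int g\,\d m_2^{\phi}$ for all $g\in C(\Omega)$, whence $m_1^{\phi}=m_2^{\phi}$ by the Riesz uniqueness theorem on the compact metric space $\Omega$ (Theorem 14.12 in \cite{Aliprantis2006Hitch}); thus $\langle\phi,\mu_1(A)\rangle=\langle\phi,\mu_2(A)\rangle$ for every Borel $A$, and since $S^*$ separates points (Hahn--Banach) we conclude $\mu_1(A)=\mu_2(A)$, i.e. $\mu_1=\mu_2$. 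The implication $4\Rightarrow1$ runs along the same lines with $\phi\in\f$, invoking that $\f$ separates the points of $S$ at the last step.

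The main obstacle is precisely the reduction inside $4\Rightarrow1$: to feed statement $4$ the functions needed for the Riesz test I must use the products $g\phi$ with $g\in C(\Omega)$, $\phi\in\f$, and I need these to lie in $C(\Omega,\f)$. Note that the constant $\f$-valued functions $f\equiv\phi$ by themselves only yield $\langle\phi,\mu_1(\Omega)\rangle=\langle\phi,\mu_2(\Omega)\rangle$, i.e. equality of total masses, which is far too weak; the localization to arbitrary Borel sets genuinely requires the non-constant products. This is where the structure of the separating family enters (it suffices, for instance, that $\f$ be closed under multiplication by continuous scalar functions, as when $\f$ is a linear subspace), and granting this the map $\omega\mapsto g(\omega)\phi$ is continuous into $\f$ and the scalar reduction goes through. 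Everything remaining is routine bookkeeping with Proposition \ref{prop:Bochner}.
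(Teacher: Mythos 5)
Your proposal is correct, but the two converse implications take a genuinely different route from the paper's. The paper closes the single cycle $1\Rightarrow2\Rightarrow3\Rightarrow4\Rightarrow5\Rightarrow1$: it obtains $4\Rightarrow5$ by testing against $s^*\cdot f$ with $s^*\in\f$, $f\in C(\Omega)$ and using that $\f$ separates points, and then proves $5\Rightarrow1$ by approximating $\chi_A$ for open $A$ with a uniformly bounded sequence of continuous functions and invoking the vector-valued dominated convergence theorem (Theorem 11.46 in \cite{Aliprantis2006Hitch}), which yields $\mu_1(A)=\mu_2(A)$ on open sets, the extension to all Borel sets being left implicit. You instead scalarize once and for all: the signed measures $m_i^{\phi}=\langle\phi,\eta_i\rangle|\mu_i|$ satisfy $m_i^{\phi}(A)=\langle\phi,\mu_i(A)\rangle$ by part 6 of Proposition \ref{prop:Bochner}, and Riesz uniqueness on $C(\Omega)$ gives $m_1^{\phi}=m_2^{\phi}$, hence equality of $\mu_1,\mu_2$ on \emph{every} Borel set in one stroke; this lets you run $4\Rightarrow1$ and $5\Rightarrow1$ in parallel and dispenses with the vector dominated convergence theorem at the price of the scalar Riesz representation theorem --- a fact the paper itself cites in the paragraph motivating the claim, so nothing foreign is imported. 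Your forward implications coincide with the paper's ($1\Rightarrow2$ via Claim \ref{claim:VarRep} and Corollary \ref{cor:NorDen}, exactly as in the text; your integrability bound in $2\Rightarrow3$ replaces the paper's uniform approximation by simple functions, a cosmetic difference). Note finally that both arguments share the same tacit strengthening of the hypothesis on $\f$: your products $g\phi$ and the paper's $s^*\cdot f$ equally require $\f$ to be closed under multiplication by real scalars (e.g.\ a subspace, as the paper's subsequent remark assumes); you flag this explicitly, which the statement of the claim does not.
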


\begin{proof}
$\ $
\begin{itemize}
    \item $1\Rightarrow2$: By Corollary \ref{cor:NorDen}
    $$
    \int_{\Omega}f\d\mu_1
    =
    \int_{\Omega}\left\langle f,\eta_1\right\rangle\d|\mu_1|
    =
    \int_{\Omega}\left\langle f,\frac{\eta_1}{\|\eta_1\|}\right\rangle\d V(\mu_1)
    =
    \int_{\Omega}\left\langle f,\frac{\eta_2}{\|\eta_2\|}\right\rangle\d V(\mu_2)
    =
    \int_{\Omega}\left\langle f,\eta_2\right\rangle\d|\mu_2|
    =
    \int_{\Omega}f\d\mu_2$$
    \item $2\Rightarrow3$: Immediate since any continuous $f$ can be uniformly approximated by simple functions thus it is Bochner integrable.
     \item $3\Rightarrow4$: Immediate since $C(\Omega,\f)\subset C(\Omega,S^*)$.
    \item $4\Rightarrow5$: Let $s^*\in\f$ and $f\in C(\Omega)$, then $s^*\cdot f\in C(\Omega,\f)$. By $4$ and part 6 of Proposition \ref{prop:Bochner}
    $$\left\langle s^*,\int_{\Omega}f\d\mu_1\right\rangle=\int_{\Omega}s^*f\d\mu_1=\int_{\Omega}s^*f\d\mu_2=\left\langle s^*,\int_{\Omega}f\d\mu_2\right\rangle$$
    Since $s^*$ is arbitrary and $\f$ separates the points of $S$ we are done.
    \item $5\Rightarrow1$: Let $A\in\mathcal{A}$ an open subset. There exists a uniformly bounded sequence of continuous functions $f_n\in C(\Omega)$ converging pointwise to $\chi_A$. Since $\|f_n\eta_i\|\leq \sup_n\|f_n\|_{\infty}\|\eta_i\|\in L^1(|\mu_i|)$, by the vector dominated convergence theorem (Theorem 11.46 in \cite{Aliprantis2006Hitch} or Theorem 3 in \cite{Diestel1977} Chapter II.3)  we conclude
    $$
    \int_{\Omega}f_n\d\mu_i
    =
    \int_\Omega f_n\eta_i\d|\m_i|
    \underset{n\rightarrow\infty}{\longrightarrow}
    \int_A\eta_i\d|\m_i|
    =
    \mu_i(A)
    $$
    Since $ \int_\Omega f_n\d\m_1= \int_\Omega f_n\d\m_2\ \forall n$ its limit is $\mu_1(A)=\mu_2(A)$.
\end{itemize}
\end{proof}

\begin{rem}
From now on, we will always assume $\f$ is some given family (or in particular a subspace) of functionals in $S^*$ separating the points of $S$.
\end{rem}

In order to use $S$-valued measures in transport problems, we  define the marginals of these measures as in the scalar case:
\begin{defn}  
\label{def:marginals} For compact metric spaces $X,Y$, we say $(\eta,\pi)\in\mm\left(X\times Y,S\right)$ has \index{marginal}\emph{marginal}
$\mu\in\mm\left(X,S\right)$ on $X$ if any of the following hold:
\begin{itemize}
    \item $(\eta,\pi)(A\x Y)=\mu(A)$ for every $A\in\mathcal{A}$.
    \item $
            \int_{X\times Y} g\left(x\right)\eta(x,y)\d\pi\left(x,y\right)
                 =
            \int_{X}g\left(x\right)\d\mu\left(x\right)
            \ \forall g\in C(X)
          $
    \item $
            \int_{X\times Y} \left\langle f\left(x\right),\eta(x,y)\right\rangle\d\pi\left(x,y\right)
                 =
            \int_{X}f\left(x\right)\d\mu\left(x\right)
            \ \forall f\in C(X,\f)
          $
          where $\f$ is some family of functionals separating the points of $S$.
\end{itemize}
        
The marginal on $Y$ is defined similarly.
\end{defn}

\subsection{Transport problems in the infinite-dimensional case}

We can now formulate the optimal transport problem for $S$-valued
measures:
\begin{itemize}
\item $X,Y$ are two compact metric spaces.
\item $S$ is a Banach space. $\mathcal{F}\subset S^*$ is a subspace of functionals separating the points of $S$ (For example $\mathcal{F}=S^*$ or $\mathcal{F}^*=S$). We denote by $w(\f)$\index{w@$w(\f)$} the \index{weak topology} weak topology on $S$ generated by $\f$.
\item $\mu\in\mm\left(X,S\right),\nu\in\mm\left(Y,S\right)$ are
two $S$-valued measures.
\item $c\in C(X\x Y)$.
\item $\eta\in C(X\times Y,(S,w(\f)))$ and norm bounded.
\item $\Pi\left(\mu,\nu,\eta\right)$ \index{pimn@$\Pi\left(\mu,\nu,\eta\right)$}
is the set of scalar measures $\pi\in\mm_+\left(X\times Y\right)$
such that $(\eta,\pi)$ has marginals $\mu,\nu$: for every $\psi\in C\left(X,\mathcal{F}\right),\varphi\in C\left(Y,\mathcal{F}\right)$
\[
\int_{X}\psi\left(x\right)\d\mu\left(x\right)+\int_{Y}\varphi\left(y\right)\d\nu\left(y\right)=\int_{X\times Y}\left\langle \psi\left(x\right)+\varphi\left(y\right),\eta\left(x,y\right)\right\rangle \d\pi\left(x,y\right)
\]
\item $\Phi\left(c,\eta\right):=\left\{ \left(\psi,\varphi\right)\in C\left(X,\mathcal{F}\right)\times C\left(Y,\mathcal{F}\right):\left\langle \psi\left(x\right)+\varphi\left(y\right),\eta\left(x,y\right)\right\rangle \leq c\left(x,y\right)\forall x,y\right\} $\index{f1ch@$\Phi\left(c,\eta\right)$}.
\end{itemize}
\begin{assumption}
\label{ass:Vec}
There exists $f\in C(X,\mathcal{F})$ such that $\left\langle f,\eta\right\rangle\equiv1$.
\end{assumption}

\begin{rem}
The above assumption may be replaced with the existence of some $\tilde{f}\in C(X,\mathcal{F})$ such that $\left\langle \tilde{f},\eta\right\rangle^{-1}\in C(X)$, to see that just take $f=\frac{\tilde{f}}{\left\langle \tilde{f},\eta\right\rangle}$.
\end{rem}

\begin{rem}
For $\eta=\dm$, if $\left\langle f,\eta\right\rangle\equiv1$ then $\int_Af\d\mu=\int_A \left\langle f,\eta\right\rangle\d|\mu|=|\mu|(A)$ for any $A\subset X$. On other hand, if $\int_Af\d\mu=|\mu|(A)$ for all $A\subset X$ then
$|\mu|(A)=\int_Af\d\mu=\int_A \left\langle f,\eta\right\rangle\d|\mu|$
for all $A\subset X$ thus $\left\langle f,\eta\right\rangle\equiv1$ $|\mu|$-almost everywhere.
\end{rem}

\begin{rem}
\label{rem:Exb1}
The above assumption hold true in the finite-dimensional case $S=\rn$ when $\mu=(\m_1,\ldots,\m_n)\in\mm_+(X)^n$ and $\left|\mu\right|=\sum_{i=1}^{n}\mu_{i}$:
Just choose the constant function $f=\left(1,\ldots,1\right)\in\rn$
and then
\[
\left\langle \frac{\d\mu}{\d\left|\mu\right|},f\right\rangle =\sum_{i=1}^{n}\frac{\d\mu_{i}}{\d\left|\mu\right|}=\frac{\d\left|\mu\right|}{\d\left|\mu\right|}\equiv1
\]
\end{rem}

\begin{claim}
\label{claim:pi has marginal |mu|}
Under Assumption \ref{ass:Vec}, any $\pi\in\Pi\left(\mu,\nu,\dm\right)$ has marginal $|\mu|$ on $X$
\end{claim}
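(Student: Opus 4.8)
The plan is to feed the $S$-valued marginal constraint that defines $\Pi\left(\mu,\nu,\dm\right)$ a test function tailored to Assumption \ref{ass:Vec}, so that the pairing against $\eta=\dm$ collapses to the constant $1$ and the vector identity becomes the scalar identity $\int_{X\x Y}g(x)\,\d\pi=\int_X g\,\d\left|\mu\right|$ for every $g\in C(X)$. Since $X$ is compact metric, this last identity is exactly what it means for $\pi$ to have marginal $\left|\mu\right|$ on $X$, so the argument reduces to producing it.

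First I would fix $g\in C(X)$ and set $\psi:=g\cdot f$, where $f\in C(X,\f)$ is the function supplied by Assumption \ref{ass:Vec}. Because $\f$ is a subspace, the product $\psi$ again lies in $C(X,\f)$ and is therefore an admissible test function. Plugging $\psi$ together with $\varphi\equiv0$ into the defining identity of $\Pi\left(\mu,\nu,\dm\right)$ yields
\[
\int_X \psi\,\d\mu=\int_{X\x Y}\left\langle \psi(x),\eta(x,y)\right\rangle\d\pi(x,y).
\]
Next I would evaluate the two sides separately. On the left, Definition \ref{def:integral S^*} together with $\left\langle f,\dm\right\rangle\equiv1$ gives $\int_X g f\,\d\mu=\int_X g(x)\left\langle f(x),\dm(x)\right\rangle\d\left|\mu\right|(x)=\int_X g\,\d\left|\mu\right|$. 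On the right, the integrand is $\left\langle \psi(x),\eta(x,y)\right\rangle=g(x)\left\langle f(x),\dm(x)\right\rangle=g(x)$, so the integral reduces to $\int_{X\x Y}g(x)\,\d\pi(x,y)$. Equating the two expressions and letting $g$ range over $C(X)$ then finishes the proof, since two finite Borel measures on the compact metric space $X$ that agree on $C(X)$ coincide; applied to $(\mathrm{proj}_X)_\#\pi$ and $\left|\mu\right|$ this gives the claim.

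The step to watch is the clean disappearance of the pairing on both sides, namely the two uses of $\left\langle f,\dm\right\rangle=1$. On the $\left|\mu\right|$-integral it is needed only $\left|\mu\right|$-almost everywhere, which is precisely the content of Assumption \ref{ass:Vec} in the case $\eta=\dm$ (see the remark following that assumption), and this is harmless because we integrate against $\left|\mu\right|$. On the $\pi$-integral the factor $\left\langle f(x),\eta(x,y)\right\rangle$ depends only on $x$ through the fixed density $\dm$ entering the definition of $\Pi\left(\mu,\nu,\dm\right)$ and equals $1$ there, so it pulls out of the integral. Apart from this bookkeeping the statement has no real obstacle; the single idea that drives the proof is the choice $\psi=g\cdot f$, which is exactly what converts Assumption \ref{ass:Vec} into the scalar marginal identity we are after.
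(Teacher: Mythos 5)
Your proof is correct and takes essentially the same route as the paper: both arguments test the defining constraint of $\Pi\left(\mu,\nu,\dm\right)$ with the vector test function $\psi=g\cdot f$ (and $\varphi\equiv0$) and use $\left\langle f,\dm\right\rangle\equiv1$ twice to collapse the identity to $\int_{X\x Y}g\left(x\right)\d\pi=\int_{X}g\,\d\left|\mu\right|$ for all $g\in C\left(X\right)$. The only cosmetic difference is the direction of reading: the paper starts from $\int\psi\,\d\pi$ and unfolds the same chain of equalities, while you start from the marginal constraint and simplify both sides.
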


\begin{proof}
Let $\pi\in\Pi\left(\mu,\nu,\dm\right)$. By definition, $\left(\dm,\pi\right)$
has marginal $\mu$ on $X$ so for any $\psi\in C\left(X\right)$ 
\begin{multline*}
\int_{X\x Y}\psi\left(x\right)\d\pi\left(x,y\right)
 = 
\int_{X\x Y}\psi\left(x\right)\left\langle f\left(x\right),\dm\left(x\right)\right\rangle \d\pi\left(x,y\right)
= \\
 \int_{X}\psi\left(x\right)f\left(x\right)\d\mu\left(x\right)
  = 
 \int_{X}\left\langle \psi\left(x\right)f\left(x\right),\dm\left(x\right)\right\rangle \d\left|\mu\right|\left(x\right)
  = 
 \int_{X}\psi\left(x\right)\d\left|\mu\right|\left(x\right)
\end{multline*}
\end{proof}

The optimal transport problem for $S$-valued measures and its dual
problem are described in the next duality theorem:

\bigskip

\colorbox{thmcolor}{
\parbox{\linewidth}{
\begin{thm}
\label{thm:GMK}(Generalized Kantorovich Duality\index{generalized Kantorovich duality}) Under Assumption \ref{ass:Vec}:

\[
\min_{\pi\in\Pi\left(\mu,\nu,\eta\right)}\int_{X\times Y}c\left(x,y\right)\d\pi\left(x,y\right)
=
\sup_{\left(\psi,\varphi\right)\in\Phi\left(c,\eta\right)}\int_{X}\psi\left(x\right)\d\mu\left(x\right)+\int_{Y}\varphi\left(y\right)\d\nu\left(y\right)
\]
\end{thm}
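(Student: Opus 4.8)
The plan is to obtain the statement as a direct specialization of the abstract duality theorem (Theorem \ref{Thm:AbsDuality}), in the spirit of Example \ref{exa:Kant} but with the pairing against $\eta$ in place of ordinary addition. I would take $V=C(X\x Y)$ with the supremum norm, so that $V^*=\mm(X\x Y)$ by the Riesz representation theorem; let $\q$ be the cone of non-negative functions in $V$; set $K=C(X,\f)\x C(Y,\f)$, $h(\psi,\varphi):=\left\langle \psi(\cdot)+\varphi(\cdot),\eta(\cdot,\cdot)\right\rangle$, and $k^*(\psi,\varphi):=\int_X\psi\,\d\mu+\int_Y\varphi\,\d\nu$; and put $u=c\in V$. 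Here $K$ is a linear space and both $h$ and $k^*$ are linear, so the defining inequalities of $\Pi(k^*,h)$ collapse to equalities: $v^*\in\Pi(k^*,h)$ corresponds (via $v^*|_\q\geq0$) to a measure $\pi\in\mm_+(X\x Y)$ satisfying $\int\left\langle\psi+\varphi,\eta\right\rangle\d\pi=\int\psi\,\d\mu+\int\varphi\,\d\nu$ for all $(\psi,\varphi)$, which is exactly the marginal condition of Definition \ref{def:marginals}. Thus $\Pi(k^*,h)=\Pi(\mu,\nu,\eta)$.

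Before applying the theorem I must confirm that $h$ indeed takes values in $V=C(X\x Y)$. Writing $h(\psi,\varphi)(x,y)=\left\langle\psi(x),\eta(x,y)\right\rangle+\left\langle\varphi(y),\eta(x,y)\right\rangle$, I would establish joint continuity of each summand by splitting, e.g., $\left\langle\psi(x),\eta(x,y)\right\rangle-\left\langle\psi(x_0),\eta(x_0,y_0)\right\rangle$ into $\left\langle\psi(x)-\psi(x_0),\eta(x,y)\right\rangle$, bounded by $\|\psi(x)-\psi(x_0)\|\sup\|\eta\|$ (using the norm-boundedness of $\eta$ and norm-continuity of $\psi$), plus $\left\langle\psi(x_0),\eta(x,y)\right\rangle-\left\langle\psi(x_0),\eta(x_0,y_0)\right\rangle$, which vanishes by the $w(\f)$-continuity of $\eta$. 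This is the one genuinely technical, albeit routine, verification.

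The crux of the hypotheses is Assumption \ref{assu:Sublinear}, and this is exactly where Assumption \ref{ass:Vec} is used. Taking $f\in C(X,\f)$ with $\left\langle f,\eta\right\rangle\equiv1$, I set $s(v):=(\|v\|_\infty f,0)$. Then $h(s(v))=\|v\|_\infty\left\langle f,\eta\right\rangle\equiv\|v\|_\infty\geq v$, so $s(v)\in\Phi(v,h)$, while $k^*(s(v))=\|v\|_\infty\int_X f\,\d\mu$ with $\int_X f\,\d\mu$ a finite constant (as $f$ is bounded and $\mu$ has bounded variation). Hence $k^*\circ s$ is bounded above on the unit ball, a neighborhood of $0$, and Assumption \ref{assu:Sublinear} holds.

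It remains to invoke the linear min-formulation recorded in the Remark after Theorem \ref{Thm:AbsDuality}: since $K,h,k^*$ are linear,
\[
\min_{v^*\in\Pi(k^*,h)}v^*(u)=\sup_{-k\in\Phi(-u,h)}k^*(k).
\]
With $u=c$, linearity of $h$ gives $-k\in\Phi(-c,h)\iff h(k)\leq c\iff k\in\Phi(c,\eta)$, so the right-hand side is $\sup_{(\psi,\varphi)\in\Phi(c,\eta)}\int\psi\,\d\mu+\int\varphi\,\d\nu$ while the left-hand side is $\min_{\pi\in\Pi(\mu,\nu,\eta)}\int c\,\d\pi$, which is the claim. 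Attainment of the minimum comes for free, since it is the image of the primal maximizer that Theorem \ref{Thm:AbsDuality} produces whenever the value is finite; finiteness corresponds via Lemma \ref{lem:EqCon} to $h$-positivity of $k^*$, equivalently to $\Pi(\mu,\nu,\eta)\neq\emptyset$, and in the degenerate case of no transport plan both sides read $+\infty$. The main obstacle is therefore not any single hard estimate but assembling the dictionary correctly — above all verifying $h(K)\subset C(X\x Y)$ and the identification $\Pi(k^*,h)=\Pi(\mu,\nu,\eta)$ — after which the conclusion is immediate from the abstract machinery.
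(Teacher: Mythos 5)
Your proposal is correct and is essentially identical to the paper's own proof: the paper makes exactly the same substitutions in Theorem \ref{Thm:AbsDuality} ($V=C(X\x Y)$, $\q$ the non-negative cone, $K=C(X,\f)\x C(Y,\f)$, $h(\psi,\varphi)=\left\langle\psi+\varphi,\eta\right\rangle$, $k^*(\psi,\varphi)=\int_X\psi\,\d\mu+\int_Y\varphi\,\d\nu$, $u=c$) and verifies Assumption \ref{assu:Sublinear} with the same choice $s(v)=(\|v\|_\infty f,0)$ built from Assumption \ref{ass:Vec}. The only difference is that you spell out the routine steps the paper leaves implicit — the joint continuity of $h(\psi,\varphi)$, the identification $\Pi(k^*,h)=\Pi(\mu,\nu,\eta)$, the passage to the min/sup formulation via the remark following Theorem \ref{Thm:AbsDuality}, and the degenerate case — all of which you handle correctly.
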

}}

\begin{proof}
Substitute in Theorem \ref{Thm:AbsDuality}:

\begin{itemize}
\item $V=C(X\x Y)$, $V^*=\mm(X\x Y)$
\item $K=C\left(X,\mathcal{F}\right)\times C\left(Y,\mathcal{F}\right)$
\item $u=c\left(x,y\right)$
\item $\q=$ the non-negative functions in $V$.
\item $k^{*}\left(\psi,\varphi\right)=\int_{X}\psi\left(x\right)\d\mu\left(x\right)+\int_{Y}\varphi\left(y\right)\d\nu\left(y\right)$
\item $h\left(\psi,\varphi\right)(x,y)=\left\langle \psi\left(x\right)+\varphi\left(y\right),\eta\left(x,y\right)\right\rangle$. $h\left(\psi,\varphi\right)$ is continuous since $\eta$ is norm bounded and $w(\f)$-continuous.
\end{itemize}
To verify Assumption \ref{assu:Sublinear}, just choose $s(g):=(f\cdot\|g\|_{\infty},0)$ (where $f$ is the function from Assumption \ref{ass:Vec}).
\end{proof}
By Lemma \ref{lem:EqCon}, we get equivalent conditions to the existence
of a transport plan:
\begin{cor}
\label{cor:of EqCon}Under Assumption \ref{ass:Vec} the following are equivalent
\begin{enumerate}
\item $\Pi\left(\mu,\nu,\eta\right)\neq\emptyset$ 
\item $\inf_{\left(\psi,\varphi\right)\in\Phi\left(c,\eta\right)}\int_{X}\psi\left(x\right)\d\mu\left(x\right)+\int_{Y}\varphi\left(y\right)\d\nu\left(y\right)>-\infty$
for some $c\in C\left(X\x Y\right)$.
\item $\int_{X}\psi\left(x\right)\d\mu\left(x\right)+\int_{Y}\varphi\left(y\right)\d\nu\left(y\right)\geq0$
for every $\left(\psi,\varphi\right)\in C\left(X,\mathcal{F}\right)\times C\left(Y,\mathcal{F}\right)$
such that $\left\langle \eta\left(x,y\right),\psi\left(x\right)+\varphi\left(y\right)\right\rangle \geq0\ \forall x,y$
\end{enumerate}

\end{cor}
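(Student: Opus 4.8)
The plan is to obtain all three equivalences for free by specializing the five equivalent conditions of Lemma \ref{lem:EqCon} to the abstract instance already constructed in the proof of Theorem \ref{thm:GMK}. Concretely, I take $V=C(X\x Y)$, $K=C(X,\f)\x C(Y,\f)$, $\q$ the cone of non-negative functions, $k^{*}(\psi,\varphi)=\int_{X}\psi\d\mu+\int_{Y}\varphi\d\nu$, and $h(\psi,\varphi)(x,y)=\left\langle \psi(x)+\varphi(y),\eta(x,y)\right\rangle$. Under Assumption \ref{ass:Vec} the map $s(g)=(f\|g\|_{\infty},0)$ verifies Assumption \ref{assu:Sublinear} (this is exactly the verification in Theorem \ref{thm:GMK}), so Lemma \ref{lem:EqCon} applies verbatim. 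By the very definition of $\Pi(k^{*},h)$ under this substitution, $\Pi(k^{*},h)=\Pi(\mu,\nu,\eta)$, so condition (4) of the lemma is literally condition (1) of the corollary.

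Next I would unwind the remaining two conditions. For condition (5), $k^{*}$ is $h$-positive means $k^{*}(\psi,\varphi)\geq0$ for every $(\psi,\varphi)\in h^{-1}(\q)$, and $h^{-1}(\q)$ is precisely the set of pairs with $\left\langle \psi(x)+\varphi(y),\eta(x,y)\right\rangle\geq0$ for all $x,y$; spelling out $k^{*}$, this is exactly condition (3) of the corollary (the pairing is symmetric in the notation $\left\langle \eta,\psi+\varphi\right\rangle$). For condition (1) of the lemma, $p(v)=\inf_{k\in\Phi(v,h)}k^{*}(k)>-\infty$ for some $v\in V=C(X\x Y)$, where $\Phi(v,h)=h^{-1}(v+\q)$ is the set of feasible pairs cut out by the order constraint $\left\langle \psi+\varphi,\eta\right\rangle\geq v$; translating this gives that $\inf\{\int_{X}\psi\d\mu+\int_{Y}\varphi\d\nu\}$ over the corresponding feasible set is bounded below for some cost, which is condition (2) of the corollary. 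Having matched $(1)\leftrightarrow(4)$, $(2)\leftrightarrow(1)/(2)$, $(3)\leftrightarrow(5)$, the chain of equivalences already proved in Lemma \ref{lem:EqCon} delivers the three-way equivalence at once.

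The only genuinely delicate point is the sign and order bookkeeping in condition (2): one must line up the direction of the constraint in $\Phi(v,h)=h^{-1}(v+\q)$ (written with $\geq$) against the set $\Phi(c,\eta)$ (written with $\leq$), exactly as in the passage from the max-formulation of Theorem \ref{Thm:AbsDuality} to the min-formulation used to prove Theorem \ref{thm:GMK}. The correct translation uses the reflection $(\psi,\varphi)\mapsto(-\psi,-\varphi)$ together with $v\mapsto-v$, so that condition (2) is read as the \emph{bounded} end of the dual value rather than its reflection (the opposite end is automatically $-\infty$, since Assumption \ref{ass:Vec} lets one slide $\psi$ by multiples of $f$, with $\left\langle f,\eta\right\rangle\equiv1$). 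Once this identification is fixed, nothing remains but the direct substitution into Lemma \ref{lem:EqCon}.
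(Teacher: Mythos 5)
Your proposal is correct and takes exactly the paper's route: the paper's entire proof of Corollary \ref{cor:of EqCon} is the one-line specialization of Lemma \ref{lem:EqCon} to the data $V=C(X\x Y)$, $K=C(X,\f)\x C(Y,\f)$, $h(\psi,\varphi)=\left\langle \psi+\varphi,\eta\right\rangle$, $k^{*}(\psi,\varphi)=\int_{X}\psi\d\mu+\int_{Y}\varphi\d\nu$ already set up for Theorem \ref{thm:GMK}, with the matching (1)$\leftrightarrow$(4), (2)$\leftrightarrow$(1), (3)$\leftrightarrow$(5) that you describe (linearity of $K$, $h$, $k^{*}$ turning the defining inequality of $\Pi(k^{*},h)$ into the marginal equality). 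Your final paragraph on the reflection $(\psi,\varphi)\mapsto(-\psi,-\varphi)$, $c\mapsto-c$ --- needed because $\Phi(c,\eta)$ is cut out by $\leq$ while $\Phi(v,h)$ is cut out by $\geq$, and because the literal infimum over the $\leq$-set degenerates to $-\infty$ by sliding $\psi$ along multiples of the function $f$ from Assumption \ref{ass:Vec} --- is in fact more careful than the paper, which leaves this sign bookkeeping entirely implicit.
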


Before discussing the properties of this problem, we give an application of the vector-valued problem in the semi-discrete case.

\fbox{
\parbox{16cm}{
\begin{example}
\label{Ex:InfMarket} In this example we model
a market of two power plants among a set of consumers:
\begin{itemize}
\item $|\mu|\in\mathcal{P}(X)$ is the distribution of consumers over a set $X$. $Y=\left\{ y_{1},y_{2}\right\}$
is a set of two power plants. 
\item Each plant $y_{i}$ can supply an amount of power $\nu_{i}\left(t\right)\in\r$
at time $t\in\left[0,1\right]$. $\nu$ is a $S$-valued
measure on $Y$, where $S$ is some space of functions with variable
$t$, assume for simplicity $S=S^{*}=\mathcal{F}=L^{2}\left[0,1\right]$.
\item Each consumer at $x\in X$ demands $\eta\left(x,t\right)$ amount of
power at time $t$ (we consider $\eta:X\times\left[0,1\right]\rightarrow\r$
instead of $\eta:X\rightarrow S$), $\mu=(\eta,|\mu|)$.
\item $u=\left(c_{1},c_{2}\right)$ represents the cost of supplying power
to $x\in X$ by the two plants.
\item A transport plan is a measure in $\mm_+(X\x Y)=\mm_+(X)^2$. By Claim \ref{claim:pi has marginal |mu|} such transport plan $(\mu_1,\mu_2)$ has marginal $|\mu|$ on $X$ thus $\mu_{1}+\mu_{2}=|\mu|$. Moreover the transport plan should allow each consumer to purchase from both plants in a way which is consistent
with the supply constraint: For every $i=1,2$ and almost every $t\in\left[0,1\right]$:
\[
\begin{array}{c}
\int_{X}\eta\left(x,t\right)\d\mu_{i}\left(x\right)=\nu_{i}\left(t\right)\end{array}
\]
\item We look for the minimal total cost 
\[
\min_{(\mu_{1},\mu_{2})}\int_{X}c_{1}\left(x\right)\d\mu_{1}\left(x\right)+\int_{X}c_{2}\left(x\right)\d\mu_{2}\left(x\right)
\]
\item The dual problem is
\[
\max\int_{X}\int_{0}^{1}\psi\left(x,t\right)\eta\left(x,t\right)\d t\d|\mu|+\int_{0}^{1}\varphi_{1}\left(t\right)\nu_{1}\left(t\right)\d t+\int_{0}^{1}\varphi_{2}\left(t\right)\nu_{2}\left(t\right)\d t
\]
where the maximum is taken over all triplets $\left(\psi,\varphi_{1},\varphi_{2}\right)\in C\left(X,S\right)\x S\x S$
satisfying
$$\int_{0}^{1}\left[\psi\left(x,t\right)+\varphi_{i}\left(t\right)\right]\eta\left(x,t\right)\d t\leq c_{i}\left(x\right)\ \forall x,i$$
\end{itemize}
\end{example}
}}
\newpage
\subsection{Kernels of measures}

A transport map sends each $x\in X$ to a unique $y\in Y$, whereas a transport
plan \textquotedbl spreads\textquotedbl{} any $x\in X$ over $Y$. In the scalar case, we identify any transport map $T:X\rightarrow Y$ with the transport plan $(\text{Id}_X\x T)_\#$ which has an integration element $\d\pi(x,y)=\d\delta_{Tx}(y)\d\mu(x)$. Any transport plan can be represented  similarly as $\d\pi(x,y)=\d P_x(y)\d\mu(x)$ where $\delta_{Tx}$ is replaced with some family of probability measures, corresponding to the non-deterministic distribution on $Y$ to which $x$ is sent. We will establish a similar representation for transport plans of the vector-valued problem. We recall that $X,Y$ are two compact metric spaces.

\begin{defn}
$\ $
\begin{enumerate}
\item \index{kernel}A\emph{ kernel} (also called a Markov kernel)
is a family of Borel probability measures $P=\left\{ P_{x}\right\} _{x\in X}\subset\mathcal{P}\left(Y\right)$
such that $x\mapsto P_{x}\left(B\right)$ is a real Borel measurable function
for every Borel measurable subset $B\subset Y$. We denote the set
of such kernels by\index{ker@$\text{ker}(X,Y)$} $\text{ker}\left(X,Y\right)$.
\item For any $\mu\in\mm\left(X,S\right)$ and $P\in\text{ker}\left(X,Y\right)$ we define $P\otimes|\mu|\in\mm(X\x Y)$\index{1@$\otimes$} by 
\[
\int_{X\x Y}f\left(x,y\right)\d P\otimes|\mu|\left(x,y\right)
:=
\int_{X}\left(\int_{Y} f\left(x,y\right)\d P_{x}\left(y\right)\right)\d\left|\mu\right|\left(x\right)
\ \forall f\in C\left(X\x Y\right)
\]
 and $P\otimes\mu:=(\dm,P\otimes|\m|)\in\mm\left(X\x Y,S\right)$
   (which is indeed a measure by part 4 of Proposition \ref{prop:Bochner}), explicitly: $P\otimes \mu(A)=\int_A\dm(x)\d P\otimes|\mu|(x,y)$.
We denote by $P\mu\in\mm(Y,S)$ and $P|\m|\in\mm_+(Y)$ the marginals on $Y$ of $P\otimes\mu$ and $P\otimes|\mu$| respectively.
\end{enumerate}
\end{defn}

\begin{rem}
By Claim \ref{claim:cont is enough}, we may define $P\otimes\mu$ using its action on vector or real functions:
\begin{itemize}
    \item For any $f:X\x Y\rightarrow S^*$
    \begin{equation}
    \label{eq:action on vector function}
    \int_{X\x Y}f\d P\otimes \mu
    =
    \int_X\int_Y\left\langle f(x,y),\dm(x)\right\rangle\d P_x(y)\d|\m|(x)
    =
    \int_X\int_Yf(x,y)\d P_x(y)\d\m(x)
    \end{equation}
    Where the last equality is implied by part 6 of Proposition \ref{prop:Bochner}.
    \item For any $g:X\x Y\rightarrow\r$
    $$
    \int_{X\x Y}g\d P\otimes\mu
    =
    \int_X\int_Yg(x,y)\dm(x)\d P_x(y)\d|\mu|(x)
    =
    \int_X\int_Yg(x,y)\d P_x(y)\d\mu(x)
    $$
    
\end{itemize}
\end{rem}

\fbox{
\parbox{15cm}{
\begin{example}
Kernels can be used to describe random processes, for example the
simple random walk on $\mathbb{Z}$ with probabilities $l$ to go
left, $m$ to stay put and $r$ to go right ($l+m+r=1$) is described
by the kernel $P\in\text{ker}\left(\mathbb{Z},\mathbb{Z}\right)$
defined by
\[
P_{x}=l\cdot\delta_{x-1}+m\cdot\delta_{x}+r\cdot\delta_{x+1}
\]
\end{example}
}
}

\bigskip

For two measures $\m\in\mm(X,S),\n\in\mm(Y,S)$ there might be no kernel $P\in\text{ker}(X,Y)$ such that $P\m=\n$. The existence of such a kernel defines a preorder on the family of measure spaces:
\begin{defn}
\index{1@$\succ$}
\label{def:relation}
For any pair of $S$-valued measures  $(\m,\n)\in\mm(X,S)\x\mm(Y,S)$ we define the relation $\mu\succ\nu\Leftrightarrow\exists P\in\text{ker}(X,Y)$ such that $P\mu=\nu$, explicitly:
$$
\nu(B)
=
\int_X P_x(B)\d\m(x)\ \forall B\subset Y
$$
\end{defn}

\begin{rem}
\label{rem:RelationScalar}
For non-negative measures, $\mu\succ\nu$ is equivalent to $\mu(X)=\nu(Y)$, since we may choose $P_x=\frac{1}{\nu(Y)}\nu\ \forall x$.
\end{rem}

\begin{prop}
\label{prop:Balyage Convex}
Let $\mu\in\mm(X,S)$, then $N_{\mu}:=\{\nu\in\mm(Y,S);\mu\succ\nu\}$ is a convex subset of $\mm(Y,S)$.
\end{prop}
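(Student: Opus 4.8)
The plan is to exploit the fact that membership in $N_\mu$ is \emph{witnessed} by a Markov kernel, together with the elementary observation that a convex combination of kernels is again a kernel. Concretely, suppose $\nu_0,\nu_1\in N_\mu$, so that by Definition \ref{def:relation} there exist $P^0,P^1\in\text{ker}(X,Y)$ with $P^0\mu=\nu_0$ and $P^1\mu=\nu_1$. Fix $t\in[0,1]$; the goal is to exhibit a single kernel $P^t$ with $P^t\mu=t\nu_1+(1-t)\nu_0$, which by definition of $N_\mu$ shows $t\nu_1+(1-t)\nu_0\in N_\mu$.

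First I would define the pointwise convex combination $P^t_x:=tP^1_x+(1-t)P^0_x$ for each $x\in X$ and check that $P^t\in\text{ker}(X,Y)$. Each $P^t_x$ is a Borel probability measure on $Y$, since a convex combination of probability measures is again a probability measure; and for every Borel $B\subset Y$ the map $x\mapsto P^t_x(B)=tP^1_x(B)+(1-t)P^0_x(B)$ is Borel measurable, being a linear combination of the measurable maps $x\mapsto P^i_x(B)$. Hence $P^t$ satisfies the two requirements in the definition of a kernel.

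Next I would verify $P^t\mu=t\nu_1+(1-t)\nu_0$ directly from the explicit set-function formula in Definition \ref{def:relation}. For every Borel $B\subset Y$,
\[
(P^t\mu)(B)=\int_X P^t_x(B)\,\d\mu(x)=\int_X\left(tP^1_x(B)+(1-t)P^0_x(B)\right)\d\mu(x).
\]
Writing the $S$-valued integral as the Bochner integral $\int_X g\,\d\mu=\int_X g\,\dm\,\d|\mu|$ (Definition \ref{def:integral S^*}) and applying it to the bounded measurable scalar functions $x\mapsto P^1_x(B)$ and $x\mapsto P^0_x(B)$, linearity of the Bochner integral gives
\[
(P^t\mu)(B)=t\int_X P^1_x(B)\,\d\mu(x)+(1-t)\int_X P^0_x(B)\,\d\mu(x)=t\nu_1(B)+(1-t)\nu_0(B).
\]
Since this holds for all Borel $B\subset Y$, we conclude $P^t\mu=t\nu_1+(1-t)\nu_0$, completing the proof of convexity.

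I do not expect any serious obstacle here; the only points demanding care are confirming that $P^t$ is a genuine Markov kernel (probability-measure-valued, and measurable in $x$) and that the $S$-valued integral defining $P^t\mu$ is linear in its scalar integrand, which is immediate once it is recast as a Bochner integral of $P^i_\cdot(B)\,\dm$ against $|\mu|$. As an alternative to the set-function computation, one could instead test both sides against $f\in C(Y,\f)$ and invoke Claim \ref{claim:cont is enough} to conclude equality of the two vector measures, but the direct argument above is shorter.
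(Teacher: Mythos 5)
Your proposal is correct and follows essentially the same route as the paper: the paper's proof also takes $P:=tP^1+(1-t)P^0$ and observes directly that $P\mu=t\nu_1+(1-t)\nu_0$. Your additional verifications (that the pointwise convex combination is a genuine Markov kernel, and that linearity of the Bochner integral justifies the computation of $P\mu$) merely make explicit steps the paper leaves implicit.
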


\begin{proof}
    \item Let $\nu_0,\nu_1\in N_{\mu}$ and $t\in[0,1]$. There exists $P^0,P^1\in\text{ker}(X,Y)$ such that $P^0\mu=\nu_0,\ P^1\mu=\nu_1$. For $P:=tP^1+(1-t)P^0\in\text{ker}(X,Y)$ we get $P\mu=t\nu_1+(1-t)\nu_0$ thus $\mu\succ t\nu_1+(1-t)\nu_0$ and $t\nu_1+(1-t)\nu_0\in N_{\mu}$. Therefore $N_{\mu}$ is convex.
\end{proof}

\fbox{
\parbox{15cm}{
\begin{example}
\label{ex:discretedominance}
(Discrete case) Let $X=Y=\{0,1\},\ S=\r$, then $\mm(X,S)=\r^2$. Take some $\mu=(\mu_0,\mu_1)\in\mm(X,S),\ \nu=(\nu_0,\nu_1)\in\mm(Y,S)$. In this case $\mu\succ\nu$ is equivalent to the existence of $p,q\in\left[0,1\right]$ such that
$$
\begin{array}{ccc}
     \nu_0&=&p\mu_0+q\mu_1
     \\
     \nu_1&=&(1-p)\mu_0+(1-q)\mu_1
\end{array}
$$
which implies $\mu_0+\mu_1=\nu_0+\nu_1$. The kernel is $P=(P_0,P_1)=((p,1-p),(q,1-q))$ and $P\mu$ can be considered as a multiplication  of a matrix with a vector. When $S=\r^2$ and $\mu=((\mu_1,\mu_2),(\mu_3,\mu_4)),\ \nu=((\nu_1,\nu_2),(\nu_3,\nu_4))$ then $\mu\succ\nu$ is equivalent to the existence of $p,q\in\left[0,1\right]$ such that 
$$
\begin{pmatrix}p&q\\1-p&1-q\end{pmatrix}\begin{pmatrix}\mu_1&\mu_3\\\mu_2&\mu_4\end{pmatrix}=\begin{pmatrix}\nu_1&\nu_3\\\nu_2&\nu_4\end{pmatrix}
$$
The left matrix is called a (left) stochastic/Markov matrix. For this reason this relation is sometimes referred as "stochastic dominance".

For more concrete examples we may conclude that
$$
\begin{array}{cc}
     ((1,0),(0,1))\succ((a,1-a),(b,1-b))\iff a,b\in[0,1]
     \\
     \left(\left(1,0\right),\left(\frac{1}{2},\frac{1}{2}\right)\right)\succ((a,1-a),(b,1-b))\iff a\in[0,1],\ b\in\left[\frac{a}{2},\frac{a+1}{2}\right]
     \\
     \left(\left(a,1-a\right),\left(b,1-b\right)\right)\succ((c,1-c),(c,1-c))\iff a,b\in\r,\ c\in\left[0,1\right]\ (take\ p=q=c)
\end{array}
$$
\end{example}
}
}

\bigskip

\fbox{
\parbox{15cm}{
\begin{example}
\label{ex:semidiscretedominance}
(Semi-discrete case) Let $X=[0,1],\ Y=\{0,1\},\ S=\r^2$ and $\mu=(\frac{\d\mu}{\d\lambda},\lambda)$ where $\lambda$ is the Lebesgue measure on $[0,1]$ and $\frac{\d\mu}{\d\lambda}=(1,2x)$. In this case $\mu\succ((a,1-a),(b,1-b)),\ a,b\in[0,1]$ is equivalent to the existence of some measurable $P_x(0)=1-P_x(1):=f(x)$ with values in $[0,1]$ such that
\begin{equation}
\label{eq:semi dis example}
\int_0^1f(x)=a,\ \int_0^12xf(x)=b 
\end{equation}
An immediate necessary condition is $2a>b$, but a stronger equivalent condition may be found: Assume $\int_0^1f(x)=a$, the minimal value of $\int_0^12xf(x)$ is $a^2$ (for $f(x)=\chi_{[0,a]}$) and the maximal value is $2a-a^2$ (for $f(x)=\chi_{[1-a,1]}$) therefore
$$\mu\succ((a,1-a),(b,1-b))\iff a\in[0,1],\ b\in[a^2,2a-a^2]$$
In particular for $a=b$: $\mu\succ((a,1-a),(a,1-a))\ \forall a\in[0,1]$.

Moreover, $P$ is unique (up to almost everywhere equivalence) when $b=a^2$ or $b=2a-a^2$, since in these cases $f(x)=\chi_{[0,a]}$ or $f(x)=\chi_{[1-a,a]}$ respectively is the unique (up to almost everywhere equivalence) solution to (\ref{eq:semi dis example}) (To minimize/maximize the integral $\int_0^1 xf(x)\d x$ the mass of $f$ must be concentrated to the left/right as much as possible).
\end{example}
}
}

\bigskip

As mentioned earlier for scalar measures, any measurable $T:X\rightarrow Y$ induces a deterministic kernel sending $\m$ to $T_\#\m$. This is also true for infinite-dimensional measures:

\begin{claim}
\label{claim:map induces kernel}
Let $\m\in\mm(X,S)$ and $T:X\rightarrow Y$ a measurable map. Then $P\in\text{ker}(X,Y)$ defined by $P_x=\delta_{Tx}$ satisfies $P\m=T_\#\m$. In particular $\m\succ T_\#\m$.
\end{claim}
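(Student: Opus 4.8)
The plan is to compute $P\m$ directly and verify it agrees with $T_\#\m$ on every Borel set, rather than going through continuous test functions. First I would check that $P$ is genuinely a kernel: for each Borel $B\subset Y$ we have $P_x(B)=\delta_{Tx}(B)=\chi_B(Tx)=\chi_{T^{-1}(B)}(x)$, which is Borel measurable because $T$ is measurable, and each $P_x=\delta_{Tx}$ is a Borel probability measure on $Y$; hence $P\in\text{ker}(X,Y)$.

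Next, writing $\m=(\eta,|\m|)$ with $\eta=\frac{\d\m}{\d|\m|}$, I recall that $P\m$ is by definition the marginal on $Y$ of $P\otimes\m=(\eta,P\otimes|\m|)$, so by Definition \ref{def:marginals} (applied on the $Y$ side) we have $P\m(B)=(P\otimes\m)(X\x B)=\int_{X\x B}\eta(x)\d(P\otimes|\m|)(x,y)$ for every Borel $B\subset Y$. The crux is to evaluate this Bochner integral. I would first do it for an $S$-valued simple density: if $\eta=\sum_i\chi_{A_i}s_i$, then the definition of $P\otimes|\m|$ together with $\int_Y\chi_B\d P_x=\chi_B(Tx)=\chi_{T^{-1}(B)}(x)$ gives $(P\otimes|\m|)(A_i\x B)=\int_{A_i}\chi_{T^{-1}(B)}\d|\m|=|\m|(A_i\cap T^{-1}(B))$, whence $\int_{X\x B}\eta\,\d(P\otimes|\m|)=\sum_i s_i|\m|(A_i\cap T^{-1}(B))=\int_{T^{-1}(B)}\eta\d|\m|$. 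Passing to a general Bochner-integrable $\eta$ by approximating with simple functions (using the definition of the Bochner integral and Proposition \ref{prop:Bochner}) yields
\[
P\m(B)=\int_{T^{-1}(B)}\eta\d|\m|=\m(T^{-1}(B))=T_\#\m(B)
\]
for every Borel $B\subset Y$, which is exactly $P\m=T_\#\m$. The final assertion $\m\succ T_\#\m$ is then immediate from Definition \ref{def:relation}, since $P$ is a kernel sending $\m$ to $T_\#\m$.

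As a shortcut for the approximation step, I would note that one can instead reduce to the scalar case: for any $s^*\in\f$ the functional $\left\langle s^*,\cdot\right\rangle$ commutes with the Bochner integral (Proposition \ref{prop:Bochner}, part 6), so $\left\langle s^*,(P\otimes\m)(X\x B)\right\rangle$ is computed from the scalar definition of $P\otimes|\m|$ applied to the real function $x\mapsto\left\langle s^*,\eta(x)\right\rangle$, giving $\left\langle s^*,\m(T^{-1}(B))\right\rangle$; since $\f$ separates the points of $S$, this forces $P\m(B)=\m(T^{-1}(B))$.

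The step I expect to be the only real obstacle is the measure-theoretic bookkeeping in evaluating $\int_{X\x B}\eta\d(P\otimes|\m|)$: the defining identity for $P\otimes|\m|$ is stated for continuous scalar test functions, so some care is needed to extend it to the indicator of $X\x B$ and to the $S$-valued density $\eta$. Both the simple-function route and the functional-reduction route handle this cleanly, and no idea beyond the Bochner-integral machinery already collected in Proposition \ref{prop:Bochner} is required. I also note that the existence of $T_\#\m$ as an element of $\mm(Y,S)$ needs no separate argument, since the computation identifies it with $P\m$, which lies in $\mm(Y,S)$ by construction.
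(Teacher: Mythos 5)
Your proof is correct and takes essentially the same route as the paper's: the paper's one-line argument computes $P\m(B)=\int_X P_x(B)\,\d\m(x)=\m\left(T^{-1}(B)\right)=T_\#\m(B)$ for every Borel $B\subset Y$, which is exactly the identity you establish. The only difference is that you re-derive the formula $P\m(B)=\int_X P_x(B)\,\d\m(x)$ from the Bochner-integral machinery (simple-function approximation, or reduction via $\f$ and part 6 of Proposition \ref{prop:Bochner}), whereas the paper simply invokes it as the explicit form of $P\m=\n$ recorded in Definition \ref{def:relation}.
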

\begin{proof}
For any measurable $B\subset Y$
$$
    P\m(B)
    =
    \int_XP_x(B)\d\m(x)
    =
     \int_{T^{-1}(B)}\d\m(x)
    = 
     \m(T^{-1}(B))
     =
     T_\#\m(B)
$$
Therefore $P\mu=T_{\#}\m$,
\end{proof}

\begin{rem}
\label{ex:pushforward}
An immediate corollary is
$$
\int_Yf\d T_\#\mu=\int_X\int_Yf(y)\d \delta_{Tx}(y)\d\mu(x)=\int_Xf(Tx)\d\mu(x)
\ \forall f\in C(Y,S^*)
$$
\end{rem}

\begin{claim}
\label{lem:relation}The relation $\succ$ is a preorder.
\end{claim}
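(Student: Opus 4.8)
The plan is to verify directly the two defining properties of a preorder listed in the Conventions, namely reflexivity and transitivity, by exhibiting in each case the kernel that witnesses the relation.

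For reflexivity, given $\mu\in\mm(X,S)$ I would take the identity kernel $P_{x}:=\delta_{x}\in\mathcal{P}(X)$, which lies in $\text{ker}(X,X)$ because $x\mapsto\delta_{x}(B)=\chi_{B}(x)$ is Borel measurable for every Borel $B\subset X$. Applying Claim \ref{claim:map induces kernel} with $T=\text{Id}_{X}$ gives $P\mu=(\text{Id}_{X})_{\#}\mu=\mu$, so $\mu\succ\mu$. (Equivalently one computes $P\mu(B)=\int_{X}\delta_{x}(B)\,\d\mu(x)=\int_{X}\chi_{B}\,\d\mu=\mu(B)$ directly from Definition \ref{def:relation}.)

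For transitivity, suppose $\mu\in\mm(X,S)$, $\nu\in\mm(Y,S)$, $\rho\in\mm(Z,S)$ satisfy $\mu\succ\nu$ and $\nu\succ\rho$, witnessed by kernels $P\in\text{ker}(X,Y)$ and $Q\in\text{ker}(Y,Z)$ with $P\mu=\nu$ and $Q\nu=\rho$. I would introduce the composed kernel $R\in\text{ker}(X,Z)$ defined by
\[
R_{x}(C):=\int_{Y}Q_{y}(C)\,\d P_{x}(y),\qquad C\subset Z\ \text{Borel},
\]
the standard composition of Markov kernels. Each $R_{x}$ is a probability measure, being a $P_{x}$-average of the probability measures $Q_{y}$, and measurability of $x\mapsto R_{x}(C)$ follows from a monotone-class argument: $x\mapsto\int_{Y}h\,\d P_{x}$ is Borel measurable for every bounded Borel $h:Y\rightarrow\r$, starting from indicators $h=\chi_{B}$ where measurability holds by the definition of a kernel. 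It then remains to check $R\mu=\rho$, i.e.\ $\rho(C)=\int_{X}R_{x}(C)\,\d\mu(x)$ for every Borel $C\subset Z$, which after unwinding the definitions amounts to the identity
\[
\int_{Y}g\,\d\nu=\int_{X}\Big(\int_{Y}g(y)\,\d P_{x}(y)\Big)\d\mu(x)
\]
applied to the bounded Borel function $g(y):=Q_{y}(C)$; indeed this yields $\rho(C)=\int_{Y}Q_{y}(C)\,\d\nu(y)=\int_{X}R_{x}(C)\,\d\mu(x)=R\mu(C)$.

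The main obstacle is justifying the displayed identity, since $g$ is only bounded and measurable while $P\mu=\nu$ is defined by $\nu(B)=\int_{X}P_{x}(B)\,\d\mu$ as an $S$-valued Bochner integral. I would establish it first for indicators $g=\chi_{B}$, which is precisely Definition \ref{def:relation}, extend by linearity to simple functions, and then pass to general bounded Borel $g$ by approximating it uniformly by simple functions $g_{n}$. The passage to the limit on the right is controlled by $\big\|\int_{X}(g-g_{n})\eta\,\d|\mu|\big\|\leq\|g-g_{n}\|_{\infty}\,V(\mu)(X)$ via part 2 of Proposition \ref{prop:Bochner} (with $\eta=\dm$), together with $\big|\int_{Y}(g-g_{n})\,\d P_{x}\big|\leq\|g-g_{n}\|_{\infty}$ for the inner integrals, so both limits commute with the integrals; alternatively one invokes the vector dominated convergence theorem exactly as in the final step of Claim \ref{claim:cont is enough}. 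Once this interchange is secured, reflexivity and transitivity together show that $\succ$ is a preorder.
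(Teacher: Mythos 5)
Your proof is correct and follows essentially the same route as the paper: reflexivity via the deterministic identity kernel (through Claim \ref{claim:map induces kernel}) and transitivity via the composed kernel $R_{x}(C)=\int_{Y}Q_{y}(C)\,\d P_{x}(y)$, which is exactly the kernel $K$ the paper exhibits. The only difference is that you carefully justify the measurability of the composed kernel and the interchange $\int_{Y}g\,\d\nu=\int_{X}\int_{Y}g\,\d P_{x}\,\d\mu$ for bounded Borel $g$ via simple-function approximation, details the paper's one-line argument leaves implicit.
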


\begin{proof}
$\ $
\begin{itemize}
\item Reflexivity - For any $\mu$, the identity map sends $\mu$ to itself to by Claim \ref{claim:map induces kernel} $\mu\succ\mu$.
\item Transitivity - Let $\mu\in\mm(X,S), \nu\in\mm(Y,S),\ \lambda\in\mm(Z,S)$ and assume $\mu\succ\nu\succ\lambda$.
By definition there exist two kernels $P\in\text{Ker}\left(X,Y\right),\ Q\in\text{ker}\left(Y,Z\right)$
such that $P\mu=\nu$ and $Q\nu=\lambda$. The kernel  $K\in\text{Ker}(X,Z)$ defined by $K_{x}:=\int_{Y}Q_{y}\left(\cdot\right)\d P_{x}\left(y\right)$ satisfies $K\mu=\lambda$
hence $\mu\succ\lambda$.
\end{itemize}
\end{proof}

\subsection{A generalization of Blackwell's theorem}

In a series of papers from the 1950's \cite{blackwell1951,Blackwell_Range,blackwell1953}, David Blackwell described a method to compare finite-dimensional vector measures, which he referred to as "experiments". By this method, one experiment is more "informative" than
another if the image of the first experiment contains the image of the second. His motivation was sampling
procedures in statistics and his tools were probabilistic
(conditional expectation etc.). The author also formulated equivalent conditions for one experiment to be more informative than another, one of them is the relation described in Definition \ref{def:relation}. I will show a connection between this relation and the optimal transport problem for vector measures (Claim \ref{claim:char of trans plans}), suggest a generalization
of this result for infinite-dimensional vector measures,
and give a measure-theoretic proof based on the disintegration theorem, different from the original
probabilistic proof. 

\b

The disintegration
theorem is a known tool in measure and probability theory. Intuitively, a disintegration of a measure
is its representation as a collection of measures, each lives on a
\textquotedbl small\textquotedbl{} (usually null) set. Formally:
\begin{defn}
\index{disintegration}Let $X,Y$ be two measurable spaces, $\mu\in\mm_+\left(X\right)$ and $T:X\rightarrow Y$ a measurable map.
A \emph{$T$-disintegration} of $\mu$ is a kernel $Q\in\text{Ker}\left(Y,X\right)$
such that
\begin{enumerate}
\item $QT_{\#}\mu=\mu$ 
\item $T_\#Q_y(\{y\})=Q_{y}\left(T^{-1}(\{y\})\right)=1$ for $T_{\#}\mu$-almost every $y\in Y$.
\end{enumerate}
\end{defn}

The disintegration theorem states the existence and uniqueness of
such disintegration:
\begin{thm}
\index{disintegration theorem}\label{thm:DisInt}(Theorem 1 in \cite{Chang1997})
Let $\mu\in\mm_+\left(X\right)$ and $T:X\rightarrow Y$ a measurable map.  There exists
a $T$-disintegration of $\mu$ which is unique almost surely: If
$Q,R\in\text{ker}\left(Y,X\right)$ are two such disintegrations then
$T_{\#}\mu\left(\left\{ y\in Y;\ Q_{y}\neq R_{y}\right\} \right)=0$.
\end{thm}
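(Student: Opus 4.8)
The plan is to reduce the statement to the construction of a regular conditional distribution, exploiting the compact metric structure of $X$ and $Y$ (the standing assumption of this chapter), under which finite Borel measures are determined by their action on continuous functions and $C(X)$ is separable. Write $\n:=T_{\#}\m$. First I would, for each $f\in C(X)$, form the signed measure $B\mapsto\int_{T^{-1}(B)}f\,\d\m$ on $Y$; since it vanishes on $\n$-null sets, the Radon--Nikodym theorem yields a density $\Lambda(f)\in L^{1}(\n)$, which is the conditional expectation of $f$ given $T$ realized on $Y$. The assignment $f\mapsto\Lambda(f)$ is linear, positive, and satisfies $\Lambda(1)=1$ and $|\Lambda(f)|\le\|f\|_{\infty}$, each holding $\n$-almost everywhere.

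Next I would convert this family of $L^{1}$-classes into an honest kernel. Choosing a countable dense $\q$-subalgebra $\mathcal D\subset C(X)$ containing the constants, the countably many a.e. identities (linearity over $\mathcal D$, positivity, normalization, and the bound) hold simultaneously off a single $\n$-null set $N$. For $y\notin N$ the functional $f\mapsto\Lambda(f)(y)$ is then a positive, norm-one functional on the dense subalgebra $\mathcal D$, so it extends uniquely to $C(X)$ and, by the Riesz representation theorem, is represented by a probability measure $Q_y\in\mathcal P(X)$; for $y\in N$ set $Q_y$ to be any fixed probability measure. Measurability of $y\mapsto Q_y(A)$ follows from that of $y\mapsto\int f\,\d Q_y=\Lambda(f)(y)$ for $f\in C(X)$ by a monotone-class argument, so $Q\in\text{ker}(Y,X)$. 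Property (1), namely $Q\n=\m$, is then immediate: for every $f\in C(X)$ one has $\int_X f\,\d(Q\n)=\int_Y\Lambda(f)\,\d\n=\int_X f\,\d\m$, and measures on a compact metric space agreeing on $C(X)$ coincide.

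The step I expect to be the main obstacle is property (2), that each $Q_y$ concentrates on the fiber $T^{-1}(\{y\})$. The clean way I would argue is to identify the glued measure $\tilde\m$ on $X\x Y$ defined by $\int h\,\d\tilde\m:=\int_Y\int_X h(x,y)\,\d Q_y(x)\,\d\n(y)$ with the graph measure $(\text{Id}_X\x T)_{\#}\m$. Testing against products $f(x)g(y)$ with $f\in C(X),\ g\in C(Y)$, both sides equal $\int_X f(x)g(Tx)\,\d\m(x)$ (using $\int_Y g\,\Lambda(f)\,\d\n=\int_X(g\circ T)f\,\d\m$), and since such products span a dense subspace of $C(X\x Y)$ by Stone--Weierstrass, the two finite measures agree. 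As $(\text{Id}_X\x T)_{\#}\m$ is carried by the measurable graph $G=\{(x,y):Tx=y\}$, we obtain $\int_Y Q_y\big(X\setminus T^{-1}(\{y\})\big)\,\d\n(y)=\tilde\m(G^{c})=0$, whence $Q_y(T^{-1}(\{y\}))=1$ for $\n$-a.e. $y$. Finally, for uniqueness, if $Q,R$ are two disintegrations then property (2) gives $\int_X f(x)g(Tx)\,\d\m=\int_Y g(y)\int_X f\,\d Q_y\,\d\n$ and likewise for $R$; varying $g\in C(Y)$ forces $\int f\,\d Q_y=\int f\,\d R_y$ for $\n$-a.e. $y$ for each fixed $f$, and running $f$ through a countable dense subset of $C(X)$ yields $Q_y=R_y$ off a $\n$-null set.
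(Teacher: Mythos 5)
Your proof is correct, but note that the paper does not prove this theorem at all: it is quoted as Theorem 1 of Chang--Pollard \cite{Chang1997}, so you have supplied an argument where the paper only supplies a citation. Your route is the classical construction of a regular conditional distribution, and it works precisely because of the chapter's standing assumption that $X$ and $Y$ are compact metric: separability of $C(X)$ gives the countable dense $\q$-subalgebra needed to discard a single null set, and the Riesz representation theorem converts the pointwise positive normalized functionals $f\mapsto\Lambda(f)(y)$ into genuine probability measures $Q_y$. The cited theorem of Chang and Pollard is proved in substantially greater generality ($\sigma$-finite Radon measures on metric spaces), so your argument buys self-containedness at the cost of generality --- which is harmless here, since the paper only ever invokes the theorem under the compactness assumption. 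Two steps you gloss are routine but worth flagging: the identity $\int_X f\,\d Q_y=\Lambda(f)(y)$ a.e.\ must be upgraded from $f\in\mathcal D$ to all $f\in C(X)$ (uniform approximation plus $\|\Lambda(f_n)-\Lambda(f)\|_{L^1(\n)}\le\|f_n-f\|_{\infty}\,\n(Y)$ and passage to an a.e.\ convergent subsequence), and the defining formula for $\tilde\m$ must be extended from $C(X\x Y)$ to bounded Borel integrands by a monotone-class argument before you may evaluate $\tilde\m(G^c)$, where measurability of the graph $G$ follows since it is the preimage of the closed diagonal under the Borel map $(x,y)\mapsto(Tx,y)$. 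Your identification of $Q\otimes\n$ with $(\text{Id}_X\x T)_{\#}\m$ via Stone--Weierstrass is a nice touch that in fact mirrors the equivalence the paper itself records in Remark \ref{rem:DisIntEq} between the disintegration theorem and Corollary \ref{cor:DisInt product measures}.
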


A common application of disintegration is for product spaces:
\begin{cor}
\label{cor:DisInt product measures}If $\pi\in\mm_+\left(X\times Y\right)$
has marginal $\mu$ on $X$, then there exists $Q\in\text{ker}(X,Y)$ unique $\m$-almost everywhere such that $\pi=Q\otimes\mu$.
\end{cor}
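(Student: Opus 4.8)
The plan is to obtain $Q$ by applying the disintegration theorem (Theorem \ref{thm:DisInt}) to the measure $\pi$ itself, viewed on the space $X\x Y$, with respect to the projection $T:=\mathrm{proj}_X:X\x Y\rightarrow X$, $(x,y)\mapsto x$. Since $\pi$ has marginal $\mu$ on $X$, we have $T_\#\pi=\mu$, so the theorem produces a kernel $R\in\text{ker}(X,X\x Y)$, unique $\mu$-almost everywhere, satisfying $R\mu=\pi$ and $R_x\big((\mathrm{proj}_X)^{-1}(\{x\})\big)=R_x(\{x\}\x Y)=1$ for $\mu$-almost every $x$.

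The key observation is that each fiber $R_x$ is concentrated on the slice $\{x\}\x Y$, so its $X$-coordinate carries no information and may be projected away. I would therefore define $Q_x:=(\mathrm{proj}_Y)_\# R_x\in\mathcal{P}(Y)$, that is $Q_x(B):=R_x(X\x B)$; measurability of $x\mapsto Q_x(B)$ is inherited from that of $R$, so $Q\in\text{ker}(X,Y)$. To verify $\pi=Q\otimes\mu$ it suffices (by compactness of $X\x Y$ and the Riesz representation) to check equality against all $f\in C(X\x Y)$, for which I would compute
\[
\int_{X\x Y}f\,\d\pi=\int_X\Big(\int_{X\x Y}f(x',y)\,\d R_x(x',y)\Big)\d\mu(x),
\]
and then use that $R_x$ lives on $\{x\}\x Y$ to replace $x'$ by $x$ inside the inner integral, turning it into $\int_Y f(x,y)\,\d Q_x(y)$; this is exactly the defining formula for $Q\otimes\mu$.

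For uniqueness I would run the correspondence backwards: any $Q\in\text{ker}(X,Y)$ with $Q\otimes\mu=\pi$ gives rise to the kernel $\widetilde R_x:=\delta_x\otimes Q_x\in\mathcal{P}(X\x Y)$, defined by $\widetilde R_x(A\x B)=\chi_A(x)Q_x(B)$, and a direct check shows $\widetilde R$ is a $\mathrm{proj}_X$-disintegration of $\pi$: indeed $\widetilde R_x(\{x\}\x Y)=Q_x(Y)=1$ and $\widetilde R\mu=Q\otimes\mu=\pi$. By the almost-everywhere uniqueness clause in Theorem \ref{thm:DisInt}, $\widetilde R_x$ is determined for $\mu$-almost every $x$, and since $Q_x$ is recovered as its $Y$-marginal, $Q$ is unique $\mu$-almost everywhere.

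The main obstacle, though a mild one, is the bookkeeping in identifying a disintegration that naturally lives in $\text{ker}(X,X\x Y)$ with a kernel in $\text{ker}(X,Y)$: one must verify that discarding the (almost surely constant) $X$-coordinate of each fiber is legitimate both for measurability and for the integral identity, and that this identification is compatible with the uniqueness statement of the disintegration theorem.
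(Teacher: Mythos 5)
Your proposal is correct and follows essentially the same route as the paper: disintegrate $\pi$ along the projection $T=\mathrm{proj}_X$ (using $T_\#\pi=\mu$) and then convert the resulting kernel in $\text{ker}(X,X\x Y)$ into one in $\text{ker}(X,Y)$ --- the paper sets $Q_x(B):=\tilde{Q}_x(\{x\}\x B)$ where you take the pushforward $Q_x(B):=R_x(X\x B)$, and these agree $\mu$-almost everywhere precisely because $R_x$ is concentrated on the fiber $\{x\}\x Y$. Your explicit reverse construction $\widetilde{R}_x=\delta_x\otimes Q_x$ to extract the uniqueness clause from Theorem \ref{thm:DisInt} spells out a step the paper leaves implicit, but it does not change the method.
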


\begin{proof}
Apply the disintegration theorem to $\mu=T_{\#}\pi$, where $T:X\times Y\rightarrow X$ is the projection onto $X$ (defined by $T(x,y):=x$). There exists $\tilde{Q}\in\text{ker}\left(X,X\x Y\right)$ unique $\m$-almost everywhere,
such that $\pi=\tilde{Q}\mu$. Since $\tilde{Q}_x(T^{-1}(\{x\}))=\tilde{Q}_x(\{x\}\x Y)=1$ $\mu$-almost everywhere we may define $Q\in\text{ker}(X,Y)$ by $Q_x(B):=\tilde{Q}_x(\{x\}\x B)$ for every measurable $B\subset Y$ and then $\pi=\tilde{Q}\mu=Q\otimes\mu$
\end{proof}
\begin{example}
In the above corollary, when $\pi=\mu\times\nu$ is a product
measure then $Q_{x}=\nu\ \forall x$.
\end{example}

\begin{rem}
\label{rem:DisIntEq}
Corollary \ref{cor:DisInt product measures} is actually equivalent to Theorem \ref{thm:DisInt}: To see that we take $\pi=(\text{Id}_X\x T)_\#\m$, then $\pi$ has marginal $T_\#\m$ on $Y$ so by Corollary \ref{cor:DisInt product measures} there exists some kernel $Q\in\text{ker}(Y,X)$ such that $\pi=Q\otimes T_\#\m$. Since the marginal of $\pi$ on $X$ is $\mu$ we conclude $\mu=QT_\#\m$.
\end{rem}
\b

Using the disintegration theorem we can characterize transport plans in $\Pi\left(\m,\n,\dm\right)$:
\begin{claim}
\label{claim:char of trans plans}Let $\mu\in\mm\left(X,S\right)$
, $\nu\in\mm\left(Y,S\right)$ and $\pi\in\mm_+(X\x Y)$ with marginal $\pi_X$ on $X$\index{Pix@$\pi_X$}. Then $\pi\in\Pi\left(\mu,\nu,\dm\right)$ 
iff $\left(\dm,\pi_X\right)=\mu$ and there exists a kernel $P\in\text{ker}\left(X,Y\right)$ such that $\pi=P\otimes\pi_X$  and $P\mu=\nu$.
\end{claim}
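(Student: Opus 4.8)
The plan is to route everything through the disintegration theorem together with an identification of the $S$-valued measure $(\dm,\pi)$ with its marginals. First I would record the standing observation that, since $\pi\in\mm_+(X\x Y)$ has marginal $\pi_X$ on $X$, Corollary \ref{cor:DisInt product measures} already furnishes a kernel $P\in\text{ker}(X,Y)$, unique $\pi_X$-almost everywhere, with $\pi=P\otimes\pi_X$. Thus the existence of $P$ is automatic and the real content is the two identities $(\dm,\pi_X)=\mu$ and $P\mu=\nu$. I would then prove both directions at once by establishing that the $X$-marginal of $(\dm,\pi)$ is exactly $(\dm,\pi_X)$ and that its $Y$-marginal is exactly $P\mu$; granting these, the defining condition of $\Pi(\mu,\nu,\dm)$ (that $(\dm,\pi)$ has marginals $\mu$ and $\nu$) becomes literally the conjunction $(\dm,\pi_X)=\mu$ and $P\mu=\nu$.

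For the $X$-marginal identity I would use the first bullet of Definition \ref{def:marginals}: the $X$-marginal of $(\dm,\pi)$ sends $A$ to the Bochner integral $\int_{A\x Y}\dm(x)\d\pi(x,y)$ of a function depending only on $x$. For a simple density $\dm=\sum_i\chi_{B_i}s_i$ this integral equals $\sum_i\pi((A\cap B_i)\x Y)s_i=\sum_i\pi_X(A\cap B_i)s_i=(\dm,\pi_X)(A)$ by the very definition of the scalar marginal $\pi_X$. For general $\dm$ I would approximate: since $\mu=(\dm,\pi_X)\in\mm(X,S)$, there are simple functions $\eta_n$ on $X$ with $\int_X\|\eta_n-\dm\|\d\pi_X\to0$, and viewing them on $X\x Y$ gives $\int_{X\x Y}\|\eta_n-\dm\|\d\pi=\int_X\|\eta_n-\dm\|\d\pi_X\to0$, because $\|\eta_n-\dm\|$ is a scalar function of $x$ and $\pi_X$ is the scalar marginal of $\pi$. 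Passing to the limit on both sides yields $\int_{A\x Y}\dm\d\pi=(\dm,\pi_X)(A)$ for every $A$.

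The $Y$-marginal identity is where the definitions must be unwound carefully, and I expect this bookkeeping to be the main obstacle. The point is that once $\mu=(\dm,\pi_X)$ holds, I may take $\pi_X$ as the reference measure $|\mu|$ in the representation of $\mu$; then by the definition of $P\otimes\mu$ we have $P\otimes\mu=(\dm,P\otimes|\mu|)=(\dm,P\otimes\pi_X)=(\dm,\pi)$. Consequently $P\mu$, which by definition is the $Y$-marginal of $P\otimes\mu$, is exactly the $Y$-marginal of $(\dm,\pi)$. The delicate step is to justify that replacing the originally fixed $|\mu|$ by $\pi_X$ is legitimate and leaves the density $\dm$ unchanged as a function; this is precisely what $(\dm,\pi_X)=\mu$ asserts, and the consistency of the normalized density across different reference measures is guaranteed by Corollary \ref{cor:NorDen}.

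With the two identities in hand I would conclude. If $\pi\in\Pi(\mu,\nu,\dm)$, then $(\dm,\pi)$ has $X$-marginal $\mu$ and $Y$-marginal $\nu$; the first identity gives $(\dm,\pi_X)=\mu$, and for the disintegrating kernel $P$ the second identity gives $P\mu=\nu$. Conversely, if $(\dm,\pi_X)=\mu$ and $P\mu=\nu$ for a kernel $P$ with $\pi=P\otimes\pi_X$, then the first identity shows the $X$-marginal of $(\dm,\pi)$ is $\mu$ and the second shows its $Y$-marginal is $\nu$, so $(\dm,\pi)$ has marginals $\mu,\nu$ and hence $\pi\in\Pi(\mu,\nu,\dm)$.
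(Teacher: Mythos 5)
Your overall architecture is sound and, at bottom, it shares the skeleton of the paper's proof: the paper likewise obtains $P$ from Corollary \ref{cor:DisInt product measures} and reduces everything to the two identities you isolate. The packaging, however, is genuinely different. The paper never forms set-function marginals and never changes the reference measure: it verifies both directions by direct computation against test pairs $\left(\psi,\varphi\right)\in C\left(X,\f\right)\x C\left(Y,\f\right)$, using $\pi=P\otimes\pi_X$ to split the integral, integrating $\int_Y\varphi\,\d P_x$ directly against $\d\mu$, and then invoking Claim \ref{claim:cont is enough} to conclude $P\mu=\nu$. You instead prove the structural identity $P\otimes\mu=\left(\dm,\pi\right)$ by re-choosing $\pi_X$ as the reference measure $\left|\mu\right|$, which obliges you to address the representation-independence of $P\otimes\mu$ via Corollary \ref{cor:NorDen} --- an obligation the paper's computation sidesteps entirely, since it never switches representations. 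What your version buys is a cleaner logical shape (both directions of the iff drop out formally from two marginal identities); what the paper's buys is fewer well-definedness checks, and your appeal to Corollary \ref{cor:NorDen} is only a sketch of the consistency argument actually needed for $P\otimes\mu$.

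There is one circular step you must repair. In the forward direction you justify the approximation $\int_X\left\|\eta_n-\dm\right\|\d\pi_X\rightarrow0$ by writing ``since $\mu=\left(\dm,\pi_X\right)\in\mm\left(X,S\right)$'' --- but $\left(\dm,\pi_X\right)=\mu$ is precisely the conclusion of that direction, so you cannot derive Bochner integrability of $\dm$ wrt $\pi_X$ from it. (In the converse direction it is a hypothesis, so your argument is fine there.) The fix is cheap and is what the paper implicitly does: take $\varphi=0$ in the defining identity of $\Pi\left(\mu,\nu,\dm\right)$ to get $\int_X\left\langle \psi,\dm\right\rangle \d\pi_X=\int_X\psi\,\d\mu$ for all $\psi\in C\left(X,\f\right)$ --- the integrand depends only on $x$, so this reduction needs only the scalar marginal property, no Bochner theory --- and then, granting integrability of $\dm$ wrt $\pi_X$ from the standing norm-boundedness of the density in this section rather than from the conclusion, identify $\left(\dm,\pi_X\right)$ with $\mu$ by Claim \ref{claim:cont is enough}. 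Note finally that both your argument and the paper's lean on the nonvanishing-density hypothesis of Claim \ref{claim:cont is enough} (automatic under Assumption \ref{ass:Vec}); you inherit that looseness from the paper rather than introduce it.
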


\begin{proof}
\begin{itemize}
    \item Assume $\left(\dm,\pi_X\right)=\mu$ and $\exists P\in\text{Ker}\left(X,Y\right)$ such that $\pi=P\otimes\pi_X$
and $P\mu=\nu$. For every $\left(\psi,\varphi\right)\in C\left(X,\mathcal{F}\right)\times C\left(Y,\mathcal{F}\right)$ 

\[
\begin{array}{cc}
\int_{X\times Y}\left\langle \psi\left(x\right)+\varphi\left(y\right),\dm\left(x\right)\right\rangle \d\pi\left(x,y\right)
& =\\
\\
\int_{X}\left\langle \psi\left(x\right),\dm\left(x\right)\right\rangle\d \pi_X(x)+\int_{X}\int_{Y}\left\langle \varphi\left(y\right),\dm\left(x\right)\right\rangle \d P_{x}\left(y\right)\d\pi_X\left(x\right)
& =\\
\\
\int_{X}\psi\left(x\right)\d\mu\left(x\right)+\int_{X}\left\langle\int_{Y} \varphi\left(y\right)\d P_{x}\left(y\right),\dm\left(x\right)\right\rangle \d\pi_X\left(x\right)
& =\\
\\
\int_{X}\psi\left(x\right)\d\mu\left(x\right)+\int_{X}\int_{Y} \varphi\left(y\right)\d P_{x}\left(y\right)\d\mu\left(x\right)
& =\\
\\
\int_{X}\psi\d\mu+\int_{Y} \varphi\d\nu
\end{array}
\]

therefore $\pi\in\Pi\left(\mu,\nu,\dm\right)$. 
\item Assume $\pi\in\Pi\left(\mu,\nu,\dm\right)$. Since  $\left(\dm,\pi_X\right)$ is the marginal of $\left(\dm,\pi\right)$ on $X$ it is equal to $\mu$ by definition of $\Pi\left(\mu,\nu,\dm\right)$. Let $P$ be the kernel so that $\pi=P\otimes\pi_X$ (exists and unique by Corollary \ref{cor:DisInt product measures}). For every $\varphi\in C\left(Y,\f\right)$ 
\begin{multline*}
\int_{Y}\varphi\left(y\right)\d P\mu\left(y\right)
 = 
\int_{X\x Y}\varphi\left(y\right)\d P\otimes\mu\left(x,y\right)
 = 
\int_{X}\int_{Y}\varphi\left(y\right)\d P_{x}\left(y\right) \d\mu\left(x\right) 
 =\\=
 \int_{X}\left\langle \int_{Y}\varphi\left(y\right)\d P_{x}\left(y\right),\dm\left(x\right)\right\rangle \d\pi_X\left(x\right)
 =
\int_{X\times Y}\left\langle \varphi\left(y\right),\dm\left(x\right)\right\rangle \d\pi\left(x,y\right)
 = 
\int_{Y}\varphi\left(y\right)\d\nu\left(y\right)
\end{multline*}
so by Claim \ref{claim:cont is enough} $P\mu=\nu$.
\end{itemize}
\end{proof}

\begin{cor}
\label{cor:succ iff Pi not empty}
$\Pi\left(\mu,\nu,\dm\right)\neq\emptyset$ iff $\mu\succ\nu$.
\end{cor}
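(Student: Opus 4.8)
The plan is to deduce this corollary directly from the characterization of transport plans in Claim~\ref{claim:char of trans plans}, so essentially no new work is needed beyond unwinding the definitions of $\succ$ and of $\Pi\left(\mu,\nu,\dm\right)$.

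For the forward implication I would start from any $\pi\in\Pi\left(\mu,\nu,\dm\right)$ (which exists by hypothesis) and let $\pi_X$ denote its marginal on $X$. By Claim~\ref{claim:char of trans plans} there is a kernel $P\in\text{ker}(X,Y)$ with $\pi=P\otimes\pi_X$ and, crucially, $P\mu=\nu$. The existence of such a $P$ is precisely the definition of $\mu\succ\nu$ in Definition~\ref{def:relation}, so this direction closes immediately.

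For the converse, suppose $\mu\succ\nu$, witnessed by a kernel $P\in\text{ker}(X,Y)$ with $P\mu=\nu$. I would then set $\pi:=P\otimes|\mu|\in\mm_+(X\x Y)$ and verify the two hypotheses of Claim~\ref{claim:char of trans plans}. Since each $P_x$ is a probability measure, testing $\pi$ against functions of $x$ alone and using $P_x(Y)=1$ shows that the marginal of $\pi$ on $X$ is $\pi_X=|\mu|$; hence $\left(\dm,\pi_X\right)=\left(\dm,|\mu|\right)=\mu$, which is the first condition. The second condition is exactly $\pi=P\otimes\pi_X$ together with $P\mu=\nu$, both of which hold by construction and by the choice of $P$. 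Claim~\ref{claim:char of trans plans} then gives $\pi\in\Pi\left(\mu,\nu,\dm\right)$, so the set is nonempty.

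The only point demanding any care is the computation $\pi_X=|\mu|$ in the converse, which collapses the inner $Y$-integral against the normalization $P_x(Y)=1$. This is routine, and I do not expect a genuine obstacle: the substantive content has already been absorbed into Claim~\ref{claim:char of trans plans}, and this corollary merely translates that characterization into the language of the preorder $\succ$.
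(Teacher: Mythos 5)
Your proposal is correct and follows essentially the same route as the paper: the forward direction reads $\mu\succ\nu$ off the kernel produced by Claim~\ref{claim:char of trans plans}, and the converse takes $\pi=P\otimes|\mu|$ and verifies via the same claim that it lies in $\Pi\left(\mu,\nu,\dm\right)$. Your explicit check that $\pi_X=|\mu|$ (using $P_x(Y)=1$) merely spells out a step the paper leaves implicit.
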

\begin{proof}
If $\Pi\left(\mu,\nu,\dm\right)\neq\emptyset$ then by Claim \ref{claim:char of trans plans} there exists $P\in\text{ker}(X,Y)$ such that $P\mu=\nu$ so by definition $\mu\succ\nu$. If $\mu\succ\nu$ there exists $P\in\text{ker}(X,Y)$ such that $P\mu=\nu$, and then $P\otimes|\mu|\in\Pi\left(\mu,\nu,\dm\right)$.
\end{proof}

The kernel $P_{x}=\delta_{Tx}$ sends $\mu$ into
its pushforward $P\mu=T_{\#}\mu$. Such a kernel
can be considered as a \textquotedbl deterministic kernel\textquotedbl{}
which sends any element of $X$ to a Dirac measure or an element
in $Y$. A $T$-disintegration is a kernel $Q$ that sends the pushforward measure $T_{\#}\mu$ back to $\mu$.
So one can refer the disintegration as an \textquotedbl inverse
kernel\textquotedbl{} of the deterministic kernel induced by $T$.
A reasonable question to ask is whether such \textquotedbl inverse
kernel\textquotedbl{} exists for non-deterministic
kernels. In the following corollary we answer this question:

\begin{cor}
\label{cor:GDT} Let $\mu\in\mm_+\left(X\right)$, $P\in\text{ker}\left(X,Y\right)$ and $\nu:=P\mu$. Then there exists a kernel
$Q\in\text{ker}\left(Y,X\right)$ such that $Q\otimes\nu=P\otimes\mu$ (and in particular $Q\nu=\mu$).
\end{cor}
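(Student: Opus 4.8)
The plan is to recognize this statement as Corollary \ref{cor:DisInt product measures} applied with the roles of $X$ and $Y$ interchanged. The key observation is that the measure $\pi:=P\otimes\mu\in\mm_+\left(X\x Y\right)$ already carries $\nu$ as its marginal on $Y$, so disintegrating $\pi$ along the projection onto $Y$ produces exactly the kernel we want.

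First I would record that $\pi:=P\otimes\mu$ is a well-defined non-negative measure on $X\x Y$ and compute its two marginals. Testing against a function of the form $f(x,y)=g(x)$ and using $P_{x}(Y)=1$ shows the marginal on $X$ is $\mu$; testing against $f(x,y)=h(y)$ and unwinding the definition $\nu(B)=\int_{X}P_{x}(B)\d\mu(x)$ (approximating $h$ by simple functions, or invoking that the two sides agree on continuous functions) shows the marginal on $Y$ is exactly $\nu=P\mu$.

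Next, since $\pi\in\mm_+\left(X\x Y\right)$ has marginal $\nu$ on $Y$, I would apply Corollary \ref{cor:DisInt product measures} with $X$ and $Y$ swapped to obtain a kernel $Q\in\text{ker}\left(Y,X\right)$, unique $\nu$-almost everywhere, such that $\pi=Q\otimes\nu$. This is precisely the asserted identity $Q\otimes\nu=P\otimes\mu$. For the parenthetical claim, I would take marginals on $X$ of both sides: the marginal of $Q\otimes\nu$ on $X$ is by definition $Q\nu$, while the marginal of $P\otimes\mu$ on $X$ was already computed to be $\mu$, so $Q\nu=\mu$.

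The statement is essentially immediate once $\pi$ is identified, so I do not expect a serious obstacle. The only points requiring care are the bookkeeping of the coordinate swap (Corollary \ref{cor:DisInt product measures} is phrased for a prescribed marginal on the \emph{first} factor and must be read symmetrically) and the verification that the $Y$-marginal of $P\otimes\mu$ is indeed $P\mu$; both are routine given the definitions of $P\otimes\mu$ and $P\mu$.
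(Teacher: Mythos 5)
Your proposal is correct and follows exactly the paper's own argument: set $\pi:=P\otimes\mu$, note that $\nu$ is by definition its marginal on $Y$, and apply Corollary \ref{cor:DisInt product measures} (read with the roles of the factors interchanged) to obtain $Q$ with $Q\otimes\nu=\pi$. Your additional verification of the marginals and the extraction of $Q\nu=\mu$ by comparing $X$-marginals are just the routine details the paper leaves implicit.
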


\begin{proof}
Denote $\pi:=P\otimes\mu$.
By definition of $\nu$, it is the marginal of $\pi$ on $Y$, so
by Corollary \ref{cor:DisInt product measures} there exists $Q\in\text{ker}\left(Y,X\right)$
such that $Q\otimes\nu=\pi$.
\end{proof}

Under some normalization condition, we may interchange $P$ and $|\cdot|$:
\begin{lem}
\label{lem:kernel maps densities}
Let $\mu\in\mm\left(X,S\right),\nu\in\mm\left(Y,S\right)$ and $P\in\text{ker}(X,Y)$ such that $P\mu=\nu$ and assume
\begin{equation}
\label{eq:CondDens}
  \exists s^*\in S^*,\ 
  \left\langle s^*,\frac{\d\mu}{\d\left|\mu\right|}\right\rangle \equiv\left\langle s^*,\frac{\d\nu}{\d\left|\nu\right|}\right\rangle \equiv1  
\end{equation}
Then $P|\mu|=|\nu|$.
\end{lem}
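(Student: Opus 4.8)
The plan is to apply the single functional $s^*$ supplied by condition (\ref{eq:CondDens}) in order to collapse the $S$-valued identity $P\mu=\nu$ onto the scalar identity $P|\mu|=|\nu|$. The guiding observation is that post-composing a measure in $\mm(\cdot,S)$ with $s^*$ returns its variation measure, precisely because the relevant densities pair with $s^*$ to give the constant $1$.

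First I would record the elementary fact that $\langle s^*,\mu(A)\rangle=|\mu|(A)$ for every measurable $A\subset X$. Writing $\mu(A)=\int_A\dm\,\d|\mu|$ and using that $s^*$ is a bounded linear operator, part 6 of Proposition \ref{prop:Bochner} permits pulling $s^*$ inside the Bochner integral, so that $\langle s^*,\mu(A)\rangle=\int_A\langle s^*,\dm\rangle\,\d|\mu|=\int_A 1\,\d|\mu|=|\mu|(A)$, the middle equality being (\ref{eq:CondDens}). The identical computation for $\nu$ gives $\langle s^*,\nu(B)\rangle=|\nu|(B)$ for every measurable $B\subset Y$.

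Next I would fix a measurable $B\subset Y$ and evaluate $\langle s^*,(P\mu)(B)\rangle$ in two ways. By the explicit description of $P\mu$ in Definition \ref{def:relation} together with Definition \ref{def:integral S^*}, $(P\mu)(B)=\int_X P_x(B)\,\d\mu(x)=\int_X P_x(B)\,\dm(x)\,\d|\mu|(x)$, and a second application of part 6 of Proposition \ref{prop:Bochner}, again using $\langle s^*,\dm\rangle\equiv1$, yields $\langle s^*,(P\mu)(B)\rangle=\int_X P_x(B)\,\d|\mu|(x)=(P|\mu|)(B)$. On the other hand $P\mu=\nu$, so the first paragraph gives $\langle s^*,(P\mu)(B)\rangle=\langle s^*,\nu(B)\rangle=|\nu|(B)$. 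Equating the two evaluations produces $(P|\mu|)(B)=|\nu|(B)$ for every $B$, which is exactly the claim $P|\mu|=|\nu|$.

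I do not expect a genuine obstacle: the argument is bookkeeping organized around the interchange of $s^*$ with the Bochner integral, licensed by part 6 of Proposition \ref{prop:Bochner}. The only point needing mild care is treating the scalar $P_x(B)\in[0,1]$ as a bounded measurable coefficient that may be carried simultaneously through the Bochner integral and through $s^*$, and checking that the density $\dm$ is the same object whether $\int_X P_x(B)\,\d\mu$ is read via Definition \ref{def:integral S^*} or computed against $|\mu|$.
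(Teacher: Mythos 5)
Your proposal is correct and follows essentially the same route as the paper: both apply the functional $s^*$ from (\ref{eq:CondDens}) to the identity $P\mu=\nu$, use part 6 of Proposition \ref{prop:Bochner} to move $s^*$ through the Bochner integrals, and invoke $\left\langle s^*,\dm\right\rangle\equiv\left\langle s^*,\dn\right\rangle\equiv1$ to collapse everything to the scalar identity $P|\mu|(B)=|\nu|(B)$. The only difference is organizational --- the paper writes it as one chain of equalities starting from $|\nu|(B)$, while you evaluate $\left\langle s^*,(P\mu)(B)\right\rangle$ twice --- which is immaterial.
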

\begin{proof}
For the $s^*$ satisfying (\ref{eq:CondDens}) and any measurable $B\subset Y$
\begin{multline*}
|\nu|(B)
=
\int_B\left\langle s^*,\dn\left(y\right)\right\rangle \d|\nu|(y)
=
 \left\langle s^*,\nu(B)\right\rangle
 =
 \left\langle s^*,P\mu(B)\right\rangle
 =\\
 \int_X\int_B\left\langle s^*,\dm\left(x\right)\right\rangle\d P_{x}\left(y\right)\d\left|\mu\right|(x)
 =
 \int_X P_{x}\left(B\right)\d\left|\mu\right|(x)
 =
 P|\m|(B)
\end{multline*}
\end{proof}

To prove one of our main results, which is a generalization of a result by Blackwell, we will need the following property of reversed kernels:
\begin{lem}
\label{lem:rev ker prop}
Let $\mu\in\mm\left(X,S\right),\nu\in\mm\left(Y,S\right)$. If $P\otimes|\m|=Q\otimes|\n|$ and $P\m=\n$ then  $|\nu|$-almost everywhere
\[
\int_{X}\dm(x)\d Q_{y}\left(x\right)=\dn\left(y\right)
\] 
\end{lem}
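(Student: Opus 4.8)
The plan is to set $g(y):=\int_{X}\dm(x)\,\d Q_{y}(x)\in S$ and to prove the claimed equality $g=\dn$ $\left|\nu\right|$-almost everywhere by showing that $(g,\left|\nu\right|)$ and $\nu$ are the \emph{same} $S$-valued measure and then invoking the uniqueness of the Bochner density (part 5 of Proposition \ref{prop:Bochner}). The only bridge between the two kernels $P$ and $Q$ is the scalar identity $\int_{X\x Y}f\,\d(P\otimes\left|\mu\right|)=\int_{X\x Y}f\,\d(Q\otimes\left|\nu\right|)$ (valid first for $f\in C(X\x Y)$ and hence, since these are equal scalar measures, for every integrable $f$), so each step must be arranged so that this identity is applied to a \emph{scalar} integrand.

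First I would establish that $g$ is well defined $\left|\nu\right|$-almost everywhere and Bochner integrable with respect to $\left|\nu\right|$; this is the technical heart of the argument. Applying the key identity to the scalar integrand $\left\Vert \dm\right\Vert $ gives $\int_{Y}\bigl(\int_{X}\left\Vert \dm\right\Vert \d Q_{y}\bigr)\d\left|\nu\right|=\int_{X\x Y}\left\Vert \dm\right\Vert \d(Q\otimes\left|\nu\right|)=\int_{X\x Y}\left\Vert \dm\right\Vert \d(P\otimes\left|\mu\right|)=\int_{X}\left\Vert \dm\right\Vert \d\left|\mu\right|<\infty$, so $\dm$ is $Q_{y}$-Bochner integrable for $\left|\nu\right|$-a.e. $y$ and $g(y)$ is genuinely defined. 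To see that $g$ is measurable, pick simple functions $\eta_{n}\to\dm$ with $\int_{X}\left\Vert \eta_{n}-\dm\right\Vert \d\left|\mu\right|\to0$. For a simple $\eta_{n}=\sum_{i}\chi_{A_{i}}s_{i}$ the function $g_{n}(y):=\int_{X}\eta_{n}\,\d Q_{y}=\sum_{i}Q_{y}(A_{i})s_{i}$ is a measurable $S$-valued function of $y$, because $Q$ is a kernel. Bounding $\left\Vert g_{n}(y)-g(y)\right\Vert \leq\int_{X}\left\Vert \eta_{n}-\dm\right\Vert \d Q_{y}$ by part 2 of Proposition \ref{prop:Bochner} and integrating in $y$, the same identity yields
\[
\int_{Y}\int_{X}\left\Vert \eta_{n}-\dm\right\Vert \d Q_{y}\,\d\left|\nu\right|=\int_{X\x Y}\left\Vert \eta_{n}-\dm\right\Vert \d(Q\otimes\left|\nu\right|)=\int_{X\x Y}\left\Vert \eta_{n}-\dm\right\Vert \d(P\otimes\left|\mu\right|)=\int_{X}\left\Vert \eta_{n}-\dm\right\Vert \d\left|\mu\right|\underset{n\rightarrow\infty}{\longrightarrow}0.
\]
Hence $g_{n}\to g$ in $L^{1}(\left|\nu\right|;S)$, so a subsequence converges pointwise $\left|\nu\right|$-a.e.; this makes $g$ measurable and Bochner integrable.

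Next I would show $\int_{B}g\,\d\left|\nu\right|=\nu(B)$ for every measurable $B\subset Y$. Fix $s^{*}\in\f$; since $s^{*}$ commutes with the Bochner integral (part 6 of Proposition \ref{prop:Bochner}),
\[
\left\langle s^{*},\int_{B}g\,\d\left|\nu\right|\right\rangle =\int_{B}\int_{X}\left\langle s^{*},\dm(x)\right\rangle \d Q_{y}(x)\,\d\left|\nu\right|(y)=\int_{X\x Y}\chi_{B}(y)\left\langle s^{*},\dm(x)\right\rangle \d(Q\otimes\left|\nu\right|).
\]
Replacing $Q\otimes\left|\nu\right|$ by $P\otimes\left|\mu\right|$ and using the definition of the latter, the right-hand side equals $\int_{X}P_{x}(B)\left\langle s^{*},\dm(x)\right\rangle \d\left|\mu\right|(x)=\left\langle s^{*},\int_{X}P_{x}(B)\,\d\mu(x)\right\rangle =\left\langle s^{*},P\mu(B)\right\rangle =\left\langle s^{*},\nu(B)\right\rangle$, where the last step uses $P\mu=\nu$ (Definition \ref{def:relation}). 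Since $\f$ separates the points of $S$ and $\int_{B}g\,\d\left|\nu\right|$ and $\nu(B)$ are fixed elements of $S$, they coincide.

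Finally, $\int_{B}g\,\d\left|\nu\right|=\nu(B)=\int_{B}\dn\,\d\left|\nu\right|$ for all measurable $B$, so part 5 of Proposition \ref{prop:Bochner} applied to the Bochner-integrable function $g-\dn$ gives $g=\dn$ $\left|\nu\right|$-a.e., which is the assertion. The main obstacle is the first step: in contrast with the scalar case, where the density of the marginal is immediate, one must first verify that the $S$-valued integral $g(y)$ defines a measurable, Bochner-integrable function before any Radon--Nikodym-type uniqueness is available, and it is precisely the approximation by simple functions together with the identity $Q\otimes\left|\nu\right|=P\otimes\left|\mu\right|$ that makes this work.
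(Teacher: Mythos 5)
Your proof is correct and follows essentially the same route as the paper's: the chain $\int_B\dn\d\left|\nu\right|=\nu(B)=P\mu(B)=\int_{X\x B}\dm(x)\d (P\otimes\left|\mu\right|)=\int_{X\x B}\dm(x)\d (Q\otimes\left|\nu\right|)=\int_B\int_X\dm\d Q_y\d\left|\nu\right|$, concluded by the uniqueness statement in part 5 of Proposition \ref{prop:Bochner}. The only difference is that you additionally verify that $y\mapsto\int_X\dm\d Q_y$ is $\left|\nu\right|$-almost everywhere defined, measurable and Bochner integrable (via simple-function approximation) and scalarize through $s^*\in\f$ before invoking $P\mu=\nu$ --- details the paper's proof takes for granted --- so this is a welcome tightening rather than a different argument.
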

\begin{proof}
For every Borel measurable $B\subset Y$:
\begin{multline*}
\int_B\dn\d\left|\nu\right|
=
\nu(B)
=
P\mu(B)
=
P\otimes\mu(X\x B)
=\\=
\int_{X\x B}\dm(x)\d P\otimes\left|\mu\right|
=
\int_{X\x B}\dm(x)\d Q\otimes\left|\nu\right|
=
\int_{B}\int_{X}\dm\left(x\right)\d Q_{y}\left(x\right)\d\left|\nu\right|\left(y\right)
\end{multline*}

hence
$$\int_{B}\left[\dn(y)-\int_{X}\dm\left(x\right)\d Q_{y}\left(x\right)\right]\d\left|\nu\right|\left(y\right)
\ \forall B\subset Y$$
and by part 5 of Proposition \ref{prop:Bochner} we are done.
\end{proof}

We will also need Jensen's inequality for the Bochner integral: 

\begin{prop}
\label{prop:Jensen's inequality}(Jensen's inequality)\index{Jensen's inequality}
Let
\begin{itemize}
    \item $\f\subset S^*$ a family of functionals separating the points of $S$.
    \item $g:S\rightarrow\r$ a convex and $w(\f)$-lower semi-continuous function.
    \item $\mu\in\mathcal{P}\left(X\right)$.
    \item $f:X\rightarrow S$ a Bochner integrable function wrt $\mu$.
\end{itemize}
Then
$
g\left(\int_{X}f\\d\mu\right)
\leq
\int_{X}g\circ f\d\mu
$.
\end{prop}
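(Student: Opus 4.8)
The plan is to prove the inequality by realizing $g$ as the supremum of the continuous affine functions it dominates, and then exchanging this supremum with the integral. Write $\bar{f}:=\int_{X}f\,\d\mu\in S$ for the Bochner integral, so that the goal is $g(\bar f)\le\int_{X}g\circ f\,\d\mu$. The topology $w(\f)$ is locally convex and Hausdorff (the latter because $\f$ separates points), and its continuous dual is exactly the linear span of $\f$ inside $S^{*}$. Since $g$ is convex, $w(\f)$-lower semicontinuous and finite (hence proper), it equals the pointwise supremum of all $w(\f)$-continuous affine minorants:
\[
g(s)=\sup\{\,\langle\phi,s\rangle+c\ ;\ \phi\in\mathrm{span}(\f),\ c\in\r,\ \langle\phi,\cdot\rangle+c\le g\,\}\quad\forall s\in S.
\]
This representation is the only nontrivial input; I would obtain it from the Hahn--Banach separation theorem applied to the (closed, convex) epigraph of $g$ in $(S,w(\f))\times\r$, separating it from a point $(\bar f,g(\bar f)-\varepsilon)$ lying strictly below the graph. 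Note that I cannot use the subdifferentiability corollary proved earlier, since that requires \emph{upper} semicontinuity at the base point, which is not available here; the global minorant representation is what replaces a single supporting hyperplane.

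Given the representation, the computation is short. Fix one affine minorant $a(s)=\langle\phi,s\rangle+c$ with $\phi\in\mathrm{span}(\f)\subset S^{*}$ and $a\le g$ on $S$. Since $\phi$ is a bounded linear functional, part 6 of Proposition \ref{prop:Bochner} gives $\langle\phi,\bar f\rangle=\langle\phi,\int_{X}f\,\d\mu\rangle=\int_{X}\langle\phi,f\rangle\,\d\mu$, and because $\mu$ is a probability measure the constant integrates to itself. Hence
\[
a(\bar f)=\langle\phi,\bar f\rangle+c=\int_{X}\big(\langle\phi,f(x)\rangle+c\big)\,\d\mu(x)=\int_{X}a\circ f\,\d\mu\le\int_{X}g\circ f\,\d\mu,
\]
the last step using $a\le g$ pointwise. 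Taking the supremum over all such minorants $a$ and invoking the representation of $g$ yields $g(\bar f)=\sup_{a}a(\bar f)\le\int_{X}g\circ f\,\d\mu$, which is the claim.

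The main obstacle is the technical bookkeeping around the weak topology $w(\f)$, which is generally neither metrizable nor barrelled, so $g$ may be continuous nowhere (the norm on $S$ is the prototypical example, and indeed the inequality then recovers $\|\int f\|\le\int\|f\|$ from part 2 of Proposition \ref{prop:Bochner}). Two points need care. First, in the separation step one must verify that the separating functional has a nonzero $\r$-component, so that after normalization it produces a genuine affine function rather than a vertical hyperplane; this is standard and uses that the point sits strictly below the epigraph. Second, one must ensure $g\circ f$ is $\mu$-measurable so that $\int_{X}g\circ f\,\d\mu$ is well defined (possibly $+\infty$, in which case the inequality is trivial): since $f$ is Bochner integrable it is essentially separably valued, and restricting to a separable subspace containing its essential range lets one realize $g$ as a supremum of \emph{countably many} affine minorants, each composing with $f$ to a measurable scalar function, whose supremum $g\circ f$ is then measurable.
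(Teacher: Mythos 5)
Your proof is correct and takes essentially the same route as the paper: the paper invokes an $\varepsilon$-subgradient of $g$ at $s_{0}=\int_{X}f\,\d\mu$ (Lemma 7.5 in \cite{Aliprantis2006Hitch}, available precisely because $(S,w(\f))$ is Hausdorff and locally convex), which is exactly the affine minorant your epigraph-separation step produces at $\bar f$, and then integrates it against $\mu$ using the same exchange of a continuous linear functional with the Bochner integral (part 6 of Proposition \ref{prop:Bochner}) before letting $\varepsilon\to0$. Your extra care about the measurability of $g\circ f$ and the nonvanishing $\r$-component of the separating functional is a harmless refinement of details the paper leaves implicit.
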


\begin{proof}
Denote $s_{0}:=\int_{X}f\d\mu$. The $w(\f)$ topology on $S$ is Haussdorf and locally convex, so by
Lemma 7.5 in \cite{Aliprantis2006Hitch}, for any $\varepsilon>0$ there exists $s^*\in \f$ such that 
\[
\
\left\langle s^*,s-s_{0}\right\rangle +g\left(s_{0}\right)  \leq  g\left(s\right)+\varepsilon\  \forall s\in S
\]
Take $s=f(x)$ and integrate wrt $\m$:
\[
\varepsilon+\int_{X}g\circ f\left(x\right)\d\mu
\geq
\int_{X}\left\langle s^*,f\left(x\right)-s_0\right\rangle +g\left(s_0\right)\d\mu
=
\int_{X}\left\langle s^*,f\left(x\right)\right\rangle \d\mu-\left\langle s^*,s_0\right\rangle+g\left(s_0\right)
=
g\left(\int_{X}f\left(x\right)\d\mu\right)
\]
and since $\varepsilon$ is arbitrary small we are done.
\end{proof}

We are now ready to present and prove the generalization of Blackwell's theorem:

\b

\colorbox{thmcolor}{
\parbox{\linewidth}{
\begin{thm}
\label{thm:Blackwell}Let $\mu\in\mm\left(X,S\right),\nu\in\mm\left(Y,S\right)$ and $\f\subset S^*$ a subspace of functionals that separates the points of $S$.
Denote by $G$ the set of all convex and $w(\f)$-lower semi-continuous
functions $g:S\rightarrow\mathbb{R}$. Assuming (\ref{eq:CondDens}), the following are equivalent:
\begin{enumerate}
\item There exists a kernel $Q\in\text{ker}\left(Y,X\right)$ such that $Q\left|\nu\right|=\left|\mu\right|$
and $|\nu|$-almost everywhere
\[
\int_{X}\dm(x)\d Q_{y}\left(x\right)=\dn\left(y\right)
\] 

\item $\int_{X}g\left(\dm\right)\d\left|\mu\right|\geq\int_{Y}g\left(\dn\right)\d\left|\nu\right|$
for every $g\in G$.
\item $\Pi\left(\m,\n,\dm\right)\neq\emptyset$.
\item There exists a kernel $P\in\text{ker}\left(X,Y\right)$ such that $P\mu=\nu$.
\end{enumerate}
While assuming \ref{ass:Vec} instead of the stronger assumption (\ref{eq:CondDens}), we only have $1\Rightarrow2\Rightarrow3\Leftrightarrow4$.
\end{thm}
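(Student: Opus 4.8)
The plan is to close the cycle $1\Rightarrow2\Rightarrow3\Leftrightarrow4\Rightarrow1$. The equivalence $3\Leftrightarrow4$ is precisely Corollary~\ref{cor:succ iff Pi not empty} and needs nothing beyond Assumption~\ref{ass:Vec}. The step $1\Rightarrow2$ uses neither assumption, and $2\Rightarrow3$ uses only Assumption~\ref{ass:Vec} (through Corollary~\ref{cor:of EqCon}); hence the chain $1\Rightarrow2\Rightarrow3\Leftrightarrow4$ is available in the weaker setting. The one step that genuinely relies on the normalization~(\ref{eq:CondDens}) is $4\Rightarrow1$, which is exactly why the cycle fails to close under Assumption~\ref{ass:Vec} alone.

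For $1\Rightarrow2$, I would fix $g\in G$ and apply Jensen's inequality (Proposition~\ref{prop:Jensen's inequality}) to the probability measure $Q_{y}$ and the Bochner integrable density $\dm$: since $\int_{X}\dm(x)\d Q_{y}(x)=\dn(y)$ for $|\nu|$-almost every $y$, this gives $g(\dn(y))\leq\int_{X}g(\dm(x))\d Q_{y}(x)$. Integrating against $|\nu|$ and using $Q|\nu|=|\mu|$ to rewrite the right-hand side as $\int_{X}g(\dm)\d|\mu|$ yields exactly the inequality in $2$. (A convex $w(\f)$-lower semicontinuous $g$ is minorized by an affine $w(\f)$-continuous functional, so all integrals are well defined in $(-\infty,\infty]$.)

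The crux is $2\Rightarrow3$. By the equivalence in Corollary~\ref{cor:of EqCon}, $\Pi(\mu,\nu,\dm)\neq\emptyset$ follows once I show that every $(\psi,\varphi)\in C(X,\f)\x C(Y,\f)$ with $\left\langle\dm(x),\psi(x)+\varphi(y)\right\rangle\geq0$ for all $x,y$ satisfies $\int_{X}\psi\d\mu+\int_{Y}\varphi\d\nu\geq0$. Given such a pair, I would introduce $g(s):=\sup_{y\in Y}\left\langle-\varphi(y),s\right\rangle$. Since $Y$ is compact and $\varphi$ is continuous this supremum is finite, and being a pointwise supremum of $w(\f)$-continuous linear functionals, $g$ is convex and $w(\f)$-lower semicontinuous, so $g\in G$. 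The constraint rewrites as $\left\langle\psi(x),\dm(x)\right\rangle\geq g(\dm(x))$ for every $x$, so integrating against $|\mu|$ gives $\int_{X}\psi\d\mu\geq\int_{X}g(\dm)\d|\mu|$. Applying hypothesis $2$ and then the elementary bound $g(\dn(y))\geq\left\langle-\varphi(y),\dn(y)\right\rangle$ gives $\int_{X}\psi\d\mu\geq\int_{Y}g(\dn)\d|\nu|\geq-\int_{Y}\varphi\d\nu$, which is the desired inequality.

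Finally, for $4\Rightarrow1$ under~(\ref{eq:CondDens}), I would start from a kernel $P$ with $P\mu=\nu$ and invoke Lemma~\ref{lem:kernel maps densities} to deduce $P|\mu|=|\nu|$. Then $P\otimes|\mu|\in\mm_+(X\x Y)$ has marginal $|\nu|$ on $Y$, so by the disintegration corollary (Corollary~\ref{cor:DisInt product measures}) there is $Q\in\text{ker}(Y,X)$ with $Q\otimes|\nu|=P\otimes|\mu|$; comparing marginals on $X$ gives $Q|\nu|=|\mu|$, and Lemma~\ref{lem:rev ker prop} (applied to $P\otimes|\mu|=Q\otimes|\nu|$ together with $P\mu=\nu$) supplies the density identity $\int_{X}\dm(x)\d Q_{y}(x)=\dn(y)$ $|\nu|$-almost everywhere, producing the kernel required in $1$. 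I expect the main obstacle to be $2\Rightarrow3$ — specifically verifying that the candidate $g$ built from the dual variable $\varphi$ really lies in $G$ and correctly encodes feasibility — whereas $4\Rightarrow1$ is the step where the normalization~(\ref{eq:CondDens}) is indispensable, via Lemma~\ref{lem:kernel maps densities}.
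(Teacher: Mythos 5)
Your proposal is correct and takes essentially the same route as the paper's proof: Jensen's inequality for $1\Rightarrow2$, the identical auxiliary function $g_{\varphi}(s)=\sup_{y\in Y}\left\langle -\varphi(y),s\right\rangle $ combined with Corollary \ref{cor:of EqCon} for $2\Rightarrow3$, Corollary \ref{cor:succ iff Pi not empty} for $3\Leftrightarrow4$, and Lemma \ref{lem:kernel maps densities} followed by the reversed kernel (your use of Corollary \ref{cor:DisInt product measures} is exactly what the paper's Corollary \ref{cor:GDT} packages) and Lemma \ref{lem:rev ker prop} for $4\Rightarrow1$. Your accounting of which steps need (\ref{eq:CondDens}) versus only Assumption \ref{ass:Vec} also matches the paper precisely.
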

}}

\begin{proof}
\begin{itemize}
\item $1\Rightarrow2$: By Jensen's inequality
(Proposition \ref{prop:Jensen's inequality}) for any $g\in G$
\[
\begin{array}{ccc}
\int_{X}g\left(\dm\right)\d\left|\mu\right|
& = &
\int_{Y}\int_{X}g\left(\dm\left(x\right)\right)\d Q_{y}\left(x\right)\d\left|\nu\right|(y)
\\\\ & \geq &
\int_{Y}g\left(\int_{X}\dm\left(x\right)\d Q_{y}\left(x\right)\right)\d\left|\nu\right|(y)
\\\\& = &
\int_{Y}g\left(\dn\right)\d\left|\nu\right|
\end{array}
\]
\item $2\Rightarrow3$: Let $\left(\psi,\varphi\right)\in C\left(X,\f\right)\x C\left(Y,\f\right)$
satisfying $\left\langle \dm\left(x\right),\psi\left(x\right)+\varphi\left(y\right)\right\rangle\geq0\ \forall x,y$
and define $g_{\varphi}\in G$ by $g_{\varphi}\left(z\right):=\sup_{y\in Y}\left\langle -\varphi\left(y\right),z\right\rangle$.
By $2$:
\begin{multline*}
\int_X\psi\d\mu 
 = 
\int_{X}\left\langle \psi\left(x\right),\dm\left(x\right)\right\rangle \d\left|\mu\right|\left(x\right) 
\geq
 \int_{X}g_{\varphi}\left(\dm\left(x\right)\right)\d\left|\mu\right|\left(x\right)
 \geq
 \int_{Y}g_{\varphi}\left(\dn\left(y\right)\right)\d\left|\nu\right|\left(y\right)   
 \\=
 \int_{Y}\sup_{z\in Y}\left\langle -\varphi\left(z\right),\dn\left(y\right)\right\rangle \d\left|\nu\right|\left(y\right)
 \geq
 -\int_{Y}\left\langle \varphi\left(y\right),\dn\left(y\right)\right\rangle \d\left|\nu\right|\left(y\right)
 =
-\int_Y\varphi\d\nu
\end{multline*}

thus $\int\psi\d\mu+\int\varphi\d\nu\geq0$. By Corollary \ref{cor:of EqCon} $\Pi\left(\mu,\nu,\dm\right)\neq\emptyset$.

\item $3\Leftrightarrow4$: By Corollary \ref{cor:succ iff Pi not empty}.
\item $4\Rightarrow1$: By Lemma \ref{lem:kernel maps densities} (assuming (\ref{eq:CondDens})) $P\left|\mu\right|=\left|\nu\right|$, and by Corollary \ref{cor:GDT}
there exists a reversed kernel $Q$ such that $P\otimes\left|\mu\right|=Q\otimes\left|\nu\right|$. By Lemma \ref{lem:rev ker prop} we are done.
\end{itemize}
\end{proof}

\subsection{The Martingale formulation}
Let $|\m|\in\mathcal{P}(X),\ |\n|\in\mathcal{P}(Y)$ and $\pi\in\Pi(|\m|,|\n|)$. A pair of functions $f,g:X\x Y\rightarrow\r$ is called a martingale\index{martingale} wrt $\pi$ if $\mathbb{E}_{\pi}[f|g]=g$, meaning
\begin{equation}
    \label{eq:mart cond}
    \int_{X\x Y}(\zeta\circ g)(f-g)\d\pi=0\ \forall \zeta\in C_c(\r)
\end{equation}

By the disintegration theorem (Theorem \ref{thm:DisInt}), we can find some $Q\in\text{ker}(Y,X)$ so that $\pi=Q\otimes|\n|$. If $X=Y=\r$ and $f(x)=x,\ g(y)=y$, equation (\ref{eq:mart cond}) translates into
$$
    \int_Y \zeta(y)\int_X(x-y)\d Q_y(x)\d|\n|(y)=0\ \forall \zeta\in C_c(\r)
$$
which is equivalent to 
$$
\int_Xx\d Q_y(x)=y\ \ |\n|-almost\ everywhere
$$
The problem of minimizing the total cost $\int_{X\x Y}c\d\pi$ among all transport plans satisfy the above constraint is known as the "martingale optimal transport problem", discussed for example in \cite{beiglbockMartingale,guo2017computational}. More generally, we may consider the constraint
$$
\int_Xf(x)\d Q_y(x)=g(y)\ \ |\n|-almost\ everywhere
$$
for some measurable $f:X\rightarrow S,\ g:Y\rightarrow S$.
We consider the set of transport plans in $\Pi(|\mu|,|\nu|)$ satisfying the above constraint:
\[
\Pi_{Mar}(|\m|,|\n|,f,g):= \left\{
\begin{array}{cc}
     \pi\in\mm_+\left(X\x Y\right);\ \forall (\Psi,\Phi,\zeta)\in C(X)\x C(Y)\x C(Y,\f)  \\\\\ 
      \int_{X\x Y}\Psi(x)+\Phi(y)+\left< \zeta(y),f(x)-g(y)\right>\d\pi=\int_{X}\Psi\d|\m|+\int_{X}\Phi\d|\n| 
\end{array}
\right\}
\]\index{pimar@$\Pi_{Mar}$}

The Martingale formulation of Theorem \ref{thm:GMK} is the following:
 
 \b
 
 \colorbox{thmcolor}{
\parbox{\linewidth}{
 \begin{thm}
 \label{thm:MartingaleOT}
 Let $c\in C(X\x Y)$ and $f\in C(X,(S,w(\f))),\ g\in C(Y,(S,w(\f)))$ both norm bounded then
 $$
 \min_{\pi\in\Pi_{Mar}(|\m|,|\n|,f,g)}\int_{X\x Y}c\d\pi=\sup_{\Psi,\Phi} \int_{X}\Psi\d|\m|+\int_{X}\Phi\d|\n|
 $$
 Where the supremum is taken over all $(\Psi,\Phi)\in C(X)\x C(Y)$ satisfying
\[
\Psi(x)+\Phi(y)+\left<\zeta(y),f(x)-g(y)\right>\leq c(x,y)\ \forall x,y\in X\x Y
\]
for some $\zeta\in C(Y,\f)$.
 \end{thm}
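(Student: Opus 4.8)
The plan is to recognize Theorem \ref{thm:MartingaleOT} as a direct instance of the abstract duality theorem (Theorem \ref{Thm:AbsDuality}), read off in its minimization form as in the Remark following that theorem. Concretely, I would substitute $V = C(X\x Y)$ with $V^{*}=\mm(X\x Y)$, $u=c$, and $\q$ the cone of non-negative functions in $V$, exactly as in the proof of Theorem \ref{thm:GMK}. The novelty lies only in the cone $K$ and the operators: I would take $K = C(X)\x C(Y)\x C(Y,\f)$, define the linear operator $h:K\to V$ by $h(\Psi,\Phi,\zeta)(x,y) := \Psi(x)+\Phi(y)+\langle \zeta(y), f(x)-g(y)\rangle$, and the linear functional $k^{*}(\Psi,\Phi,\zeta) := \int_{X}\Psi\d|\m|+\int_{Y}\Phi\d|\n|$ (which ignores the $\zeta$-component).

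Before invoking the theorem I would check its hypotheses. Since $h$ and $k^{*}$ are linear, $h$ is automatically $\q$-superlinear and $k^{*}$ is sublinear. The one genuine verification is that $h$ lands in $V=C(X\x Y)$: the terms $\Psi(x)$ and $\Phi(y)$ are clearly continuous, and the pairing $(x,y)\mapsto\langle\zeta(y),f(x)-g(y)\rangle$ is jointly continuous by the same argument used for $\eta$ in Theorem \ref{thm:GMK} — splitting $\langle\zeta(y),f(x)\rangle-\langle\zeta(y_{0}),f(x_{0})\rangle$ into $\langle\zeta(y)-\zeta(y_{0}),f(x)\rangle+\langle\zeta(y_{0}),f(x)-f(x_{0})\rangle$, the first term is controlled by $\|\zeta(y)-\zeta(y_{0})\|$ uniformly in $x$ because $f$ is norm bounded, while the second tends to $0$ because $f$ is $w(\f)$-continuous and $\zeta(y_{0})\in\f$; the $g$-term is handled identically. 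For Assumption \ref{assu:Sublinear} I would take $s(v):=(\|v\|_{\infty},0,0)$, so that $h(s(v))=\|v\|_{\infty}\geq v$ gives $s(v)\in\Phi(v,h)$, and $k^{*}\circ s(v)=\|v\|_{\infty}\,|\m|(X)=\|v\|_{\infty}$ is bounded on the unit ball, using $|\m|\in\mathcal{P}(X)$.

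With these choices, because $K$, $h$ and $k^{*}$ are linear, the defining inequality of $\Pi(k^{*},h)$ becomes an equality, so $\Pi(k^{*},h)$ is exactly the set of non-negative $\pi\in\mm(X\x Y)$ with $\int h(\Psi,\Phi,\zeta)\d\pi = k^{*}(\Psi,\Phi,\zeta)$ for all $(\Psi,\Phi,\zeta)$ — that is, $\Pi(k^{*},h)=\Pi_{Mar}(|\m|,|\n|,f,g)$. Applying Theorem \ref{Thm:AbsDuality} to the element $-c$ and using the linear change of variable $k\mapsto -k$ (which turns $h(k)\ge -c$ into $h(k)\le c$ and $k^{*}(k)$ into $-k^{*}(k)$, as recorded in the Remark) yields
\[
\min_{\pi\in\Pi_{Mar}}\int_{X\x Y}c\,\d\pi=\sup_{h(\Psi,\Phi,\zeta)\le c}\ \int_{X}\Psi\,\d|\m|+\int_{Y}\Phi\,\d|\n|.
\]
Since $k^{*}$ does not depend on $\zeta$, the condition "$\exists\,\zeta\in C(Y,\f)$ with $\Psi(x)+\Phi(y)+\langle\zeta(y),f(x)-g(y)\rangle\le c(x,y)\ \forall x,y$" can be absorbed into the feasibility constraint of the supremum over $(\Psi,\Phi)$, producing precisely the stated dual. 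By Theorem \ref{Thm:AbsDuality} the minimum is attained whenever $k^{*}$ is $h$-positive, which by Lemma \ref{lem:EqCon} is equivalent to $\Pi_{Mar}\neq\emptyset$; in the degenerate case $\Pi_{Mar}=\emptyset$ the identity reads $+\infty=+\infty$.

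I expect the only real obstacle to be the joint-continuity check that $h$ maps into $C(X\x Y)$, which is exactly why the hypotheses that $f,g$ be norm bounded and $w(\f)$-continuous are essential; the remainder is a mechanical transcription of Theorem \ref{Thm:AbsDuality} together with its minimization reformulation.
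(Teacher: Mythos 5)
Your proposal is correct and follows exactly the paper's own route: the paper proves Theorem \ref{thm:MartingaleOT} by substituting into Theorem \ref{Thm:AbsDuality} the very same data you chose ($V=C(X\x Y)$, $\q$ the non-negative cone, $K=C(X)\x C(Y)\x C(Y,\f)$, the same $h$ and $k^{*}$, and $s(v)=(\|v\|_{\infty},0,0)$ for Assumption \ref{assu:Sublinear}). Your write-up in fact spells out details the paper leaves implicit — the joint-continuity argument for $h$ and the identification $\Pi(k^{*},h)=\Pi_{Mar}(|\m|,|\n|,f,g)$ via linearity — but the argument is essentially identical.
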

 }}
 \b

\begin{proof}
Take in Theorem \ref{Thm:AbsDuality}:
\begin{itemize}
    \item $V=C(X\x Y)$, $\q$ is the cone of non-negative functions in $V$.
    \item $K=C(X)\x C(Y)\x C(Y,\f)$
    \item $h(\psi,\varphi,\zeta)(x,y)=\Psi(x)+\Phi(y)+\left<\zeta(y),f(x)-g(y)\right>$, continuous since $f,g$ are $w(\f)$-continuous and norm bounded.
    \item $k^*(\psi,\varphi,\zeta)=\int_{X}\Psi\d|\m|+\int_{X}\Phi\d|\n|$
    \item Assumption \ref{assu:Sublinear} holds by taking $s(v)=\left(\|v\|_{\infty},0,0\right)$.
\end{itemize}

\end{proof}

Assuming (\ref{eq:CondDens}) and $f=\dm,g=\dn$, the primal and dual problems in the theorem above and in  the problem of vector measures (Theorem \ref{thm:GMK}) coincide. In the following claim we prove this for the two primal problems:

\begin{claim}  
\label{claim:Mar=GMK}
Let $\m\in\mm(X,S),\ \n\in\mm(Y,S)$. Assume (\ref{eq:CondDens}) then $$\Pi_{Mar}\left(|\mu|,|\nu|,\dm,\dn\right)=\Pi\left(\m,\n,\dm\right)$$
\end{claim}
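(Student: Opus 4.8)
The plan is to show that both sets coincide with the common collection
\[
\mathcal{S}:=\left\{\pi\in\mm_+(X\x Y)\ ;\ \pi_X=|\m|,\ \pi_Y=|\n|,\ \int_{X\x Y}\langle\zeta(y),\dm(x)-\dn(y)\rangle\,\d\pi=0\ \forall\zeta\in C(Y,\f)\right\},
\]
where $\pi_X,\pi_Y$ denote the marginals of $\pi$. The identity $\Pi_{Mar}(|\m|,|\n|,\dm,\dn)=\mathcal{S}$ is essentially formal: the defining relation of $\Pi_{Mar}$ is affine in the triple $(\Psi,\Phi,\zeta)$, so it holds for all triples iff it holds for each family separately. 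Setting $\zeta=0$ and letting $\Psi$ (resp.\ $\Phi$) range over $C(X)$ (resp.\ $C(Y)$) forces $\pi_X=|\m|$ and $\pi_Y=|\n|$, since on a compact metric space a measure is determined by its action on continuous functions; setting $\Psi=\Phi=0$ leaves exactly the martingale condition. Conversely these three conditions recombine by linearity into the full defining relation.

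The substance is the identity $\Pi(\m,\n,\dm)=\mathcal{S}$. Given $\pi\in\Pi(\m,\n,\dm)$, I would first rewrite its defining constraint via Definition \ref{def:integral S^*} and split it (by setting $\varphi=0$, resp.\ $\psi=0$) into the two conditions
\[
\int_X\langle\psi,\dm\rangle\,\d|\m|=\int_{X\x Y}\langle\psi(x),\dm(x)\rangle\,\d\pi\quad\forall\psi\in C(X,\f)
\]
and
\[
\int_Y\langle\varphi,\dn\rangle\,\d|\n|=\int_{X\x Y}\langle\varphi(y),\dm(x)\rangle\,\d\pi\quad\forall\varphi\in C(Y,\f).
\]
Taking $\psi=g\cdot f$ in the first condition, where $f\in C(X,\f)$ is the function of Assumption \ref{ass:Vec} with $\langle f,\dm\rangle\equiv1$ and $g\in C(X)$ is an arbitrary scalar, collapses it to $\int_X g\,\d|\m|=\int_X g\,\d\pi_X$, whence $\pi_X=|\m|$ (this is exactly Claim \ref{claim:pi has marginal |mu|}).

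The key step, and the main obstacle, is producing the $Y$-marginal equality $\pi_Y=|\n|$: the second condition above only tests $\dm$ against functions pulled back from $Y$ and does not by itself pin down $\pi_Y$. Here I would invoke the structure of transport plans. By Claim \ref{claim:char of trans plans} we may write $\pi=P\otimes\pi_X=P\otimes|\m|$ for a kernel $P$ with $P\m=\n$; then Lemma \ref{lem:kernel maps densities}, whose hypothesis is precisely (\ref{eq:CondDens}), gives $P|\m|=|\n|$, that is $\pi_Y=|\n|$. This is the one place where the stronger assumption (\ref{eq:CondDens}) is genuinely used. With $\pi_Y=|\n|$ in hand, the right-hand side of the second condition equals $\int_{X\x Y}\langle\varphi(y),\dn(y)\rangle\,\d\pi$, so the condition becomes exactly the martingale identity, and therefore $\pi\in\mathcal{S}$.

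Finally, for the reverse inclusion $\mathcal{S}\subseteq\Pi(\m,\n,\dm)$ I would run the same computation backwards: for $\pi\in\mathcal{S}$, the equality $\pi_X=|\m|$ yields the first condition immediately, while $\pi_Y=|\n|$ together with the martingale identity yields the second, and the two conditions recombine by linearity into the defining relation of $\Pi(\m,\n,\dm)$. This closes the chain $\Pi_{Mar}=\mathcal{S}=\Pi(\m,\n,\dm)$ and proves the claim.
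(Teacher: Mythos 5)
Your proposal is correct and follows essentially the same route as the paper: the marginal $|\m|$ comes from Claim \ref{claim:pi has marginal |mu|}, the marginal $|\n|$ from Claim \ref{claim:char of trans plans} together with Lemma \ref{lem:kernel maps densities} (which is exactly where (\ref{eq:CondDens}) enters, as you correctly isolate), and the inclusion $\Pi_{Mar}\subseteq\Pi\left(\m,\n,\dm\right)$ amounts to the same substitution the paper makes, namely $\Psi=\left\langle\psi,\dm\right\rangle$, $\Phi=\left\langle\varphi,\dn\right\rangle$, $\zeta=\varphi$. Your intermediate set $\mathcal{S}$ merely repackages the paper's two direct inclusions into a symmetric three-condition form, which is a presentational rather than a mathematical difference.
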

\begin{proof}
$\ $
\begin{itemize}
    \item Let $\pi\in\Pi\left(\m,\n,\dm\right)$, by Claim \ref{claim:pi has marginal |mu|} $\pi$ has marginal $|\mu|$. By Claim \ref{claim:char of trans plans} $\pi=P\otimes|\m|$ for some $P\in\text{ker}(X,Y)$ such that $P\m=\n$, so by Lemma \ref{lem:kernel maps densities} it has marginals $|\m|$ and $|\n|$ and for all $\zeta\in C(Y,\f)$:
    
    $$
    \int_{X\x Y}\left< \zeta(y),\dm(x)\right>\d\pi(x,y)
     = 
    \int_{Y}\zeta\d\n
     = 
    \int_{Y}\left< \zeta(y),\dn\left(y\right)\right>\d|\n|(y)
     = 
    \int_{X\x Y}\left< \zeta(y),\dn\left(y\right)\right>\d\pi(x,y)
    $$
    
     therefore $\pi\in\Pi_{Mar}\left(|\mu|,|\nu|,\dm,\dn\right)$.
    \item Let $\pi\in\Pi_{Mar}\left(|\mu|,|\nu|,\dm,\dn\right)$. For any $(\psi,\varphi)\in C(X,\f)\x C(Y,\f)$ choose $\Psi=\left\langle\psi,\dm\right\rangle,\ \Phi=\left\langle\varphi,\dn\right\rangle$ and $\zeta=\varphi$ in the definition of $\Pi_{Mar}$ to conclude $\pi\in\Pi\left(\m,\n,\dm\right)$.
\end{itemize}
\end{proof}

By the above claim and Theorems \ref{thm:GMK}, \ref{thm:MartingaleOT} we may conclude the two dual problems coincide as well, although it is not too hard to verify this directly.

\subsection{Existence of a solution to the dual problem}
We now wish to prove the existence of solutions for the dual problem in the special case $S=S^*=\f=\rn$ and $\eta=\frac{\d\mu}{\d|\mu|}\in C(X)^n$. Under Assumption \ref{ass:Vec} we may conclude that
$
\left\{\left\langle\psi,\eta\right\rangle;\ \psi\in C(X,\mathcal{F})\right\}=C(X)
$
and therefore the dual problem can be written as
\begin{equation}
\label{eq:dualproblem}
\begin{array}{cc}
     \sup\left\{\int_X\psi\d\m+\int_Y\varphi\d\n;\ \left(\psi,\varphi\right)\in\Phi(c,\eta)\right\}
      =  \\\\
     \sup\left\{\int_X\left\langle\psi,\eta\right\rangle\d|\m|+\int_Y\varphi\d\n;\ \left(\psi,\varphi\right)\in\Phi(c,\eta)\right\}
       =  \\\\
      \sup\left\{\int_X\Psi\d|\m|+\int_Y\varphi\d\n;\ \Psi\in C(X),\ \varphi\in C(Y)^n,\ \Psi(x)+\left\langle\varphi(y),\eta(x)\right\rangle\leq c(x,y)\right\}
\end{array}
\end{equation}

A solution to the dual problem $\Psi,\varphi$ together with an optimal transport plan $\pi\in\Pi\left(\mu,\nu,\dm\right)$ satisfy:

$$
\int_{X\x Y}\Psi(x)+\sum_{i=1}^n\varphi_i(y)\eta_i(x)\d\pi(x,y)
=
\int_X\Psi\d|\m|+\sum_{i=1}^n\int_Y\varphi_i\d\n_i
=
\int_{X\x Y}c(x,y)\d\pi(x,y)
$$
Together with $\Psi+\sum_{i=1}^n\varphi_i\eta_i\leq c$ this is equivalent to
\[
\begin{array}{ccc}
\Psi\left(x\right)+\sum_{i=1}^n\varphi_i(y)\eta_i(x)\leq c\left(x,y\right) & ; & \forall (x,y)\in X\times Y\\
\\
\Psi\left(x\right)+\sum_{i=1}^n\varphi_i(y)\eta_i(x) =c\left(x,y\right) & ; & \forall\left(x,y\right)\in\text{supp}\pi
\end{array}
\]

This dual problem may have no optimal solution, even for non-continuous $\psi,\varphi$, as we demonstrate in the following example:

\begin{example}
\label{ex:nonexist}
As in Example \ref{ex:semidiscretedominance}, let
\begin{itemize}
\item $X=\left[0,1\right],\ Y=\left\{ 0,1\right\},\ S=\r^2$.
\item $\mu_{1}$ the Lebesgue measure on $X$.
\item $\mu_{2}$ the measure on $X$ with density $\frac{\text{d}\mu_{2}}{\text{d}\mu_{1}}=2x$
wrt $\mu_{1}$.
\item $\mu=\left(\mu_{1},\mu_{2}\right)\in\mathcal{P}\left(X\right)^{2}$,
$\eta\left(x\right)=\frac{\text{d}\mu}{\text{d}\mu_{1}}\left(x\right)=\left(1,2x\right)$.
\item $\nu=\left(\nu_{1},\nu_{2}\right)=\left(\left(\frac{1}{2},\frac{1}{2}\right),\left(\frac{1}{4},\frac{3}{4}\right)\right)\in\mathcal{P}\left(Y\right)^{2}$.
\item Assumption \ref{ass:Vec} holds for $f=(1,0)$ ($f\cdot\eta\equiv1$).
\end{itemize}

By Claims \ref{claim:pi has marginal |mu|} and \ref{claim:char of trans plans}, every transport plan in $\Pi\left(\mu,\nu,\eta\right)$
is of the form $\pi=P\otimes\mu_{1}$ where $P\in\text{ker}\left(X,Y\right)$
satisfies $P\mu=\nu$. By Example \ref{ex:semidiscretedominance} such $P$ exists and unique (since $\left(\frac{1}{2}\right)^2=\frac{1}{4}$) so the only transport plan is $\pi=P\otimes\mu_{1}$ where
$P$ is the deterministic kernel defined by $P_{x}=\left(\chi_{\left[0,\frac{1}{2}\right]}\left(x\right),\chi_{\left(\frac{1}{2},1\right]}\left(x\right)\right)$.
This transport plan is induced by the map 
\[
Tx=\begin{cases}
0 & ;x\in\left[0,\frac{1}{2}\right]\\
1 & ;x\in\left(\frac{1}{2},1\right]
\end{cases}
\]
therefore the transport plan is $\pi=\left(\text{Id}\times T\right)_{\#}\mu_{1}$. Since $\pi$ is the only transport plan, it will be the optimal one for any cost function.

We wish to find some $c\in C\left(X\times Y\right)$ for which there
is no $\Psi:X\rightarrow\r$ and $\varphi:Y\rightarrow\mathbb{R}^{2}$
such that 
\[
\begin{array}{ccc}
\Psi\left(x\right)+\left\langle \varphi\left(y\right),\eta\left(x\right)\right\rangle \leq c\left(x,y\right) & ; & \forall (x,y)\in X\times Y\\
\\
\Psi\left(x\right)+\left\langle \varphi\left(y\right),\eta\left(x\right)\right\rangle =c\left(x,y\right) & ; & \forall\left(x,y\right)\in\text{supp}\pi
\end{array}
\]

Assume by contradiction that some $\Psi:X\rightarrow\r$ and $\varphi=\left(\varphi_{1},\varphi_{2}\right)=\left(\varphi_1(0),\varphi_1(1),\varphi_2(0),\varphi_2(1)\right)$ satisfy the above. We denote $p_0=\varphi_1(0),\ p_1=\varphi_1(1),\ q_0=\varphi_2(0),\ q_1=\varphi_2(1)$ then
\[
\begin{array}{ccc}
\Psi\left(x\right)+p_0+2xq_0\leq c\left(x,0\right) & ; & \forall x\in\left[0,1\right]\\
\Psi\left(x\right)+p_1+2xq_1\leq c\left(x,1\right) & ; & \forall x\in\left[0,1\right]\\
\Psi\left(x\right)+p_0+2xq_0=c\left(x,0\right) &  & \forall x\in\left[0,\frac{1}{2}\right]\\
\Psi\left(x\right)+p_1+2xq_1=c\left(x,1\right) &  & \forall x\in\left(\frac{1}{2},1\right]
\end{array}
\]
By the last two equalities:

\[
\Psi\left(x\right)=\begin{cases}
c\left(x,0\right)-p_0-2xq_0 & x\in\left[0,\frac{1}{2}\right]\\
c\left(x,1\right)-p_1-2xq_1 & x\in\left(\frac{1}{2},1\right]
\end{cases}
\]
Substitute this $\Psi$ in the two inequalities leads to
\[
\begin{array}{ccc}
c\left(x,1\right)-p_1-2xq_1+p_0+2xq_0\leq c\left(x,0\right) & ; & \forall x\in\left(\frac{1}{2},1\right]\\
c\left(x,0\right)-p_0-2xq_0+p_1+2xq_1\leq c\left(x,1\right) & ; & \forall x\in\left[0,\frac{1}{2}\right]
\end{array}
\]
Denote $p=p_1-p_0$ and $q=q_1-q_0$ then:
     
\begin{eqnarray}{}
 \label{eq:nonex}
 \begin{split}
c\left(x,1\right)-c\left(x,0\right)\leq p+2xq & ; & \forall x\in\left(\frac{1}{2},1\right]\\
p+2xq\leq c\left(x,1\right)-c\left(x,0\right) & ; & \forall x\in\left[0,\frac{1}{2}\right]
 \end{split}
\end{eqnarray}

Choose $c(\cdot,0)\equiv0$ and
\[
c(x,1)=\begin{cases}
0 & ;x\in\left[0,\frac{1}{2}\right]\\
\sqrt{x-\frac{1}{2}} & ;x\in\left(\frac{1}{2},1\right]
\end{cases}
\]
By taking $x\rightarrow\frac{1}{2}$ in (\ref{eq:nonex}) we get $p+q=0\Rightarrow p=-q$ and again by (\ref{eq:nonex})
 $\sqrt{x-\frac{1}{2}}\geq(2x-1)q\ \forall x\in\left(\frac{1}{2},1\right]$
hence $1\geq2\sqrt{x-\frac{1}{2}}q\ \forall x\in\left(\frac{1}{2},1\right]$.
By taking $x\rightarrow\frac{1}{2}^+$ we get a contradiction.
\end{example}

\begin{rem}
We chose that $c$ since its slope is unbounded near $x=\frac{1}{2}$. One can show, using similar arguments, that for any $c(\cdot,1)-c(\cdot,0)$ that has a bounded slope near $x=\frac{1}{2}$, this particular dual problem described above admits a solution.
\end{rem}

\begin{rem}
Theorem 2 in \cite{Wolansky2020} gives sufficient conditions under which such examples may be obtained.
\end{rem}

For a fixed $\mu\in\mm(X,S)$, we can use Theorem \ref{thm:ExMin} to show the dual problem admits a solution on some dense subset of
$$
N_{\m}:=\left\{\n\in\mm(Y,S);\ \m\succ\n\right\}=\left\{\n\in\mm(Y,S);\ \Pi\left(\m,\n\dm\right)\neq\emptyset\right\}
$$
where the last equality is implied by Corollary \ref{cor:succ iff Pi not empty}. In the particular case where $Y$ is finite and $N_{\mu}$ is a subset of the Euclidean space, we can identify this dense subset as the relative interior of $N_{\m}$. We denote $Y=\{1,\ldots ,m\}$, $C(Y,\f)\cong \f^m\cong\r^{n\x m}$ so the dual problem is reduced to
\begin{multline*}
\sup\left\{\int_X\Psi\d|\m|+\sum_{i=1}^n\int_Y\varphi_i\d\n_i;\ \Psi+\langle \eta,\varphi\rangle\leq c\right\}
=\\=
\sup\left\{\int_X\Psi\d|\m|+\sum_{i=1}^n\sum_{j=1}^m \varphi_i^j\n_i^j;\ \Psi(x)+\sum_{i=1}^n\eta_i(x)\varphi_i^j\leq c_j(x)\ \forall x\in X\ ,j\in Y\right\}
\end{multline*}

\colorbox{thmcolor}{
\parbox{\linewidth}{
\begin{thm}
\label{thm:ExMin Semi-discrete}
When $Y$ is finite and $c,\eta$ are (Lipschitz) continuous, the dual problem (\ref{eq:dualproblem}) admits a (Lipschitz) continuous solution for any $\n$ in the relative interior of $N_{\m}$.
\end{thm}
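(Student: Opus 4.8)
The plan is to eliminate the variable $\Psi$, reduce the dual problem to a finite-dimensional convex optimization over $\varphi$ alone, and then invoke the standard fact that a closed proper convex function on a Euclidean space is subdifferentiable throughout the relative interior of its effective domain. Since $Y=\{1,\dots,m\}$ is finite, the dual variable $\varphi$ lives in $C(Y)^n\cong\mathbb{R}^{n\times m}$, and for each fixed $\varphi=(\varphi_i^j)$ the best admissible $\Psi$ is the pointwise minimum
\[
\Psi_\varphi(x):=\min_{j\in Y}\{\, c_j(x)-\sum_{i=1}^n\eta_i(x)\varphi_i^j \,\},
\]
i.e. the largest function with $(\Psi_\varphi,\varphi)\in\Phi(c,\eta)$. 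Consequently the dual value (\ref{eq:dualproblem}) equals
\[
D(\nu)=\sup_{\varphi\in\mathbb{R}^{n\times m}}\{\, \int_X\Psi_\varphi\,\d|\mu|+\sum_{i,j}\varphi_i^j\nu_i^j \,\},
\]
and the supremum is attained at some $\varphi^{*}$ if and only if $(\Psi_{\varphi^{*}},\varphi^{*})$ solves the dual problem. Crucially, $\Psi_\varphi$ is continuous, and Lipschitz whenever $c$ and $\eta$ are (a finite minimum of Lipschitz functions on the compact $X$, using that $\eta$ is norm bounded); so mere attainment in the single finite-dimensional variable $\varphi$ already delivers a (Lipschitz) continuous solution of the full dual problem.

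Next I would recognize $D$ as a convex conjugate. Set $g(\varphi):=-\int_X\Psi_\varphi\,\d|\mu|$. Because $\Psi_\varphi$ is a pointwise minimum of functions that are affine in $\varphi$, it is concave in $\varphi$, so $g$ is convex and finite on all of $\mathbb{R}^{n\times m}$, hence continuous and closed. Then $D=g^{*}$ is the Legendre transform of $g$, and since $g=g^{**}$ the set of maximizers of the reduced dual problem is exactly $\partial g^{*}(\nu)=\partial D(\nu)$. By Theorem \ref{thm:GMK}, $D(\nu)$ equals the primal value $\min_{\pi\in\Pi(\mu,\nu,\eta)}\int c\,\d\pi$; by Claim \ref{claim:pi has marginal |mu|} any admissible $\pi$ has marginal $|\mu|$, so $\pi(X\times Y)=|\mu|(X)<\infty$ and the primal is finite precisely when $\Pi(\mu,\nu,\eta)\neq\emptyset$, that is (Corollary \ref{cor:succ iff Pi not empty}) precisely when $\nu\in N_\mu$. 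Hence the effective domain of the closed proper convex function $D$ is exactly $N_\mu$, which is convex by Proposition \ref{prop:Balyage Convex}.

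Finally I would apply subdifferentiability: a closed proper convex function on $\mathbb{R}^N$ is subdifferentiable at every point of the relative interior of its domain. Therefore for $\nu\in\operatorname{relint}(N_\mu)$ we get $\partial D(\nu)\neq\emptyset$, which produces a maximizer $\varphi^{*}$; taking $\Psi^{*}:=\Psi_{\varphi^{*}}$ completes a (Lipschitz) continuous solution of the dual problem. The existence of a maximizer is then immediate from the convex-analytic theorem, so the main obstacle is purely the bookkeeping that pins the statement down: verifying that $\operatorname{dom}D=N_\mu$ exactly (so its relative interior is the set named in the theorem) and that the maximizers of the reduced problem coincide with the subgradients $\partial D(\nu)$. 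This route is the finite-dimensional incarnation of the "dense subset" of attainment produced abstractly by Theorem \ref{thm:ExMin}, with the relative interior playing the role of that dense set.
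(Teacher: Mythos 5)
Your proposal is correct and follows essentially the same route as the paper's proof: eliminate $\Psi$ by the pointwise minimum $\Psi_\varphi$, recognize the reduced dual value as the convex conjugate of the finite convex function $G(\varphi)=\int_X\max_j\bigl(\sum_i\varphi_i^j\eta_i(x)-c_j(x)\bigr)\d|\mu|$, identify $\operatorname{dom}$ of the resulting conjugate with $N_\mu$ via the duality theorem and Corollary \ref{cor:succ iff Pi not empty}, and invoke subdifferentiability of a convex function on the relative interior of its domain together with the Fenchel--Young equality (the paper cites Hiriart-Urruty--Lemar\'echal and Barbu--Precupanu for these two steps) to extract a maximizer $\varphi^{*}$ and set $\Psi^{*}=\Psi_{\varphi^{*}}$. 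Your explicit check that $\Psi_\varphi$ is (Lipschitz) continuous as a finite minimum of (Lipschitz) functions is a welcome detail the paper leaves implicit.
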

}}
\begin{proof}
By considering only $\Psi$ of the form  $\Psi(x)=\min_j c_j(x)-\sum_{i=1}^n\varphi_i^j\n_i^j$, the infimum will not change so the dual problem is 
\begin{multline*}
F(\n)
:=
\sup_{\varphi\in \r^{n\times m}}\sum_{i=1}^n\sum_{j=1}^m\varphi_i^j\n_i^j+\int_X\min_j\left( c_j(x)-\sum_{i=1}^n\varphi_i^j\eta_i(x)\right)\d|\m|(x)
=\\=
\sup_{\varphi\in \r^{n\times m}}\sum_{i=1}^n\sum_{j=1}^m\varphi_i^j\n_i^j-\int_X\max_j\left( \sum_{i=1}^n\varphi_i^j\eta_i(x)-c_j(x)\right)\d|\m|(x)
\end{multline*}

By the abstract duality theorem (Theorem \ref{Thm:AbsDuality}), $F$ is finite iff $\Pi(\m,\n,\eta)\neq\emptyset$ which is true iff $\m\succ \n$ by Corollary \ref{cor:succ iff Pi not empty}. Therefore the domain of $F$ (the set on which $F<\infty$) is exactly $\text{dom}(F)=N_{\m}$.
$F:\r^{n\x m}\rightarrow\r\cup\{\infty\}$ is convex, so by Proposition IV.1.2.1 in \cite{hiriart2013convex} it is subdifferentiable on the relative interior of its domain, meaning that for any $\n$ in the relative interior of $N_{\m}$, there exists some $\phi\in\partial F(\n)$ (the subdifferential of $F$ at $\n$). $F$ is also the convex conjugate of $G(\varphi)=\int_X\max_j\left( \sum_{i=1}^n\varphi_i^j\eta_i(x)-c_j(x)\right)\d|\m|(x)$, which is convex and lower semi-continuous, so by Proposition 2.33 in \cite{barbu2012convexity}
$$
\sup_{\varphi\in \r^{n\times m}}\sum_{i=1}^n \sum_{j=1}^m\varphi_i^j\\n_i^j-G(\varphi)
=
F(\n)
=
\sum_{i=1}^n\sum_{j=1}^m \phi_i^j\n_i^j-G(\phi)
$$
(where we used the equivalence of part (iii) and (iv) in that proposition). Therefore
$$
\sup\left\{\int_X\Psi\d|\m|+\sum_{i=1}^n\sum_{j=1}^m \varphi_i^j\n_i^j;\ \Psi(x)+\sum_{i=1}^n\eta_i(x)\varphi_i^j\leq c_j(x)\right\}
$$
is attained for $\phi$ and $\Psi(x)=\min_j c_j(x)-\sum_{i=1}^n\eta_i(x)\phi_i^j$. So if $c,\eta$ are Lipschitz continuous so is $\psi$.
\end{proof}

For a compact metric space $Y$ (not necessarily a finite set) we prove the existence of solutions for a weak* dense subset of $N_\mu$:

\b

\colorbox{thmcolor}{
\parbox{\linewidth}{
\begin{thm}
\label{thm:ExMinVec}

 If $c,\eta$ are Lipschitz continuous, the dual problem (\ref{eq:dualproblem}) admits a Lipschitz continuous solution on a weak*-dense subset of $N_{\m}$.

\end{thm}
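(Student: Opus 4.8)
The plan is to mirror Example \ref{ex:ExMin}, now applying Theorem \ref{thm:ExMin} in the vector setting $S=S^*=\f=\rn$, $\eta=\dm\in C(X)^n$. Fix an arbitrary $\nu\in N_\mu$; I will produce, for every $\varepsilon>0$, a measure $\nu'\in N_\mu$ that is within $\varepsilon$ of $\nu$ in a suitable norm and for which (\ref{eq:dualproblem}) is solvable. First I would set up the abstract ingredients: $V=C(X\x Y)$ with the supremum norm (so $V^*=\mm(X\x Y)$), $\q$ the cone of non-negative functions, $K=C(X)\x C(Y)^n$, the linear operator $h(\Psi,\varphi)(x,y)=\Psi(x)+\langle\varphi(y),\eta(x)\rangle$, the functional $k^*(\Psi,\varphi)=\int_X\Psi\d|\mu|+\int_Y\varphi\d\nu$, and $u=-c$. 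As the convex set I would take
\[
D=\{(\Psi,\varphi)\in K;\ |\varphi_i(y)|\le 1,\ |\varphi_i(y)-\varphi_i(z)|\le d(y,z)\ \forall i,\ \forall y,z\in Y\},
\]
which constrains only $\varphi$, exactly as in Example \ref{ex:ExMin}.

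Next I would check the hypotheses of Theorem \ref{thm:ExMin}. Since $\nu\in N_\mu$ means $\mu\succ\nu$, Corollary \ref{cor:succ iff Pi not empty} gives $\Pi(\mu,\nu,\eta)\neq\emptyset$, so by Lemma \ref{lem:EqCon} the functional $k^*$ is $h$-positive. Assumption \ref{assu:Sublinear} for $n\cdot D$ holds by taking $s(v)=(\|v\|_\infty,0)$, whose image lies in $\Phi(v,h)\cap(n\cdot D)$ (the $\varphi$-component is $0$) and on which $k^*\circ s(v)=\|v\|_\infty|\mu|(X)$ is bounded near $0$. The essential point is that $\inf_{(n\cdot D)\cap\Phi(-c,h)}k^*$ is attained. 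Here I would follow the Arzela--Ascoli argument of Example \ref{ex:ExMin}: along a minimizing sequence $(\Psi_m,\varphi_m)$ one may replace $\Psi_m$ by $\Psi_m(x)=\max_{y}\big(-c(x,y)-\langle\varphi_m(y),\eta(x)\rangle\big)$ without leaving $(n\cdot D)\cap\Phi(-c,h)$ or increasing $k^*$ (as $|\mu|\ge 0$ and $D$ does not constrain $\Psi$); the family $\{\varphi_m\}\subset n\cdot D$ is uniformly bounded and equi-Lipschitz, so a subsequence converges uniformly, and since $\eta$ is bounded the associated $\Psi_m$ converge uniformly as well. Uniform convergence preserves both the constraint and membership in $n\cdot D$, and $k^*$ is continuous for uniform convergence because $|\mu|$ and the $\nu_i$ have finite total variation; hence the limit attains the infimum.

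Theorem \ref{thm:ExMin} then yields, for each $\varepsilon>0$, a measure $\pi=\pi_\varepsilon\in V^*=\mm(X\x Y)$ such that $\pi\circ h$ is $h$-positive, $\inf_{k\in\Phi(-c,h)}\pi\circ h(k)$ is attained, and $\|\pi\circ h-k^*\|_D\le\varepsilon$. Because $D$ leaves $\Psi\in C(X)$ unconstrained, finiteness of $\|\pi\circ h-k^*\|_D$ forces $\int_{X\x Y}\Psi(x)\d\pi=\int_X\Psi\d|\mu|$ for all $\Psi\in C(X)$, i.e. $\pi$ has marginal $|\mu|$ on $X$. Disintegrating $\pi=P\otimes|\mu|$ (Corollary \ref{cor:DisInt product measures}) and setting $\nu':=P\mu\in\mm(Y,S)$, I get $\mu\succ\nu'$ so $\nu'\in N_\mu$, and $\pi\circ h(\Psi,\varphi)=\int_X\Psi\d|\mu|+\int_Y\varphi\d\nu'$. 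Thus the attained infimum $\inf_{\Phi(-c,h)}\pi\circ h$ equals, up to the sign flip $(\Psi,\varphi)\mapsto(-\Psi,-\varphi)$ that turns $u=-c$ into the $\le c$ constraint, the negative of the dual problem (\ref{eq:dualproblem}) for $\nu'$; its optimizer lies in $n\cdot D$ and has $\Psi$-component a maximum of Lipschitz functions, hence is Lipschitz continuous.

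Finally, the residual norm becomes a vector Kantorovich--Rubinstein distance: cancelling the $\Psi$-part, $\|\pi\circ h-k^*\|_D=\sup_{\varphi\in D_Y}\int_Y\varphi\,\d(\nu'-\nu)\le\varepsilon$, where $D_Y=\{\varphi\in C(Y)^n;\ |\varphi_i|\le1,\ \varphi_i\ 1\text{-Lipschitz}\}$. Testing one coordinate at a time shows each component $\nu'_i$ lies within $\varepsilon$ of $\nu_i$ in the scalar Kantorovich--Rubinstein norm, which metrizes weak* convergence; hence $\nu'\to\nu$ weak* as $\varepsilon\to0$. Since each such $\nu'$ lies in $N_\mu$ and admits a Lipschitz solution of (\ref{eq:dualproblem}), these $\nu'$ form a weak*-dense subset of $N_\mu$, proving the theorem. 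I expect the main obstacle to be the attainment of $\inf_{(n\cdot D)\cap\Phi(-c,h)}k^*$ --- the compactness (Arzela--Ascoli) argument together with the verification that passing to the uniform limit preserves the constraint set --- and, secondarily, the bookkeeping that identifies the abstract conclusion with the concrete dual problem for $\nu'$ and confirms that $\|\cdot\|_{D_Y}$ controls weak* convergence.
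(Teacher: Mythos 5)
Your proposal is correct and follows essentially the same route as the paper's proof: the same substitution into Theorem \ref{thm:ExMin} with $D$ constraining only the bounded $1$-Lipschitz $\varphi$-components, the same Arzela--Ascoli attainment argument after replacing $\Psi$ by the pointwise-optimal maximum, the same deduction that the resulting $\pi$ has marginal $|\mu|$ on $X$, and the same Kantorovich--Rubinstein estimate yielding weak* density. The only differences are presentational --- you carry the sign flip $u=-c$ explicitly where the paper works directly with $\Phi(c,\eta)$, and you verify $h$-positivity and $\nu'\in N_\mu$ (via disintegration and Corollary \ref{cor:succ iff Pi not empty}) where the paper leaves these steps implicit.
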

}}

\begin{proof}
Fix $\m\in\mm(X,S)$ and $\n\in N_{\m}$.
Substitute in Theorem \ref{thm:ExMin}:
        \begin{itemize}
            \item $K=C(X)\x C(Y)^n$
            \item $D=\{(\Psi,\varphi)\in K;\ |\varphi_i(y)|\leq 1,\ |\varphi_i(y)-\varphi_i(y')|\leq d(y,y')\ \forall y,y',i\}$.
            \item $V=C(X\x Y)$, $V^*=\mm(X\x Y)$
            \item $h(\Psi,\varphi)=\Psi+\left\langle\varphi,\eta\right\rangle$
            \item $\Phi(c,\eta):=\{(\Psi,\varphi)\in K;\ h(\Psi,\varphi)\geq c\}$
            \item $k^*(\Psi,\varphi):=\int_X \Psi\d|\m|+\int_Y\varphi\d\n$.
            \item Take $s(v):=(||v||_{\infty},0)\in\Phi(v,\eta)\cap n\cdot D$ which satisfies Assumption \ref{assu:Sublinear}.
        \end{itemize}
    A minimizing sequence $(\Psi_n,\varphi_n)\in\Phi(c,\eta)\cap (n\cdot D)$ of $k^*$ can be replaced with $(\max_y\{c(x,y)-\langle \eta(x),\varphi_n(y)\rangle\},\varphi_n)$ which is uniformly bounded and Lipschitz so by the Arzela-Ascoli theorem\index{Arzela-Ascoli theorem} (Theorem A.4.6 in \cite{shalit2017first}) it has a converging subsequence (in the supremum norm) and therefore $\inf_{(n\cdot D)\cap\Phi(c,\eta)}k^*(k)$ is attained (at the limit of the subsequence).
    By Theorem \ref{thm:ExMin}, for any $\varepsilon>0$ there exists some $\pi\in\mm(X\x Y)=V^*$ so that
    $$
    \inf\left\{\int_{X\x Y}\Psi(x)+\left\langle\varphi(y),\eta(x)\right\rangle\d\pi(x,y);\ \Psi(x)+\left\langle\varphi(y),\eta(x)\right\rangle\geq c(x,y)\right\}
    $$
    is attained and
    $$\sup_{(\Psi,\varphi)\in D}\left\{\int_{X\x Y}\Psi(x)+\left\langle\varphi(y),\eta(x)\right\rangle\d\pi(x,y)-\int_X \Psi\d|\m|-\sum_{i=1}^n\int_Y\varphi_i\d\n_i\right\}\leq\varepsilon$$
    
    Since the supremum is finite and $\Psi\in C(X)$ is arbitrary, we conclude the marginal of $\pi$ on $X$ is $|\mu|$ and therefore 
    \begin{multline*}
    \int_{X\x Y}\left[\Psi(x)+\left\langle\varphi(y),\eta(x)\right\rangle\right]\d\pi-\int_X \Psi\d|\m|-\sum_{i=1}^n\int_Y\varphi_i\d\n_i
    = 
    \int_{X\x Y}\left\langle\varphi(y),\eta(x)\right\rangle\d\pi-\sum_{i=1}^n\int_Y\varphi_i\d\n_i
    \end{multline*}
    Denote by $\nu'_i$ the marginal of $(\eta_i,\pi)$ on $Y$,  then
    $
    \int_{X\x Y}\left\langle\varphi(y),\eta(x)\right\rangle \d\pi
    =
    \sum_{i=1}^n\int_{Y}\varphi_i\d\nu'_i
    $
    so 
    
    $$
    \inf\left\{\int_X\Psi\d|\m|+ \sum_{i=1}^n\int_{Y}\varphi_i\d\nu'_i;\ \Psi(x)+\left\langle\varphi(y),\eta(x)\right\rangle\geq c(x,y)\right\}
    $$
    
    is attained and 
    
    $$
    \sup_{\varphi}\sum_{i=1}^n\int_{Y}\varphi_i\d\nu'_i-\int_Y\varphi_i\d\nu_i
    =
    \sum_{i=1}^n||\n_i-\n'_i||_{KR}\leq\varepsilon
    $$
    where the supremum is taken over all 1-Lipschitz $\varphi$ that are bounded between -1 and 1. As already mentioned in Example \ref{ex:ExMin}, the norm $\|\cdot\|_{KR}$ is sometimes called the "Kantorovich-Rubinstein" norm\index{Kantorovich-Rubinstein norm} and is known to metricize the weak convergence for probability measures (see Proposition 7.1.5 in \cite{Ambrosio2005}, Theorem 8.3.2 in \cite{bogachev2007measure}, Theorem 5.9 in \cite{santambrogio2015optimal} or Theorem 7.12 in \cite{Villani2003}).
\end{proof}

We will later get a slightly stronger result using a different proof (Theorem \ref{thm:ExMinCont}).

\subsection{Existence of an optimal transport map in the semi-discrete case}

Let $\mu\in\mathcal{M}\left(X,S\right),\ \nu\in\mathcal{M}\left(Y,S\right)$. We recall that $\Pi\left(\mu,\nu,\dm\right)$
is the set of all $\pi\in\mm_+(X\x Y)$
such that
\[
\int_{X\x Y}\left\langle \psi\left(x\right)+\varphi\left(y\right),\dm\left(x\right)\right\rangle \d\pi\left(x,y\right)=\int_{X}\psi\d\mu+\int_{Y}\varphi\d\nu
\]
 for all $\left(\psi,\varphi\right)\in C\left(X,\f\right)\x C\left(Y,\f\right)$.
Such measures are called transport plans.

\begin{defn}
A \emph{transport map} sending $\mu\in\mm(X,S)$ to $\nu\in\mm(Y,S)$ is a measurable map $T:X\rightarrow Y$ such that $T_\#\mu=\nu$, namely $T_\#\mu(B):=\mu(T^{-1}(B))=\nu(B)$ for all measurable $B\subset Y$. Equivalently (by Claim \ref{claim:cont is enough} for $\mu_1=T_\#\mu$ and $\mu_2=\nu$, see also Remark \ref{ex:pushforward}):
$$\int_Xf(Tx)\d\mu(x)=\int_Yf(y)\d\nu(y)\ \forall f\in C(Y,\f)$$
We denote the family of these transport maps
as $\mathcal{T}\left(\mu,\nu\right)$\index{Tmn@$\mathcal{T}\left(\mu,\nu\right)$}.
\end{defn}

\begin{claim}
$\ $
\begin{enumerate}
    \item If $T\in\mathcal{T}\left(\mu,\nu\right)$ then $(\text{Id}_X\x T)_\#|\mu|\in\Pi\left(\mu,\nu,\dm\right)$.
    \item If $\pi\in\Pi\left(\mu,\nu,\dm\right)$ is supported on a graph of a Borel measurable map $T:X\rightarrow Y$ then $T\in\mathcal{T}\left(\mu,\nu\right)$.
\end{enumerate}

\end{claim}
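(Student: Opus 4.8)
The plan is to verify each of the two implications by pushing the defining marginal identity of $\Pi\left(\m,\n,\dm\right)$ through the change of variables induced by $T$. Both directions reduce to direct computations once the integration conventions of Definition \ref{def:integral S^*} and the identification of the $X$-marginal of a plan (Claim \ref{claim:pi has marginal |mu|}) are invoked, so I expect no deep obstruction, only bookkeeping.

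For part 1, I would set $\pi:=(\text{Id}_X\x T)_\#|\m|$ and test it against an arbitrary pair $(\psi,\varphi)\in C(X,\f)\x C(Y,\f)$. By the push-forward formula (Remark \ref{ex:pushforward}), $\int_{X\x Y}\langle\psi(x)+\varphi(y),\dm(x)\rangle\,\d\pi=\int_X\langle\psi(x)+\varphi(Tx),\dm(x)\rangle\,\d|\m|$. The first summand equals $\int_X\psi\,\d\m$ by Definition \ref{def:integral S^*}, and the second is $\int_X(\varphi\circ T)\,\d\m$, which equals $\int_Y\varphi\,\d\n$ by the integral characterization of $T_\#\m=\n$ applied with $f=\varphi$. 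Summing yields exactly the identity defining $\Pi\left(\m,\n,\dm\right)$, so $\pi$ belongs to it.

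For part 2, I would argue from the dual side: testing $\pi\in\Pi\left(\m,\n,\dm\right)$ with $\psi=0$ and arbitrary $\varphi=f\in C(Y,\f)$ gives $\int_Y f\,\d\n=\int_{X\x Y}\langle f(y),\dm(x)\rangle\,\d\pi$. Since $\pi$ is concentrated on the graph of $T$, we have $y=Tx$ for $\pi$-almost every $(x,y)$, so the integrand may be replaced by $\langle f(Tx),\dm(x)\rangle$, which depends only on $x$; the integral therefore collapses to $\int_X\langle f(Tx),\dm(x)\rangle\,\d\pi_X$ against the $X$-marginal $\pi_X$ of $\pi$. Under Assumption \ref{ass:Vec}, Claim \ref{claim:pi has marginal |mu|} identifies $\pi_X=|\m|$, so this last integral is $\int_X f(Tx)\,\d\m$ by Definition \ref{def:integral S^*}. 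Hence $\int_X f(Tx)\,\d\m=\int_Y f\,\d\n$ for every $f\in C(Y,\f)$, which is precisely $T_\#\m=\n$, i.e.\ $T\in\mathcal{T}(\m,\n)$.

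The one step deserving care is the reduction $y\mapsto Tx$ in part 2: I must justify replacing $\langle f(y),\dm(x)\rangle$ by its value on the graph, which uses only that $\pi$ assigns zero mass to $\{(x,y):y\neq Tx\}$, together with the identification $\pi_X=|\m|$, where Assumption \ref{ass:Vec} enters through Claim \ref{claim:pi has marginal |mu|}. As an alternative route for part 2, I could instead apply Claim \ref{claim:char of trans plans} to obtain a disintegration kernel $P$ with $\pi=P\otimes\pi_X$ and $P\m=\n$; concentration of $\pi$ on the graph forces $P_x=\delta_{Tx}$ for $\pi_X$-almost every $x$, and Claim \ref{claim:map induces kernel} then gives $P\m=T_\#\m$, whence $T_\#\m=\n$.
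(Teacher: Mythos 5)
Your part 1 coincides with the paper's proof essentially verbatim: push forward along $\text{Id}_X\x T$, split the pairing, and use the integral characterization of $T_\#\m=\n$. For part 2, your primary route is more direct than the paper's but quietly imports an extra hypothesis: you pin down $\pi_X=|\m|$ via Claim \ref{claim:pi has marginal |mu|}, which requires Assumption \ref{ass:Vec}, whereas the claim as stated (and the paper's proof of it) invokes no such assumption. The paper only needs the weaker identity $\left(\dm,\pi_X\right)=\m$ as vector measures, which comes straight from the definition of $\Pi\left(\m,\n,\dm\right)$ (cf.\ Claim \ref{claim:char of trans plans}) and suffices to convert $\int_X\left\langle f(Tx),\dm(x)\right\rangle\d\pi_X(x)$ into $\int_Xf(Tx)\d\m(x)$; this distinction is not cosmetic, since without Assumption \ref{ass:Vec} one can have $\pi_X\neq|\m|$ (e.g.\ if $\dm$ vanishes on a set of positive $|\m|$-measure, a plan may carry extra mass over that set without violating any marginal constraint), yet the conclusion $T_\#\m=\n$ still holds. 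Structurally, the paper disintegrates $\pi=P\otimes\pi_X$ (Corollary \ref{cor:DisInt product measures}), uses the graph support to show $\pi=(\text{Id}_X\x T)_\#\pi_X$, and then computes with $\left(\dm,\pi_X\right)=\m$. Your alternative route --- graph support forces $P_x=\delta_{Tx}$ for $\pi_X$-almost every $x$, then $P\m=T_\#\m$ by Claim \ref{claim:map induces kernel} combined with $P\m=\n$ from Claim \ref{claim:char of trans plans} --- is essentially the paper's argument and is assumption-free, so that is the version you should promote to the main proof; the a.e.-substitution route buys brevity, while the paper's route buys the full generality of the statement.
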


\begin{proof}
$\ $
\begin{enumerate}
    \item Let $T\in\mathcal{T}\left(\mu,\nu\right)$. For any $(\psi,\varphi)\in C(X,\f)\x  C(Y,\f)$
    \begin{multline*}
    \int_{X\x Y}\left\langle \psi(x)+\varphi(y),\dm(x)\right\rangle\d(\text{Id}_X\x T)_\#|\mu|(x,y)
    =
    \int_{X}\left\langle \psi(x)+\varphi(Tx),\dm(x)\right\rangle\d|\mu|(x)
    =\\=
    \int_{X}\psi(x)\d\mu+\int_{X}\varphi(Tx)\d\mu(x)
    =
    \int_{X}\psi(x)\d\mu+\int_{Y}\varphi(y)\d\mu(y)
    \end{multline*}
    Thus $(\text{Id}_X\x T)_\#|\mu|\in\Pi\left(\mu,\nu,\dm\right)$.
    \item Let $\pi\in\Pi\left(\mu,\nu,\dm\right)$ supported on the graph of a Borel measurable $T:X\rightarrow Y$. Denote its marginal on $X$ by $\pi_X$ and the graph of $T$ by 
$$G(T)=\{(x,y)\in X\x Y;\ Tx=y\}$$
By Corollary \ref{cor:DisInt product measures}, there exists some $P\in\text{ker}(X,Y)$ such that $\pi=P\otimes\pi_X$. For any $f\in C(X\x Y)$:
\begin{multline*}
    \int_{X\x Y}f\d\pi
    =
    \int_{G(T)}f\d\pi
    =
    \int_X\int_Y\chi_{G(T)}(x,y)f(x,y)\d P_x(y)\d\pi_X(x)
    =\\=
    \int_{X}f(x,Tx)\d\pi_X(x)
    =
    \int_{X\x Y}f\d(\text{Id}_X\x T)_\#\pi_X
\end{multline*}
thus $\pi=(\text{Id}_X\x T)_\#\pi_X$ and for any $f\in C(Y,\f)$ we have 
\begin{multline*}
    \int_Xf(Tx)\d\mu(x)
    =
    \int_{X\x Y}\left\langle f(Tx),\dm(x)\right\rangle\d\pi(x,y)
    =\\=
    \int_X\left\langle f(Tx),\dm(x)\right\rangle\d\pi_X(x)
    =
    \int_{X\x Y}\left\langle f(y),\dm(x)\right\rangle\d\pi(x,y)
    =
    \int_Yf(y)\d\nu(y)
\end{multline*}

and therefore $T_\#\mu=\nu$.
\end{enumerate}
\end{proof}

\begin{rem}
Due to the above claim, we allow ourselves to identify $\mathcal{T}(\mu,\nu)$ with the subset of $\Pi\left(\mu,\nu,\dm\right)$ of measures supported on a graph of some measurable $T:X\rightarrow Y$.
\end{rem}

When $Y=\{1,\ldots,n\}$ is finite, $\mm(Y,S)=S^n$ thus $\nu$ is just a vector with values in $S$ and $\mm_+(X\x Y)=\mm_+(X)^n$ so transport plans are just vectors of non-negative measures. In this case, the transport plans/maps can be characterized as following:

\begin{claim}
\label{claim:SemiDisTransChar}
Assume $Y=\{1,\ldots,n\}$, let $\pi=(\pi_1,\ldots,\pi_n)\in\mm_+(X)^n$ and denote $\pi_X=\sum_{i=1}^n\pi_i$, then
\begin{enumerate}
    \item $\pi\in\Pi\left(\m,\n,\dm\right)$ iff $\left(\dm,\pi_X\right)=\mu$ and $\int_X\dm\d\pi_i=\n_i\ \forall i$.
    \item Assume $\pi\in\Pi\left(\m,\n,\dm\right)$ then $\pi\in\mathcal{T}\left(\mu,\nu\right)$ iff for $\pi_X$-almost every $x\in X$, $(\frac{\d\pi_1}{\d\pi_X}(x),\ldots,\frac{\d\pi_n}{\d\pi_X}(x))$ is in the standard basis of $\rn$.
\end{enumerate}
\end{claim}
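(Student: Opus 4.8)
The plan is to reduce everything to the disintegration $\pi=P\otimes\pi_X$ furnished by Corollary \ref{cor:DisInt product measures} together with the identification $\frac{\d\pi_i}{\d\pi_X}(x)=P_x(\{i\})$, after which both parts become bookkeeping.

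For part $1$, I would unwind the defining marginal conditions for $\Pi\left(\m,\n,\dm\right)$ (Definition \ref{def:marginals}) in the discrete setting. Since $Y=\{1,\ldots,n\}$ carries the discrete topology, $C(Y,\f)=\f^{n}$, so every admissible test function is an arbitrary assignment $i\mapsto\varphi(i)\in\f$; testing the $Y$-marginal condition against $\varphi=\chi_{\{i\}}s^{*}$ with $s^{*}\in\f$, using that $\left(\dm,\pi\right)(X\x\{i\})=\int_X\dm\,\d\pi_i$ and that $\f$ separates the points of $S$ (Claim \ref{claim:cont is enough}), collapses it to $\int_X\dm\,\d\pi_i=\n_i$ for each $i$. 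Symmetrically, testing the $X$-marginal condition against $\psi$ with $\varphi=0$ and noting $\left(\dm,\pi\right)(A\x Y)=\int_A\dm\,\d\pi_X=\left(\dm,\pi_X\right)(A)$ collapses it to $\left(\dm,\pi_X\right)=\m$. Both directions follow at once since these are equivalences.

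For part $2$, the starting remark is that each $\pi_i\ll\pi_X$, so the densities $f_i:=\frac{\d\pi_i}{\d\pi_X}$ exist, are non-negative, and satisfy $\sum_{i}f_i=1$ $\pi_X$-almost everywhere; hence $\left(f_1(x),\ldots,f_n(x)\right)$ lies in the probability simplex and is a standard basis vector exactly when one coordinate equals $1$ and the rest vanish. Disintegrating $\pi=P\otimes\pi_X$ with $P\in\text{ker}(X,Y)$ unique $\pi_X$-a.e.\ (Corollary \ref{cor:DisInt product measures}) and using $\pi_i(A)=\pi(A\x\{i\})=\int_A P_x(\{i\})\,\d\pi_X$ gives $f_i(x)=P_x(\{i\})$ $\pi_X$-a.e., so the density vector is precisely the probability vector $P_x$. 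The desired equivalence then reads: the density vector is a standard basis vector $\pi_X$-a.e.\ $\iff$ $P_x$ is a Dirac mass $\delta_{T(x)}$ $\pi_X$-a.e.\ $\iff$ $\pi$ is supported on the graph of a measurable $T:X\rightarrow Y$.

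For $(\Leftarrow)$ I would set $T(x):=i$ on the measurable set $A_i:=\{x;\ f_i(x)=1\}$; by hypothesis the $A_i$ partition $X$ up to a $\pi_X$-null set, so $T$ is well defined and measurable, $\pi\bigl(\{(x,y);\ y\neq T(x)\}\bigr)=\sum_i\int_{\{T\neq i\}}f_i\,\d\pi_X=0$, and the preceding (unlabelled) claim yields $T\in\mathcal{T}(\m,\n)$. For $(\Rightarrow)$, a $\pi\in\mathcal{T}(\m,\n)$ is by the identifying remark supported on a graph $G(T)$, and the computation in that preceding claim shows $\pi=(\text{Id}_X\x T)_\#\pi_X$, whose disintegration kernel is $\delta_{T(x)}$; uniqueness of $P$ then forces $P_x=\delta_{T(x)}$ a.e., whence $f_i(x)=\chi_{\{T(x)=i\}}$ and the density vector equals a standard basis vector $\pi_X$-a.e. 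The only real obstacle is the almost-everywhere bookkeeping: guaranteeing that the $A_i$ partition $X$ up to a null set so that $T$ is measurable, and invoking uniqueness of the disintegration kernel to pass between ``$\pi$ supported on a graph'' and ``$P_x$ Dirac a.e.''; both rest on Corollary \ref{cor:DisInt product measures}.
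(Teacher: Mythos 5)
Your proof is correct, and part 1 is essentially the paper's own argument: both unwind the defining marginal identity over $C(X,\f)\x\f^n$, isolate the $Y$-conditions by specializing the test functions (the paper sets $\psi\equiv0$; you test against $\varphi=\chi_{\{i\}}s^*$, which is the same thing), and invoke Claim \ref{claim:cont is enough} to convert the scalar-testing identity into $\left(\dm,\pi_X\right)=\m$. In part 2 you take a mildly different route: you pass through the disintegration $\pi=P\otimes\pi_X$ of Corollary \ref{cor:DisInt product measures}, identify $\frac{\d\pi_i}{\d\pi_X}(x)=P_x(\{i\})$, and translate \emph{graph-supported} into \emph{$P_x$ Dirac a.e.} via a.e.-uniqueness of the kernel, whereas the paper argues directly on the support: for $A\subset T^{-1}(j)$ and $i\neq j$ it computes $0=\pi(A\x\{i\})=\int_A\frac{\d\pi_i}{\d\pi_X}\d\pi_X$ and concludes $\frac{\d\pi_i}{\d\pi_X}=0$ a.e.\ off $T^{-1}(i)$, with the converse given by the same construction of $T$ from the sets where the density is $1$ that you use. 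Since $Y$ is finite the kernel \emph{is} the density vector, so your detour costs little and has the virtue of making the link to Claim \ref{claim:char of trans plans} explicit, and your null-set bookkeeping (the $A_i$ partition $X$ only up to a $\pi_X$-null set, on which $T$ is defined arbitrarily) is carried out more carefully than in the paper; the paper's version is the more elementary and self-contained of the two, needing neither disintegration nor its uniqueness. One shared point worth noting: for merely measurable $T$ the graph need not be closed, so ``supported on a graph'' should be read as $\pi\bigl(G(T)^c\bigr)=0$ rather than as containment of the topological support — your formulation, like the paper's, is consistent with this reading and both computations only ever use the measure-zero statement.
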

\begin{proof}
Let $\pi=(\pi_1,\ldots,\pi_n)\in\mm_+(X)^n$.
\begin{enumerate}
    \item By definition, $\pi\in\Pi\left(\m,\n,\dm\right)$ iff for any $\psi\in C(X,\f),\varphi\in C(Y,\f)=\f^n$
    \[
    \label{eq:consYfinite}
    \sum_{i=1}^n\int_X\left\langle\psi(x)+\varphi_i,\dm(x)\right\rangle\d\pi_i(x)=\int_X\psi\d\m+\sum_{i=1}^n\left\langle\varphi_i,\n_i\right\rangle
    \]
    which is equivalent to
    \begin{itemize}
        \item $\int_X\dm\d\pi_i=\n_i\ \forall i$ (by taking $\psi\equiv0$) and
        \item $\int_X\left\langle\psi(x),\dm(x)     \right\rangle\d\pi_X(x)=\sum_{i=1}^n\int_X\left\langle\psi(x),\dm(x)     \right\rangle\d\pi_i(x)=\int_X\psi\d\m$
             which is equivalent to $\left(\dm,\pi_X\right)=\mu$ by Claim \ref{claim:cont is enough}.
    \end{itemize}
    
    \item Assume the support of $\pi$ is contained in the graph of $T:X\rightarrow Y$ which is the set $\{(x,i);\ Tx=i\}$. For every $i\neq j$ and $A\subset T^{-1}(j)$, $A\x\{i\}$ is in the complement of the support of $\pi$ thus
    $$
    0=\pi(A\x\{i\})=\pi_i(A)=\int_A \frac{\d\pi_i}{\d\pi_X}\d\pi_X
    $$
    and therefore $\frac{\d\pi_i}{\d\pi_X}(x)=0$ for $\pi_X$-almost every $x\notin T^{-1}(i)$. We conclude that for $\pi_X$-almost every $x\in X$, $\frac{\d\pi_i}{\d\pi_X}(x)=0$ for all $i\neq Tx$. On the other hand, If $\frac{\d\pi}{\d\pi_X}(x)$ is a standard basis vector of $\rn$, then $\pi$ is supported on the graph of $T$ defined by $\frac{\d\pi_i}{\d\pi_X}(x)\neq0\Leftrightarrow Tx=i$.
\end{enumerate}
\end{proof}

To prove existence of transport maps we will use the notion of extreme points:
\begin{defn}
Let $C$ be a convex subset of a vector space. We say $c\in C$ is an \emph{extreme point}\index{extreme point} of $C$ if for any $x,y\in C$:
$$\frac{x+y}{2}=c\Rightarrow x=y=c$$
\end{defn}

We want to identify the extreme points of $\Pi\left(\m,\n,\dm\right)$ as transport maps. In the finite-dimensional case $(S=\r^m)$, this identification is possible  when $\mu$ is assumed to be non-atomic, due to Lyapunov's convexity theorem\index{Lyapunov's convexity theorem} (Theorem 13.33 in \cite{Aliprantis2006Hitch}) which guarantees that every finite-dimensional vector measure has a convex (and compact) range. In the general case (when $S$ is infinite-dimensional), we will need the following stronger assumption about $\m$:
\begin{assumption}
\label{Ass:Lyap}
For any measurable $A\subset X$ there exists some measurable $B\subset A$ and $\alpha\in(0,1)$ such that $\m(B)=\alpha\m(A)$.
\end{assumption}

\begin{rem}
We recall that a measure is non-atomic if for any measurable $A\subset X$ such that $\mu(A)\neq0$ there exists some measurable $B\subset A$ such that $\m(B)\neq0,\mu(A)$. Any measure with a convex range satisfies Assumption \ref{Ass:Lyap}, and any measure satisfying Assumption \ref{Ass:Lyap} must be non-atomic. If $S$ is finite-dimensional, any non-atomic S-valued measure has a convex range by Lyapunov's convexity theorem and therefore satisfy Assumption \ref{Ass:Lyap} as well.
\end{rem}

\begin{prop}
\label{prop:Ex pt is Map}
Under Assumption \ref{Ass:Lyap}, if $Y$ is finite then every extreme point of $\Pi\left(\m,\n,\dm\right)$ is in $\mathcal{T}\left(\mu,\nu\right)$.
\end{prop}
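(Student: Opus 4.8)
The plan is to prove the contrapositive: if $\pi\in\Pi\left(\mu,\nu,\dm\right)$ is \emph{not} a transport map, then it is not an extreme point. So I would fix such a $\pi=(\pi_1,\dots,\pi_n)\in\mm_+(X)^n$, set $\pi_X=\sum_i\pi_i$ and $f_i:=\frac{\d\pi_i}{\d\pi_X}$ (these exist by Radon--Nikodym since $\pi_i<<\pi_X$), so that $f_i\geq0$ and $\sum_i f_i=1$ holds $\pi_X$-a.e. By Claim \ref{claim:SemiDisTransChar}(2), the assumption $\pi\notin\mathcal{T}(\mu,\nu)$ says that the probability vector $(f_1(x),\dots,f_n(x))$ fails to lie in the standard basis of $\rn$ on a set of positive $\pi_X$-measure; equivalently it has at least two positive coordinates there. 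Since there are only finitely many index pairs, I can fix $j\neq k$ and $\delta>0$ (letting $\delta\downarrow0$) such that $A:=\{x:\ f_j(x)\geq\delta,\ f_k(x)\geq\delta\}$ has positive $\pi_X$-measure.

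The idea is to perturb $\pi$ by transferring mass only between the $j$-th and $k$-th fibres, in a direction preserving both the marginal $\pi_X$ and all the vector constraints $\int_X\dm\,\d\pi_i=\nu_i$. This is where Assumption \ref{Ass:Lyap} enters: applied to $A$ it yields a measurable $B\subset A$ and $\alpha\in(0,1)$ with $\mu(B)=\alpha\,\mu(A)$. I would then set
\[
\phi:=(1-\alpha)\chi_B-\alpha\,\chi_{A\setminus B},
\]
which is measurable, supported on $A$, nowhere zero on $A$ (as $\alpha\in(0,1)$), and satisfies $|\phi|<1$. Using that $(\dm,\pi_X)=\mu$ gives $\int_C\dm\,\d\pi_X=\mu(C)$ for every measurable $C$, the key computation is the telescoping
\[
\int_A\phi\,\dm\,\d\pi_X=(1-\alpha)\mu(B)-\alpha\,\mu(A\setminus B)=(1-\alpha)\alpha\,\mu(A)-\alpha(1-\alpha)\mu(A)=0 .
\]
This single vanishing is exactly what makes the perturbation admissible, since moving mass between fibres $j$ and $k$ alters only the $j$-th and $k$-th constraints, and by $\pm$ this same integral.

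Finally I would define the two competitors, with $t:=\delta$, by
\[
\pi^{\pm}:=\bigl(f_1\pi_X,\ \dots,\ (f_j\pm t\phi)\pi_X,\ \dots,\ (f_k\mp t\phi)\pi_X,\ \dots,\ f_n\pi_X\bigr).
\]
On $A$ one has $|t\phi|<t=\delta\leq f_j,f_k$, so all components stay non-negative, and off $A$ nothing changes; the densities still sum to $1$, so the marginal remains $\pi_X$ and hence $(\dm,\pi_X^{\pm})=(\dm,\pi_X)=\mu$; and each displaced constraint changes by $\pm t\int_A\phi\,\dm\,\d\pi_X=0$. By Claim \ref{claim:SemiDisTransChar}(1) both $\pi^{\pm}\in\Pi\left(\mu,\nu,\dm\right)$. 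Moreover $\pi=\tfrac12(\pi^++\pi^-)$ componentwise, while $\pi^+\neq\pi^-$ because $\phi\,\pi_X\neq0$ (as $\phi$ is nowhere zero on $A$ and $\pi_X(A)>0$). Hence $\pi$ is not extreme, which is the desired contrapositive.

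The hard part is the middle step. Because $S$ may be infinite-dimensional, forcing the transferred ``vector mass'' $\int_A\phi\,\dm\,\d\pi_X$ to vanish is a priori an infinite family of linear constraints that a generic perturbation cannot meet. The role of Assumption \ref{Ass:Lyap} is precisely to reduce this to one balanced splitting $\mu(B)=\alpha\,\mu(A)$, after which the $\alpha$-weighted indicator $\phi$ cancels the entire $S$-valued integral at once. In the finite-dimensional case this is exactly the place where Lyapunov convexity would be invoked, which is why the preceding remark records that non-atomicity together with Lyapunov's theorem yields Assumption \ref{Ass:Lyap}.
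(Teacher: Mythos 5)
Your proof is correct and takes essentially the same approach as the paper's: the contrapositive via Claim \ref{claim:SemiDisTransChar}, Assumption \ref{Ass:Lyap} applied to a positive-measure set $A$ where the densities of two fibres are bounded below, and a perturbation whose density $\phi=\chi_B-\alpha\chi_A$ (with $\int_A\phi\,\frac{\d\mu}{\d|\mu|}\,\d\pi_X=0$ forced by $\mu(B)=\alpha\mu(A)$ and $(\frac{\d\mu}{\d|\mu|},\pi_X)=\mu$) is, up to sign, exactly the paper's term $\varepsilon\left(\alpha\pi_X(\cdot\cap A)-\pi_X(\cdot\cap B)\right)$. The only cosmetic difference is that you certify $\pi^+\neq\pi^-$ directly from $\phi\,\pi_X\neq0$, while the paper exhibits a witness set among $B$ and $A\setminus B$.
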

\begin{proof}

    Assume $Y=\{1,\ldots,n\}$. Let $\pi=(\pi_1,\ldots,\pi_n)\in\Pi\left(\m,\n,\dm\right)\setminus\mathcal{T}\left(\mu,\nu\right)$, we will show $\pi$ is not an extreme point of $\Pi\left(\m,\n,\dm\right)$. Since $\pi$ is not a transport map, by Claim \ref{claim:SemiDisTransChar} there exists some measurable $A\subset X$, $j\neq k$ and $\varepsilon>0$ such that $\pi_X(A):=\sum_{i\in Y}\pi_i(A)>0$ and $\frac{\d\pi_j}{\d\pi_X}(x),\frac{\d\pi_k}{\d\pi_X}(x)\in(\varepsilon,1-\varepsilon)\ \forall x\in A$. By Assumption \ref{Ass:Lyap} there exists some measurable $B\subset A$ and $\alpha\in(0,1)$ such that $\m(B)=\alpha\mu(A)$ and by Claim \ref{claim:SemiDisTransChar} we get $\int_B\dm\d\pi_X=\alpha\int_A\dm\d\pi_X$. Define a pair vector measures $\pi^{\pm}=(\pi_i^{\pm})_{i\in Y}$ by
    \[
    \pi_i^{\pm}(E):=
    \begin{cases}
     \pi_i(E) &;i\notin\{j,k\}
     \\
     \pi_j(E)\pm\varepsilon(\alpha\pi_X(E\cap A)-\pi_X(E\cap B)) &;i=j
     \\
      \pi_k(E)\mp\varepsilon(\alpha\pi_X(E\cap A)-\pi_X(E\cap B)) &;i=k
    \end{cases}
    \]
    The measures $\pi^{\pm}$ satisfy:
    \begin{itemize}
        \item $\sum_{i\in Y}\pi^{\pm}_i=\sum_{i\in Y}\pi_i=\pi_X$.
        
        \item $\pi^{\pm}_i\in\mm_+(X)\ \forall i\in Y$ since $\pi_i^{\pm}(E)\geq\pi_i(E)-\varepsilon\pi_X(E)\geq0$ for all $i\in\{j,k\},E\subset X$, where the last inequality is implied by $\frac{\d\pi_j}{\d\pi_X},\frac{\d\pi_k}{\d\pi_X}>\varepsilon$.
        
        \item $\int_X\dm\d\pi^{\pm}_i=\int_X\dm\d\pi_i=\n_i\ \forall i\in Y$ by Claim \ref{claim:SemiDisTransChar}.
    \end{itemize}
     and therefore by Claim \ref{claim:SemiDisTransChar} $\pi^{\pm}\in\Pi\left(\m,\n,\dm\right)$. Moreover, since $\pi_X(A)>0$, there exists some $E\subset X$ such that $\alpha\pi_X( E\cap A)\neq\pi_X(E\cap B)$ (one of  $B,A\setminus B$ will do) so for that $E$, $\pi^+(E),\pi^-(E),\pi(E)$ are all different, and since $\frac{\pi^++\pi^-}{2}=\pi$ we conclude $\pi$ is not an extreme point of $\Pi\left(\m,\n,\dm\right)$.
\end{proof}

\colorbox{thmcolor}{
\parbox{\linewidth}{
\begin{thm}
\label{thm:ExTransMap}
Assume \ref{ass:Vec} and \ref{Ass:Lyap}. If $Y$ is finite, $\dm\in C(X,(S,w(\f)))$ is norm bounded and $\Pi\left(\m,\n,\dm\right)\neq\emptyset$ then $\mathcal{T}\left(\mu,\nu\right)\neq\emptyset$. Moreover, for every lower semicontinuous $c:X\x Y\rightarrow\r$ there exists an optimal transport map, meaning
$$
\min_{\pi\in\mathcal{T}\left(\mu,\nu\right)}\int_{X\x Y} c\d\pi
=
\min_{\pi\in\Pi\left(\m,\n,\dm\right)}\int_{X\x Y} c\d\pi
$$
\end{thm}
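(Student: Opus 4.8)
The plan is to exhibit the optimal plan as an extreme point of the convex set $\Pi:=\Pi\left(\mu,\nu,\dm\right)$ and to invoke Proposition \ref{prop:Ex pt is Map}, which already identifies such extreme points with transport maps. The first task is to show $\Pi$ is weak* compact. Convexity is immediate, since the defining marginal conditions are linear in $\pi$ and non-negativity is convex. For weak* closedness, note that for each fixed $(\psi,\varphi)\in C(X,\f)\x C(Y,\f)$ the functional $\pi\mapsto\int_{X\x Y}\left\langle\psi(x)+\varphi(y),\dm(x)\right\rangle\d\pi$ is weak* continuous, because its integrand is a single continuous function on $X\x Y$ (here I use that $\dm$ is $w(\f)$-continuous and norm bounded); hence each marginal constraint, and also non-negativity, cuts out a weak* closed set. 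By Claim \ref{claim:pi has marginal |mu|} (which uses Assumption \ref{ass:Vec}) every $\pi\in\Pi$ has marginal $|\mu|$ on $X$, so $\pi(X\x Y)=|\mu|(X)$ is fixed and $\Pi$ sits inside a bounded ball of $\mm(X\x Y)=C(X\x Y)^*$. That ball is weak* compact by Banach--Alaoglu (using that $X\x Y$ is compact metric), so the weak* closed set $\Pi$ is itself weak* compact.

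Since $\Pi\neq\emptyset$ by hypothesis, the Krein--Milman theorem yields an extreme point $\pi^*\in\Pi$, and Proposition \ref{prop:Ex pt is Map} (using Assumption \ref{Ass:Lyap} and the finiteness of $Y$) gives $\pi^*\in\mathcal{T}\left(\mu,\nu\right)$; in particular $\mathcal{T}\left(\mu,\nu\right)\neq\emptyset$.

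For the optimality statement, fix a lower semicontinuous $c$ on the compact space $X\x Y$. Writing $c$ as the pointwise supremum of an increasing sequence of continuous functions $c_k$ and using monotone convergence, $\pi\mapsto\int c\,\d\pi=\sup_k\int c_k\,\d\pi$ is a supremum of weak* continuous functionals and hence weak* lower semicontinuous; it therefore attains its infimum $m$ on the weak* compact set $\Pi$. The minimizing set $\Pi_c:=\{\pi\in\Pi:\int c\,\d\pi=m\}$ is non-empty, convex, and weak* compact (it is $\Pi$ intersected with the weak* closed sublevel set $\{\int c\,\d\pi\le m\}$), so by Krein--Milman it has an extreme point $\pi^*$. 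The point is that $\Pi_c$ is a \emph{face} of $\Pi$: if $\pi^*=\tfrac12(\pi_1+\pi_2)$ with $\pi_1,\pi_2\in\Pi$, then $m=\tfrac12\left(\int c\,\d\pi_1+\int c\,\d\pi_2\right)$ with both integrals at least $m$ forces $\pi_1,\pi_2\in\Pi_c$, whence extremality of $\pi^*$ in $\Pi_c$ gives $\pi_1=\pi_2=\pi^*$; thus $\pi^*$ is extreme in $\Pi$ and, by Proposition \ref{prop:Ex pt is Map}, lies in $\mathcal{T}\left(\mu,\nu\right)$. Since $\mathcal{T}\left(\mu,\nu\right)\subseteq\Pi$ gives $\inf_{\mathcal{T}}\int c\,\d\pi\ge m$, while $\pi^*\in\mathcal{T}$ realizes the value $m$, the two minima coincide.

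The genuine content lies in Proposition \ref{prop:Ex pt is Map}, which is already established; the work here is functional-analytic packaging. The step most prone to error is the compactness argument: one must check that the marginal constraints stay weak* closed even though $\dm$ is merely $w(\f)$-continuous and norm bounded (so that $\left\langle\psi(x)+\varphi(y),\dm(x)\right\rangle$ is genuinely continuous on $X\x Y$), and that Assumption \ref{ass:Vec}, via the fixed total mass, is what delivers the Banach--Alaoglu compactness. The face argument transferring extremality from $\Pi_c$ to $\Pi$ is standard but should be written out as above.
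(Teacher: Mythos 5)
Your proposal is correct and takes essentially the same route as the paper: the paper also obtains the optimal plan as an extreme point of the convex, weak*-compact set $\Pi\left(\m,\n,\dm\right)$ and concludes via Proposition \ref{prop:Ex pt is Map}, citing Claim \ref{claim:compactness} for compactness, Lemma 4.3 in \cite{Villani2016} for the weak* lower semicontinuity of $\pi\mapsto\int_{X\x Y}c\,\d\pi$, and the Bauer maximum principle (Theorem 7.69 in \cite{Aliprantis2006Hitch}) where you use Krein--Milman together with the face argument. The only difference is that you unpack these three citations into self-contained arguments, and your face argument is sound precisely because $\pi\mapsto\int_{X\x Y}c\,\d\pi$ is linear (for a merely convex functional the set of minimizers need not be a face), which is exactly the special case of Bauer's principle that the paper invokes.
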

}}

\begin{proof}
Let $c:X\x Y\rightarrow\r$ a lower semicontinuous function and define $F:\Pi\left(\m,\n,\dm\right)\rightarrow\r$ by $F(\pi):=\int_{X\x Y} c\d\pi$. $F$ is a convex and weak*-lower semicontinuous functional (see Lemma 4.3 in \cite{Villani2016}) defined on $\Pi\left(\m,\n,\dm\right)$ which is weak*-compact under Assumption \ref{ass:Vec} (see Claim \ref{claim:compactness}) and convex (immediate by definition), so by the Bauer maximum principle (Theorem 7.69 in \cite{Aliprantis2006Hitch}) it attains its minimum on an extreme point of $\Pi\left(\m,\n,\dm\right)$ which is an element in $\mathcal{T}\left(\mu,\nu\right)$ by Proposition \ref{prop:Ex pt is Map}. So in particular $\mathcal{T}\left(\mu,\nu\right)\neq\emptyset$.
\end{proof}
  
Using the above theorem we can formulate the relation $\m\succ p$ between a continuous measure $\mu$ and a discrete measure $p$ using partitions:
\begin{cor}
\label{cor:Relation discrete Partitions}
Let $\mu\in\mm\left(X,S\right)$ satisfying Assumption \ref{Ass:Lyap}, $\eta=\dm$ satisfying Assumption \ref{ass:Vec} and $p\in M\left(\left\{ 1,\ldots,n\right\} ,S\right)$. Then $\mu\succ p$ iff there exists $\{A_i\}_{i=1}^n$ a Borel measurable partition of $X$ such that $\m(A_i)=p_i\ \forall i$.
\end{cor}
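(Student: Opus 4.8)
The plan is to reduce the relation $\mu \succ p$ to the existence of a transport map into the finite set $Y = \{1,\ldots,n\}$, and then to exploit the fact that a measurable map into a finite set is nothing but a measurable partition of its domain. Throughout I use the standing hypotheses that $\eta = \dm$ is norm bounded and $w(\f)$-continuous, so that Theorem \ref{thm:ExTransMap} is applicable. The argument splits into the two implications of the ``iff'', and only one of them needs the full strength of the existence theorem.

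First I would treat the easy direction. Suppose $\{A_i\}_{i=1}^n$ is a Borel partition of $X$ with $\mu(A_i) = p_i$ for every $i$. Define $T:X\to Y$ by $T(x) = i$ whenever $x \in A_i$; this is well defined and Borel measurable because the $A_i$ are disjoint Borel sets covering $X$ (so $T^{-1}(\{i\}) = A_i$ is Borel, and the power-set $\sigma$-algebra on $Y$ is generated by singletons). For any $B \subset Y$ we then have $T_\#\mu(B) = \mu(T^{-1}(B)) = \sum_{i \in B}\mu(A_i) = \sum_{i\in B}p_i = p(B)$, so $T_\#\mu = p$. By Claim \ref{claim:map induces kernel} the deterministic kernel $P_x := \delta_{Tx}$ satisfies $P\mu = T_\#\mu = p$, whence $\mu \succ p$. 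Note that this direction uses only the definitions and Claim \ref{claim:map induces kernel}, not Assumption \ref{Ass:Lyap}.

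For the converse, assume $\mu \succ p$. By Corollary \ref{cor:succ iff Pi not empty} this is equivalent to $\Pi(\mu,p,\dm) \neq \emptyset$. Since $Y$ is finite and Assumptions \ref{ass:Vec} and \ref{Ass:Lyap} are in force, Theorem \ref{thm:ExTransMap} applies and yields a transport map $T \in \mathcal{T}(\mu,p)$, i.e.\ a measurable $T:X\to Y$ with $T_\#\mu = p$. Setting $A_i := T^{-1}(\{i\})$ produces a Borel partition of $X$ (the preimages of the distinct singletons of $Y$ are disjoint and cover $X$), and $\mu(A_i) = T_\#\mu(\{i\}) = p_i$ for each $i$, which is exactly the asserted partition.

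The substantive content is carried entirely by Theorem \ref{thm:ExTransMap}, which converts the mere existence of a transport plan into the existence of a transport map via the Bauer maximum principle together with the Lyapunov-type Assumption \ref{Ass:Lyap}; once that theorem is granted, the corollary is only the tautological identification of a measurable map into $\{1,\ldots,n\}$ with the partition $\{T^{-1}(\{i\})\}$ it induces. I therefore do not expect a real obstacle beyond bookkeeping: the one point deserving care is to keep the two directions separated, since the implication ``partition $\Rightarrow \mu \succ p$'' is elementary and must \emph{not} be made to rely on Assumption \ref{Ass:Lyap}, whereas the reverse implication genuinely invokes the existence theorem and hence all of its hypotheses.
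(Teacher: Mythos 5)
Your proof is correct and follows essentially the same route as the paper: the easy direction via the map $T$ induced by the partition and Claim \ref{claim:map induces kernel}, and the hard direction via Corollary \ref{cor:succ iff Pi not empty} together with Theorem \ref{thm:ExTransMap}. The only cosmetic difference is that you extract the partition as the preimages $A_i=T^{-1}(\{i\})$ of the transport map, whereas the paper reads it off the densities $\frac{\d\pi_i}{\d\pi_X}$ using Claim \ref{claim:SemiDisTransChar}; your variant is, if anything, slightly more direct.
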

\begin{proof}
\begin{itemize}
    \item Assume $\mu\succ p$: By Corollary \ref{cor:succ iff Pi not empty}, $\Pi\left(\m,p,\dm\right)\neq\emptyset$ thus by Theorem \ref{thm:ExTransMap} there exists $\pi\in\mathcal{T}\left(\mu,p\right)$. Define $A_i=\{x\in X;\ \frac{\d\pi_i}{\d\pi_X}(x)=1\}$ for all $i\in\{1,\ldots,n\}$. By Claim \ref{claim:SemiDisTransChar}, for every $i=1,\ldots,n$ we have $\frac{\d\pi_i}{\d\pi_X}(x)=0$ for $\pi_X$-almost every $x\notin A_i$ therefore by Claim \ref{claim:SemiDisTransChar} again
    $$
    \m(A_i)
    =
    \int_{A_i}\dm\d\pi_X
    =
    \int_{A_i}\dm\frac{\d\pi_i}{\d\pi_X}\d\pi_X
    =
    \int_X\dm\frac{\d\pi_i}{\d\pi_X}\d\pi_X
    =
    \int_X\dm\d\pi_i
    =
    p_i
    $$
    
    \item Assume there exists $\{A_i\}_{i=1}^n\subset 2^X$ a Borel measurable partition of $X$ such that $\m(A_i)=p_i\ \forall i$: Define $T:X\rightarrow\{1,\ldots,n\}$ by $x\in A_i\Leftrightarrow Tx=i$. $T_\#\m=p$ so by Claim \ref{claim:map induces kernel} $\mu\succ p$.
\end{itemize}
\end{proof}

To show the existence of a transport map in the general case, we will approximate the target measure $\nu$ with finitely supported measures.

\subsection{Domination of discrete measures}

The optimal transport problem where one of the spaces is finite (as
in Example \ref{Ex:InfMarket}) is simpler than the general case,
and stronger results may be easier to achieve (see for example Theorem \ref{thm:ExTransMap}). Such results may be applied to the general case using approximation with measures supported on a finite set (see for example the proof of Theorem 5.10 - Kantorovich duality - in \cite{Villani2016}). In this part of the chapter we show that the preorder in definition
\ref{def:relation} \textquotedbl respects\textquotedbl{} such
finite approximations. We begin by defining a weaker sequence of preorders:
\begin{defn}
For any $\mu\in\mm\left(X,S\right),\ \n\in\mm\left(Y,S\right)$
and $n\in\mathbb{N}$ we denote $\mu\succ_{n}\nu$\index{1@$\succ_{n}$}
whenever $\nu\succ p\Rightarrow \mu\succ p$ for every $p\in \mm\left(\left\{ 1,\ldots,n\right\} ,S\right)$.
\end{defn}

The above relation may be formulated in an equivalent way using partitions as described in the following claim. This claim follows directly from Corollary \ref{cor:Relation discrete Partitions}.

\begin{claim}
\label{claim:finite dominance partitions}
Let $\mu\in\mm\left(X,S\right),\n\in\mm\left(Y,S\right)$ both satisfying Assumption \ref{Ass:Lyap} and $\dm,\dn$ satisfying Assumption \ref{ass:Vec}. Then $\mu\succ_{n}\nu$ iff for any $\{B_i\}_{i=1}^n$ a measurable partition of $Y$ there exists $\{A_i\}_{i=1}^n$ a measurable partition of $X$ such that $\m(A_i)=\n(B_i)\ \forall i$.
\end{claim}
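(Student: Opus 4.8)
The plan is to unwind the definition of $\mu\succ_{n}\nu$ and apply Corollary \ref{cor:Relation discrete Partitions} \emph{twice} --- once to $\mu$ and once to $\nu$ --- so as to replace every occurrence of the relation $\succ$ against a discrete measure by an equivalent statement about partitions. Recall that $\mm(\{1,\ldots,n\},S)\cong S^{n}$, so a discrete measure $p$ is just a tuple $(p_{1},\ldots,p_{n})$ with $p_{i}\in S$, and that by hypothesis both $\m,\n$ satisfy Assumption \ref{Ass:Lyap} while both $\dm,\dn$ satisfy Assumption \ref{ass:Vec}. These are precisely the hypotheses Corollary \ref{cor:Relation discrete Partitions} needs in order to characterise $\mu\succ p$ (respectively $\nu\succ p$) by the existence of a measurable partition of $X$ (respectively $Y$) whose pieces are sent by $\mu$ (respectively $\nu$) onto $p_{1},\ldots,p_{n}$.

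For the forward implication I would assume $\mu\succ_{n}\nu$ and fix an arbitrary measurable partition $\{B_{i}\}_{i=1}^{n}$ of $Y$. Setting $p_{i}:=\nu(B_{i})$ gives a discrete measure $p\in\mm(\{1,\ldots,n\},S)$ for which the very partition $\{B_{i}\}$ witnesses $\nu\succ p$, by the ``if'' direction of Corollary \ref{cor:Relation discrete Partitions} applied to $\nu$. The assumption $\mu\succ_{n}\nu$ then yields $\mu\succ p$, and the ``only if'' direction of Corollary \ref{cor:Relation discrete Partitions} applied to $\mu$ produces a measurable partition $\{A_{i}\}_{i=1}^{n}$ of $X$ with $\m(A_{i})=p_{i}=\n(B_{i})$ for all $i$, which is exactly what is required.

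For the reverse implication I would assume the partition property and verify $\mu\succ_{n}\nu$ straight from its definition: take any $p\in\mm(\{1,\ldots,n\},S)$ with $\nu\succ p$. The ``only if'' direction of Corollary \ref{cor:Relation discrete Partitions} applied to $\nu$ gives a partition $\{B_{i}\}$ of $Y$ with $\n(B_{i})=p_{i}$; the assumed property then furnishes a partition $\{A_{i}\}$ of $X$ with $\m(A_{i})=\n(B_{i})=p_{i}$, and the ``if'' direction of Corollary \ref{cor:Relation discrete Partitions} applied to $\mu$ delivers $\mu\succ p$. Since $p$ was arbitrary, this is $\mu\succ_{n}\nu$.

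The argument is essentially bookkeeping, so there is no genuine analytic obstacle; the one point demanding care is that Corollary \ref{cor:Relation discrete Partitions} must be invoked for the \emph{target} measure $\nu$ as well as for $\mu$. This is exactly why the claim imposes Assumption \ref{Ass:Lyap} on \emph{both} measures and Assumption \ref{ass:Vec} on \emph{both} densities, rather than only on $\mu$ as the corollary does. I would flag this symmetric use of the hypotheses explicitly, since it is the single place where the stronger assumptions of the claim are actually consumed.
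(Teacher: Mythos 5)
Your proposal is correct and is exactly the argument the paper intends: the paper states that the claim ``follows directly from Corollary~\ref{cor:Relation discrete Partitions}'', and your proof simply spells out that derivation, applying the corollary to both $\mu$ and $\nu$ in both directions of the equivalence. Your closing observation --- that invoking the corollary for the target measure $\nu$ is precisely why the claim assumes \ref{Ass:Lyap} and \ref{ass:Vec} for both measures rather than only for $\mu$ --- correctly identifies where the symmetric hypotheses are consumed.
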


By the claim above we immediately get the following:

\begin{cor}
Under the same assumptions of Claim \ref{claim:finite dominance partitions}:
\begin{enumerate}
    \item $\mu\succ_1\nu$ iff $\mu(X)=\nu(Y)$.
    \item $\mu\succ_2\nu$ iff $\{\mu(A);\ A\subset X\}\supset\{\nu(A);\ A\subset Y\}$ and $\mu(X)=\nu(Y)$.
\end{enumerate}
\end{cor}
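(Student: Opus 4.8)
The plan is to derive both statements directly from the partition characterization of $\succ_n$ in Claim \ref{claim:finite dominance partitions}, together with finite additivity of the vector measures $\mu$ and $\nu$ on disjoint sets. No further analytic input should be required, since the assumptions of that claim (Assumption \ref{Ass:Lyap} for $\mu,\nu$ and Assumption \ref{ass:Vec} for $\dm,\dn$) are exactly those carried over here.

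For the first part I would specialize Claim \ref{claim:finite dominance partitions} to $n=1$. The only measurable partition of $Y$ into one block is $\{Y\}$, and likewise the only one-block partition of $X$ is $\{X\}$. Hence $\mu\succ_1\nu$ holds precisely when there is a partition $\{A_1\}$ of $X$ with $\mu(A_1)=\nu(Y)$, i.e. when $\mu(X)=\nu(Y)$; this step is immediate once the characterization is invoked.

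For the second part I would use Claim \ref{claim:finite dominance partitions} with $n=2$, observing that a two-block measurable partition of $Y$ is the same datum as a single measurable subset $B\subset Y$ (the blocks being $B$ and $Y\setminus B$), and similarly a two-block partition of $X$ corresponds to a subset $A\subset X$. Thus $\mu\succ_2\nu$ is equivalent to: for every $B\subset Y$ there exists $A\subset X$ with $\mu(A)=\nu(B)$ and $\mu(X\setminus A)=\nu(Y\setminus B)$. By additivity, $\mu(X\setminus A)=\mu(X)-\mu(A)$ and $\nu(Y\setminus B)=\nu(Y)-\nu(B)$, so the second equation becomes automatic once $\mu(A)=\nu(B)$ and $\mu(X)=\nu(Y)$ both hold.

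It then remains to match this reformulation with the stated conditions. For the reverse implication, assuming $\mu(X)=\nu(Y)$ and $\{\nu(B);\ B\subset Y\}\subset\{\mu(A);\ A\subset X\}$, the containment supplies for each $B$ an $A$ with $\mu(A)=\nu(B)$, and the total-mass equality forces $\mu(X\setminus A)=\nu(Y\setminus B)$, giving $\mu\succ_2\nu$. For the forward implication, picking any $B$ (say $B=Y$) and the matching $A$ yields $\mu(X)=\mu(A)+\mu(X\setminus A)=\nu(B)+\nu(Y\setminus B)=\nu(Y)$ by additivity, while reading off $\nu(B)=\mu(A)$ for arbitrary $B$ gives the containment. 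The only point requiring care --- the closest thing to an obstacle --- is the bookkeeping of the partition-subset correspondence and making sure additivity is applied to genuinely disjoint blocks; everything else is a routine consequence of Claim \ref{claim:finite dominance partitions}.
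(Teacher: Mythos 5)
Your proposal is correct and follows exactly the route the paper intends: the paper states this corollary as an immediate consequence of Claim \ref{claim:finite dominance partitions}, and your specialization to $n=1$ and $n=2$ (identifying two-block partitions with subsets and using additivity to make the complementary-block equation automatic given $\mu(X)=\nu(Y)$) is precisely the omitted bookkeeping. No gaps.
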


By the above corollary it is clear that $\mu\succ_2\nu\Rightarrow\mu\succ_1\nu$. The same is true for higher orders:

\begin{claim}
\label{claim:n n+1}
Let $\mu\in\mm\left(X,S\right),\ \n\in\mm\left(Y,S\right)$, $n\in\mathbb{N}$ then 
$
\mu\succ_{n+1}\nu\Rightarrow\mu\succ_n\nu
$.
\end{claim}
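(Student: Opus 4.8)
The plan is to reduce the domination of an $n$-atom target to the domination of an $(n+1)$-atom target by padding the target with a single zero atom, and then to use the hypothesis $\mu\succ_{n+1}\nu$ directly through the kernel definition of $\succ$ (Definition \ref{def:relation}). Since the statement carries no extra hypotheses on $\mu,\nu$, I will avoid the partition characterization (Claim \ref{claim:finite dominance partitions}) and work only with kernels.

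Concretely, I would fix an arbitrary $p=(p_1,\dots,p_n)\in\mm\left(\{1,\dots,n\},S\right)$ with $\nu\succ p$; the goal is to deduce $\mu\succ p$. Define $\tilde p\in\mm\left(\{1,\dots,n+1\},S\right)$ by $\tilde p_i=p_i$ for $i\le n$ and $\tilde p_{n+1}=0$. First I would check $\nu\succ\tilde p$: taking a kernel $P\in\text{ker}\left(Y,\{1,\dots,n\}\right)$ witnessing $P\nu=p$, extend it to $\tilde P\in\text{ker}\left(Y,\{1,\dots,n+1\}\right)$ by $\tilde P_y(A):=P_y\!\left(A\cap\{1,\dots,n\}\right)$, i.e.\ placing no mass on the new atom $n+1$. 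Each $\tilde P_y$ is still a probability measure and $y\mapsto\tilde P_y(A)$ is measurable, so $\tilde P$ is a genuine kernel; since $\tilde P_y(\{i\})=P_y(\{i\})$ for $i\le n$ and $\tilde P_y(\{n+1\})=0$, linearity of the integral in Definition \ref{def:relation} gives $\tilde P\nu=\tilde p$, hence $\nu\succ\tilde p$.

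Now the hypothesis $\mu\succ_{n+1}\nu$ applies to $\tilde p\in\mm\left(\{1,\dots,n+1\},S\right)$, yielding $\mu\succ\tilde p$, i.e.\ a kernel $Q\in\text{ker}\left(X,\{1,\dots,n+1\}\right)$ with $Q\mu=\tilde p$. To return to an $n$-atom target I would merge the atom $n+1$ into the atom $n$: let $\iota:\{1,\dots,n+1\}\to\{1,\dots,n\}$ fix $1,\dots,n$ and send $n+1\mapsto n$, and set $Q'_x:=\iota_\#Q_x$, so that $Q'_x(\{i\})=Q_x(\{i\})$ for $i<n$ and $Q'_x(\{n\})=Q_x(\{n\})+Q_x(\{n+1\})$. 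Then $Q'\in\text{ker}\left(X,\{1,\dots,n\}\right)$, and by linearity of the ($S$-valued) integral together with $\tilde p_{n+1}=0$ one computes $(Q'\mu)(\{i\})=\tilde p_i=p_i$ for $i<n$ and $(Q'\mu)(\{n\})=\tilde p_n+\tilde p_{n+1}=p_n$, so $Q'\mu=p$ and $\mu\succ p$. As $p$ was arbitrary, this gives $\mu\succ_n\nu$.

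I do not expect a genuine obstacle here: the argument is essentially the observation that $\succ$ is insensitive to appending or amalgamating zero-mass atoms. The only points requiring a little care are verifying that the extension $\tilde P$ and the amalgamation $Q'$ remain Markov kernels (total mass $1$ and measurability preserved), and that $Q'\mu=p$ follows from linearity of the vector integral rather than from any pointwise vanishing of $Q_x(\{n+1\})$ — which is why merging the last atom, rather than trying to delete it, is the clean move.
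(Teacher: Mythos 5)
Your proposal is correct and follows essentially the same route as the paper's own proof: pad $p$ with a zero atom $\tilde p_{n+1}=0$, extend the witnessing kernel by placing no mass on the new atom, invoke $\mu\succ_{n+1}\nu$ to get $Q$ with $Q\mu=\tilde p$, and then amalgamate the atoms $n$ and $n+1$ to recover a kernel sending $\mu$ to $p$. Your explicit verifications that $\tilde P$ and $Q'$ remain Markov kernels, and the remark that merging (rather than deleting) the last atom is what makes the conclusion follow from linearity alone, are slightly more careful than the paper's write-up but add nothing different in substance.
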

\begin{proof}
Assume $\mu\succ_{n+1}\nu$, need to show $\mu\succ_n\nu$. To show that, let $p\in \mm\left(\left\{1,\ldots,n\right\} ,S\right)$ such that $\nu\succ p$, we need to show $\mu\succ p$. $p$ is a $S$-valued measure on $\{1,\ldots,n\}$ so it is of the form $p=(p_1,\ldots,p_n)\in S^n$. There exists $P\in\text{ker}(Y,\{1,\ldots,n\})$ such that $P\nu=p$, namely $\int_YP_y(i)\d\n(y)=p_i$ for all $i\in\{1,\ldots,n\}$.
Denote $\tilde{p}=(p_1,\ldots,p_n,p_{n+1}=0)\in\mm\left(\left\{ 1,\ldots,n+1\right\} ,S\right)$ and $\tilde{P}\in\text{ker}(Y,\{1,\ldots,n+1\})$ by $\tilde{P}_y:=\left(P_y(1),\ldots,P_y(n),0\right)$. $\int_Y\tilde{P}_y(i)\d\n(y)=p_i$ for all $i\in\{1,\ldots,n+1\}$ and therefore $\nu\succ\tilde{p}$. We assumed $\mu\succ_{n+1}\nu$ thus $\mu\succ\tilde{p}$ which implies the existence of some $Q\in\text{ker}(X,\{1,\ldots,n+1\})$ such that $\int_XQ_x(i)\d\m(x)=p_i$ for all $i\in\{1,\ldots,n+1\}$. Define $\tilde{Q}\in\text{ker}(X,\{1,\ldots,n\})$ by $\tilde{Q}_x=(Q_x(1),\ldots,Q_x(n-1),Q_x(n)+Q_x(n
+1))$. $Q\mu=p$ thus $\mu\succ p$.
\end{proof}

\begin{cor}
For every $\mu\in\mm\left(X,S\right)$ and $\nu\in\mm\left(Y,S\right)$,
there exists $N\in\mathbb{N}\cup\left\{ \infty\right\} $ s.t $\mu\succ_{n}\nu\Leftrightarrow n<N$.
\end{cor}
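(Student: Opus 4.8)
The plan is to read the corollary as the assertion that the index set $A:=\{n\in\mathbb{N};\ \mu\succ_{n}\nu\}$ is an initial segment of $\mathbb{N}$, and to exhibit the threshold $N$ explicitly. The only ingredient required is the monotonicity of Claim \ref{claim:n n+1}, which states $\mu\succ_{n+1}\nu\Rightarrow\mu\succ_{n}\nu$; equivalently, $A$ is downward closed ($n+1\in A\Rightarrow n\in A$), or in contrapositive form, $n\notin A\Rightarrow n+1\notin A$.

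First I would set
$$
N:=\inf\{n\in\mathbb{N};\ \mu\not\succ_{n}\nu\}\in\mathbb{N}\cup\{\infty\},
$$
using the convention $\inf\emptyset=\infty$. If the set on the right is empty, then $\mu\succ_{n}\nu$ holds for every $n$, and the equivalence $\mu\succ_{n}\nu\Leftrightarrow n<\infty=N$ is immediate. Otherwise $N$ is a genuine natural number, being the least element of a nonempty subset of the well-ordered set $\mathbb{N}$.

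It then remains to check both directions of $\mu\succ_{n}\nu\Leftrightarrow n<N$. For $n<N$, minimality of $N$ forces $n\notin\{m;\ \mu\not\succ_{m}\nu\}$, hence $\mu\succ_{n}\nu$. For $n\geq N$ we have $\mu\not\succ_{N}\nu$ by the definition of $N$, and iterating the contrapositive form of Claim \ref{claim:n n+1} from $m=N$ upward gives $\mu\not\succ_{n}\nu$ for all $n\geq N$, as desired. There is no genuine obstacle here: the whole argument is the observation that a downward-closed subset of $\mathbb{N}$ is an initial segment, with the downward-closure supplied by Claim \ref{claim:n n+1}. The only points deserving a moment's care are the convention $\inf\emptyset=\infty$, which handles the case where $\mu\succ_{n}\nu$ for all $n$, and the boundary value $N=1$, which correctly encodes the situation in which even $\mu\succ_{1}\nu$ fails, since then $\{n\in\mathbb{N};\ n<1\}=\emptyset$.
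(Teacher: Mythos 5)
Your proposal is correct and matches the paper's own argument: both define $N$ as the minimal index at which $\mu\succ_{N}\nu$ fails (with $N=\infty$ when no failure occurs), use minimality to handle $n<N$, and iterate Claim \ref{claim:n n+1} in contrapositive form to rule out $n\geq N$. Your write-up merely makes explicit the conventions ($\inf\emptyset=\infty$, the boundary case $N=1$) that the paper leaves implicit.
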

  
\begin{proof}
If $\mu\succ_{n}\nu$ for every $n\in\mathbb{N}$
then $N=\infty$. Otherwise there exists a minimal $N$ for which
$\mu\not\succ_{N}\nu$, by minimality $\mu\succ_{n}\nu$ for every $n<N$.
By Claim \ref{claim:n n+1}, we know
that $\mu\succ_{n+1}\nu\Rightarrow\mu\succ_{n}\nu$ for every $n\in\mathbb{N}$
hence $\mu\not\succ_{n}\nu$ for every $n\geq N$.
\end{proof}

Our goal is to prove the connection between $\succ_{n}$ and $\succ$
given below:

\b

\colorbox{thmcolor}{
\parbox{\linewidth}{
\begin{thm}
\label{thm:order}Let $\mu\in\mm\left(X,S\right)$ with norm bounded $\dm\in C(X,(S,w(\f)))$ satisfying \ref{ass:Vec} and $\nu\in\mm\left(Y,S\right)$,
then $\mu\succ_{n}\nu\ \forall n\in\mathbb{N}\Leftrightarrow\mu\succ\nu$ (meaning $\succ=\cap_{n\in\mathbb{N}}\succ_n$).
\end{thm}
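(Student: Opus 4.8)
The plan is to prove the two implications separately. The forward direction $\mu\succ\nu\Rightarrow\mu\succ_{n}\nu\ \forall n$ is immediate from transitivity of $\succ$ (Claim \ref{lem:relation}): if $\nu\succ p$ then $\mu\succ\nu\succ p$ forces $\mu\succ p$, which is exactly the defining implication of $\mu\succ_{n}\nu$. All the content lies in the converse, and the strategy there is to manufacture a single transport plan in $\Pi(\mu,\nu,\dm)$ out of the finite-level data supplied by the relations $\succ_{n}$, and then invoke Corollary \ref{cor:succ iff Pi not empty} to conclude $\mu\succ\nu$.

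First I would fix a refining sequence of finite Borel partitions $\{B_1^{n},\ldots,B_{k_n}^{n}\}$ of the compact space $Y$ whose mesh $\delta_n:=\max_i\text{diam}(B_i^{n})$ tends to $0$, together with sample points $y_i^{n}\in B_i^{n}$. For each $n$ the collapsing kernel $R_y:=\delta_i$ for $y\in B_i^{n}$ witnesses $\nu\succ p^{n}$, where $p^{n}\in\mm(\{1,\ldots,k_n\},S)$ is defined by $p_i^{n}:=\nu(B_i^{n})$. Since $\mu\succ_{k_n}\nu$ by hypothesis (this is where I use $\succ_n$ for all $n$), there is a kernel $Q^{n}\in\text{ker}(X,\{1,\ldots,k_n\})$ with $Q^{n}\mu=p^{n}$, i.e. $\int_X Q_x^{n}(i)\,\dm(x)\,\d|\mu|(x)=\nu(B_i^{n})$ for every $i$. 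I would then push these finite kernels back onto $Y$ by setting $P^{n}_x:=\sum_i Q_x^{n}(i)\,\delta_{y_i^{n}}\in\mathcal{P}(Y)$, a genuine Markov kernel (measurability of $x\mapsto P^n_x(B)$ being inherited from that of the $Q^n_x(i)$), and form the candidate plans $\pi^{n}:=P^{n}\otimes|\mu|\in\mm_+(X\x Y)$.

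The heart of the argument is to show these $\pi^{n}$ converge, along a subsequence, to a plan in $\Pi(\mu,\nu,\dm)$. Each $\pi^{n}$ has total mass $|\mu|(X)$ and marginal $|\mu|$ on $X$ (because $\sum_i Q^{n}_x(i)=1$), so the family lies in a weak*-compact set of measures on the compact space $X\x Y$; extract a weak*-convergent subsequence $\pi^{n_k}\to\pi$. For fixed $\psi\in C(X,\f),\varphi\in C(Y,\f)$ a direct computation, pulling the functional $\langle\varphi(y_i^{n}),\cdot\rangle$ through the Bochner integral by part 6 of Proposition \ref{prop:Bochner}, gives
\[
\int_{X\x Y}\langle\psi(x)+\varphi(y),\dm(x)\rangle\,\d\pi^{n}=\int_X\psi\,\d\mu+\sum_i\langle\varphi(y_i^{n}),\nu(B_i^{n})\rangle ,
\]
where the first summand is exact and the second differs from $\int_Y\varphi\,\d\nu$ by $\sum_i\int_{B_i^{n}}\langle\varphi(y_i^{n})-\varphi(y),\dn(y)\rangle\,\d|\nu|(y)$, whose modulus is bounded by $\omega_\varphi(\delta_n)\int_Y\|\dn\|\,\d|\nu|$ and hence tends to $0$ as $\delta_n\to0$, using uniform continuity of $\varphi$ and the Bochner integrability of $\dn$. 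On the other hand the integrand $(x,y)\mapsto\langle\psi(x)+\varphi(y),\dm(x)\rangle$ is continuous, since $\dm$ is norm bounded and $w(\f)$-continuous, exactly as in the proof of Theorem \ref{thm:GMK}; therefore weak* convergence yields $\int\langle\psi+\varphi,\dm\rangle\,\d\pi^{n_k}\to\int\langle\psi+\varphi,\dm\rangle\,\d\pi$. Comparing the two limits gives $\int_{X\x Y}\langle\psi(x)+\varphi(y),\dm(x)\rangle\,\d\pi=\int_X\psi\,\d\mu+\int_Y\varphi\,\d\nu$ for all admissible $\psi,\varphi$, i.e. $\pi\in\Pi(\mu,\nu,\dm)$, and Corollary \ref{cor:succ iff Pi not empty} finishes the proof.

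The main obstacle is precisely this limiting step. The finite-level kernels $Q^{n}$ produced by $\succ_{k_n}$ carry no compatibility across levels — they need not refine one another — so one cannot assemble a kernel on $Y$ by a naive projective or martingale limit. Passing instead to weak* limits of the associated plans $\pi^{n}$ on the compact product $X\x Y$ circumvents the consistency issue, at the cost of having to control the refinement error through the mesh $\delta_n\to0$ together with integrability of $\dn$, and the continuity of the vector pairing under the $w(\f)$ topology; both are handled as above. I note that, in contrast to Claim \ref{claim:finite dominance partitions}, this route requires neither $\mu$ nor $\nu$ to satisfy Assumption \ref{Ass:Lyap}.
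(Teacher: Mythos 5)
Your proposal is correct and takes essentially the same approach as the paper: the paper merely factors your argument into Claim~\ref{claim:(Discrete-aproximation)} (collapsing a fine partition of $Y$ onto sample points, with the same uniform-continuity estimate $\omega_\varphi(\delta_n)\cdot V(\nu)(Y)$), Lemma~\ref{lem:equivalence}, transitivity, and Lemma~\ref{lem:ordcont}, whose proof extracts a weak*-convergent subsequence of plans via Claim~\ref{claim:compactness} and passes the marginal identities to the limit exactly as you do. You have simply inlined those lemmas into one direct construction, so the mechanism --- discrete approximation, domination of each approximation via $\succ_{n}$, and closure under weak* limits, concluding by Corollary~\ref{cor:succ iff Pi not empty} --- coincides with the paper's.
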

}}

\medskip

Before we prove the above theorem we need a few preliminary results.
\begin{notation}
For $\mu\in\mm\left(X,S\right),N\subset\mm\left(Y,S\right)$,
we denote $\Pi\left(\mu,N\right):=\cup_{\nu\in N}\Pi\left(\mu,\nu,\dm\right)$\index{pimn@$\Pi\left(\mu,N\right)$},
i.e. all  $\pi\in\mm_+(X\x Y)$ such that $\left(\dm,\pi\right)$ has marginal
$\mu$ on $X$ and marginal $\in N$ on $Y$.
\end{notation}

\begin{defn}
Each $g\in C(Y,\f)$ induces a linear functional on $\mm(Y,S)$ by $\nu\mapsto\int g\d\nu$. We denote the topology on $\mm(Y,S)$ generated by those functionals as the weak topology\index{weak topology}. We denote the convergence wrt this topology by $\rightharpoonup$. Explicitly: $\nu_n\rightharpoonup\nu$ iff $\int_Yg\d\nu_n\rightarrow\int_Yg\d\nu$ for all $g\in C(Y,\f)$.
\end{defn}

\begin{claim}
\label{claim:compactness}(Sequentially compactness of $\Pi\left(\mu,N\right)$)
Let $\mu\in\mm\left(X,S\right)$ with norm bounded $\dm\in C(X,(S,w(\f)))$ satisfying \ref{ass:Vec}.
If $N\subset\mm\left(Y,S\right)$ is weak-sequentially compact then $\Pi\left(\mu,N\right)\subset\mm\left(X\times Y\right)$
is weak*-sequentially compact and compact.
\end{claim}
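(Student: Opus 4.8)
The plan is to reduce the whole statement to sequential closedness, exploiting that all measures in $\Pi(\m,N)$ carry the same total mass. First I would record, via Claim \ref{claim:pi has marginal |mu|}, that under Assumption \ref{ass:Vec} every $\pi\in\Pi(\m,N)$ has scalar marginal $|\m|$ on $X$; in particular $\pi(X\x Y)=|\m|(X)$ for all such $\pi$, so $\Pi(\m,N)$ is a norm-bounded subset of $\mm(X\x Y)=C(X\x Y)^*$. Since $X\x Y$ is a compact metric space, $C(X\x Y)$ is separable, hence bounded subsets of its dual are weak*-metrizable and weak*-compact by Banach--Alaoglu. Consequently weak*-compactness and weak*-sequential compactness coincide on $\Pi(\m,N)$, and it suffices to prove sequential closedness: every sequence in $\Pi(\m,N)$ has a weak*-convergent subsequence whose limit again lies in $\Pi(\m,N)$.

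So let $\{\pi_k\}\subset\Pi(\m,N)$, with $\pi_k$ having $Y$-marginal $\nu_k\in N$. By boundedness I would pass to a subsequence $\pi_{k_j}\rightharpoonup^*\pi$ in $\mm(X\x Y)$; the limit $\pi$ is non-negative, being tested against non-negative continuous functions. The key observation is that for every $\psi\in C(X,\f)$ and $\varphi\in C(Y,\f)$ the functions $(x,y)\mapsto\langle\psi(x),\dm(x)\rangle$ and $(x,y)\mapsto\langle\varphi(y),\dm(x)\rangle$ lie in $C(X\x Y)$: this is exactly where norm-boundedness and $w(\f)$-continuity of $\dm$ enter, since $\langle\psi(x),\dm(x)\rangle-\langle\psi(x_0),\dm(x_0)\rangle$ splits as $\langle\psi(x)-\psi(x_0),\dm(x)\rangle+\langle\psi(x_0),\dm(x)-\dm(x_0)\rangle$, the first term bounded by $\|\psi(x)-\psi(x_0)\|\,\|\dm\|_\infty$ and the second tending to $0$ by $w(\f)$-continuity against the fixed functional $\psi(x_0)\in\f$. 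Testing $\pi_{k_j}\rightharpoonup^*\pi$ against $\langle\psi(x),\dm(x)\rangle$ and using that each $\pi_{k_j}$ has $X$-marginal $\m$ gives $\int_{X\x Y}\langle\psi(x),\dm(x)\rangle\,\d\pi=\int_X\psi\,\d\m$ for all $\psi\in C(X,\f)$.

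The crux is the $Y$-marginal. Here I would invoke the weak-sequential compactness of $N$ to extract a further subsequence along which $\nu_{k_j}\rightharpoonup\nu^*$ for some $\nu^*\in N$, in the weak topology on $\mm(Y,S)$ generated by $C(Y,\f)$. Then for every $\varphi\in C(Y,\f)$, testing the weak* convergence against the continuous function $\langle\varphi(y),\dm(x)\rangle$ yields, along this subsequence,
\[
\int_{X\x Y}\langle\varphi(y),\dm(x)\rangle\,\d\pi=\lim_j\int_{X\x Y}\langle\varphi(y),\dm(x)\rangle\,\d\pi_{k_j}=\lim_j\int_Y\varphi\,\d\nu_{k_j}=\int_Y\varphi\,\d\nu^*,
\]
the middle equality using that $\nu_{k_j}$ is the $Y$-marginal of $\pi_{k_j}$. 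Combining this with the $X$-marginal identity gives, for all $(\psi,\varphi)\in C(X,\f)\x C(Y,\f)$, the defining relation $\int_{X\x Y}\langle\psi(x)+\varphi(y),\dm(x)\rangle\,\d\pi=\int_X\psi\,\d\m+\int_Y\varphi\,\d\nu^*$, so that $\pi\in\Pi(\m,\nu^*,\dm)\subseteq\Pi(\m,N)$. This is sequential closedness, and with the metrizability reduction of the first paragraph it delivers both weak*-sequential compactness and weak*-compactness.

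The step I expect to be the main obstacle is the continuity on $X\x Y$ of the test functions $\langle\psi(x),\dm(x)\rangle$ and $\langle\varphi(y),\dm(x)\rangle$: this is the only point where the structural hypotheses on $\dm$ are genuinely used, and without it the weak* limits of the plans could not be paired with their $Y$-marginals. A secondary subtlety is synchronizing the weak* convergence of the plans with the weak convergence of their $Y$-marginals in $N$; this is legitimate because $\f$ separates the points of $S$, so by Claim \ref{claim:cont is enough} the weak topology on $\mm(Y,S)$ is Hausdorff and the candidate $\nu^*$ is unambiguously realized as the $Y$-marginal of $\pi$.
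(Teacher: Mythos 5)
Your proposal is correct and follows essentially the same route as the paper's proof: norm-boundedness via Claim \ref{claim:pi has marginal |mu|}, Banach--Alaoglu together with separability of $C(X\x Y)$ to identify compactness with sequential compactness, extraction of a weakly convergent subsequence of $Y$-marginals from $N$, and passage to the limit against the test functions $\left\langle \psi(x)+\varphi(y),\dm(x)\right\rangle\in C(X\x Y)$. Your explicit splitting argument verifying the joint continuity of these test functions (which the paper merely asserts) is a welcome addition, but it does not change the structure of the argument.
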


\begin{proof}
By the Banach-Alaoglu\index{Banach-Alaoglu theorem} theorem (Theorem 6.21 in \cite{Aliprantis2006Hitch}) any norm-bounded weak*-closed subset of the dual space $\mm(X\x Y)$ is weak*-compact. Moreover, since $C(X\x Y)$ is separable, such a norm-bounded subset is weak* metrizable by Theorem 6.30 in \cite{Aliprantis2006Hitch}, so the notions of compactness and sequentially-compactness coincide.

All elements of $\Pi\left(\mu,N\right)\subset\mm(X\x Y)$ has total measure of $\pi\left(X\times Y\right)=\left|\mu\right|\left(X\right)$ (by Claim \ref{claim:pi has marginal |mu|})
hence $\Pi(\mu,N)$ is norm-bounded and we only need
to show it is weak*-closed. Take $\Pi\left(\mu,N\right)\ni\pi_n\stackrel[n\rightarrow\infty]{w^{*}}{\longrightarrow}\pi$
then $\left(\frac{\d\mu}{\d\left|\mu\right|},\pi_{n}\right)$ has marginals $\mu$
on $X$ and some marginal $\nu_{n}\in N$ on $Y$. $\nu_{n}$ has
a weak-converging subsequence (without loss of generality also denoted by $\nu_{n}$) $\nu_{n}\underset{n\rightarrow\infty}{\rightharpoonup}\nu\in N$.
For every $\left(\psi,\varphi\right)\in C\left(X,\f\right)\times C\left(Y,\f\right)$, $\left\langle \psi+\varphi,\frac{\d\mu}{\d\left|\mu\right|}\right\rangle\in C(X\x Y)$ thus
\begin{equation}
\label{eq:same arguments}
\begin{array}{ccc}
\int_{X\times Y}\left\langle \psi\left(x\right)+\varphi\left(y\right),\frac{\d\mu}{\d\left|\mu\right|}\left(x\right)\right\rangle \d\pi\left(x,y\right) & = & \lim_{n\rightarrow\infty}\int_{X\times Y}\left\langle \psi\left(x\right)+\varphi\left(y\right),\frac{\d\mu}{\d\left|\mu\right|}\left(x\right)\right\rangle \d\pi_{n}\left(x,y\right)\\
\\
 & = & \lim_{n\rightarrow\infty}\int_{X}\psi\left(x\right)\d\mu\left(x\right)+\int_{Y}\varphi\left(y\right)\d\nu_{n}\left(y\right)\\
\\
 & = & \int_{X}\psi\left(x\right)\d\mu\left(x\right)+\int_{Y}\varphi\left(y\right)\d\nu\left(y\right)
\end{array}
\end{equation}
Therefore $\pi\in\Pi\left(\mu,N\right)$ and $\Pi\left(\mu,N\right)$ is weak*-closed.
\end{proof}

\begin{lem}
\label{lem:ordcont}(Order is preserved by limit) Let $\mu\in\mm\left(X,S\right)$ with norm bounded $\dm\in C(X,(S,w(\f)))$ satisfying \ref{ass:Vec}, and let
$\left\{ \nu_{n}\right\} _{n=1}^{\infty}\subset\mm\left(Y,S\right)$. If $\nu_{n}\underset{n\rightarrow\infty}{\rightharpoonup}\nu$
and $\mu\succ\nu_{n}\ \forall n$ then $\mu\succ\nu$.
\end{lem}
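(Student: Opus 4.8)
The plan is to lift the weak convergence $\nu_{n}\rightharpoonup\nu$ to a weak* convergence of transport plans and then identify the limiting marginal. Since $\mu\succ\nu_{n}$ for every $n$, Corollary \ref{cor:succ iff Pi not empty} guarantees $\Pi\left(\mu,\nu_{n},\dm\right)\neq\emptyset$, so I may choose a transport plan $\pi_{n}\in\Pi\left(\mu,\nu_{n},\dm\right)$ for each $n$. The aim is to produce a weak* limit point of $\left\{\pi_{n}\right\}$ lying in $\Pi\left(\mu,\nu,\dm\right)$, which by Corollary \ref{cor:succ iff Pi not empty} again yields $\mu\succ\nu$.

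First I would set $N:=\left\{\nu_{n}\right\}_{n=1}^{\infty}\cup\left\{\nu\right\}\subset\mm\left(Y,S\right)$ and check that $N$ is weak-sequentially compact: any sequence drawn from $N$ either has a constant (hence convergent) subsequence, when some element of $N$ is repeated infinitely often, or a subsequence of the form $\nu_{n_{j}}$ with $n_{j}\to\infty$, which converges to $\nu\in N$ by hypothesis; in both cases the limit lies in $N$. With this in hand, Claim \ref{claim:compactness} applies and shows that $\Pi\left(\mu,N\right)=\cup_{\rho\in N}\Pi\left(\mu,\rho,\dm\right)$ is weak*-sequentially compact. Since each $\pi_{n}$ lies in $\Pi\left(\mu,N\right)$, I can extract a subsequence $\pi_{n_{k}}\overset{w^{*}}{\rightharpoonup}\pi$ for some $\pi\in\Pi\left(\mu,N\right)$.

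It remains to show that the limit $\pi$ has $Y$-marginal exactly $\nu$, i.e. $\pi\in\Pi\left(\mu,\nu,\dm\right)$. This is carried out precisely as in the computation (\ref{eq:same arguments}) inside the proof of Claim \ref{claim:compactness}: for every $\left(\psi,\varphi\right)\in C\left(X,\f\right)\times C\left(Y,\f\right)$ the function $\left\langle\psi+\varphi,\dm\right\rangle$ lies in $C\left(X\x Y\right)$, so the weak* convergence of $\pi_{n_{k}}$ gives
\[
\int_{X\x Y}\left\langle\psi\left(x\right)+\varphi\left(y\right),\dm\left(x\right)\right\rangle\d\pi=\lim_{k\to\infty}\left(\int_{X}\psi\d\mu+\int_{Y}\varphi\d\nu_{n_{k}}\right)=\int_{X}\psi\d\mu+\int_{Y}\varphi\d\nu,
\]
where the last equality uses that $\nu_{n_{k}}\rightharpoonup\nu$ as a subsequence of a weakly convergent sequence. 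Hence $\left(\dm,\pi\right)$ has marginals $\mu$ on $X$ and $\nu$ on $Y$, so $\pi\in\Pi\left(\mu,\nu,\dm\right)$ and the proof concludes by Corollary \ref{cor:succ iff Pi not empty}.

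The main obstacle is not the limiting computation, which is routine once compactness is available, but rather the bookkeeping around the marginals: the weak* limit produced by Claim \ref{claim:compactness} a priori only has its $Y$-marginal somewhere in $N$, and one must invoke $\nu_{n_{k}}\rightharpoonup\nu$ to pin this marginal down to $\nu$ itself. Verifying that $N$ is weak-sequentially compact, so that Claim \ref{claim:compactness} may legitimately be applied, is the only genuinely new ingredient beyond what has already been established.
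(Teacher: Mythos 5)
Your proposal is correct and follows essentially the same route as the paper's proof: choose $\pi_{n}\in\Pi\left(\mu,\nu_{n},\dm\right)$, apply Claim \ref{claim:compactness} to $N=\left\{\nu_{n}\right\}_{n=1}^{\infty}\cup\left\{\nu\right\}$ to extract a weak*-convergent subsequence, and identify the $Y$-marginal of the limit via the computation (\ref{eq:same arguments}) before concluding with Corollary \ref{cor:succ iff Pi not empty}. Your explicit verification that $N$ is weak-sequentially compact is a detail the paper asserts without proof, but it does not change the argument.
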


\begin{proof}
For every $n\in\mathbb{N}$, $\mu\succ\nu_{n}$ hence by Claim \ref{claim:char of trans plans} there exists
$\pi_{n}\in\Pi\left(\mu,\nu_{n},\dm\right)$. $N:=\left\{ \nu_{n}\right\} _{n=1}^{\infty}\cup\left\{ \nu\right\} $
is weak-sequentially compact
and $\pi_{n}\in\Pi\left(\mu,\nu_{n},\dm\right)\subset\Pi\left(\mu,N\right)$ hence by Claim \ref{claim:compactness}, $\pi_{n}$ has a subsequence weak*-converging to some $\pi\in\Pi\left(\mu,N\right)$. By exactly the same arguments as in (\ref{eq:same arguments}) we conclude $\pi\in\Pi\left(\mu,\nu,\frac{\d\mu}{\d\left|\mu\right|}\right)$ and by Corollary \ref{cor:succ iff Pi not empty} $\mu\succ\nu$.
\end{proof}

In the next lemma,
we will show that for finitely supported measures, the underlying
space plays no role in the preorder.

\begin{lem}
(Equivalence of discrete and finitely supported measures)\label{lem:equivalence}
Let $\left(b_{1},\ldots,b_{n}\right)\in S^{n}$ and $\left(x_{1},\ldots,x_{n}\right)\in X^{n}$.
Define 
\[
\mu=\sum_{j=1}^{n}b_{j}\delta_{x_{j}}\in\mm\left(X,S\right)\ ,\ \nu=\left(b_{1},\ldots,b_{n}\right)\in\mm\left(\left\{ 1,\ldots,n\right\} ,S\right)
\]
then $\mu\succ\nu$ and $\nu\succ\mu$.
\end{lem}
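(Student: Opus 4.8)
The plan is to prove both relations by exhibiting explicit deterministic kernels and invoking Claim \ref{claim:map induces kernel}, which asserts that any measurable map $T$ induces a kernel $P_x = \delta_{Tx}$ satisfying $P\mu = T_\#\mu$, hence $\mu \succ T_\#\mu$. Throughout I assume the base points $x_1, \ldots, x_n$ are distinct, which is the setting in which the lemma is applied inside Theorem \ref{thm:order}; as I explain at the end, the forward relation $\mu \succ \nu$ genuinely requires this. Each direction then reduces to a single pushforward computation.

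For $\nu \succ \mu$ I would take $T' \colon \{1, \ldots, n\} \to X$ defined by $T'(i) = x_i$. Since $\{1, \ldots, n\}$ carries the discrete topology, $T'$ is automatically measurable, and for every Borel $A \subset X$ one computes $T'_\#\nu(A) = \nu\bigl((T')^{-1}(A)\bigr) = \sum_{i : x_i \in A} b_i = \mu(A)$, the last equality being exactly the definition of $\mu = \sum_j b_j \delta_{x_j}$. Hence $T'_\#\nu = \mu$ and Claim \ref{claim:map induces kernel} gives $\nu \succ \mu$. This direction uses no distinctness and would survive repeated points.

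For $\mu \succ \nu$ I would define $T \colon X \to \{1, \ldots, n\}$ by $T(x_j) = j$ and $T(x) = 1$ for $x \notin \{x_1, \ldots, x_n\}$; this map is Borel measurable because finite sets and their complements are Borel. Since $\mu$ is supported on the finite set $\{x_1, \ldots, x_n\}$, for each $i$ one has $T_\#\mu(\{i\}) = \mu\bigl(T^{-1}(\{i\})\bigr)$, and distinctness gives $T^{-1}(\{i\}) \cap \{x_1, \ldots, x_n\} = \{x_i\}$, so $T_\#\mu(\{i\}) = \mu(\{x_i\}) = b_i = \nu(\{i\})$. Thus $T_\#\mu = \nu$ and Claim \ref{claim:map induces kernel} yields $\mu \succ \nu$.

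The one delicate point, and the only obstacle, is precisely the direction $\mu \succ \nu$ when two base points coincide. If $x_i = x_j$ with $b_i, b_j$ linearly independent, then any kernel can only transport the aggregated value $\mu(\{x_i\}) = b_i + b_j$ from that single point as scalar multiples, so no $P$ can recover the prescribed components $b_i$ and $b_j$ separately; this shows distinctness is necessary, not merely convenient. I would therefore read (and apply) the lemma with distinct support points, which is always arrangeable when approximating a fixed $\nu$ by finitely supported measures.
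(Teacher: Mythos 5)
Your proof is correct and takes essentially the same route as the paper's own argument: both directions use exactly the same deterministic maps ($T_1(j)=x_j$ for $\nu\succ\mu$, and $T_2(x_j)=j$ with $T_2\equiv1$ off $\{x_1,\ldots,x_n\}$ for $\mu\succ\nu$) together with Claim \ref{claim:map induces kernel}. Your added observation that $\mu\succ\nu$ genuinely requires the $x_j$ to be distinct is a valid refinement the paper leaves implicit --- its map $T_2$ is ill-defined at coincident points, and your linear-independence counterexample shows the hypothesis is necessary --- while distinctness does hold in the lemma's actual use inside Theorem \ref{thm:order}, where the $x_i^n$ are chosen from disjoint cells of a partition.
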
  
\begin{proof}
Define $T_1:\{1,\ldots,n\}\rightarrow X$ by $T_1(j)=x_j$. $T_{1\#}\n=\m$ so by Claim \ref{claim:map induces kernel} $\n\succ\m$. On the other hand for $T_2:X\rightarrow\{1,\ldots,n\}$ defined by $T_2(x_j)=j$ and $T_2(x)=1$ for all $x\notin\{x_j\}_{j=1}^n$ we get $T_{2\#}\m=\n$ so again by Claim \ref{claim:map induces kernel} $\m\succ\n$.
\end{proof}

\begin{claim}
\label{claim:(Discrete-aproximation)} (Discrete approximation) For any
$\mu\in\mm\left(X,S\right)$ there exist a sequence of finitely supported
measures $p_{n}\in\mm\left(X,S\right)$ such that $p_{n}\underset{n\rightarrow\infty}{\rightharpoonup}\mu$
and $\mu\succ p_{n}\ \forall n$ .
\end{claim}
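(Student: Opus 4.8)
The plan is to realize each approximant $p_n$ as a pushforward of $\mu$ under a map that collapses the cells of an increasingly fine partition of $X$ to representative points. This makes the domination $\mu\succ p_n$ automatic through Claim \ref{claim:map induces kernel}, while the weak convergence $p_n\rightharpoonup\mu$ will come from the uniform continuity of test functions on the compact space $X$.

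First I would fix, for each $n$, a finite Borel partition $\{A_j^{(n)}\}_{j=1}^{k_n}$ of $X$ whose mesh $\max_j\text{diam}(A_j^{(n)})$ tends to $0$ as $n\to\infty$; this is possible since $X$ is compact metric (cover by finitely many balls of radius $1/n$ and disjointify). Choosing a point $x_j^{(n)}\in A_j^{(n)}$ in each cell, I define the measurable map $T_n:X\rightarrow X$ by $T_n(x):=x_j^{(n)}$ for $x\in A_j^{(n)}$, and set
\[
p_n:=T_{n\#}\mu=\sum_{j=1}^{k_n}\mu\big(A_j^{(n)}\big)\,\delta_{x_j^{(n)}}.
\]
Each $p_n$ is finitely supported, hence lies in $\mm(X,S)$ (its density is a simple function, so Bochner integrable), and Claim \ref{claim:map induces kernel} applied to the deterministic kernel $(P_n)_x=\delta_{T_nx}$ yields precisely $\mu\succ T_{n\#}\mu=p_n$.

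It then remains to show $p_n\rightharpoonup\mu$, i.e. $\int_X g\,\d p_n\to\int_X g\,\d\mu$ for every $g\in C(X,\f)$. Writing $\eta=\dm$ and using $\mu(A_j^{(n)})=\int_{A_j^{(n)}}\eta\,\d|\mu|$, part 6 of Proposition \ref{prop:Bochner} (to move $g(x_j^{(n)})\in S^*$ through the Bochner integral), and the identity $\int_X f\,\d\mu=\int_X\langle f,\eta\rangle\,\d|\mu|$ defining the integral of $S^*$-valued functions, I would obtain
\[
\int_X g\,\d p_n-\int_X g\,\d\mu=\sum_{j=1}^{k_n}\int_{A_j^{(n)}}\big\langle g(x_j^{(n)})-g(x),\,\eta(x)\big\rangle\,\d|\mu|(x),
\]
whose modulus is at most $\sum_j\int_{A_j^{(n)}}\|g(x_j^{(n)})-g(x)\|\,\|\eta(x)\|\,\d|\mu|(x)$ by the duality pairing bound. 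Given $\varepsilon>0$, uniform continuity of $g$ on the compact $X$ forces $\|g(x_j^{(n)})-g(x)\|<\varepsilon$ on every cell once the mesh is small enough, and the surviving factor $\int_X\|\eta\|\,\d|\mu|=V(\mu)(X)$ is finite because $\eta$ is Bochner integrable (Proposition \ref{prop:Bochner}, part 2). Hence the difference is eventually bounded by $\varepsilon\,V(\mu)(X)$ and tends to $0$.

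The only genuine obstacle is that one fixed sequence $\{p_n\}$ must converge against all test functions at once, rather than a $g$-dependent net. This is exactly what the last estimate delivers: it factors as a modulus of continuity that the mesh refinement drives to zero, times the constant $V(\mu)(X)$ that is independent of $g$. Compactness of $X$ is therefore doing the essential work, upgrading the per-cell pointwise approximation into honest weak convergence.
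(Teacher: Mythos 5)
Your proposal is correct and follows essentially the same route as the paper's own proof: the same mesh-refining partitions with representative points, the same pushforward construction $p_n=T_{n\#}\mu$ with domination via Claim \ref{claim:map induces kernel}, and the same uniform-continuity estimate bounded by $\varepsilon\,V(\mu)(X)$. One minor note: the ``obstacle'' you flag at the end is not actually one, since weak convergence only requires convergence separately for each fixed test function $g$, which your per-$g$ estimate already gives.
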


\begin{proof}
Let $\mu\in \mm\left(X,S\right)$, denote the metric of $X$ by $d$. For any $n\in\mathbb{N}$, there
exists a finite measurable partition $\left\{ A_{i}^{n}\right\} _{i=1}^{M_{n}}$
such that $\text{diam}(A_i^n):=\sup\{d(x,y);x,y\in A^n_i\}\leq\frac{1}{n}\ \forall i$.
For every $n\in\mathbb{N}$ and $i=1,\ldots,M_{n}$, choose some $x_{i}^{n}\in A_{i}^{n}$
and define the sequence of finitely supported measures $p_{n}:=\sum_{i=1}^{M_{n}}\mu\left(A_{i}^{n}\right)\delta_{x_{i}^{n}}$. For every $f\in C(X,S^*)$:
\begin{multline*}
    \int_Xf\d p_n
    =
    \sum_{i=1}^{M_{n}}\int_X\left\langle \mu\left(A_{i}^{n}\right),f(x)\right\rangle\d\delta_{x_i^n}
    =
     \sum_{i=1}^{M_{n}}\left\langle \mu\left(A_{i}^{n}\right),f(x_i^n)\right\rangle
     =\\=
     \sum_{i=1}^{M_{n}}\left\langle \int_{A_i^n}\dm\d|\mu|,f(x_i^n)\right\rangle
     =
     \sum_{i=1}^{M_{n}}\int_{A_i^n}\left\langle \dm,f(x_i^n)\right\rangle\d|\mu|
\end{multline*}

\begin{itemize}
\item $p_{n}\underset{n\rightarrow\infty]}{\rightharpoonup}\mu$:
Let $f\in C\left(X,\f\right)$ and $\varepsilon>0$. Since $X$ is
compact, $f$ is also uniformly continuous hence there exist $N>0$
such that $d\left(x,y\right)\leq\frac{1}{N}\Rightarrow\left|\left|f\left(x\right)-f\left(y\right)\right|\right|\leq\varepsilon$
for any $x,y\in X$. So for every
$n>N$, by using Jensen's inequality (the scalar version) we get
\[
\begin{array}{ccccc}
\left|\int_{X}f\d\mu-\int_{X}f\d p_{n}\right|
& = &
\left|\sum_{i=1}^{M_{n}}\int_{A_{i}^{n}}\left\langle f,\dm\right\rangle\d|\mu|-\sum_{i=1}^{M_{n}}\int_{A_i^n}\left\langle f(x_i^n),\dm\right\rangle\d|\mu|\right|\\\\
& = &
 \left|\sum_{i=1}^{M_{n}}\int_{A_{i}^{n}}\left\langle f\left(x\right)-f\left(x_{i}^{n}\right),\dm(x)\right\rangle\d|\mu|(x)\right|\\\\
 & \leq &
\sum_{i=1}^{M_{n}}\int_{A_{i}^{n}}\left|\left\langle f\left(x\right)-f\left(x_{i}^{n}\right),\dm(x)\right\rangle\right|\d|\mu|(x)\\\\
 & \leq &
 \sum_{i=1}^{M_{n}}\int_{A_{i}^{n}}\left|\left|f\left(x\right)-f\left(x_{i}^{n}\right)\right|\right|\cdot\left|\left|\dm(x)\right|\right|\d\left|\mu\right|(x) \\\\
 & \leq &
 \varepsilon\sum_{i=1}^{M_{n}}\int_{A_{i}^{n}}\left|\left|\dm(x)\right|\right|\d\left|\mu\right|(x) \\\\
 & = &
  \varepsilon\int_X\left|\left|\dm(x)\right|\right|\d\left|\mu\right|(x)
 & = &
 \varepsilon V(\mu)(X) 
\end{array}
\]
hence $\int_{X}f\d p_{n}\underset{n\rightarrow\infty}{\longrightarrow}\int_{X}f\d\mu$
for every $f\in C\left(X,\f\right)$ thus $p_{n}\underset{n\rightarrow\infty}{\rightharpoonup}\mu$.
\item $\mu\succ p_{n}\ \forall n$: For any $n\in\mathbb{N}$ define $T_n:X\rightarrow X$ by $T_n(x)=x_i^n\ \forall x\in A_i^n$. $T_{n\#}\m=p_n$ so $\mu\succ p_{n}$ by Claim \ref{claim:map induces kernel}.
\end{itemize}
\end{proof}

We can now prove Theorem \ref{thm:order}:
\begin{proof}
(of Theorem \ref{thm:order})
\begin{itemize}
\item Assume $\mu\succ_{n}\nu\ \forall n$. By Claim \ref{claim:(Discrete-aproximation)}
there exists a sequence of finitely supported measures $p_{n}\in\mm\left(Y,S\right)$
such that $p_{n}\underset{n\rightarrow\infty}{\rightharpoonup}\nu$ and $\nu\succ p_{n}\ \forall n$. By Lemma \ref{lem:equivalence}, there exists a sequence $q_n\in\mm(\{1,\ldots,|\text{supp}(p_n)|\},S)$ such that $q_n\succ p_n$ and $p_n\succ q_n$. By transitivity (see Claim \ref{lem:relation}) $\nu\succ q_n\ \forall n$ thus $\mu\succ q_n\ \forall n$ and by transitivity again $\mu\succ p_n\ \forall n$. By Lemma \ref{lem:ordcont} we conclude $\mu\succ\nu$.
\item If $\mu\succ\nu$ then by transitivity (see Claim \ref{lem:relation}) $\mu\succ_{n}\nu\ \forall n$.
\end{itemize}
\end{proof}

\begin{cor}
\label{cor:dominance partitions}
Let $\mu\in\mm\left(X,S\right)$ and $\nu\in\mm\left(Y,S\right)$ both satisfying \ref{Ass:Lyap} and norm bounded $\dm\in C(X,(S,w(\f))),\ \dn\in C(Y,(S,w(\f)))$ satisfying \ref{ass:Vec}. Then $\mu\succ\nu$ iff for any $n\in\mathbb{N}$ and $\{B_i\}_{i=1}^n$ a measurable partition of $Y$ there exists $\{A_i\}_{i=1}^n$ a measurable partition of $X$ such that $\m(A_i)=\n(B_i)\ \forall i$.
\end{cor}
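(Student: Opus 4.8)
The plan is to obtain this corollary by chaining together the two results immediately preceding it, so that essentially no new work is required. The statement asserts an equivalence between the global domination relation $\mu\succ\nu$ and a family of finite matching-of-partitions conditions (one for each $n$), and exactly such a bridge between $\succ$ and its finite approximants $\succ_n$ is already available.

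First I would invoke Theorem \ref{thm:order}. Its hypotheses ask that $\mu\in\mm(X,S)$ have norm bounded $\dm\in C(X,(S,w(\f)))$ satisfying Assumption \ref{ass:Vec}, and that $\nu\in\mm(Y,S)$; all of these are part of the present hypotheses. Hence Theorem \ref{thm:order} gives the equivalence $\mu\succ\nu \iff \mu\succ_n\nu$ for every $n\in\mathbb{N}$. This reduces the corollary to re-expressing the condition ``$\mu\succ_n\nu$ for all $n$'' in terms of partitions.

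Second I would apply Claim \ref{claim:finite dominance partitions} for each fixed $n$ separately. That claim requires both $\mu$ and $\nu$ to satisfy Assumption \ref{Ass:Lyap} and both densities $\dm,\dn$ to satisfy Assumption \ref{ass:Vec}, which is precisely what is now assumed (the corollary has been strengthened relative to Theorem \ref{thm:order} exactly so that this claim applies). It states that $\mu\succ_n\nu$ holds if and only if every measurable partition $\{B_i\}_{i=1}^n$ of $Y$ can be matched by a measurable partition $\{A_i\}_{i=1}^n$ of $X$ with $\mu(A_i)=\nu(B_i)$ for all $i$. Quantifying this over all $n\in\mathbb{N}$ yields precisely the partition condition in the statement.

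Putting the two equivalences together gives $\mu\succ\nu \iff (\forall n)(\mu\succ_n\nu) \iff$ the stated partition condition holds for every $n$ and every partition of $Y$, which is the claim. I do not anticipate any genuine obstacle here: the only thing to watch is that the two cited results have slightly different hypothesis sets, so the main care-point is simply confirming that the strengthened assumptions of the corollary (namely that \emph{both} measures satisfy Assumption \ref{Ass:Lyap} and have suitable continuous densities) simultaneously validate the use of Theorem \ref{thm:order} and of Claim \ref{claim:finite dominance partitions} for every $n$.
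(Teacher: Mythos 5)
Your proposal is correct and is exactly the paper's own argument: the paper proves this corollary ``directly by Theorem \ref{thm:order} and Claim \ref{claim:finite dominance partitions}'', i.e.\ the same chaining of $\mu\succ\nu\iff\mu\succ_n\nu\ \forall n$ with the partition characterization of $\succ_n$. Your added check that the corollary's strengthened hypotheses (both measures satisfying Assumption \ref{Ass:Lyap}, both densities continuous, norm bounded and satisfying Assumption \ref{ass:Vec}) cover the hypothesis sets of both cited results is the only care-point, and you handled it correctly.
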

\begin{proof}
Directly by Theorem \ref{thm:order} and Claim \ref{claim:finite dominance partitions}.
\end{proof}

As a corollary from Claim \ref{claim:(Discrete-aproximation)}, we conclude a result similar to Theorem \ref{thm:ExMinVec}. We recall that $$N_{\mu}:=\{v\in\mm(Y,S);\ \mu\succ\nu\}$$

\colorbox{thmcolor}{
\parbox{\linewidth}{
\begin{thm}
\label{thm:ExMinCont}
Assume $S=S^*=\f=\r^d$ and $\eta=\dm$ satisfying \ref{ass:Vec}. If $c,\eta$ are (Lipschitz) continuous, the dual problem (\ref{eq:dualproblem}) admits a (Lipschitz) continuous solution on a weak*-dense subset of $N_{\m}$.  
\end{thm}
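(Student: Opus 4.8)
The plan is to combine the semi-discrete existence result (Theorem \ref{thm:ExMin Semi-discrete}) with the discrete approximation machinery (Claim \ref{claim:(Discrete-aproximation)}), rather than the Kantorovich--Rubinstein argument used for Theorem \ref{thm:ExMinVec}. Since $S=\r^d$ is finite-dimensional, the weak convergence $\rightharpoonup$ on $\mm(Y,S)=\mm(Y)^d$ coincides with the weak* topology, so "weak*-dense" and "$\rightharpoonup$-dense" mean the same thing. First I would show that the finitely supported measures lying in $N_\m$ are weak*-dense in $N_\m$: given $\n\in N_\m$, Claim \ref{claim:(Discrete-aproximation)} produces finitely supported $p_k\rightharpoonup\n$ with $\n\succ p_k$, and transitivity of $\succ$ (Claim \ref{lem:relation}) together with $\m\succ\n$ gives $\m\succ p_k$, i.e. $p_k\in N_\m$.

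Next, for a finitely supported $\n=\sum_{j=1}^m b_j\delta_{y_j}\in N_\m$ I would reduce the dual problem to the finite target space $\hat Y=\{y_1,\dots,y_m\}$. By Lemma \ref{lem:equivalence}, domination over $Y$ and over the discrete space $\{1,\dots,m\}$ agree, so the discrete datum $\hat\n=(b_1,\dots,b_m)$ lies in the corresponding finite domination set $\hat N_\m$. Theorem \ref{thm:ExMin Semi-discrete} then yields a (Lipschitz) maximizer $(\hat\Psi,\hat\varphi)$ with $\hat\varphi\in(\r^d)^m$ whenever $\hat\n$ is in the relative interior of $\hat N_\m$; because the relative interior is dense in the convex set $\hat N_\m$ (convexity by Proposition \ref{prop:Balyage Convex}), I can replace $\n$ by a convex combination with a fixed interior point, obtaining finitely supported measures that still lie in $N_\m$, converge weak* to $\n$, and do admit a finite-target solution. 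After this double approximation the set of $\n$ with solutions is weak*-dense, so it suffices to lift a finite-target solution to the full problem.

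For the lift I would take $\Psi(x):=\min_j\big(c(x,y_j)-\langle\hat\varphi_j,\eta(x)\rangle\big)$, which is Lipschitz and satisfies the constraint at each $y_j$, and construct a Lipschitz $\varphi\in C(Y)^d$ with $\varphi(y_j)=\hat\varphi_j$ and $\Psi(x)+\langle\varphi(y),\eta(x)\rangle\le c(x,y)$ for all $(x,y)$. Since $\int_Y\varphi\,\d\n=\sum_j\langle\hat\varphi_j,b_j\rangle$ and $\int_X\Psi\,\d|\m|$ depend only on the finite data, the lifted pair has the same value as the finite-target optimum, and its optimality for the $Y$-problem is then automatic from the chain $(\text{value of }(\Psi,\varphi))=\hat Y\text{-dual}=\hat Y\text{-primal}\ge Y\text{-primal}=Y\text{-dual}$ (Theorem \ref{thm:GMK}, using that every plan over $\hat Y$ is a plan over $Y$). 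The main obstacle is exactly this extension of $\varphi$: for each $y$ the admissible values form the closed convex set $C_y=\{v\in\r^d:\langle v,\eta(x)\rangle\le c(x,y)-\Psi(x)\ \forall x\}$, and I must produce a continuous (indeed Lipschitz) selection $y\mapsto\varphi(y)\in C_y$ pinned to $\hat\varphi_j$ at $y_j$ --- e.g. by projecting a partition-of-unity interpolation of the $\hat\varphi_j$ onto $C_y$ and invoking continuity of the projection as $C_y$ varies with $y$, or via a Michael-type selection theorem. Ensuring $C_y\neq\emptyset$ for every $y$ is transparent under a constant normalizing functional as in (\ref{eq:CondDens}) --- which holds in the natural finite-dimensional setup $|\m|=\sum_i\m_i$, $f=(1,\dots,1)$ of Remark \ref{rem:Exb1} --- so I would carry out the selection in that normalization.
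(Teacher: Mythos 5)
Your skeleton is exactly the paper's: approximate $\n\in N_{\m}$ by finitely supported $p_k\in N_{\m}$ via Claim \ref{claim:(Discrete-aproximation)} plus transitivity (Claim \ref{lem:relation}), identify the finitely supported case with the discrete one via Lemma \ref{lem:equivalence}, apply Theorem \ref{thm:ExMin Semi-discrete} after perturbing into the relative interior of the finite-dimensional convex set $N_{\m}\cap\mm(Y_n,S)$ (whose density in that set is the same observation the paper makes), and use that for $S=\f=\r^d$ the weak and weak* topologies on $\mm(Y,S)=\mm(Y)^d$ coincide. Where you genuinely add something is the lifting step: the paper simply asserts that $\n'$ supported on $Y_n$ ``admits a solution to the dual problem (\ref{eq:dualproblem})'', tacitly identifying the dual problem with target $Y_n$ with the dual problem over all of $Y$, whereas a maximizer $(\hat\Psi,\hat\varphi)$ of the $Y_n$-problem must be extended to $\varphi\in C(Y)^d$ satisfying the constraint at \emph{every} $y\in Y$ without losing value; your chain of (in)equalities through Theorem \ref{thm:GMK} on both target spaces is the right way to conclude optimality of the lift, and the inclusion of plans you invoke does hold since under the normalization of Remark \ref{rem:Exb1} the scalar $Y$-marginal of any plan is $\sum_i\nu'_i$, hence concentrated on $Y_n$. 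Your extension itself can be made elementary, with no Michael-type selection or metric projection: with $\langle s^*,\eta\rangle\equiv1$ for $s^*=(1,\ldots,1)$ and $L$ a uniform Lipschitz constant of $c$ in $y$, set $\varphi(y):=\sum_j w_j(y)\bigl(\hat\varphi_j-L\,d(y,y_j)\,s^*\bigr)$ for any Lipschitz partition of unity with $w_j(y_j)=1$ and $w_j(y_k)=0$ for $k\neq j$; then $\langle\varphi(y),\eta(x)\rangle\leq\sum_j w_j(y)\bigl[c(x,y_j)-\hat\Psi(x)-L\,d(y,y_j)\bigr]\leq c(x,y)-\hat\Psi(x)$, while $\varphi(y_j)=\hat\varphi_j$ keeps $\int_Y\varphi\,\d\n'$ unchanged (for merely continuous $c$ replace $L\,d(y,y_j)$ by a modulus of continuity to get a continuous solution). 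Your caution about the normalization is also justified: Assumption \ref{ass:Vec} alone provides only an $x$-dependent $f$, which cannot be used for this constant shift, so carrying out the selection under (\ref{eq:CondDens})/Remark \ref{rem:Exb1} is the correct choice. In short: same route as the paper, with a correct --- and for full rigor, necessary --- patch of a step the paper leaves implicit.
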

}}
\begin{proof}
Let $\nu\in N_{\mu}$. We will show there exists some $\nu'\in N_{\mu}$ which is arbitrary close to $\nu$ in the weak* topology and for which the dual problem (\ref{eq:dualproblem}) admits a solution. By Claim \ref{claim:(Discrete-aproximation)}, there exist some $\nu_n\in N_{\mu}$ supported on  $Y_n\subset Y$ some finite subset of $Y$ of size $n$, arbitrary close to $\nu$ in the weak* topology (When $S=\f=\r^d$, the weak topology on $\mm(Y,S)=\mm(Y)^d$ is just the weak* topology generated by $C(Y,S)=C(Y)^d$). $\nu_n\in N_{\mu}\cap\mm(Y_n,S)$ so by Theorem \ref{thm:ExMin Semi-discrete} there exists some $\nu'\in N_{\mu}\cap\mm(Y_n,S)$ arbitrary close to $\nu_n$ in the finite-dimensional standard topology ($N_{\mu}\cap\mm(Y_n,S)$ is isomorphic to a subset of $\r^{nd}$) for which the dual problem (\ref{eq:dualproblem}) admits a solution. Therefore $\nu'$ is arbitrary close to $\nu$ in the weak* topology. If $c,\eta$ are Lipschitz continuous, then so are the solutions to the dual problem by Theorem \ref{thm:ExMin Semi-discrete}.
\end{proof}

\subsection{Existence of a transport map in the general case}
We showed that in the semi-discrete case, there exists an optimal transport map whenever $\mu\succ\nu$ (Theorem \ref{thm:ExTransMap}). We will now use Theorem \ref{thm:order} to formulate equivalent conditions for the existence of some transport map (not necessarily optimal) in the general case.

\begin{defn}
A partition $\left\{C_1,\ldots,C_m\right\}$ is a \emph{refinement}\index{refinement} of a partition $\left\{D_1,\ldots,D_n\right\}$ if $\forall i=1,\ldots,m\ \exists j=1,\ldots,n$ such that $C_i\subset D_j$.
\end{defn}

\begin{defn}
We say $\mu\in\mm(X,S)$ \emph{sequentially dominates} \index{sequential domination} $\nu\in\mm(Y,S)$ if for any $\left\{ \mathcal{B}^{n}:=\left\{B_{1}^{n},\ldots,B_{M_{n}}^{n}\right\}\right\} _{n=1}^{\infty}$
sequence of partitions of $Y$ such that $\mathcal{B}^{n+1}$ is a refinement of $\mathcal{B}^{n}$  for all $n\in\mathbb{N}$, there exists a sequence of partitions $\{\mathcal{A}^{n}=\left\{ A_{1}^{n},\ldots,A_{M_{n}}^{n}\right\}\}_{n=1}^\infty$ of $X$ such that:
\begin{itemize}
\item $\mathcal{A}^{n+1}$ is a refinement of $\mathcal{A}^{n}$.
\item $\mu\left(A_{i}^{n}\right)=\nu\left(B_{i}^{n}\right)$ for any $n\in\mathbb{N}$
and $i=1,\ldots,M_{n}$.
\item $B_{i}^{n+1}\subset B_{j}^{n}\Leftrightarrow A_{i}^{n+1}\subset A_{j}^{n}$
for any $n\in\mathbb{N}$, $j=1,\ldots,M_{n}$ and $i=1,\ldots,M_{n+1}$
\end{itemize}
\end{defn}

\begin{claim}
If
\begin{enumerate}
    \item $\mu\in\mm(X,S),\ \nu\in\mm(Y,S)$ satisfy Assumption \ref{Ass:Lyap}.
    \item $\dm\in C(X,(S,w(\f))),\ \dn\in C(Y,(S,w(\f)))$ are norm bounded and satisfy \ref{ass:Vec}.
    \item $\mu$ sequentially dominates $\nu$
\end{enumerate}
then $\mu\succ\nu$.
\end{claim}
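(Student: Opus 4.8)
The plan is to deduce $\mu\succ\nu$ from the finite-order relations $\mu\succ_{n}\nu$ by means of Theorem \ref{thm:order}. Since $\mu$ has a norm bounded density $\dm\in C(X,(S,w(\f)))$ satisfying Assumption \ref{ass:Vec} (hypothesis 2), Theorem \ref{thm:order} applies and identifies $\mu\succ\nu$ with the statement that $\mu\succ_{n}\nu$ holds for every $n\in\mathbb{N}$. So it suffices to establish $\mu\succ_{n}\nu$ for each fixed $n$.

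To verify $\mu\succ_{n}\nu$ I would appeal to the partition characterization in Claim \ref{claim:finite dominance partitions}: under Assumption \ref{Ass:Lyap} for both $\mu,\nu$ (hypothesis 1) and Assumption \ref{ass:Vec} for $\dm,\dn$ (hypothesis 2), the relation $\mu\succ_{n}\nu$ holds precisely when every measurable partition $\{B_i\}_{i=1}^{n}$ of $Y$ admits a measurable partition $\{A_i\}_{i=1}^{n}$ of $X$ with $\mu(A_i)=\nu(B_i)$ for all $i$. Thus the task reduces to producing such a matching partition of $X$ for an arbitrarily given partition of $Y$.

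This is where I would invoke the hypothesis that $\mu$ sequentially dominates $\nu$. Given a partition $\{B_1,\ldots,B_n\}$ of $Y$, I form the constant sequence of partitions $\mathcal{B}^{m}:=\{B_1,\ldots,B_n\}$ for all $m\in\mathbb{N}$. Each partition is a refinement of itself (take $C_i=D_i$ in the definition of refinement), so $\mathcal{B}^{m+1}$ refines $\mathcal{B}^{m}$ and the sequence is admissible. Sequential domination then yields a refining sequence $\mathcal{A}^{m}=\{A_1^{m},\ldots,A_n^{m}\}$ of partitions of $X$ with $\mu(A_i^{m})=\nu(B_i^{m})=\nu(B_i)$ for all $m$ and $i$. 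Taking $\mathcal{A}^{1}=\{A_1^{1},\ldots,A_n^{1}\}$ gives the desired matching partition of $X$, hence $\mu\succ_{n}\nu$, and letting $n$ range over $\mathbb{N}$ concludes the proof.

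The argument is short because the heavy lifting is carried by Theorem \ref{thm:order} and Claim \ref{claim:finite dominance partitions}; the only genuine step is the observation that one may feed a constant (trivially refining) sequence into the definition of sequential domination and read off a single matching partition at one level, discarding the stronger coherence-across-levels data that sequential domination also provides. The main point to be careful about is confirming that the constant sequence really meets the refinement requirement and that the extracted partition indeed satisfies the measure-matching constraint $\mu(A_i)=\nu(B_i)$; once this bookkeeping is checked, combining the three ingredients closes the proof.
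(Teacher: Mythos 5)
Your proof is correct and follows essentially the same route as the paper, whose entire proof is ``Directly by Corollary \ref{cor:dominance partitions}'' --- a corollary that is itself precisely the combination of Theorem \ref{thm:order} and Claim \ref{claim:finite dominance partitions} that you invoke. The only difference is that you make explicit the step the paper leaves implicit, namely feeding a constant (self-refining) sequence of partitions into the definition of sequential domination to extract a single matching partition, which is exactly the intended bookkeeping.
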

\begin{proof}
Directly by Corollary \ref{cor:dominance partitions}.
\end{proof}

Under some conditions, sequential dominance is equivalent to the existence of some (not necessarily optimal) transport map:

\b

\colorbox{thmcolor}{
\parbox{\linewidth}{
\begin{thm}
\label{thm:ExTransMap generalcase}
Assume
\begin{itemize}
    \item $\mu\in\mm(X,S),\ \nu\in\mm(Y,S)$ satisfy \ref{Ass:Lyap}.
    \item $\dm$ satisfies \ref{ass:Vec}.
    \item $Y$ is complete.
\end{itemize}
Then $\mu$ sequentially dominates $\nu$ iff $\mathcal{T}(\mu,\nu)\neq\emptyset$.
\end{thm}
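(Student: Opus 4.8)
The plan is to prove the two implications separately, the substance lying entirely in the forward direction (sequential dominance $\Rightarrow$ existence of a map).

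\emph{The easy direction} ($\mathcal{T}(\mu,\nu)\neq\emptyset\Rightarrow$ sequential dominance). Given $T\in\mathcal{T}(\mu,\nu)$ and any refining sequence of partitions $\mathcal{B}^n=\{B_1^n,\ldots,B_{M_n}^n\}$ of $Y$, I would simply set $A_i^n:=T^{-1}(B_i^n)$. Preimages turn a partition into a partition and respect inclusions, so $\mathcal{A}^n$ is a partition of $X$ with $\mathcal{A}^{n+1}$ refining $\mathcal{A}^n$, and the mass condition $\mu(A_i^n)=\mu(T^{-1}(B_i^n))=\nu(B_i^n)$ is immediate from $T_\#\mu=\nu$. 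For the biconditional, $B_i^{n+1}\subset B_j^n$ forces $A_i^{n+1}\subset A_j^n$ at once, while for a \emph{nonempty} $A_i^{n+1}$ the converse holds because $A_i^{n+1}\cap A_j^n=T^{-1}(B_i^{n+1}\cap B_j^n)=\emptyset$ whenever $B_i^{n+1}\not\subset B_j^n$; the (necessarily $\mu$-null) empty cells are assigned to parents by the same refinement rule as in $\mathcal{B}^n$, so the biconditional holds there by convention.

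\emph{The main direction} (sequential dominance $\Rightarrow\mathcal{T}(\mu,\nu)\neq\emptyset$). Using that $Y$ is compact, hence totally bounded, I would first fix a refining sequence of partitions $\mathcal{B}^n$ of $Y$ into \emph{nonempty} Borel cells with $\max_i\text{diam}(B_i^n)\to0$. Sequential dominance then furnishes refining partitions $\mathcal{A}^n$ of $X$ with $\mu(A_i^n)=\nu(B_i^n)$ and with nesting matching that of $\mathcal{B}^n$. For $x\in X$ let $i_n(x)$ index the cell of $\mathcal{A}^n$ containing $x$; the refinement property makes $A_{i_{n+1}(x)}^{n+1}\subset A_{i_n(x)}^n$, and by the biconditional the corresponding cells satisfy $B_{i_{n+1}(x)}^{n+1}\subset B_{i_n(x)}^n$. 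Thus $\{\overline{B_{i_n(x)}^n}\}_n$ is a nested sequence of nonempty closed sets whose diameters tend to $0$, so by completeness of $Y$ (Cantor's intersection theorem) $\bigcap_n\overline{B_{i_n(x)}^n}$ is a single point, which I define to be $T(x)$. Choosing representatives $y_i^n\in B_i^n$, the simple maps $T_n(x):=y_{i_n(x)}^n$ are measurable (constant on each $A_i^n$) and satisfy $T_n(x),T(x)\in\overline{B_{i_n(x)}^n}$, whence $T_n\to T$ pointwise and $T$ is measurable.

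It remains to verify $T_\#\mu=\nu$, for which it suffices (by the definition of a transport map, or Claim \ref{claim:cont is enough}) to show $\int_X f(Tx)\,d\mu(x)=\int_Y f\,d\nu$ for every $f\in C(Y,\f)$. On one hand $f\circ T_n\to f\circ T$ pointwise with $\|f(T_n x)\|\le\|f\|_\infty$, so the vector dominated convergence theorem gives $\int_X f(T_n x)\,d\mu\to\int_X f(Tx)\,d\mu$. On the other hand, pulling the fixed functional $f(y_i^n)$ out of the Bochner integral (Proposition \ref{prop:Bochner}) and using the mass condition, $\int_X f(T_n x)\,d\mu=\sum_i\langle f(y_i^n),\mu(A_i^n)\rangle=\sum_i\langle f(y_i^n),\nu(B_i^n)\rangle$, and the very uniform-continuity estimate of Claim \ref{claim:(Discrete-aproximation)} (with $V(\nu)(Y)<\infty$) shows this Riemann-type sum converges to $\int_Y f\,d\nu$ as $\max_i\text{diam}(B_i^n)\to0$. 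Equating the two limits gives $T\in\mathcal{T}(\mu,\nu)$. \emph{The hard part} is precisely this construction step: arranging $\mathcal{B}^n$ with nonempty cells of vanishing diameter so that $T$ is well defined via the single-point intersection, and then controlling the interchange of limits in the pushforward identity; the hypotheses \ref{Ass:Lyap} on $\mu,\nu$ and \ref{ass:Vec} on $\dm$ enter chiefly through the earlier results they unlock (matching of masses on partitions and the identification of marginals), while compactness of $Y$ supplies the finite small-diameter partitions demanded by the definition of sequential dominance.
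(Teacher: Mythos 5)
Your proposal is correct and follows essentially the same route as the paper's proof: the easy direction uses preimage partitions $A_i^n=T^{-1}(B_i^n)$ exactly as the paper does, and the main direction uses the same small-diameter refining partitions of $Y$, the same piecewise-constant maps $T_n$, and the same two-limit verification of $T_{\#}\mu=\nu$ via dominated convergence together with the Riemann-sum estimate of Claim~\ref{claim:(Discrete-aproximation)}. The only cosmetic difference is that you define $T(x)$ via Cantor's intersection theorem applied to the nested closures $\overline{B_{i_n(x)}^n}$, whereas the paper shows the sequence $\{T_n x\}$ is Cauchy and takes its limit --- an equivalent use of the completeness of $Y$ (and your explicit treatment of empty preimage cells in the easy direction is, if anything, slightly more careful than the paper's).
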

}}

\begin{proof}
There exists $\left\{ \mathcal{B}^{n}\right\} _{n=1}^{\infty}$ a
sequence of partitions of $Y$ such that
\begin{itemize}
    \item $\mathcal{B}^n:=\left\{B_{1}^{n},\ldots,B_{M_{n}}^{n}\right\}$ where $B_i^n$ are non-empty $\forall n\in\mathbb{N},\ i=1,\ldots,M_n$.
    \item $\mathcal{B}^{n+1}$ is a refinement of $\mathcal{B}^{n}$ for all $n\in\mathbb{N}$.
    \item $\lim_{n\rightarrow\infty}\max_{B\in\mathcal{B}^{n}}\text{diam}B=0$.
\end{itemize}
By sequentially domination there exists a sequence of partitions $\{\mathcal{A}^{n}=\left\{ A_{1}^{n},\ldots,A_{M_{n}}^{n}\right\}\}_{n=1}^\infty $
satisfying:
\begin{itemize}
\item $\mathcal{A}^{n+1}$ is a refinement of $\mathcal{A}^{n}$.
\item $\mu\left(A_{i}^{n}\right)=\nu\left(B_{i}^{n}\right)$ for any $n\in\mathbb{N}$
and $i=1,\ldots,M_{n}$.
\item $B_{i}^{n+1}\subset B_{j}^{n}\Leftrightarrow A_{i}^{n+1}\subset A_{j}^{n}$
for any $n\in\mathbb{N}$, $j=1,\ldots,M_{n}$ and $i=1,\ldots,M_{n+1}$
\end{itemize}
For any $n\in\mathbb{N}$ and $i=1,\ldots,M_{n}$ choose some $y_{i}^{n}\in B_{i}^{n}$
and define $T_n:X\rightarrow Y$ by $T_{n}x:=y_{i}^{n}\ \forall x\in A_{i}^{n}$.
We will show that for any $x\in X$, $T_{n}x$ is a converging sequence: Let $x\in X$
and $\varepsilon>0$. There exists $N\in\mathbb{N}$ s.t. $\text{diam}(B_{i}^{n})<\varepsilon$
for any $n\geq N$ and $i=1,\ldots,M_{n}$. Assume without loss of generality $x\in A_1^N$ so $T_Nx=y_1^N\in B^N_1$. For any $m>n>N$ there exists some $i,j$ such that $x\in A_i^m\subset A_j^n\subset A^N_1$ and therefore $T_mx\in B^m_i$, $T_nx\in B^n_j$ and $B^m_i\subset B^n_j\subset B^N_1$ so $T_{n}x,T_{m}x\in B_1^{N}$
and thus $d\left(T_{n}x,T_{m}x\right)<\varepsilon$. We conclude $\left\{ T_{n}x\right\} $
is Cauchy and by completeness of $Y$ it converges. We denote the limit
$T=\text{lim}_{n\rightarrow\infty}T_{n}$ and since $\left\{ T_{n}\right\} _{n\in\mathbb{N}}$
are measurable so is $T$. Define $p_{n}\in\mathcal{M}\left(Y,S\right)$
by $p_{n}=\sum_{i=1}^{M_{n}}\mu\left(A_{i}^{n}\right)\delta_{y_{i}^{n}}$.
For any $n\in\mathbb{N}$ and
$f\in C\left(Y,\f\right)$
\begin{equation}
\label{Eq:Ex trans map}
\int_{X}f\left(T_{n}x\right)\text{d}\mu\left(x\right)=\sum_{i=1}^{M_{n}}\int_{A_{i}^{n}}f\left(y_{i}^{n}\right)\text{d}\mu\left(x\right)=\sum_{i=1}^{M_{n}}f\left(y_{i}^{n}\right)\mu\left(A_{i}^{n}\right)=\int_{Y}f\text{d}p_{n}
\end{equation}
By the same arguments made in the proof of Claim \ref{claim:(Discrete-aproximation)}, $\int_{Y}f\text{d}p_{n}\rightarrow \int_{Y}f\text{d}\nu\ \forall f\in C(Y,\f)$.
For any $x\in X$ and $f\in C(Y,\f)$, $f(T_nx)\rightarrow f(Tx)$ so by the dominated convergence theorem
$$
\int_Xf(T_nx)\d\mu(x)
=
\int_X\left\langle f(T_nx),\dm(x)\right\rangle\d|\mu|(x)
\rightarrow
\int_X\left\langle f(Tx),\dm(x)\right\rangle\d|\mu|(x)
=
\int_Xf(Tx)\d\mu(x)
$$
Therefore, taking the limit $n\rightarrow\infty$ in (\ref{Eq:Ex trans map}) leads to $\int_{X}f\left(Tx\right)\text{d}\mu\left(x\right)=\int_{Y}f\text{d}\nu$
so $T_{\#}\mu=\nu$.

\bigskip

On other hand, if there exists $T:X\rightarrow Y$ such that $T_\#\mu=\nu$ then for any sequence of partitions $\mathcal{B}^n=\{B_1^n,\ldots,B_{M_n}^n\}$ one can take the sequence of partitions $\mathcal{A}^n=\{T^{-1}(B_1^n),\ldots,T^{-1}(B_{M_n}^n)\}$ to conclude $\mu$ sequentially dominates $\nu$.
\end{proof}

The notion of sequentially dominance may be hard to verify. A simpler but stronger condition is the following:

\begin{defn}
$\ $
\begin{enumerate}
    \item For any $\mu\in\mm(X,S)$ and a measurable $A\subset X$ we define the \emph{restricted measure}\index{restricted measure} $\mu|_A\in\mm(A,S)$ by $\mu|_A(E):=\mu(E)\ \forall E\subset A$.
    
    \item We say $\mu\in\mm(X,S)$ \emph{strongly dominates} \index{strongly dominates} $\nu\in\mm(Y,S)$ if $\mu(X)=\nu(Y)$ and $\mu(A)=\nu(B)\Rightarrow\mu|_A\succ\nu|_B$ for any two measurable subsets $A\subset X,\ B\subset Y$.
    
\end{enumerate}
\end{defn}

\begin{rem}
By taking $A=X,\ B=Y$ in the definition above we conclude that if $\mu$ strongly dominates $\nu$ then $\mu\succ\nu$.
\end{rem}

\begin{rem}
In the scalar case $S=S^*=\f=\r$ strong domination is equivalent to $\mu(X)=\nu(Y)$,  since in this case $\mu(A)=\nu(B)$ is equivalent to $\mu|_A\succ\nu|_B$ (see Remark \ref{rem:RelationScalar}).
\end{rem}

A simple case where this strong domination occur is when $\nu$ has a 1-dimensional image:

\begin{claim}
Let $\mu\in\mm(X,S)$ and $|\nu|\in\mm_+(Y)$ such that $|\nu|(Y)=|\mu|(X)$, then $\mu$ strongly dominates $\nu\in\mm(Y,S)$ defined by $\nu(E):=\frac{|\nu|(E)}{|\mu|(X)}\mu(X)$
\end{claim}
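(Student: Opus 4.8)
The plan is to exhibit, for each relevant pair of subsets, an explicit \emph{constant} (i.e.\ $x$-independent) kernel realizing the relation $\succ$, in direct analogy with the non-negative case of Remark~\ref{rem:RelationScalar}. We may assume $|\mu|(X)>0$, since otherwise $\mu=0$ and $|\nu|(Y)=|\mu|(X)=0$ force $\nu=0$, making the statement trivial.

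First I would record that $\nu$ is a genuine $S$-valued measure: each value $\nu(E)=\frac{|\nu|(E)}{|\mu|(X)}\mu(X)$ is a scalar multiple of the fixed vector $\mu(X)\in S$, so $\nu$ is (a scalar measure times a constant vector and hence) of the required form. I then check the normalization demanded by strong domination: by hypothesis $|\nu|(Y)=|\mu|(X)$, whence $\nu(Y)=\frac{|\nu|(Y)}{|\mu|(X)}\mu(X)=\mu(X)$.

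Next, fix measurable $A\subset X$ and $B\subset Y$ with $\mu(A)=\nu(B)$; I must produce a kernel witnessing $\mu|_A\succ\nu|_B$. By the definition of $\nu$ the hypothesis reads $\mu(A)=\frac{|\nu|(B)}{|\mu|(X)}\mu(X)$. When $|\nu|(B)>0$, define the constant kernel $P\in\text{ker}(A,B)$ by $P_x(E):=\frac{|\nu|(E)}{|\nu|(B)}$ for every $x\in A$ and measurable $E\subset B$; each $P_x$ is a probability measure on $B$ and $x\mapsto P_x(E)$ is constant, hence measurable. Integrating the constant scalar $P_x(E)$ against the $S$-valued measure $\mu|_A$ gives, by Definition~\ref{def:integral S^*},
\[
P(\mu|_A)(E)=\int_A P_x(E)\,\d(\mu|_A)(x)=\frac{|\nu|(E)}{|\nu|(B)}\,\mu(A),
\]
and substituting $\mu(A)=\frac{|\nu|(B)}{|\mu|(X)}\mu(X)$ collapses the right-hand side to $\frac{|\nu|(E)}{|\mu|(X)}\mu(X)=\nu(E)=\nu|_B(E)$. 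Thus $P(\mu|_A)=\nu|_B$, i.e.\ $\mu|_A\succ\nu|_B$.

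Finally I would dispatch the degenerate case $|\nu|(B)=0$: then $\nu(B)=0=\mu(A)$ and $\nu|_B$ is the zero measure, so choosing any $y_0\in B$ and the Dirac kernel $P_x:=\delta_{y_0}$ yields $P(\mu|_A)(E)=\delta_{y_0}(E)\,\mu(A)=0=\nu|_B(E)$ for all $E\subset B$, again giving $\mu|_A\succ\nu|_B$. There is no deep obstacle in this argument; the only points requiring care are that the \emph{vector} identity $\mu(A)=\nu(B)$, combined with the one-dimensionality of the range of $\nu$, forces the \emph{scalar} proportionality $\tfrac{\mu(A)}{|\nu|(B)}=\tfrac{\mu(X)}{|\mu|(X)}$ that makes the constant kernel succeed, together with the separate (and essentially bookkeeping) treatment of the $|\nu|(B)=0$ case.
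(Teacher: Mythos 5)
Your proof is correct and takes essentially the same route as the paper: the identical constant kernel $P_x(E)=\frac{|\nu|(E)}{|\nu|(B)}$ and the same chain of equalities showing $P(\mu|_A)=\nu|_B$ via the proportionality $\mu(A)=\frac{|\nu|(B)}{|\mu|(X)}\mu(X)$. The only difference is that you also dispatch the degenerate case $|\nu|(B)=0$ (where the paper's kernel formula divides by zero) with a Dirac kernel, a small but genuine gain in rigor.
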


\begin{proof}
For any two subsets $A\subset X,\ B\subset Y$ such that $\mu(A)=\nu(B)$, we define the kernel $P\in\text{Ker}(A,B)$ by $P_x(E)=\frac{|\nu|(E)}{|\nu|(B)}\ \forall x\in X,\ E\subset B$. For every $E\subset B$ we have
\begin{multline*}
P(\mu|_A)(E)
=
\int_AP_x(E)\d\mu(x)
=
\frac{|\nu|(E)}{|\nu|(B)}\mu(A)
=\\=
\frac{|\nu|(E)}{|\nu|(B)}\nu(B)
=
\frac{|\nu|(E)}{|\nu|(B)}\frac{|\nu|(B)}{|\mu|(X)}\mu(X)
=
\frac{|\nu|(E)}{|\mu|(X)}\mu(X)=\nu(E)=\nu|_B(E)
\end{multline*}

thus  $\mu|_A\succ\nu|_B$ and therefore $\mu$ strongly dominates $\nu$.
\end{proof}

\fbox{
\parbox{15cm}{
\begin{example}
\label{ex:strongDom}
Let $\left(\mu_1,\mu_2\right)\in\mm_+(X)^2\subset\mm(X,\r^2)$ and $\nu\in\mm_+(Y)$ with $\nu(Y)=\mu_1(X)=\mu_2(X)$ then $(\mu_1,\mu_2)$ strongly dominates $(\nu,\nu)$. To see that just take  $|\left(\mu_1,\mu_2\right)|=\frac{\mu_1+\mu_2}{2}$ and then $(\nu,\nu)=(1,1)\nu=\frac{\left(\mu_1,\mu_2\right)(X)}{|\left(\mu_1,\mu_2\right)|(X)}\nu$.
\end{example}
}
}

\bigskip

Strong domination is strictly stronger than the relation $\mu\succ\nu$, even in the discrete case, as we demonstrate in the example below:

\bigskip

\fbox{
\parbox{15cm}{
\begin{example}
\label{ex:notstrongDom}
Let $X=Y=\{a,b,c,d\},\ S=\r^2,\ \mu=\nu=((2,0,2,0),(1,2,0,1))$. Clearly $\mu\succ\nu$, but for $A=\{a,d\},\ B=\{b,c\}$ we have $\mu(A)=(2,2)=\nu(B)$, and by example \ref{ex:discretedominance} $$\mu|_A=((2,0),(1,1))\not\succ((0,2),(2,0))=\nu|_B$$

More examples may be formulated even in the continuous case in the following way: Choose $\mu\in\mm(X,S),\ \nu\in\mm(Y,S)$ such that $\mu(X)=\nu(Y)$ and $\mu\not\succ\nu$. Now choose some $\mu'\in\mm(X,S),\ \nu'\in\mm(Y,S)$ such that $\mu'\succ\nu,\ \mu\succ\nu'$. Let $\tilde{X}=X\dot{\cup}X',\ \tilde{Y}=Y\dot{\cup}Y'$ be the disjoint union of two copies of $X,Y$ respectively (and extend the metrics in some proper manner) and define $\tilde{\mu}\in\mm(\tilde{X},S)$ by
$$\tilde{\mu}(A\dot{\cup}A')=\mu(A)+\mu'(A')\ \forall A\subset X,\ A'\subset X'$$
and similarly $\tilde{\nu}\in\mm(\tilde{Y},S)$ by
$$\tilde{\nu}(B\dot{\cup}B')=\nu(B)+\nu'(B')\ \forall B\subset Y,\ B'\subset Y'$$
To show $\tilde{\mu}\succ\tilde{\nu}$ we use $\tilde{P}\in\text{ker}(\tilde{X},\tilde{Y})$ defined by
$$\tilde{P}_x(B\dot{\cup}B')=\begin{cases}
      P_x(B')&;x\in X
      \\
      P'_x(B)&;x\in X'
\end{cases}$$
where $P\in\text{ker}(X,Y'),\ P'\in\text{ker}(X',Y)$ satisfy $P\mu=\nu'$ and $P'\mu'=\nu$. But for $A=X\subset{\tilde{X}},\ B=Y\subset{\tilde{Y}}$ we have $\mu(X)=\nu(Y),\ \tilde{\mu}|_X=\mu\not\succ\nu=\tilde{\nu}|_Y$ thus $\tilde{\mu}$ does not strongly dominates $\tilde{\nu}$.
\end{example}
}
}

\b

\begin{claim}
\label{claim:strongly implies sequentially}
Assume $\mu\in\mm\left(X,S\right)$ and $\nu\in\mm\left(Y,S\right)$ both satisfying \ref{Ass:Lyap} with norm bounded $\dm\in C(X,(S,w(\f))),\ \dn\in C(Y,(S,w(\f)))$ satisfying \ref{ass:Vec}. If $\mu$ strongly dominates $\nu$ then $\mu$ sequentially dominates $\nu$.
\end{claim}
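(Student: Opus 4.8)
The plan is to build the required partitions of $X$ by induction on the level $n$, using strong domination to perform the construction one block at a time. I would set $\mathcal{A}^0=\{X\}$ and $\mathcal{B}^0=\{Y\}$, so that the first clause $\mu(X)=\nu(Y)$ of strong domination is exactly the base-case requirement, and prove inductively the existence of a refining sequence $\{\mathcal{A}^n\}$ of partitions of $X$ with $\mu(A_i^n)=\nu(B_i^n)$ and $B_i^{n+1}\subset B_j^n\Leftrightarrow A_i^{n+1}\subset A_j^n$. The essential point is that the compatibility clause forces each parent block $A_j^n$ to be subdivided independently, and strong domination is precisely the hypothesis guaranteeing that each such local subdivision can be carried out.

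For the inductive step, suppose $\mathcal{A}^n=\{A_1^n,\dots,A_{M_n}^n\}$ has been built. Fix a parent index $j$ and set $I_j=\{i:\ B_i^{n+1}\subset B_j^n\}$; since $\mathcal{B}^{n+1}$ refines $\mathcal{B}^n$, the family $\{B_i^{n+1}\}_{i\in I_j}$ partitions $B_j^n$. Because $\mu(A_j^n)=\nu(B_j^n)$, strong domination gives $\mu|_{A_j^n}\succ\nu|_{B_j^n}$. I would then apply Corollary \ref{cor:dominance partitions} to these restricted measures and to the partition $\{B_i^{n+1}\}_{i\in I_j}\cup\{Y\setminus B_j^n\}$ of $Y$, obtaining a partition $\{C_i\}_{i\in I_j}\cup\{C_0\}$ of $X$ with $\mu|_{A_j^n}(C_i)=\nu(B_i^{n+1})$ and $\mu|_{A_j^n}(C_0)=\nu|_{B_j^n}(Y\setminus B_j^n)=0$. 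Setting $A_i^{n+1}:=C_i\cap A_j^n$ yields disjoint subsets of $A_j^n$ with $\mu(A_i^{n+1})=\nu(B_i^{n+1})$, whose union is $A_j^n$ up to the $\mu$-null set $C_0\cap A_j^n$, which I absorb into a single child. Ranging over $j$ assembles $\mathcal{A}^{n+1}$, and by construction $A_i^{n+1}\subset A_j^n$ holds exactly for $i\in I_j$, i.e. exactly when $B_i^{n+1}\subset B_j^n$ (the reverse implication using disjointness of the parent blocks $A_{j'}^n$; empty blocks are assigned a parent by convention), which is the refinement compatibility.

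The main obstacle is justifying that Corollary \ref{cor:dominance partitions} actually applies to the restricted measures, since a measurable block $A_j^n$ need not be compact. The resolution is to never pass to the subspace: I view $\mu|_{A_j^n}$ and $\nu|_{B_j^n}$ as measures on the compact spaces $X$ and $Y$ (extended by zero) and represent them as $(\dm,\ |\mu|\text{ restricted to }A_j^n)$ and $(\dn,\ |\nu|\text{ restricted to }B_j^n)$, keeping the \emph{same} densities. With this representation the continuous, norm-bounded densities and the function $f$ of Assumption \ref{ass:Vec} are inherited verbatim, so Assumption \ref{ass:Vec} holds almost everywhere with respect to the restricted base measures; and Assumption \ref{Ass:Lyap} for $\mu|_{A_j^n}$ follows from that for $\mu$ by applying the splitting property to $E\cap A_j^n$ (and symmetrically for $\nu|_{B_j^n}$). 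Once these hypotheses are in place the corollary is available at every node, and what remains is the routine bookkeeping of intersecting the corollary's partition with the parent block and discarding $\mu$-null remainders, together with the observation that strong domination implies $\mu\succ\nu$, which supplies the $n=0$ base case.
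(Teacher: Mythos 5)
Your proof is correct and takes essentially the same route as the paper's: an inductive, level-by-level construction of the partitions $\mathcal{A}^n$ of $X$, where each parent block $A_j^n$ with $\mu(A_j^n)=\nu(B_j^n)$ is subdivided by invoking strong domination to get $\mu|_{A_j^n}\succ\nu|_{B_j^n}$ and then applying Corollary \ref{cor:dominance partitions} to the children of $B_j^n$. Your additional care in extending the restricted measures by zero to the ambient compact spaces (keeping the same densities, so that compactness, continuity, and Assumptions \ref{ass:Vec} and \ref{Ass:Lyap} demonstrably transfer, at the cost of the harmless null block $C_0$) is a legitimate patch of a technical point the paper applies silently when it uses the corollary on the subspaces $A_i^n$, $B_i^n$.
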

\begin{proof}
Assume $\mu$ strongly dominates $\nu$. Let $\left\{ \mathcal{B}^{n}:=\left\{B_{1}^{n},\ldots,B_{M_{n}}^{n}\right\}\right\} _{n=1}^{\infty}$
sequence of partitions of $Y$ such that $\mathcal{B}^{n+1}$ is a refinement of $\mathcal{B}^{n}$  for all $n\in\mathbb{N}$.
Since $\mu$ strongly dominates $\nu$, we may define the following
sequence of finite partitions of $X$ inductively:
\begin{itemize}
\item By Corollary \ref{cor:dominance partitions}, there exists $\mathcal{A}^{1}=\left\{ A_{1}^{1},\ldots,A_{M_{1}}^{1}\right\} $
a partition of $X$ such that $\mu(A_{i}^{1})=\nu(B_{i}^{1})\ \forall i=1,\ldots,M_{1}$.

\item Given a partition $\mathcal{A}^{n}=\left\{ A_{1}^{n},\ldots,A_{M_{n}}^{n}\right\}$ such that $\mu(A_{i}^{n})=\nu(B_{i}^{n})\ \forall i=1,\ldots,M_{n}$,
define its refinement $\mathcal{A}^{n+1}=\left\{ A_{1}^{n+1},\ldots,A_{M_{n+1}}^{n+1}\right\} $
in the following way: Let $i=1,\ldots,M_{n}$. Since $\mu|_{A_{i}^{n}}\succ\nu|_{B_{i}^{n}}$ (by strong domination)
and
$$\mathcal{B}^{n+1,i}:=\left\{ B_{j}^{n+1};\ j=1,\ldots,M_{n+1},\ B_{j}^{n+1}\subset B_{i}^{n}\right\}$$

is a partition of $B_{i}^{n}$ ($\mathcal{B}^{n+1}$ is a refinement
of $\mathcal{B}^{n}$), again by Corollary \ref{cor:dominance partitions} we get a partition of $A^n_i$:
$$\mathcal{A}^{n+1,i}:=\left\{ A_{j}^{n+1};\ j=1,\ldots,M_{n+1},\ B_{j}^{n+1}\subset B_{i}^{n}\right\}$$
 such that $\mu(A_j^{n+1})=\nu(B_j^{n+1})\ \forall j$.
We define $\mathcal{A}^{n+1}=\cup_{i=1}^{M_{n}}\mathcal{A}^{n+1,i}$ which is a disjoint union of the partitions of $A_i^n$ and therefore a partition of $X$ which is a refinement of $\mathcal{A}^n$.
\end{itemize}
The sequence of partitions $\{\mathcal{A}^{n}=\left\{ A_{1}^{n},\ldots,A_{M_{n}}^{n}\right\}\}_{n=1}^\infty $
satisfies:
\begin{itemize}
\item $\mathcal{A}^{n+1}$ is a refinement of $\mathcal{A}^{n}$.
\item $\mu\left(A_{i}^{n}\right)=\nu\left(B_{i}^{n}\right)$ for any $n\in\mathbb{N}$
and $i=1,\ldots,M_{n}$.
\item $B_{i}^{n+1}\subset B_{j}^{n}\Leftrightarrow A_{i}^{n+1}\subset A_{j}^{n}$
for any $n\in\mathbb{N}$, $j=1,\ldots,M_{n}$ and $i=1,\ldots,M_{n+1}$
\end{itemize}
Therefore $\mu$ sequentially dominates $\nu$.
\end{proof}

\begin{cor}

\label{thm:ExTransMap strong}
Assume
\begin{itemize}
    \item $\mu\in\mm(X,S),\ \nu\in\mm(Y,S)$ satisfy Assumption \ref{Ass:Lyap}.
    \item $\dm\in C(X,(S,w(\f))),\ \dn\in C(Y,(S,w(\f)))$ norm bounded and satisfies \ref{ass:Vec}.
    \item $Y$ is complete.
\end{itemize}
If $\mu$ strongly dominates then $\nu$ then $\mathcal{T}(\mu,\nu)\neq\emptyset$.

\end{cor}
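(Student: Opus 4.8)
The plan is to obtain this corollary as an immediate composition of the two preceding results, using \emph{sequential domination} as the bridge between strong domination and the existence of a transport map. The chain I would set up is
\[
\mu\ \text{strongly dominates}\ \nu \ \Longrightarrow\ \mu\ \text{sequentially dominates}\ \nu \ \Longrightarrow\ \mathcal{T}(\mu,\nu)\neq\emptyset,
\]
where the first implication is exactly Claim \ref{claim:strongly implies sequentially} and the second is the forward direction of Theorem \ref{thm:ExTransMap generalcase}.

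Concretely, the first step is to check that the standing hypotheses of the present corollary match the hypotheses of Claim \ref{claim:strongly implies sequentially}: both $\mu$ and $\nu$ satisfy Assumption \ref{Ass:Lyap}, the densities $\dm\in C(X,(S,w(\f)))$ and $\dn\in C(Y,(S,w(\f)))$ are norm bounded and satisfy Assumption \ref{ass:Vec}, and by assumption $\mu$ strongly dominates $\nu$. These align verbatim, so Claim \ref{claim:strongly implies sequentially} applies and yields that $\mu$ sequentially dominates $\nu$. The second step is to feed this into Theorem \ref{thm:ExTransMap generalcase}: its hypotheses require $\mu,\nu$ to satisfy Assumption \ref{Ass:Lyap}, $\dm$ to satisfy Assumption \ref{ass:Vec}, and $Y$ to be complete — all of which are assumed here. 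Its statement is the equivalence ``$\mu$ sequentially dominates $\nu$ iff $\mathcal{T}(\mu,\nu)\neq\emptyset$,'' so the direction I need gives $\mathcal{T}(\mu,\nu)\neq\emptyset$, completing the argument.

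I do not expect a genuine obstacle at the level of this corollary, since all the substantive work has already been carried out in the two results being invoked: the construction of the compatible refining partitions of $X$ (inside Claim \ref{claim:strongly implies sequentially}, via Corollary \ref{cor:dominance partitions}) and the passage from a refining family of partitions to an actual measurable map $T$ as a pointwise Cauchy limit $T=\lim_n T_n$ (inside Theorem \ref{thm:ExTransMap generalcase}). The only points I would be careful to state explicitly are that the completeness of $Y$ is used solely through Theorem \ref{thm:ExTransMap generalcase} (it is what guarantees the sequences $\{T_n x\}$ converge), and that the norm-boundedness and $w(\f)$-continuity of the densities are exactly what let both invoked results control the weak convergence of the approximating discrete measures. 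Thus the proof reduces to a one-line citation of the two statements once the hypotheses are seen to coincide.
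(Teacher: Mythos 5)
Your proof is correct and matches the paper's own argument exactly: the paper's proof is the one-line citation ``Directly by Theorem \ref{thm:ExTransMap generalcase} and Claim \ref{claim:strongly implies sequentially},'' which is precisely your chain from strong domination through sequential domination to $\mathcal{T}(\mu,\nu)\neq\emptyset$. Your additional remarks on where completeness of $Y$ and the density hypotheses enter are accurate and consistent with how those assumptions are used in the two cited results.
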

\begin{proof}
Directly by Theorem \ref{thm:ExTransMap generalcase} and Claim \ref{claim:strongly implies sequentially}.
\end{proof}
By the above corollary we get the following corollary that generalizes the existence of transport maps in the scalar case:

\begin{cor}
Let $X,Y$ be two compact metric spaces, $Y$ complete, $\left(\mu_1,\mu_2\right)\in\mathcal{P}(X)^2$ be a pair of non-atomic measures with a continuous Radon-Nikodym derivative wrt to some $|\mu|\in\mm_+(X)$ and $\nu\in\mathcal{P}(Y)$. There exists $T:X\rightarrow Y$ such that $T_\#\mu_i=\nu$ for $i=1,2$.
\end{cor}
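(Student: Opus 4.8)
The plan is to realize the problem as an instance of vector optimal transport with $S=\f=S^{*}=\r^{2}$, taking the source to be the $\r^{2}$-valued measure $(\mu_{1},\mu_{2})\in\mm(X,\r^{2})$ and the target to be $(\nu,\nu)\in\mm(Y,\r^{2})$. A transport map $T\in\mathcal{T}\bigl((\mu_{1},\mu_{2}),(\nu,\nu)\bigr)$ is by definition a measurable $T:X\to Y$ with $T_{\#}(\mu_{1},\mu_{2})=(\nu,\nu)$, i.e.\ $\bigl(T_{\#}\mu_{1},T_{\#}\mu_{2}\bigr)=(\nu,\nu)$, which reads componentwise as the desired conclusion $T_{\#}\mu_{1}=T_{\#}\mu_{2}=\nu$. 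So it suffices to prove $\mathcal{T}\bigl((\mu_{1},\mu_{2}),(\nu,\nu)\bigr)\neq\emptyset$, and the natural tool is the strong-domination existence result (Corollary \ref{thm:ExTransMap strong}).

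First I would check the structural hypotheses for the source. Since $\mu_{1},\mu_{2}$ are non-atomic, the scalar measure $\mu_{1}+\mu_{2}$ is non-atomic, and hence $(\mu_{1},\mu_{2})$ is a non-atomic $\r^{2}$-valued measure (if $B\subset A$ gave $0<(\mu_{1}+\mu_{2})(B)<(\mu_{1}+\mu_{2})(A)$ then $(\mu_{1},\mu_{2})(B)\notin\{0,(\mu_{1},\mu_{2})(A)\}$); by Lyapunov's convexity theorem it has convex range, so by the remark following Assumption \ref{Ass:Lyap} it satisfies \ref{Ass:Lyap}, as does every restriction $(\mu_{1},\mu_{2})|_{A}$, which is again non-atomic. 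Taking $|\mu|:=\mu_{1}+\mu_{2}$, the density $\dm=(\eta_{1},\eta_{2})$ satisfies $\eta_{1}+\eta_{2}\equiv1$, so Assumption \ref{ass:Vec} holds with the constant $f=(1,1)$ by Remark \ref{rem:Exb1}; the continuity hypothesis on the Radon-Nikodym derivatives gives $\dm\in C(X,\r^{2})$, which is automatically norm bounded on the compact $X$ and $w(\f)$-continuous because $\f$ is finite-dimensional. The analogous facts for the target density of $(\nu,\nu)$ (a constant unit vector) are immediate. Finally, since $\nu(Y)=\mu_{1}(X)=\mu_{2}(X)=1$, Example \ref{ex:strongDom} shows directly that $(\mu_{1},\mu_{2})$ strongly dominates $(\nu,\nu)$.

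The point needing care, and which I expect to be the main obstacle, is that $(\nu,\nu)$ need not itself satisfy Assumption \ref{Ass:Lyap}: if $\nu$ is atomic then the range of $(\nu,\nu)$ lies on the diagonal and admits no interior splitting, so Corollary \ref{thm:ExTransMap strong} cannot be quoted verbatim. I would instead trace its proof to confirm that the target's Lyapunov property is never actually used in the implication ``strongly dominates $\Rightarrow\mathcal{T}\neq\emptyset$''. Concretely, the passage from strong to sequential domination (Claim \ref{claim:strongly implies sequentially}) invokes Corollary \ref{cor:dominance partitions} only in the direction ``$(\mu_{1},\mu_{2})|_{A}\succ(\nu,\nu)|_{B}$, hence a matching partition of $A$ exists''; unwinding that direction through Theorem \ref{thm:order} and Corollary \ref{cor:Relation discrete Partitions} shows it rests only on the Lyapunov property of the (restricted) source together with the elementary fact that a partition of $Y$ induces a pushforward kernel (Claim \ref{claim:map induces kernel}), for which no hypothesis on $\nu$ is required. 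Likewise the forward half of Theorem \ref{thm:ExTransMap generalcase} (sequential domination $\Rightarrow$ existence) builds $T$ as the pointwise limit of the maps $T_{n}$ adapted to the refining partitions, using only completeness of $Y$ and the source data, not any Lyapunov property of $(\nu,\nu)$. Assembling these, $(\mu_{1},\mu_{2})$ sequentially dominates $(\nu,\nu)$ and a transport map $T$ exists; since $T_{\#}(\mu_{1},\mu_{2})=(\nu,\nu)$ is exactly $T_{\#}\mu_{1}=T_{\#}\mu_{2}=\nu$, this completes the argument.
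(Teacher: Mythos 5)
Your proposal follows exactly the route of the paper's own proof, which is a two-line argument: by Example \ref{ex:strongDom}, $(\mu_1,\mu_2)$ strongly dominates $(\nu,\nu)$; non-atomicity plus Lyapunov's convexity theorem gives Assumption \ref{Ass:Lyap}; apply Corollary \ref{thm:ExTransMap strong}. What you add, and it is a genuine improvement, is the observation that Corollary \ref{thm:ExTransMap strong} as stated also demands that the \emph{target} $(\nu,\nu)$ satisfy Assumption \ref{Ass:Lyap} (and have a norm-bounded continuous density satisfying \ref{ass:Vec}) --- hypotheses that fail whenever $\nu$ has atoms, which the corollary's statement permits. The paper's proof cites Lyapunov only for the source and applies Corollary \ref{thm:ExTransMap strong} without comment on the target, so it leaves precisely the gap you identify. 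Your dependency trace closing that gap is accurate: in Claim \ref{claim:strongly implies sequentially} the only direction of Corollary \ref{cor:dominance partitions} used is ``$\mu|_{A}\succ\nu|_{B}$ yields a matching partition of $A$'', which runs through Claim \ref{claim:map induces kernel} (pushing $\nu|_B$ to a discrete $p$ via the partition map), transitivity, and Corollary \ref{cor:Relation discrete Partitions} applied to the restricted \emph{source}; no Lyapunov or continuity hypothesis on the target enters. Likewise, the forward half of Theorem \ref{thm:ExTransMap generalcase} uses only completeness of $Y$, the discrete-approximation argument of Claim \ref{claim:(Discrete-aproximation)} applied to $p_n=\sum_i\nu(B_i^n)\delta_{y_i^n}$ (needing only $(\nu,\nu)\in\mm(Y,S)$, which holds with constant density $(1,1)$), and dominated convergence driven by the source density. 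So your patched argument is correct and is strictly more careful than the paper's.

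One shared technicality to flag: you set $|\mu|:=\mu_1+\mu_2$, but the hypothesis gives continuity of the densities wrt some \emph{other} $|\mu|\in\mm_+(X)$, and the renormalized density $\eta_i/(\eta_1+\eta_2)$ need not extend continuously across zeros of $\eta_1+\eta_2$, so ``$\dm\in C(X,\r^2)$'' for your choice of $|\mu|$ is not automatic. The paper's proof (via Example \ref{ex:strongDom}, which also renormalizes by $\mu_1+\mu_2$) has the identical wrinkle, as does the application of Corollary \ref{cor:Relation discrete Partitions} to restrictions $\mu|_{A}$ on merely measurable $A$ when the framework is stated for compact spaces; neither is a defect of your argument relative to the paper, but a fully rigorous version would assume, e.g., $\eta_1+\eta_2>0$ on $X$ or state the continuity hypothesis wrt $\mu_1+\mu_2$ directly.
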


\begin{proof}
By Example \ref{ex:strongDom}, $\mu$ strongly dominates $(\nu,\nu)$, and any finite-dimensional non-atomic measure satisfy \ref{Ass:Lyap} (By Lyapunov's convexity theorem) so by Corollary \ref{thm:ExTransMap strong} we are done.
\end{proof}

\begin{rem}
In the corollary above with $\mu_1=\mu_2:=\mu$, we get the classical result about the existence of a transport map sending $\mu$ to $\nu$.
\end{rem}

\subsection{The multi-range of a vector measure}
The range of $\mu\in\mm(X,S)$\index{range of a vector measure} is defined by
$$\mathcal{R}(\mu):=\{\mu(A)\in S;\ A\subset X\}$$
As we already mentioned, Lyapunov's convexity theorem \index{Lyapunov's convexity theorem} (Theorem 13.33 in \cite{Aliprantis2006Hitch}), which in \cite{Diestel1977} is referred by the authors as "One of the most beautiful and best-loved theorems of the theory of vector measures", states that the range of a finite-dimensional non-atomic vector measure is a compact and convex set.

The multi-range of a vector measure\index{multi-range of a vector measure} is:
$$\mathcal{R}_n(\mu):=\{(\mu(A_1),\ldots,\mu(A_n))\in S^n;\ \{A_i\}_{i=1}^n\ is\ a\ partition\ of\ X\}$$

A direct result of Corollary \ref{cor:Relation discrete Partitions} is that under Assumptions \ref{ass:Vec} and \ref{Ass:Lyap} we have
\begin{multline}
\label{eq:lyap}
\mathcal{R}_n(\mu)
=
\{(s_1,\ldots,s_n)\in S^n=\mm(\{1,\ldots,n\},S);\ \mu\succ (s_1,\ldots,s_n)\}
=\\
\left\{\left(\int_Xg_1\d\mu,\ldots,\int_Xg_n\d\mu\right)\in S^n;\ 0\leq g_i\in L^{\infty}(|\mu|)^n\ \forall i,\ \sum_{i=1}^ng_i\equiv1\right\}
\end{multline}
and
$$\mu\succ_n\nu\iff\mathcal{R}_n(\mu)\supset\mathcal{R}_n(\nu)$$

A known result that generalizes Lyapunov's theorem is that the multi-range of a finite-dimensional non-atomic vector measure is convex and compact (Theorems 1 and 3 in \cite{dvoretzky1951relations}). This is indeed a generalization since $\mathcal{R}(\mu)$ is the projection of $\mathcal{R}_n(\mu)$ to one of the components.

\bigskip

\fbox{
\parbox{15cm}{
\begin{example}
Let $X=[0,1],\ Y=\{0,1\},\ S=\r^2$ and $\mu\in\mm(X,S)$ with $|\mu|$ is the Lebesgue measure on $[0,1]$ and $\frac{\d\mu}{\d|\mu|}=(1,2x)$. In Example \ref{ex:semidiscretedominance} we showed that
$$\mu\succ\nu=((a,1-a),(b,1-b))\iff a\in[0,1],\ b\in[a^2,2a-a^2]$$
The vector measure $\nu$ represented as an element in $S^2$ is 
$$(\nu(0),\nu(1))=((a,b),(1-a,1-b))$$
 So by (\ref{eq:lyap})
 $$\mathcal{R}_2(\mu):=\{((a,b),(1-a,1-b));\ a\in[0,1],\ b\in[a^2,2a-a^2]\}$$
 $$\mathcal{R}(\mu):=\{(a,b);\ a\in[0,1],\ b\in[a^2,2a-a^2]\}$$
\end{example}
}
}

\bigskip

 Interestingly, Lyapunov's theorem fails in the infinite-dimensional case, as shown in \cite{Diestel1977} Chapter IX.1: Examples 1 and 2 in that chapter construct two non-atomic vector measures with values in $L^1[0,1]$ and $\ell^2$ (respectively) that fail to satisfy Assumption \ref{Ass:Lyap} with $\alpha=\frac{1}{2}$ and therefore do not have a convex range. Moreover, in Example 1, the range is non-compact as well.

 Using Corollary \ref{cor:Relation discrete Partitions} we may generalize the results about multi-ranges (and ranges) of infinite-dimensional vector measures:

\begin{thm}
\label{thm:LyapInf}
Let $n\in\mathbb{N}$ and $\mu\in\mm(X,S)$ satisfying \ref{ass:Vec} and \ref{Ass:Lyap}. Then $\mathcal{R}_n(\mu)$ is convex and weak-compact.
\end{thm}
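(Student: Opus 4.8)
The plan is to realize $\mathcal{R}_n(\mu)$ as the image of a weak*-compact convex set under a weak*-to-weak continuous linear map, so that compactness follows from ``continuous image of compact is compact'' and convexity from ``linear image of convex is convex.'' Here the target topology is the weak topology on $S^n=\mm(\{1,\ldots,n\},S)$, which (since $C(\{1,\ldots,n\},\f)\cong\f^n$) is exactly the product of the topologies $w(\f)$ on the factors. The starting point is the representation (\ref{eq:lyap}), valid under Assumptions \ref{ass:Vec} and \ref{Ass:Lyap}, which gives
$$\mathcal{R}_n(\mu)=\left\{\left(\int_X g_1\d\mu,\ldots,\int_X g_n\d\mu\right);\ (g_1,\ldots,g_n)\in G\right\},\quad G:=\left\{(g_i)_{i=1}^n\in L^\infty(|\mu|)^n;\ g_i\geq 0,\ \sum_{i=1}^n g_i\equiv 1\right\}.$$
I would denote by $\Phi:L^\infty(|\mu|)^n\to S^n$ the linear map $\Phi(g_1,\ldots,g_n):=(\int_X g_i\d\mu)_{i=1}^n$, noting that each $\int_X g_i\d\mu=\int_X g_i\dm\d|\mu|$ is a well-defined element of $S$ because $g_i$ is bounded and $\dm$ is Bochner integrable, so that $\mathcal{R}_n(\mu)=\Phi(G)$.

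First I would establish that $G$ is weak*-compact in $L^\infty(|\mu|)^n$. Since $|\mu|$ is finite, hence $\sigma$-finite, we have $L^\infty(|\mu|)=L^1(|\mu|)^*$ and may equip $L^\infty(|\mu|)^n$ with the product weak* topology. The constraints $g_i\geq0$ and $\sum_i g_i\equiv1$ are each weak*-closed, being an intersection of closed half-spaces, respectively a closed affine subspace, cut out by testing against elements $h\in L^1(|\mu|)$; and any $g\in G$ satisfies $0\leq g_i\leq1$, so $G$ lies in the product of unit balls of $L^\infty(|\mu|)$. By the Banach-Alaoglu theorem that product is weak*-compact, and $G$, being a weak*-closed subset of it, is weak*-compact. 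The set $G$ is also clearly convex.

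Next I would show $\Phi$ is continuous from the product weak* topology to the weak topology on $S^n$, i.e. the topology generated by the functionals $(s_i)_{i=1}^n\mapsto\langle s^*,s_j\rangle$ for $s^*\in\f$ and $j=1,\ldots,n$. For fixed $s^*$ and $j$, part 6 of Proposition \ref{prop:Bochner} yields $\langle s^*,\int_X g_j\d\mu\rangle=\int_X g_j\langle s^*,\dm\rangle\d|\mu|$, and the function $\langle s^*,\dm\rangle$ lies in $L^1(|\mu|)$ since $\int_X\|\dm\|\d|\mu|=V(\mu)(X)<\infty$. Hence $g\mapsto\langle s^*,\int_X g_j\d\mu\rangle$ is precisely the evaluation of $g_j\in L^\infty(|\mu|)=L^1(|\mu|)^*$ against a fixed $L^1$-element, which is weak*-continuous; this gives the required continuity of $\Phi$.

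Finally, $\mathcal{R}_n(\mu)=\Phi(G)$ is the continuous image of a compact set inside the Hausdorff space $S^n$ (Hausdorff because $\f$ separates the points of $S$), hence weak-compact; and it is the image of a convex set under a linear map, hence convex, which may alternatively be quoted from Proposition \ref{prop:Balyage Convex} applied with $Y=\{1,\ldots,n\}$. The main point to handle carefully is the weak*-to-weak continuity of $\Phi$, namely the integrability of $\langle s^*,\dm\rangle$ and the identification $L^\infty(|\mu|)=L^1(|\mu|)^*$, together with checking that the constraints defining $G$ are genuinely weak*-closed; everything else is formal.
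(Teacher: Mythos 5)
Your proof is correct and follows essentially the same route as the paper's: the paper likewise writes $\mathcal{R}_n(\mu)$ as the image $TW$ of the same weak*-compact set $W$ (compact by Banach--Alaoglu) under the same linear map, cites part 6 of Proposition \ref{prop:Bochner} for weak*-to-weak continuity, and gets convexity from (\ref{eq:lyap}) together with Proposition \ref{prop:Balyage Convex}. The only difference is that you spell out details the paper leaves implicit (weak*-closedness of the constraints, $\langle s^*,\frac{\d\mu}{\d|\mu|}\rangle\in L^1(|\mu|)$ via $V(\mu)(X)<\infty$), which is a welcome sharpening rather than a deviation.
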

\begin{proof}
Let  $\mu\in\mm(X,S)$. By (\ref{eq:lyap}) and Proposition \ref{prop:Balyage Convex} $\mathcal{R}_n(\mu)$ is convex, we will show it is weak-compact: Define
$$
W=\left\{(g_1,\ldots,g_n)\in L^{\infty}(|\mu|)^n;\ g_i\geq0\forall i,\ \sum_{i=1}^ng_i\equiv1\right\}\subset (L^1(|\mu|)^n)^*
$$
$W$ is norm-bounded and weak*-closed so by the Banach-Alaoglu\index{Banach-Alaoglu theorem} theorem (Theorem 6.21 in \cite{Aliprantis2006Hitch}) it is weak*-compact.
Define $T:L^{\infty}(|\mu|)^n\rightarrow S^n$ by
$$
T(g_1,\ldots,g_n)=\left(\int_Xg_1\d\mu,\ldots,\int_Xg_n\d\mu\right)
$$
$T$ is continuous when $L^{\infty}(|\mu|)^n$ is equipped with the weak* topology generated by $L^1(|\mu|)^n$ and $S^n$ is equipped with weak topology (by part 6 of Proposition \ref{prop:Bochner}) therefore $\mathcal{R}_n(\mu)=TW$ is weak-compact.

\end{proof}

For similar results in the case $n=1$ see Corollary 7 and Theorem 10 in \cite{Diestel1977} Chapter IX.1.

\begin{rem}
In the norm topology, the multi-range of $\mu\in\mm(X,S)$ is relatively-compact whenever $\dm$ is norm bounded (even without Assumption \ref{ass:Vec} and \ref{Ass:Lyap}) since the map $T$ defined above is compact  (see Theorem 2 in \cite{Diestel1977} Chapter III.2) and therefore it sends the bounded set $$\left\{(\chi_{A_1},\ldots,\chi_{A_n});\ \{A_i\}_{i=1}^n\ is\ a\ partition\ of\ X\right\}\subset L^{\infty}(|\mu|)^n$$ to a relatively compact set which is exactly the multi-range of $\mu$.
\end{rem}

\newpage{}

\section{Applications to various transport problems}

In this chapter, I will demonstrate how the abstract duality theorem (Theorem
\ref{Thm:AbsDuality}) can be applied to other optimal transport problems.
Along this chapter we assume that $\left(X,\mu\right),\left(Y,\nu\right)$
are two compact metric scalar measure spaces and $c\in C\left(X\times Y\right)$
is a cost function.

\subsection{Optimal  partial transport}

When $\mu\left(X\right)\neq\nu\left(Y\right)$, surely one cannot transport
$\mu$ to $\nu$, but it is possible to transport part of $\mu$ to
$\nu$ (if $\mu(X)>\nu(Y)$) or $\mu$ to part of $\nu$ (if $\mu(X)<\nu(Y)$). More generally, in the "optimal partial transport" problem, which is discussed in \cite{Caf2010} and \cite{Figalli2010}, we wish to transport a total mass of $m$ from $\m$ to $\n$. Formally: 
given $\mu\in\mm_+\left(X\right),\nu\in\mm_+\left(Y\right)$
and $m\in\left[0,\min\{\mu\left(X\right),\nu\left(Y\right)\}\right]$,
minimize the total cost $\int_{X\x Y} c\d\pi$ over all $\pi\in\mm_+(X\x Y)$ satisfying
\[
\int_{X\x Y}\psi\left(x\right)+\varphi\left(y\right)+\lambda\d\pi\left(x,y\right)\leq\int_{X}\psi\text{\ensuremath{\left(x\right)}d}\mu\left(x\right)+\int_{Y}\varphi\left(y\right)\d\nu\left(y\right)+\lambda m
\]
for all $\left(\psi,\varphi,\lambda\right)\in C\left(X,\r_{+}\right)\x C\left(Y,\r_{+}\right)\x\r$. We denote this set of measures by $\Pi_m(\mu,\nu)$.
By the abstract duality theorem (Theorem \ref{Thm:AbsDuality}), the dual problem is
\[
\sup\left\{ \int_{X}\psi\text{\ensuremath{\left(x\right)}d}\mu\left(x\right)+\int_{Y}\varphi\left(y\right)\d\nu\left(y\right)+\lambda m;\ \left(\psi,\varphi,\lambda\right)\in C\left(X,\r_{+}\right)\x C\left(Y,\r_{+}\right)\x\r,\ \psi\left(x\right)+\varphi\left(y\right)+\lambda\leq c\left(x,y\right)\right\} 
\]
This result is achieved  by taking
\begin{itemize}
\item $V=C\left(X\times Y\right)$
\item $K=C\left(X,\r_{+}\right)\times C\left(Y,\r_{+}\right)\times\r$
\item $u=c\left(x,y\right)$
\item $\q=\left\{ f\in V;\ f\left(x,y\right)\geq0 \ \forall x,y\right\} $
\item $h\left(\psi,\varphi,\lambda\right)=\psi\left(x\right)+\varphi\left(y\right)+\lambda$
\item $k^{*}\left(\psi,\varphi,\lambda\right)=\int_{X}\psi\text{\ensuremath{\left(x\right)}d}\mu\left(x\right)+\int_{Y}\varphi\left(y\right)\d\nu\left(y\right)+\lambda m$
\item Assumption \ref{assu:Sublinear} holds with $s(v):=(\|v\|_{\infty},0,0)$.
\end{itemize}

A particular case: If $m=\mu(X)\leq\nu(Y)$, we transport the whole measure $\mu$ to a part of a measure $\nu$ with greater or equal mass. The constraint on $\pi$ in this case is equivalent to
\[
\int_{X\x Y}\psi\left(x\right)+\varphi\left(y\right)\d\pi\left(x,y\right)\leq\int_{X}\psi\text{\ensuremath{\left(x\right)}d}\mu\left(x\right)+\int_{Y}\varphi\left(y\right)\d\nu\left(y\right)
\]
for all $\left(\psi,\varphi\right)\in C\left(X\right)\x C\left(Y,\r_{+}\right)$. Namely $\pi$ has first marginal $\mu$ and second marginal smaller than $\nu$. In this case the problem is actually a minimum of classical transport problems
\[
\min\left\{\min_{\pi\in\Pi(\mu,\lambda)}\int_{X\x Y} c\d\pi;\ \lambda\in\mm_+(Y),\ \int_Y\varphi\d\lambda\leq\int_Y\varphi\d\n\ \forall\varphi\in C(Y,\r_+),\ \lambda(Y)=\mu(X)\right\}
\]

\subsection{Optimal transport with capacity constraints}

When trying to transport one measure to another, one can apply a constraint
on the capacity of the transport and demand that the transport plan
must be smaller than a given measure $\overline{\pi}$. This kind
of problem is discussed in \cite{Rachev2006} and \cite{Mccann}.
 The set of transport plans in this problem, that we denote by $\Pi\left(\mu,\nu,\overline{\pi}\right)$\index{pimnpi@$\Pi\left(\mu,\nu,\overline{\pi}\right)$},
is the set of measures $\pi\in\Pi(\m,\n)$
that are smaller than $\overline{\pi}$: For every $\psi\in C\left(X\right),\varphi\in C\left(Y\right),\xi\in C\left(X\times Y,\mathbb{R}_{+}\right)$
\[
\int_{X}\psi\left(x\right)\d\mu\left(x\right)+\int_{Y}\varphi\left(y\right)\d\nu\left(y\right)+\int_{X\times Y}\xi\left(x,y\right)\d\overline{\pi}\left(x,y\right)\geq\int_{X\times Y}\psi\left(x\right)+\varphi\left(y\right)+\xi\left(x,y\right)\d\pi\left(x,y\right)
\]

By the abstract duality theorem (Theorem \ref{Thm:AbsDuality}), the dual problem to $\max_{\pi\in\Pi(\mu,\nu,\overline{\pi})}\int_{X\x Y}c\d\pi$ is:

\begin{equation}
    \label{eq:CapDuality}
\inf_{\left(\psi,\varphi,\xi\right)\in\Phi\left(c\right)}\int_{X}\psi\left(x\right)\d\mu\left(x\right)+\int_{Y}\varphi\left(y\right)\d\nu\left(y\right)+\int_{X\times Y}\xi\left(x,y\right)\d\overline{\pi}\left(x,y\right)
\end{equation}
where 
\[
\Phi\left(c\right):=\left\{ \left(\psi,\varphi,\xi\right)\in C\left(X\right)\times C\left(Y\right)\times C\left(X\times Y,\mathbb{R}_{+}\right):\psi\left(x\right)+\varphi\left(y\right)+\xi\left(x,y\right)\geq c\left(x,y\right)\forall x,y\right\} 
\]
or equivalently
$$
\inf_{\left(\psi,\varphi\right)\in C\left(X\right)\times C\left(Y\right)}\int_{X}\psi\left(x\right)\d\mu\left(x\right)+\int_{Y}\varphi\left(y\right)\d\nu\left(y\right)+\int_{X\times Y}\left[c\left(x,y\right)-\psi\left(x\right)-\varphi\left(y\right)\right]_{+}\d\overline{\pi}\left(x,y\right)
$$

This result is achieved by  by taking:

\begin{itemize}
\item $V=C\left(X\times Y\right)$
\item $K=C\left(X\right)\times C\left(Y\right)\times C\left(X\times Y,\mathbb{R}_{+}\right)$
\item $u=c\left(x,y\right)$
\item $\q=\left\{ f\in V;\ f\left(x,y\right)\geq0\ \forall x,y\right\} $
\item $h\left(\psi,\varphi,\xi\right)=\psi+\varphi+\xi$
\item $k^{*}\left(\psi,\varphi,\xi\right)=\int_{X}\psi\left(x\right)\d\mu\left(x\right)+\int_{Y}\varphi\left(y\right)\d\nu\left(y\right)+\int_{X\times Y}\xi\left(x,y\right)\d\overline{\pi}\left(x,y\right)$
\item Assumption \ref{assu:Sublinear} holds with $s(v):=(\|v\|_{\infty},0,0)$.
\end{itemize}

A known sufficient and necessary condition for $\Pi\left(\mu,\nu,\overline{\pi}\right)\neq\emptyset$
can also be obtained by Lemma \ref{lem:EqCon} (By \cite{Mccann},
this condition is due to H.G Kellerer):
\begin{claim}
$\Pi\left(\mu,\nu,\overline{\pi}\right)\neq\emptyset$ iff for every
$\left(\psi,\varphi\right)\in C\left(X\right)\times C\left(Y\right)$
\[
\int_{X\times Y}\left[\psi\left(x\right)+\varphi\left(y\right)\right]_{+}\d\overline{\pi}\left(x,y\right)-\int_{X}\psi\left(x\right)\d\mu\left(x\right)-\int_{Y}\varphi\left(y\right)\d\nu\left(y\right)\geq0
\]
\end{claim}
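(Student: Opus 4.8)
The plan is to read this claim off the abstract machinery directly, by using the identification of $\Pi(\mu,\nu,\overline{\pi})$ with $\Pi(k^*,h)$ for the data $V,K,h,k^*,\q$ listed just above, for which Assumption \ref{assu:Sublinear} was already verified via $s(v)=(\|v\|_\infty,0,0)$. By the equivalence of items $4$ and $5$ in Lemma \ref{lem:EqCon}, $\Pi(\mu,\nu,\overline{\pi})=\Pi(k^*,h)\neq\emptyset$ holds precisely when $k^*$ is $h$-positive. Thus the whole task reduces to showing that $h$-positivity of $k^*$ is equivalent to the stated integral inequality.

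Unwinding the definitions, $k^*$ is $h$-positive exactly when, for every triple $(\psi,\varphi,\xi)\in C(X)\times C(Y)\times C(X\times Y,\r_+)$ satisfying $\psi(x)+\varphi(y)+\xi(x,y)\geq0$ for all $x,y$, we have
\[
\int_X\psi\,\d\mu+\int_Y\varphi\,\d\nu+\int_{X\times Y}\xi\,\d\overline{\pi}\geq0.
\]
The key step is to eliminate the auxiliary variable $\xi$. For a fixed pair $(\psi,\varphi)$, the two constraints $\xi\geq0$ and $\psi+\varphi+\xi\geq0$ are together equivalent to the pointwise bound $\xi\geq[-(\psi+\varphi)]_+$; since $\overline{\pi}$ is a non-negative measure, the term $\int\xi\,\d\overline{\pi}$ is minimized among feasible $\xi$ by the choice $\xi_0:=[-(\psi+\varphi)]_+$, which lies in $C(X\times Y,\r_+)$ because $[\cdot]_+$ preserves continuity. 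Hence $h$-positivity is equivalent to
\[
\int_X\psi\,\d\mu+\int_Y\varphi\,\d\nu+\int_{X\times Y}[-(\psi+\varphi)]_+\,\d\overline{\pi}\geq0\quad\text{for all }(\psi,\varphi),
\]
and replacing $(\psi,\varphi)$ by $(-\psi,-\varphi)$ — harmless, since the pair ranges over all of $C(X)\times C(Y)$ — turns this into exactly the inequality in the claim.

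Spelling out the two implications then requires only routine bookkeeping. For the forward direction I would feed the feasible triple $(-\psi,-\varphi,[\psi+\varphi]_+)$ into $h$-positivity, noting that $-\psi-\varphi+[\psi+\varphi]_+=[-(\psi+\varphi)]_+\geq0$, which yields the claimed inequality at once. For the converse I would take an arbitrary feasible $(\psi,\varphi,\xi)$, apply the claim's inequality to $(-\psi,-\varphi)$, and combine it with $\int\xi\,\d\overline{\pi}\geq\int[-(\psi+\varphi)]_+\,\d\overline{\pi}$, which follows from $\xi\geq[-(\psi+\varphi)]_+$ and $\overline{\pi}\geq0$. There is no genuine analytic obstacle here: all the real force sits in Lemma \ref{lem:EqCon}, and the single point deserving a word of care is that $[-(\psi+\varphi)]_+$ is an admissible continuous, non-negative competitor for $\xi$, so that the infimum over $\xi$ is actually attained inside $K$ rather than merely approached.
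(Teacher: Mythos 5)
Your proposal is correct and matches the paper's proof essentially step for step: both invoke the equivalence of items 4 and 5 in Lemma \ref{lem:EqCon} for the stated data $V,K,h,k^*,\q$, eliminate $\xi$ by substituting the minimal continuous non-negative competitor $\left[-\psi-\varphi\right]_{+}$ (valid since $\overline{\pi}\geq0$), and conclude via the substitution $\left(\psi,\varphi\right)\leftrightarrow\left(-\psi,-\varphi\right)$. Your explicit bookkeeping of the two implications is merely a more detailed rendering of the paper's one-line justification.
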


\begin{proof}
By Lemma \ref{lem:EqCon}, $\Pi\left(\mu,\nu,\overline{\pi}\right)\neq\emptyset$
iff $k^{*}$ is $h$-positive i.e.
\[
\int_{X}\psi\left(x\right)\d\mu\left(x\right)+\int_{Y}\varphi\left(y\right)\d\nu\left(y\right)+\int_{X\times Y}\xi\left(x,y\right)\d\overline{\pi}\left(x,y\right)\geq0
\]
 for every $\left(\psi,\varphi,\xi\right)\in K$ such that $\psi\left(x\right)+\varphi\left(y\right)+\xi\left(x,y\right)\geq0$.
We can always consider $\xi\left(x,y\right)=\left[-\psi\left(x\right)-\varphi\left(y\right)\right]_{+}$ since
it is the smallest non-negative $\xi$ satisfying $\psi\left(x\right)+\varphi\left(y\right)+\xi\left(x,y\right)\geq0$
given some $\psi,\varphi$, hence this condition is equivalent to

\[
\int_{X}\psi\left(x\right)\d\mu\left(x\right)+\int_{Y}\varphi\left(y\right)\d\nu\left(y\right)+\int_{X\times Y}\left[-\psi\left(x\right)-\varphi\left(y\right)\right]_{+}\d\overline{\pi}\left(x,y\right)\geq0
\]
for every $\left(\psi,\varphi\right)\in C\left(X\right)\x C\left(Y\right)$
and by changing $\left(\psi,\varphi\right)\leftrightarrow\left(-\psi,-\varphi\right)$
we are done.
\end{proof}

\subsection{Invariant Measures}

Given a map $T:Y\rightarrow Y$, a $T$-invariant measure is some
$\nu\in\mathcal{P}\left(Y\right)$ such that $T_{\#}\nu=\nu$. One can
ask what is the $T$-invariant measure \textquotedbl closest\textquotedbl{}
to a given measure $\mu\in\mathcal{P}\left( Y\right)$. A more general problem
(see also \cite{Lopes2012}) is to minimize the total cost $\int_{X\x Y}c\d\pi$ over all measures $\pi$ with marginal $\mu$ on $X$ and a $T$-invariant marginal on $Y$: This is the set $\Pi(\mu,T)$ consists of all $\pi\in\mathcal{P}(X\x Y)$ satisfying
\[
\begin{array}{cccc}
\int_{X\x Y} \psi\left(x\right)\d\pi\left(x,y\right) & = & \int_X \psi\left(x\right)\d\mu\left(x\right) & \ \forall \psi\in C\left(X\right)\\
\int_{X\x Y} \varphi\left(y\right)\d\pi\left(x,y\right) & = & \int_{X\x Y} \varphi\left(Ty\right)\d\pi\left(x,y\right) & \ \forall \varphi\in C\left(Y\right)
\end{array}
\]

When $X=Y$ is a metric space and $c$  is the metric

$$\min_{\Pi(\m,T)}\int_{X\x X}c\d\pi=\min\left\{\min_{\Pi(\mu,\nu)}\int_{X\x X}c\d\pi;\ T_\#\nu=\nu\right\}$$
is the Wasserstein distance \index{Wasserstein metric} between $\m$ and the set of $T$-invariant measures, and the minimizer $\pi\in\Pi(\m,T)$ will have a second marginal which is a $T$-invariant measure closest to $\mu$.

Using the abstract theorem we can find the corresponding dual problem

\begin{thm}
Let $\mu\in\mathcal{P}\left(X\right)$ and a measurable $T:Y\rightarrow Y$.
Define 
\[
\Phi\left(c\right):=\left\{ \psi\in C\left(X\right);\ \exists\varphi\in C(Y), \psi\left(x\right)+\varphi\left(y\right)+\varphi\left(Ty\right)\leq c\left(x,y\right)\ \forall x,y\right\} 
\]
Then 
\[
\min_{\pi\in\Pi\left(\mu,T\right)}\int_{X\times Y}c\d\pi=\sup_{\psi\in\Phi\left(c\right)}\int_{X}\psi\d\mu
\]
\end{thm}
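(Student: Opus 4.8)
The plan is to obtain this identity as yet another instance of the abstract duality theorem (Theorem \ref{Thm:AbsDuality}), in the same spirit as the Kantorovich case of Example \ref{exa:Kant}, and to read the dual problem off its minimization form. Concretely, I would take $V=C\left(X\x Y\right)$ with the supremum norm (so that $V^{*}=\mm\left(X\x Y\right)$), let $\q$ be the cone of non-negative functions in $V$, set $u=c$, and put $K=C\left(X\right)\x C\left(Y\right)$ together with the two linear maps
\[
h\left(\psi,\varphi\right)\left(x,y\right):=\psi\left(x\right)+\varphi\left(y\right)-\varphi\left(Ty\right),\qquad k^{*}\left(\psi,\varphi\right):=\int_{X}\psi\,\d\m .
\]
Both $h$ and $k^{*}$ are linear on the linear space $K$, so in the description of $\Pi\left(k^{*},h\right)$ the domination $v^{*}\circ h\leq k^{*}$ becomes an equality; this is what will turn the constraints into genuine marginal identities.

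First I would verify Assumption \ref{assu:Sublinear} with $s\left(v\right):=\left(\left\|v\right\|_{\infty},0\right)$: since $\left\|v\right\|_{\infty}\geq v\left(x,y\right)$ pointwise we have $h\left(s\left(v\right)\right)=\left\|v\right\|_{\infty}\geq v$, hence $s\left(v\right)\in\Phi\left(v,h\right)$, while $k^{*}\circ s\left(v\right)=\left\|v\right\|_{\infty}\m\left(X\right)=\left\|v\right\|_{\infty}$ is bounded on the unit ball, a neighborhood of $0$. The heart of the argument is then the identification $\Pi\left(k^{*},h\right)=\Pi\left(\m,T\right)$: a functional $v^{*}\in V^{*}$ is represented by a measure $\pi$, the condition $v^{*}\geq0$ on $\q$ forces $\pi\in\mm_{+}\left(X\x Y\right)$, and the linearity-induced equality $v^{*}\circ h=k^{*}$ reads
\[
\int_{X\x Y}\left[\psi\left(x\right)+\varphi\left(y\right)-\varphi\left(Ty\right)\right]\d\pi=\int_{X}\psi\,\d\m\qquad\forall\left(\psi,\varphi\right)\in K .
\]
Setting $\varphi\equiv0$ recovers $\int\psi\,\d\pi=\int\psi\,\d\m$, i.e. the marginal on $X$ is $\m$, while setting $\psi\equiv0$ recovers $\int\varphi\left(y\right)\d\pi=\int\varphi\left(Ty\right)\d\pi$, i.e. the $T$-invariance of the marginal on $Y$. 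Thus $\Pi\left(k^{*},h\right)$ is exactly $\Pi\left(\m,T\right)$, and the difference $\varphi\left(y\right)-\varphi\left(Ty\right)$ (coming from the invariance test) is precisely the combination that will appear in the dual constraint.

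Finally I would invoke the minimization form of Theorem \ref{Thm:AbsDuality} for linear $K,k^{*}$ (the Remark immediately following it), namely $\min_{v^{*}\in\Pi\left(k^{*},h\right)}v^{*}\left(u\right)=\sup_{-k\in\Phi\left(-u,h\right)}k^{*}\left(k\right)$, which also guarantees that the primal minimum is attained. Unwinding $-\left(\psi,\varphi\right)\in\Phi\left(-c,h\right)$ gives $\psi\left(x\right)+\varphi\left(y\right)-\varphi\left(Ty\right)\leq c\left(x,y\right)$, and since $k^{*}\left(\psi,\varphi\right)=\int\psi\,\d\m$ depends on $\psi$ alone, maximizing over admissible pairs reduces to maximizing over those $\psi$ for which some admissible $\varphi$ exists, which is exactly $\Phi\left(c\right)$. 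This yields $\min_{\pi\in\Pi\left(\m,T\right)}\int c\,\d\pi=\sup_{\psi\in\Phi\left(c\right)}\int_{X}\psi\,\d\m$, as claimed.

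I expect the main obstacle to be the bookkeeping around finiteness and attainment rather than the algebra of the substitution. The clean statement as a genuine (finite) minimum requires $\Pi\left(\m,T\right)\neq\emptyset$, equivalently (by Lemma \ref{lem:EqCon}) that $k^{*}$ be $h$-positive, i.e. that $\int\psi\,\d\m\geq0$ whenever $\psi\left(x\right)+\varphi\left(y\right)-\varphi\left(Ty\right)\geq0$ for all $x,y$. This holds as soon as $T$ admits an invariant probability measure $\nu_{0}$ (then $\m\x\nu_{0}\in\Pi\left(\m,T\right)$), which is automatic for continuous $T$ on the compact space $Y$; in the remaining degenerate cases Theorem \ref{Thm:AbsDuality} still furnishes the equality in the extended ($-\infty$) sense.
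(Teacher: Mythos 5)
Your proposal is correct and is, in structure, exactly the paper's own proof: the same substitution into Theorem \ref{Thm:AbsDuality} with $V=C(X\times Y)$ (so $V^{*}=\mm(X\times Y)$ for compact $X,Y$), $\q$ the cone of non-negative functions, $K=C(X)\times C(Y)$, $u=c$, $k^{*}(\psi,\varphi)=\int_{X}\psi\,\d\mu$, and $s(v)=(\|v\|_{\infty},0)$; you additionally write out the identification $\Pi(k^{*},h)=\Pi(\mu,T)$ and the unwinding of the min/sup form, which the paper leaves implicit, and your added remarks on attainment of the primal minimum and on finiteness via $h$-positivity (with Krylov--Bogolyubov giving an invariant measure for continuous $T$ on compact $Y$, hence $\mu\times\nu_{0}\in\Pi(\mu,T)$) are correct and go beyond what the paper records.

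The one substantive divergence is the sign in $h$: you take $h(\psi,\varphi)=\psi+\varphi-\varphi\circ T$, whereas the paper's statement and proof print $\psi+\varphi+\varphi\circ T$. Your sign is the right one. With the printed plus sign, setting $\psi=0$ in the equality $v^{*}\circ h=k^{*}$ forces the $Y$-marginal $\nu$ of $\pi$ to satisfy $\nu+T_{\#}\nu=0$, hence $\nu=0$, so $\Pi(k^{*},h)$ would be empty and could not equal $\Pi(\mu,T)$; moreover, taking a constant $\varphi\equiv-M$ with $\psi(x)=\min_{y}c(x,y)+2M$ satisfies the printed constraint and drives the printed dual supremum to $+\infty$, so the identity as literally stated would fail whenever $\Pi(\mu,T)\neq\emptyset$ and $c$ is bounded. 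Only the minus sign reproduces the invariance condition $\int\varphi(y)\,\d\pi=\int\varphi(Ty)\,\d\pi$ defining $\Pi(\mu,T)$. In effect you have silently corrected a typo in both the statement and the proof; just be aware that your closing claim that your constraint set ``is exactly $\Phi(c)$'' matches the theorem's printed $\Phi(c)$ only after flipping that sign (and, a cosmetic point: in the min/sup formulation the degenerate case reads $+\infty=+\infty$, not $-\infty=-\infty$).
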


\begin{proof}
Use Theorem \ref{Thm:AbsDuality} with:

\begin{itemize}
\item $V=C\left(X\times Y\right)$
\item $K=C\left(X\right)\times C\left(Y\right)$
\item $u=c\left(x,y\right)$
\item $\q=\left\{ f\in V;\ f\left(x,y\right)\geq0\ \forall x,y\right\} $
\item $h\left(\psi,\varphi\right)=\psi+\varphi+\varphi\circ T$
\item $k^{*}\left(\psi,\varphi\right)=\int_{X}\psi\d\mu$
\item Assumption \ref{assu:Sublinear} holds with $s(v):=(\|v\|_{\infty},0)$.
\end{itemize}
\end{proof}

\subsection{Multi-marginal problems}

A known generalization of the optimal transport problem is to find a minimizing measure on the product
space $X=\prod_{i=1}^{n}X_{i}$ with given marginals $\mu_{1},\ldots,\mu_{n}$.
The duality theorem corresponding to this problem is

\begin{multline*}
\min\left\{\int_{X}c\left(x_1,\ldots,x_n\right)\d\pi\left(x_1,\ldots,x_n\right);\ \int_X\psi(x_i)\d\pi(x_1,\ldots,x_n)=\int_{X_i}\psi\d\mu_i\ \forall i\right\}
=\\
\sup\left\{\sum_{i=1}^{n}\int_{X_i}\psi_{i}\left(x_i\right)\d\mu_{i}\left(x_i\right);\ \sum_{i=1}^{n}\psi_{i}(x_i)\leq c(x_1,\ldots,x_n)\right\}
\end{multline*}
The proof of the duality above using Theorem \ref{Thm:AbsDuality} is similar to the proof of the Theorem \ref{thm:Duality} as described in Remark \ref{rem:KantDual} and Example \ref{exa:Kant}: We take
\begin{itemize}
    \item $V=C(X)$
    \item $K=\Pi_{i=1}^nX_i$
    \item $h(\psi_1,\ldots,\psi_n)=\sum_{i=1}^n\psi_i(x_i)$
    \item $k^*(\psi)=\sum_{i=1}^{n}\int_{X_i}\psi_{i}\left(x_i\right)\d\mu_{i}\left(x_i\right)$
\end{itemize}

A different multi-marginal problem for two given measures $\mu\in\mathcal{P}(X\x Y),\nu\in\mathcal{P}(Y\x Z)$ is \index{pigmn@$\Pi_G(\m,\n)$}
\begin{equation}
    \min_{\pi\in\Pi_G(\m,\n)}\int_{X\x Y\x Z}c(x,y,z)\d\pi(x,y,z)
    \label{eq:GlueTra}
\end{equation}
where $\Pi_G(\m,\n)$ is the set of $\pi\in\mathcal{P}(X\x Y\x Z)$ satisfying
$$\int_{X\x Y\x Z}\psi(x,y)+\varphi(y,z)\d\pi(x,y,z)=\int_{X\x Y}\psi(x,y)\d\mu(x,y)+\int_{Y\x Z}\varphi(y,z)\d\nu(y,z)$$ for all $\left(\psi,\varphi\right)\in C(X\x Y)\x C(Y\x Z)$.
An immediate necessary condition for the existence of such $\pi$ is that the marginals of $\mu$ and $\nu$ on $Y$ coincide. This condition is also known to be sufficient due to the "gluing lemma"\index{gluing lemma} (see for example \cite{Villani2016}). We can obtain the same result using Lemma \ref{lem:EqCon}: Take
\begin{itemize}
    \item $K= C(X\x Y)\x C(Y\x Z)$
    \item $k^*(\psi,\varphi)=\int_{X\x Y}\psi(x,y)\d\mu(x,y)+\int_{Y\x Z}\varphi(y,z)\d\nu(y,z)$
    \item $V=C(X\x Y\x Z)$
    \item $h(\psi,\varphi)=\psi+\varphi$
    \item $s(v)=(\|v\|_{\infty},0)$
\end{itemize}
It is not to hard to verify that $k^*$ is $h$-positive iff the marginals on $Y$ coincide: If $\psi(x,y)+\varphi(y,z)\geq0$, then also $\min_{x\in X}\psi(x,y)+\varphi(y,z)\geq0$. If we assume the marginals on $Y$ are equal then
\[
\begin{array}{ccc}
     \int_{X\x Y}\psi(x,y)\d\mu(x,y)+\int_{Y\x Z}\varphi(y,z)\d\nu(y,z) & = \\ \int_{X\x Y}\psi(x,y)-\min_{x\in X}\psi(x,y)\d\mu(x,y)+\int_{Y\x Z}\varphi(y,z)+\min_{x\in X}\psi(x,y)\d\nu(y,z) & \geq & 0
     
\end{array}
\]
Whereas if the marginals differ, there exists some $\psi(y)\in C(Y)$ such that
$\int_{X\x Y}\psi(y)\d\mu(x,y)+\int_{Y\x Z}-\psi(y)\d\nu(y,z)<0$.
By the abstract theorem, the dual problem to (\ref{eq:GlueTra}) is
$$\max\int_{X\x Y}\psi(x,y)\d\mu(x,y)+\int_{Y\x Z}\varphi(y,z)\d\nu(y,z)$$ where the maximum is over all $\left(\psi,\varphi\right)\in C(X\x Y)\x C(Y\x Z)$ satisfying $\psi(x,y)+\varphi(y,z)\leq c(x,y,z)$.

In addition to the constraint on $\pi$ to have given marginals on $X\x Y$ and $Y\x Z$, we may change the problem by fixing the marginal on $X\x Z$ as well, and minimize the total cost over all measures $\pi\in\mathcal{P}(X\x Y\x Z)$ such that
$$\int_{X\x Y\x Z}\psi(x,y)+\varphi(y,z)+\xi(x,z)\d\pi(x,y,z)=\int_{X\x Y}\psi(x,y)\d\mu(x,y)+\int_{Y\x Z}\varphi(y,z)\d\nu(y,z)+\int_{X\x Z}\xi(x,z)\d\lambda(x,z)$$
for all $\left(\psi,\varphi,\xi\right)\in C(X\x Y)\x C(Y\x Z)\x C(X\x Z)$. This problem is more complex: The condition that each pair in $\mu,\nu,\lambda$ has the same marginals on the joint space is no longer sufficient for the existence of such $\pi$, but just necessary, as mentioned in Remark 2.3 in \cite{gladkov2018multistochastic}. As in the previous problem, by Lemma \ref{lem:EqCon} the equivalent condition to the existence of such $\pi$ is that
$$\int_{X\x Y}\psi(x,y)\d\mu(x,y)+\int_{Y\x Z}\varphi(y,z)\d\nu(y,z)+\int_{X\x Z}\xi(x,z)\d\lambda(x,z)\geq0$$ whenever $\psi(x,y)+\varphi(y,z)+\xi(x,z)\geq0\ \forall x,y,z$.

\subsection{Local cost constraints}

Let $X,Y$ be two compact subsets of $\rn$ and $c\left(x,y\right)=\left|x-y\right|$.
Consider the set of transport plans sending $\mu\in\mathcal{P}\left(X\right)$
to $\nu\in\mathcal{P}\left(Y\right)$ with the additional constraint
that $x$ must be sent only to close enough $y$: Given some $D>0$,
$\left(x,y\right)\in\text{supp}\pi\Rightarrow c\left(x,y\right)\leq D$.
Equivalently, For any $0\leq \xi\in C\left(X\x Y\right)$
\[
\int_{X\x Y}\xi\left(c-D\right)\d\pi\leq0
\]

Overall, such transport plan must satisfy for any $\left(\psi,\varphi,\xi\right)\in C\left(X\right)\x C\left(Y\right)\x C_+\left(X\x Y\right)$
\[
\int_{X\x Y}\psi\left(x\right)+\varphi\left(y\right)+\xi\left(x,y\right)\left[c\left(x,y\right)-D\right]\d\pi\left(x,y\right)\leq\int_{X}\psi\d\mu+\int_{Y}\varphi\d\nu
\] 
By Lemma \ref{lem:EqCon},  there exists such a transport plan iff
for any $\left(\psi,\varphi,\xi\right)\in C\left(X\right)\x C\left(Y\right)\x C\left(X\x Y,\r_+\right)$

$$\psi\left(x\right)+\varphi\left(y\right)+\xi\left(x,y\right)\left[c\left(x,y\right)-D\right]\geq0\ \forall x,y\Rightarrow\int_{X}\psi\d\mu+\int_{Y}\varphi\d\nu\geq0$$

\subsection{Strassen theorem} 

In many transport problems, we minimize the total cost over all transport plans with marginals $\mu\in\mathcal{P}\left(X\right)$
and $\nu\in\mathcal{P}\left(Y\right)$ which belong to a given weak*-closed and convex
subset of measures $\Gamma\subset\mathcal{P}\left(X\x Y\right)$. If the elements of $\Gamma$ can be characterized using some linear constraint, the problem can be formulated using this constraint (for example see some of the problems described above).
Using a separation theorem (Theorem 5.79 in \cite{Aliprantis2006Hitch}) on
the weak*-compact convex subset $\{\pi\}$ and the weak*-closed convex subset $\Gamma$, we may conclude that $\pi\in\mathcal{P}\left(X\x Y\right)$
belongs to $\Gamma$ iff $\int f\d\pi\leq\sup_{\gamma\in\Gamma}\int f\d\gamma$
for any $f\in C\left(X\x Y\right)$. Therefore the transport plans in $\Pi(\m,\n)\cap\Gamma$ are
exactly all $\pi\in\mathcal{P}\left(X\x Y\right)$ such that 
\[
\int_{X\x Y}f\left(x,y\right)+\psi\left(x\right)+\varphi\left(y\right)\d\pi\left(x,y\right)\leq\sup_{\gamma\in\Gamma}\int f\d\gamma+\int_{X}\psi\d\mu+\int_{Y}\varphi\d\nu
\]
for all $\left(\psi,\varphi,f\right)\in C\left(X\right)\x C\left(Y\right)\x C\left(X\x Y\right)$.
By  Theorem \ref{Thm:AbsDuality}
\[
\max_{\pi\in\Pi\left(\mu,\nu\right)\cap\Gamma}\int c\d\pi=\min\left\{ \sup_{\gamma\in\Gamma}\int f\d\gamma+\int_{X}\psi\d\mu+\int_{Y}\varphi\d\nu;\ f\left(x,y\right)+\psi\left(x\right)+\varphi\left(y\right)\geq c\left(x,y\right)\right\} 
\]
Of course, $\Gamma$ must not contain any transport plan at all. The
question of the existence of such transport plan is known from a theorem
by Strassen (see Theorem 7 in \cite{strassen1965}) which states the
equivalent condition: $\Pi\left(\mu,\nu\right)\cap\Gamma\neq\emptyset$
iff
\[
\int_{X}\psi\d\mu+\int_{Y}\varphi\d\nu\leq\sup_{\gamma\in\Gamma}\int\left[\psi\left(x\right)+\varphi\left(y\right)\right]\d\gamma\ \forall(\psi,\varphi)\in C(X)\x C(Y)
\]
The same condition can be obtained directly by the $h$-positivity of $k^{*}$
in Lemma \ref{lem:EqCon}.   

\section{Some more applications}
 
In this chapter, we introduce some additional examples of applications of the abstract Theorems \ref{Thm:AbsDuality} and \ref{thm:Convex Abstract}. 

\subsection{Linear and non-Linear programming}

Let $n,k\in\mathbb{N}$,
$A$ a $k\x n$ real matrix, $\q$ a convex cone in $V:=\rn$, $K\subset\r^{k}$ another convex cone, $b\in \r^k,c\in \rn$. Denote the dual cones $\q^*=\{x\in V;\ x^T q\geq0\forall q\in\q\}$ and $K^*=\{z\in \r^k;\ z^T y\geq0\forall y\in K\}$.

\b

\fbox{
\parbox{15cm}{
\begin{example}
$ $
\begin{itemize}
\item If $K=\r^{k}$ then $K^*=\{0\}$.
\item If $K$ is the cone of vectors with non-negative components then $K^*=K$.
\item If $K=\left\{ 0\right\} $ then $K^*=\r^{k}$.
\item If $K=\left\{ \left(x,y,z,0\right)\in\r^{4};\ x\geq0,y\leq0\right\} $
then $K^*=\left\{ \left(x,y,0,z\right)\in\r^{4};\ x\geq0,y\leq0\right\}$.
\end{itemize}
\end{example}
}
}

\bigskip
$A$ can be considered as a linear operator $h:K\rightarrow V$
defined by $h\left(y\right)=y^{T}A$ and $b$ as a linear functional
$k^{*}:\r^{k}\rightarrow\r$ defined by $k^{*}\left(y\right)=y^{T}b$.
A linear functional on $V$ is just an element $x\in V$, hence the
inequality $\pi\left(h\left(y\right)\right)\leq k^{*}\left(y\right)\ \forall y\in K$
is interpreted as $y^{T}Ax\leq y^{T}b\ \ \forall y\in K$ or equivalently
$Ax-b\in K^*$. We denote
\[
\begin{array}{ccc}
\Pi\left(b,A\right) & := & \left\{ x\in\q^*;\ Ax-b\in K^*\right\} \\
\Phi\left(c,A\right) & := & \left\{ y\in K;\ y^{T}A-c^{T}\in\q\right\} 
\end{array}
\]
Since $V=\rn$, by Remark \ref{rem:AssFD} we may replace Assumption \ref{assu:Sublinear} with the assumption $\Phi(v,h)\neq\emptyset\ \forall v\in V$, hence Theorem \ref{Thm:AbsDuality} gives us a version of the linear programming duality:
\begin{thm}
\label{thm:LinProg}
If for every $x\in\rn$ there exists $y\in K$ such that $y^TA- x^T\in\q$ then
\[
\max_{x\in\Pi (b,A)}c^{T}x=\inf_{y\in\Phi (c,A)}y^{T}b
\]
and the duality is finite iff $y^TA\in\q\Rightarrow y^Tb\geq 0$
\end{thm}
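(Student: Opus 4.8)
The plan is to deduce Theorem \ref{thm:LinProg} directly from the abstract duality theorem (Theorem \ref{Thm:AbsDuality}), using the dictionary already recorded before the statement and checking that each abstract ingredient reduces to the given linear-programming data. Concretely, I would take $V=\rn$ equipped with the \emph{discrete} topology, so that by Remark \ref{rem:AssFD} every linear functional is continuous and $V^{*}\cong\rn$ under the standard pairing $v^{*}(v)=x^{T}v$; the distinguished element is $u=c$; the two convex cones are $\q\subset V$ and $K\subset\r^{k}$; the operator is $h(y)=y^{T}A$; and the dominating functional is $k^{*}(y)=y^{T}b$. Since $A$ and $b$ act linearly, $h$ is automatically $\q$-superlinear (indeed $h(y+y')-h(y)-h(y')=0\in\q$) and $k^{*}$ is sublinear (being linear), so the structural requirements of Theorem \ref{Thm:AbsDuality} are met.

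Next I would identify the four derived objects. Writing $v^{*}\in V^{*}$ as $x\in\rn$, the condition $v^{*}(q)\geq0$ for all $q\in\q$ reads $x\in\q^{*}$, while $v^{*}\circ h(y)\leq k^{*}(y)$ for all $y\in K$ unwinds, via $v^{*}(h(y))=y^{T}Ax$, into the dual-cone constraint $Ax-b\in K^{*}$ that defines $\Pi(b,A)$; hence $\Pi(k^{*},h)=\Pi(b,A)$. Similarly $\Phi(u,h)=\{y\in K;\ h(y)\geq c\}=\{y\in K;\ y^{T}A-c^{T}\in\q\}=\Phi(c,A)$. Under these identifications the abstract primal value $\max_{v^{*}}v^{*}(u)$ becomes $\max_{x\in\Pi(b,A)}c^{T}x$ and the abstract dual value $\inf_{k}k^{*}(k)$ becomes $\inf_{y\in\Phi(c,A)}y^{T}b$, so the equality furnished by Theorem \ref{Thm:AbsDuality} is exactly the asserted duality, and the $\max$ is attained because the abstract theorem attains its primal value.

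It then remains to match the two side conditions. Because $V$ carries the discrete topology, Remark \ref{rem:AssFD} together with Claim \ref{claim:AssFD} lets me replace Assumption \ref{assu:Sublinear} by the requirement $\Phi(v,h)\neq\emptyset$ for every $v\in V$; since $\Phi(v,h)=\{y\in K;\ y^{T}A-v^{T}\in\q\}$, this is verbatim the theorem's hypothesis that for every $x\in\rn$ some $y\in K$ satisfies $y^{T}A-x^{T}\in\q$. Finally, the finiteness clause of Theorem \ref{Thm:AbsDuality} asserts that the common value is finite exactly when $k^{*}$ is $h$-positive, i.e. $k^{*}(y)\geq0$ whenever $h(y)\in\q$, which reads $y^{T}b\geq0$ whenever $y^{T}A\in\q$; this is precisely the stated criterion. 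Invoking Theorem \ref{Thm:AbsDuality} then finishes the argument.

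There is no genuine obstacle here: the content is entirely carried by Theorem \ref{Thm:AbsDuality}, and the work is verification. The one place demanding care is the transpose and sign bookkeeping, namely keeping $y^{T}A$ and $c^{T}$ as row vectors identified with elements of $\rn=V$ and translating the inequality $v^{*}\circ h(y)\leq k^{*}(y)$ into the correct dual-cone membership; once the pairing $v^{*}(h(y))=y^{T}Ax$ is written out explicitly, both the identification of $\Pi(k^{*},h)$ with $\Pi(b,A)$ and of the $h$-positivity condition with $y^{T}A\in\q\Rightarrow y^{T}b\geq0$ become routine.
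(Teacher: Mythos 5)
Your proposal is correct and is essentially the paper's own argument: the paper likewise derives Theorem \ref{thm:LinProg} by substituting $V=\rn$ (finite-dimensional, so by Remark \ref{rem:AssFD} Assumption \ref{assu:Sublinear} reduces to $\Phi(v,h)\neq\emptyset$ for all $v$, which is the stated hypothesis), $h(y)=y^{T}A$, $k^{*}(y)=y^{T}b$, $u=c$ into Theorem \ref{Thm:AbsDuality}, identifying $\Pi(k^{*},h)=\Pi(b,A)$, $\Phi(u,h)=\Phi(c,A)$, and $h$-positivity with the finiteness criterion $y^{T}A\in\q\Rightarrow y^{T}b\geq0$. One caveat you inherit from the paper itself: unwinding $y^{T}Ax\leq y^{T}b$ for all $y\in K$ actually gives $b-Ax\in K^{*}$ (equivalently $Ax-b\in-K^{*}$) under the paper's convention $K^{*}=\{z;\ z^{T}y\geq0\ \forall y\in K\}$, not $Ax-b\in K^{*}$ as both you and the paper write --- a sign slip that happens to be invisible in the paper's worked example, where $K=\r^{k}$ and $K^{*}=\{0\}$.
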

Usually in linear programming the dual problem admits an optimal solution whenever the primal problem does, see for example Theorem 23 in \cite{karloff2008linear}. We do not have such a result in the theorem above.

\b

\fbox{
\parbox{15cm}{
\begin{example}
Let $K=\r^k$ and $\q\subset\rn$ is the cone of vectors with non-negative components. If $A$ has full row rank ($\text{rank}(A)=k$) then for any $x\in\rn$ the equation $A^Ty=x$ has a solution $y\in\r^k$ which means $y^TA-x^T=0\in\q$ so $\Phi(v,h)\neq\emptyset\ \forall v\in V$. The above theorem translates into: If $A$ has full row rank then
\[
\max_{x\geq 0;\ Ax=b}c^{T}x=\inf_{y\in\r^k;\ A^Ty\geq c}y^{T}b
\]
The equivalent condition for the duality to be finite in this example is
known as "Farkas' lemma"\index{Farkas' lemma} (see Theorem 29 in \cite{karloff2008linear}).

We can always assume without loss of generality $\text{rank}(A)=k$. Otherwise, either one of the equations in the system $Ax=b$ is redundant and may be removed or there is no solution to $Ax=b$.
\end{example}
}
}

\bigskip
The abstract theorem can also be used to obtain a non-linear programming duality theorem when $B:\r^{k}\rightarrow\r$
is sublinear (for example $B\left(y\right):=\left|\left|Cy\right|\right|$
for some $l\x k$ matrix $C$) and 
\[
\Pi(B,A)  =  \left\{ x\in\q^*;\ y^{T}Ax\leq B\left(y\right)\ \forall y\in K\right\} \\
\]
then under the same condition as in Theorem \ref{thm:LinProg}
\[
\max_{x\in\Pi(B,A)}c^{T}x=\inf_{y\in\Phi(c,A)}B\left(y\right)
\]

\subsection{Nash equilibrium of continuous games}

A zero-sum continuous game\index{continuous game} is a triplet $\left(X,Y,F\right)$
where $X,Y$ are compact metric spaces and $F:X\x Y\rightarrow\r$ a measurable bounded function.
$X,Y$ are considered as the (pure) strategy sets of the game and $F$ as the utility. Alice
and Bob can play the game $\left(X,Y,F\right)$ in the following
way: Alice chooses $x\in X$ while Bob chooses  $y\in Y$ (both not
knowing their opponent's choice), then Bob pays Alice an amount of
$F\left(x,y\right)$ (may be negative) and the game ends. The players
can also use \textquotedbl mixed strategies\textquotedbl , which
are Borel probability measures on $X,Y$. The pure strategies of choosing
a single element $x\in X,y\in Y$ are identified with the delta measures
$\delta_{x},\delta_{y}$ respectively. If Alice chooses some mixed
strategy $\mu\in\mathcal{P}\left(X\right)$, then her expected payoff
will be $\int_{X}F\left(x,y\right)\d\mu\left(x\right)$ whenever
Bob chooses $y\in Y$. In the worst case, she will be payed $\min_{y\in Y}\int_{X}F\left(x,y\right)\d\mu\left(x\right)$.
To ensure herself a maximal payoff (in expectation), she will need to solve the following:

$$
\underline{V}
=
\sup_{\m\in\mathcal{P}(X)}\inf_{y\in Y}\int_{X}F\left(x,y\right)\d\mu\left(x\right)\
=
\sup_{\m\in\mathcal{P}(X)}\inf_{\nu\in \mathcal{P}(Y)}\int_{Y}\int_XF(x,y)\d\mu(y)\d\nu(x)
$$

Similarly, Bob will have to solve the problem

$$
\overline{V}
=
\inf_{\n\in\mathcal{P}(Y)}\sup_{x\in X}\int_{X}F\left(x,y\right)\d\nu\left(x\right)
=
\inf_{\n\in\mathcal{P}(Y)}\sup_{\mu\in \mathcal{P}(X)}\int_{X}\int_YF(x,y)\d\nu(x)\d\mu(y)
$$

Using the abstract duality theorem we will show the two problems are
dual to each other and conclude the existence of a Nash equilibrium.
\begin{thm}
\label{thm:ZSG Cont}
Let $\left(X,Y,F\right)$ be a zero-sum continuous game with $F:X\x Y\rightarrow\r$ be a measurable bounded function such that $F(\cdot,y)\in C(X)\ \forall y\in Y$ then
$\underline{V}=\overline{V}$. 
\end{thm}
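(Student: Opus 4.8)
The plan is to obtain $\underline V=\overline V$ as a single instance of the abstract duality theorem (Theorem \ref{Thm:AbsDuality}), with $\underline V$ realized as the primal maximum and $\overline V$ as the dual infimum. The guiding principle is that $F$ is continuous in its first variable only, so the ambient space must be built on $C(X)$ and the ``for all'' constraint must live over $X$.

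First I would record two preliminaries. For $\nu\in\mathcal P(Y)$ set $G_\nu(x):=\int_Y F(x,y)\,\d\nu(y)$; since $F(\cdot,y)\in C(X)$ and $|F|\le\|F\|_\infty$, dominated convergence shows $G_\nu\in C(X)$, so $\overline V=\inf_{\nu\in\mathcal P(Y)}\max_{x\in X}G_\nu(x)$ with the maximum attained because $X$ is compact. Moreover, replacing $F$ by $F+a$ for a constant $a$ shifts both $\underline V$ and $\overline V$ by $a$ and preserves all hypotheses, so I may assume $F\ge 1$; then $\underline V,\overline V>0$, which is exactly what will keep an auxiliary one-sided constraint inactive at the optimum.

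Next I would instantiate Theorem \ref{Thm:AbsDuality} with $V=C(X)\x\r$ (so $V^{*}=\mm(X)\x\r$ by Riesz representation), the convex cone $\q=C(X,\r_{+})\x[0,\infty)$, the cone $K=\mm_{+}(Y)\x\r$, the linear (hence $\q$-superlinear) map $h(\rho,\lambda):=(\lambda\mathbf 1_{X}-G_\rho,\ \rho(Y))$, the linear functional $k^{*}(\rho,\lambda):=\lambda$, and $u:=(0,1)$. Writing a primal functional as $(\m,t)\in\mm(X)\x\r$, the condition $(\m,t)(q)\ge0$ for $q\in\q$ yields $\m\ge0$ and $t\ge0$, while $(\m,t)\circ h\le k^{*}$ reads $\lambda\m(X)-\int G_\rho\,\d\m+t\rho(Y)\le\lambda$ for all $(\rho,\lambda)$; taking $\rho=0$ forces $\m(X)=1$, and taking $\lambda=0,\ \rho=\delta_y$ forces $\int_X F(\cdot,y)\,\d\m\ge t$ for every $y$. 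Hence $\Pi(k^{*},h)=\{(\m,t):\m\in\mathcal P(X),\ t\ge0,\ \int_X F(\cdot,y)\,\d\m\ge t\ \forall y\}$, and since $v^{*}(u)=t$ and $\underline V>0$ the primal maximum equals $\sup_{\m\in\mathcal P(X)}\min_y\int_X F(\cdot,y)\,\d\m=\underline V$. On the dual side $\Phi(u,h)=\{(\rho,\lambda):\lambda\ge\max_x G_\rho,\ \rho(Y)\ge1\}$, so $\inf_{\Phi(u,h)}k^{*}=\inf\{\max_x G_\rho:\rho\in\mm_{+}(Y),\ \rho(Y)\ge1\}$; because $F>0$ gives $G_\rho\ge0$, scaling shows the infimum is attained at $\rho(Y)=1$ and equals $\inf_{\nu\in\mathcal P(Y)}\max_x G_\nu=\overline V$. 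Theorem \ref{Thm:AbsDuality} then yields $\underline V=\overline V$ directly (the common value being finite as $F$ is bounded), and unshifting $F$ finishes the proof.

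It remains to verify Assumption \ref{assu:Sublinear}, which I expect to be the crux. Fixing any $\nu_0\in\mathcal P(Y)$, I would set $s(g,r):=\big([r]_{+}\,\nu_0,\ \|g\|_\infty+[r]_{+}\|G_{\nu_0}\|_\infty\big)$; then $h(s(g,r))-(g,r)$ has nonnegative first coordinate by the choice of the scalar and second coordinate $[r]_{+}-r\ge0$, so $s(g,r)\in\Phi((g,r),h)$, while $k^{*}\circ s(g,r)=\|g\|_\infty+[r]_{+}\|G_{\nu_0}\|_\infty$ is bounded on the unit ball of $V$. The delicate point is precisely \emph{why} the factor $[0,\infty)$ must appear in $\q$ (forcing the normalization $F\ge1$): the theorem demands $\Phi(v,h)\neq\emptyset$ for \emph{every} $v\in V$, yet the epigraph variable $t$ is unbounded below in the primal, so the second coordinate of $h$ can reach arbitrary reals only at the price of cutting off $t\ge0$. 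Balancing this against the requirement that the cut be inactive at the optimum is the main subtlety; once it is arranged, the identifications above are routine pairing identities.
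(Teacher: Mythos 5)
Your proposal is correct and is essentially the paper's own proof: up to swapping the two factors of $V=\r\x C(X)$ and $K=\r\x\mm_{+}(Y)$, your choices of $h$, $k^{*}$, $u$ and $\q$ coincide with the paper's instantiation of Theorem \ref{Thm:AbsDuality}, and your verification of Assumption \ref{assu:Sublinear} with a fixed $\nu_{0}\in\mathcal{P}(Y)$ reduces to the paper's map $s(\beta,f)=(\max_{x}\{|\beta|F(x,y_{0})+f(x)\},|\beta|\delta_{y_{0}})$ upon taking $\nu_{0}=\delta_{y_{0}}$. The only cosmetic differences are your normalization $F\geq1$ in place of the paper's $F\geq0$, and your explicit scaling argument reducing the dual constraint $\rho(Y)\geq1$ to probability measures, which the paper treats implicitly.
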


\begin{proof}
We notice that:

$$
\underline{V}
 = 
 \sup\left\{ \alpha\in\r;\ \exists\mu\in\mathcal{P}\left(X\right),\ \alpha\leq\int_{X}\int_{Y}F\left(x,y\right)\d\nu\left(x\right)\d\mu\left(y\right)\ \forall\nu\in\mathcal{P}\left(Y\right)\right\}
$$

$$
\overline{V}
=
\inf\left\{ \beta\in\r;\ \exists\nu\in\mathcal{P}\left(Y\right),\ \beta\geq\int_{X}\int_{Y}F\left(x,y\right)\d\nu\left(y\right)\d\mu\left(x\right)\ \forall\mu\in\mathcal{P}\left(X\right)\right\}
$$
We may assume without loss of generality $F\geq0$ (otherwise take $\tilde{F}=F-\min F\geq0$ to conclude $\underline{V}-\min F=\overline{V}-\min F$ and therefore $\underline{V}=\overline{V}$).
Use Theorem \ref{Thm:AbsDuality} with:
\begin{itemize}
\item $V=\r\x C\left(X\right)$
\item $K=\r\x\mm_+\left(Y\right)$
\item $h\left(b,\nu\right)=\left(\n(Y),b-\int_{Y}F\left(\cdot,y\right)\d\nu\left(y\right)\right)$ (where $\int_{Y}F\left(\cdot,y\right)\d\nu\left(y\right)$ is continuous by the dominated convergence theorem).
\item $k^{*}\left(b,\nu\right)=b$
\item $u=\left(1,0\right)$
\item $\q=\r_+\x C\left(X,\r_+\right)$
\item Let $y_0\in Y$. Assumption \ref{assu:Sublinear} holds with $s(\beta,f)=(\max_{x\in X}\{|\beta|F(x,y_0)+f(x )\},|\beta|\delta_{y_0})$ since
\[
\begin{array}{cc}
     h\circ s(\beta,f)=(|\beta|,\max_{x\in X}\{|\beta|F(x,y_0)+f(x)\}-|\beta|F(\cdot,y_0))\geq (\beta,f)
      \\
     k^*\circ s(\beta,f)=\max_{x\in X}|\beta|F(x,y_0)+f(x)\underset{(\beta,f)\rightarrow0}{\longrightarrow} 0
\end{array}
\]
\item  $k^*$ is $h$-positive since if $h(b,\nu)\geq0$ then $b-\int_YF(x,y)\d\nu(y)\geq0\ \forall x\Rightarrow k^*(b,\n)=b\geq0$ (Here we used the assumption $F\geq 0$).
\end{itemize}
For any $v^{*}=\left(\alpha,\mu\right)\in V^{*}=\r\x\mm\left(X\right)$
the condition $v^{*}\left(h\left(k\right)\right)\leq k^{*}\left(k\right)$
for all $k=\left(b,\nu\right)\in K$ translates to
\[
\alpha\n(Y)+\int_{X}\left[b-\int_{Y}F\left(x,y\right)\d\nu\left(y\right)\right]\d\mu\left(x\right)=\alpha\n(Y)-\int_{X}\int_{Y}F\left(x,y\right)\d\nu\left(y\right)\d\mu\left(x\right)+\mu\left(X\right)b\leq b
\]
for all $b\in\r,\n\in\mm_+(Y)$ which is equivalent to $\mu\left(X\right)=1$ and $\alpha\leq\int_{X}\int_{Y}F\left(x,y\right)\d\nu\left(y\right)\d\mu\left(x\right)\ \forall\n\in\mathcal{P}(Y)$. The condition $v^*(q)\geq 0\ \forall q\in\q$ translates to $\m\in\mm_+(X),\alpha\geq0$
hence 
\[
\max_{v^{*}\in\Pi\left(k^{*},h\right)}v^{*}\left(c\right)=\max\left\{ \alpha\geq0;\ \exists\mu\in\mathcal{P}\left(X\right)\ \alpha\leq\int_{X}\int_{Y}F\left(x,y\right)\d\nu\left(y\right)\d\mu\left(x\right)\ \forall\nu\in\mathcal{P}\left(Y\right)\right\} =\underline{V}
\]
(the restriction $\alpha\geq0$ should not bother us since $F\geq 0$).
For any $k=\left(b,\nu\right)\in K$ the condition $h\left(k\right)\geq c$
translates to $\left(\n(Y),b-\int_{Y}F\left(x,y\right)\d\nu\left(y\right)\right)\geq\left(1,0\right)$
which is equivalent to $\n(Y)\geq1$ and $b\geq \int_{Y}F\left(x,y\right)\d\nu\left(y\right)\ \forall x\in X$
therefore
\[
\begin{array}{ccccc}
\inf_{k\in\Phi\left(c,h\right)}k^{*}\left(k\right)
& = &
\inf\left\{ b\in\r;\ \exists\nu\in\mathcal{P}\left(Y\right)\ b\geq\int_X\int_{Y}F\left(x,y\right)\d\nu\left(y\right)\d\mu(x)\ \forall \mu\in \mathcal{P}(X)\right\}  & = & \overline{V}
\end{array}
\]
Therefore by Theorem \ref{Thm:AbsDuality} $\underline{V}=\overline{V}$.
\end{proof}

Using the weak* compactness of $\mathcal{P}(X),\mathcal{P}(Y)$ (or by using the symmetry between the two problems), it is possible to show the supremum and infimum in $\underline{V},\overline{V}$ are attained. The maximizer and minimizer measures form a saddle-point:

\begin{claim}
The maximizer $\mu\in\mathcal{P}\left(X\right)$ and the minimizer
$\nu\in\mathcal{P}\left(Y\right)$ of $\underline{V},\overline{V}$
respectively form a saddle point, namely: For any other $\mu'\in\mathcal{P}\left(X\right)$,$\nu'\in\mathcal{P}\left(Y\right)$
\[
\int_{Y}\int_{X}F\left(x,y\right)\d\mu'\left(x\right)\d\nu\left(y\right)\leq\int_{Y}\int_{X}F\left(x,y\right)\d\mu\left(x\right)\d\nu\left(y\right)\leq\int_{Y}\int_{X}F\left(x,y\right)\d\mu\left(x\right)\d\nu'\left(y\right)
\]
\end{claim}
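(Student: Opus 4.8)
The plan is to derive the two saddle-point inequalities directly from the optimality of $\mu$ and $\nu$ together with the equality $\underline{V}=\overline{V}$ already established in Theorem \ref{thm:ZSG Cont}, without invoking the abstract machinery again. First I would spell out what optimality of each player's measure means. The measures $\mu,\nu$ exist as extremizers by the weak*-compactness of $\mathcal{P}(X),\mathcal{P}(Y)$ noted just before the claim (the relevant maps, such as $y\mapsto\int_X F(x,y)\,\d\mu(x)$, are continuous by $F(\cdot,y)\in C(X)$ and dominated convergence, exactly as in the proof of Theorem \ref{thm:ZSG Cont}). Attainment of $\underline{V}$ by $\mu$ means $\inf_{\nu'\in\mathcal{P}(Y)}\int_Y\int_X F\,\d\mu\,\d\nu'=\underline{V}$, i.e.
\[
\int_Y\int_X F(x,y)\,\d\mu(x)\,\d\nu'(y)\ \geq\ \underline{V}\qquad\forall\,\nu'\in\mathcal{P}(Y),
\]
while attainment of $\overline{V}$ by $\nu$ means $\sup_{\mu'\in\mathcal{P}(X)}\int_Y\int_X F\,\d\mu'\,\d\nu=\overline{V}$, i.e.
\[
\int_Y\int_X F(x,y)\,\d\mu'(x)\,\d\nu(y)\ \leq\ \overline{V}\qquad\forall\,\mu'\in\mathcal{P}(X).
\]

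Next I would evaluate both families of inequalities at the pair $(\mu,\nu)$ itself. Writing $W:=\int_Y\int_X F\,\d\mu\,\d\nu$, the choice $\nu'=\nu$ in the first display gives $W\geq\underline{V}$, and the choice $\mu'=\mu$ in the second gives $W\leq\overline{V}$. Since $\underline{V}=\overline{V}$ by Theorem \ref{thm:ZSG Cont}, these two bounds pin $W$ exactly: $W=\underline{V}=\overline{V}$, and I denote this common value $V$.

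Finally, substituting $W=V$ back into the two displayed families yields the claim verbatim. For every $\mu'\in\mathcal{P}(X)$,
\[
\int_Y\int_X F\,\d\mu'\,\d\nu\ \leq\ \overline{V}=V=W=\int_Y\int_X F\,\d\mu\,\d\nu,
\]
which is the left inequality, and for every $\nu'\in\mathcal{P}(Y)$,
\[
\int_Y\int_X F\,\d\mu\,\d\nu'\ \geq\ \underline{V}=V=W=\int_Y\int_X F\,\d\mu\,\d\nu,
\]
which is the right inequality.

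I do not expect a genuine obstacle in the argument proper, since it is a purely formal manipulation of two inequalities and one equality. The only points needing care are the existence of the extremizers $\mu,\nu$ (resting on the weak*-compactness and continuity arguments indicated above) and the well-definedness of every double integral; the latter is immediate because $F$ is bounded and measurable, so it is integrable against each product measure $\mu'\times\nu'$ and Fubini's theorem justifies the iterated integrals. If one wished to avoid assuming attainment of $\overline{V}$, the symmetry between the roles of the two players—or the weak*-compactness of $\mathcal{P}(Y)$ applied to the lower-semicontinuous functional $\nu'\mapsto\sup_{\mu'}\int_Y\int_X F\,\d\mu'\,\d\nu'$—supplies the minimizer $\nu$, and the remainder of the plan is unchanged.
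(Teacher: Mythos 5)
Your proposal is correct and follows essentially the same route as the paper's proof: both use the optimality of $\mu$ and $\nu$ to bound $\int_Y\int_X F\,\d\mu'\,\d\nu\leq\overline{V}$ and $\int_Y\int_X F\,\d\mu\,\d\nu'\geq\underline{V}$, invoke $\underline{V}=\overline{V}$ from Theorem \ref{thm:ZSG Cont}, and pin the value at $(\mu,\nu)$ by substituting the pair into its own inequalities. Your reorganization (fixing $W=\int_Y\int_X F\,\d\mu\,\d\nu$ first) and the added remarks on attainment and Fubini are harmless refinements of the identical argument.
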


\begin{proof}
Let $\mu\in\mathcal{P}\left(X\right)$ be the maximizer
of $\underline{V}$ then
$$\underline{V}
=
\inf_{\nu'\in\mathcal{P}(Y)}\int_{Y}\int_{X}F\left(x,y\right)\d\mu\left(x\right)\d\nu'\left(y\right)
\leq
\int_{Y}\int_{X}F\left(x,y\right)\d\mu\left(x\right)\d\nu'\left(y\right)\ \forall\nu'\in\mathcal{P}\left(Y\right)$$

Let $\nu\in\mathcal{P}\left(X\right)$ be the minimizer
of $\overline{V}$ then $$\overline{V}
=
\sup_{\mu'\in\mathcal{P}(X)}\int_{Y}\int_{X}F\left(x,y\right)\d\mu'\left(x\right)\d\nu\left(y\right)
\geq
\int_{Y}\int_{X}F\left(x,y\right)\d\mu'\left(x\right)\d\nu\left(y\right)\ \forall\mu'\in\mathcal{P}\left(X\right)$$

Therefore $\forall\mu'\in\mathcal{P}\left(X\right),\ \nu'\in\mathcal{P}\left(Y\right)$
\[
\int_{Y}\int_{X}F\left(x,y\right)\d\mu'\left(x\right)\d\nu\left(y\right)\leq\overline{V}=\underline{V}\leq\int_{Y}\int_{X}F\left(x,y\right)\d\mu\left(x\right)\d\nu'\left(y\right)
\]
 and for $\mu'=\mu,\nu'=\nu$ we get equality above hence $\forall\mu'\in\mathcal{P}\left(X\right),\forall\nu'\in\mathcal{P}\left(Y\right)$
\[
\int_{Y}\int_{X}F\left(x,y\right)\d\mu'\left(x\right)\d\nu\left(y\right)\leq\int_{Y}\int_{X}F\left(x,y\right)\d\mu\left(x\right)\d\nu\left(y\right)\leq\int_{Y}\int_{X}F\left(x,y\right)\d\mu\left(x\right)\d\nu'\left(y\right)
\]
\end{proof}

For an arbitrary bounded measurable $F$, it is possible that $\overline{V}\neq\underline{V}$, as demonstrated in \cite{Parthasarathy}. However, if we restrict one of the players to measures which are absolutely continuous wrt some given measure $\lambda$ we may prove the following minimax theorem:
\begin{thm}
\label{thm:ZSG Meas}
Let $\lambda\in\mm_+(Y)$ and $F:X\x Y\rightarrow\r$ a bounded measurable function, then
$$\sup_{\mu\in\mathcal{P}(X)}\inf_{\lambda>>\nu\in\mathcal{P}(Y)}\int_{X}\int_YF(x,y)\d\nu(x)\d\mu(y)
=
\inf_{\lambda>>\nu\in\mathcal{P}(Y)}\sup_{\mu\in\mathcal{P}(X)}\int_{X}\int_YF(x,y)\d\nu(x)\d\mu(y)$$
\end{thm}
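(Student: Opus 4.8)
The plan is to mirror the proof of the continuous-game duality (Theorem \ref{thm:ZSG Cont}), applying the abstract duality theorem (Theorem \ref{Thm:AbsDuality}) to the restricted game after two reductions. First, replacing $F$ by $F-\min F$ I may assume $F\geq 0$, which shifts both sides of the claimed identity by the same constant. Second, the inequality ``$\leq$'' is automatic since $\sup\inf\leq\inf\sup$ for any function, so it suffices to prove ``$\geq$''. I then record the two reformulations that make the hypothesis $\nu\ll\lambda$ usable: writing $\nu=g\lambda$ with $g\in L^{1}(\lambda)$, $g\geq0$, $\int_{Y}g\,\d\lambda=1$, the inner infimum becomes a $\lambda$-essential infimum,
\[
\inf_{\lambda\gg\nu\in\mathcal{P}(Y)}\int_{Y}\Psi_{\mu}(y)\,\d\nu(y)=\operatorname*{ess\,inf}_{y\sim\lambda}\Psi_{\mu}(y),\qquad \Psi_{\mu}(y):=\int_{X}F(x,y)\,\d\mu(x),
\]
whereas on the other side $\sup_{\mu\in\mathcal{P}(X)}\int_{X}\Phi_{g}\,\d\mu=\sup_{x\in X}\Phi_{g}(x)$ (point masses are admissible), where $\Phi_{g}(x):=\int_{Y}F(x,y)g(y)\,\d\lambda(y)$. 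The goal thus reduces to $\inf_{g}\sup_{x}\Phi_{g}(x)\leq\sup_{\mu}\operatorname*{ess\,inf}_{y\sim\lambda}\Psi_{\mu}(y)$.

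Next I set up Theorem \ref{Thm:AbsDuality} as in the continuous game, but encoding Bob's strategy by a density. I take $K=\r\times\{g\in L^{1}(\lambda);\ g\geq0\}$, $k^{*}(b,g)=b$, $u=(1,0)$, $\q=\r_{+}\times\{\text{nonnegative functions of }x\}$, and
\[
h(b,g)=\Big(\int_{Y}g\,\d\lambda,\ b-\int_{Y}F(\cdot,y)g(y)\,\d\lambda(y)\Big).
\]
Fixing a probability density $g_{0}$ wrt $\lambda$, Assumption \ref{assu:Sublinear} is verified by $s(\beta,f)=\big(\sup_{x}\{f(x)+[\beta]_{+}\Phi_{g_{0}}(x)\},\,[\beta]_{+}g_{0}\big)$, and $h$-positivity of $k^{*}$ follows from $F\geq0$ exactly as in Theorem \ref{thm:ZSG Cont}. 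The primal value $\inf_{k\in\Phi(u,h)}k^{*}(k)$ then equals $\overline{V}$ (the constraint $\int g\,\d\lambda=1$ being forced because $F\geq0$ permits rescaling any $g$ with $\int g>1$), and it is finite since $\overline V\leq\|F\|_{\infty}$. Reading off the conditions defining $\Pi(k^{*},h)$ shows that the optimal functional $v^{*}=(\alpha,\mu)$ guaranteed by the theorem satisfies $\mu\geq0$, $\mu(X)=1$, $v^{*}(u)=\alpha$, and $\alpha\leq\Psi_{\mu}(y)$ for $\lambda$-a.e. $y$. Hence $\overline{V}=\alpha\leq\operatorname*{ess\,inf}_{y}\Psi_{\mu}\leq\sup_{\mu'}\operatorname*{ess\,inf}_{y}\Psi_{\mu'}=\underline{V}$, which is the desired inequality, \emph{provided $\mu$ is a genuine (countably additive) probability measure on $X$}.

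That proviso is exactly where the main difficulty lies. The second component of $h(b,g)$, namely $x\mapsto b-\int_{Y}F(x,y)g(y)\,\d\lambda(y)$, is only bounded and measurable in $x$, not continuous, because $F$ is merely measurable; so $h$ does not map into $\r\times C(X)$. Enlarging the target to $\r\times B(X)$ with the supremum norm (which still satisfies Assumption \ref{assu:Sublinear}) forces the dual into $\r\times B(X)^{*}$, and $B(X)^{*}$ contains purely finitely additive set functions, so the maximizer $\mu$ is \emph{a priori} only finitely additive. The crux of the argument is to upgrade this finitely additive $\mu$ to a countably additive element of $\mathcal{P}(X)$ without spoiling the almost-everywhere inequality $\Psi_{\mu}\geq\overline{V}$; here one must exploit the weak*-compactness of $\mathcal{P}(X)$ (valid since $X$ is compact metric) together with the fact that the inequality is required only for $\lambda$-a.e. $y$, so that absolute continuity wrt $\lambda$ leaves room to pass to a countably additive measure. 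I expect this single step to carry essentially all the weight of the theorem. An alternative, more symmetric route is a Sion-type minimax argument on the weak*-compact convex set $\mathcal{P}(X)$ paired with the convex set of densities; there the same obstruction reappears as the need for upper semicontinuity of $\mu\mapsto\int_{X}\Phi_{g}\,\d\mu$, which again hinges on taming the discontinuity of $\Phi_{g}$ through the reference measure $\lambda$. Once this upgrade is secured, the remainder is the now-routine verification of the hypotheses of Theorem \ref{Thm:AbsDuality}.
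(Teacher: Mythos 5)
Your constructive steps coincide with the paper's own proof almost verbatim: the paper likewise applies Theorem \ref{Thm:AbsDuality} with exactly the data of Theorem \ref{thm:ZSG Cont}, changing only $K=\r\x\left\{\nu\in\mm_+(Y);\ \nu<<\lambda\right\}$ (your density formulation of $K$ is equivalent) and the map verifying Assumption \ref{assu:Sublinear}, which the paper takes as $s(\beta,f)=\left(\max_{x\in X}\left\{\frac{|\beta|}{\lambda(Y)}\int_YF(x,y)\d\lambda(y)+f(x)\right\},\frac{|\beta|}{\lambda(Y)}\lambda\right)$ --- this is your $s$ with $g_{0}\equiv 1/\lambda(Y)$ and $|\beta|$ in place of $[\beta]_{+}$. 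So up to and including the identification of the infimum over $\Phi(u,h)$ with $\overline{V}$, you are exactly on the paper's track.

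However, your proposal is not a complete proof, and you say so yourself: the upgrade of the finitely additive maximizer to a countably additive $\mu\in\mathcal{P}(X)$ is left entirely open, and the tools you point to do not obviously suffice. Weak*-compactness of $\mathcal{P}(X)$ cannot do the job by itself, since $\Phi_{g}(x)=\int_YF(x,y)g(y)\d\lambda(y)$ is in general not continuous (take $F(x,y)=\chi_{A}(x)$ for a non-closed Borel set $A$: then $\Phi_g=\chi_A$ for every density $g$), so $\mu\mapsto\int_X\Phi_{g}\d\mu$ is not weak*-upper semicontinuous; worse, for a merely finitely additive $\mu$ even your intermediate passage from ``$\alpha\leq\mu(\Phi_{g})$ for all densities $g$'' to ``$\Psi_{\mu}\geq\alpha$ $\lambda$-a.e.'' is unavailable, because it uses Fubini, which fails for finitely additive measures. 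So the gap you isolate is genuine and your argument stops precisely where the remaining work lies. You should know, though, that the paper's own proof does not close it either: it asserts that the proof of Theorem \ref{thm:ZSG Cont} goes through with only the two changes above, tacitly keeping $V=\r\x C(X)$ and $V^{*}=\r\x\mm(X)$, even though for merely measurable $F$ the second component of $h(b,\nu)$, namely $b-\int_YF(\cdot,y)\d\nu(y)$, need not lie in $C(X)$, so $h$ no longer maps $K$ into $V$ and the Riesz identification of $V^{*}$ with measures --- on which the computation of $\Pi(k^{*},h)$ rests --- is not justified. In short, your diagnosis is correct and in fact sharper than the paper's treatment, but neither your proposal nor the paper's one-line argument supplies the missing step; genuinely closing it (for instance by reformulating the $Y$-side in $L^{\infty}(\lambda)$, where the almost-everywhere order builds the essential infimum into the duality, or by a Yosida--Hewitt decomposition of the finitely additive maximizer) is where the real content of the theorem sits.
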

The proof of the theorem above is similar to the proof of Theorem \ref{thm:ZSG Cont} with the following slight changes:
\begin{itemize}
    \item $K=\r\x\left\{\nu\in\mm_+(Y);\ \nu<<\lambda\right\}$
    \item  Assumption \ref{assu:Sublinear} holds with $s(\beta,f)=\left(\max_{x\in X}\frac{|\beta|}{\lambda(Y)}\int_YF(x,y)\d\lambda(y)+f(x),\frac{|\beta|}{\lambda(Y)}\lambda\right)$ since
\end{itemize}    
\[
\begin{array}{cc}
     h\circ s(\beta,f)=\left(|\beta|,\max_{x\in X}\left\{\frac{|\beta|}{\lambda(Y)}\int_YF(x,y)\d\lambda(y)+f(x)\right\}-\frac{|\beta|}{\lambda(Y)}\int_YF(x,y)\d\lambda(y)\right)\geq (\beta,f)
      \\
     k^*\circ s(\beta,f)=\max_{x\in X}\frac{|\beta|}{\lambda(Y)}\int_YF(x,y)\d\lambda(y)+f(x)\underset{(\beta,f)\rightarrow0}{\longrightarrow} 0
\end{array}
\]

Theorem \ref{thm:ZSG Meas} generalizes Parthasarathy's theorem (see \cite{Parthasarathy}) in which $X=Y=[0,1]\subset\r$,  $\lambda$ is the Lebesgue measure, and all discontinuity points of $F$ lie on a finite number of curves in $X\x Y$. In Parthasarathy's theorem, one of the players in restricted to use only the mixed strategies which are absolutely continuous wrt the Lebesgue measure, so this player cannot use any pure strategy. In Theorem \ref{thm:ZSG Meas}, we may take for example $\lambda=\text{Leb}+\sum_{i=1}^n\delta_{x_i}$ (where $\text{Leb}$ is the Lebesgue measure and $\{x_i\}_{i=1}^n\subset X$) thus allowing the player use the absolutely continuous strategies and the pure strategies $\delta_{x_i},\ i=1,\ldots,n$.

\subsection{Moment problems}

Given a compact metric space $X$, $M\in C\left(X\right)^{n}$
and $m\in\rn$, a known problem is whether there exists some non-negative
measure $\mu\in\mm_{+}\left(X\right)$ such that
\begin{equation}
    \label{eq:MomentGeneral}
    \int_{X}M_{i}\d\mu=m_{i};\ i=1,\ldots,n
\end{equation}
We can use Lemma \ref{lem:EqCon} as follows: Take
\begin{itemize}
\item $V=C(X),\ V^*=\mm(X)$
\item $\q =$ non-negative functions in $V$, $K=\rn$
\item $h(\alpha)=\alpha\cdot M(x)$
\item $k^*(\alpha)=m\cdot\alpha$
\end{itemize}
If $M_1\equiv 1$ then one can easily verify Assumption \ref{assu:Sublinear}  by taking $s(v):=(||v||_{\infty},0,\ldots,0)$. Therefore by Lemma \ref{lem:EqCon} we can obtain a necessary and
sufficient condition for the existence of $\mu\in\mm_+ (X)$ s.t
$$\int_X h(\alpha)\d\mu=k^*(\alpha)\ \forall\alpha\in\rn$$
which is equivalent to (\ref{eq:MomentGeneral}). Such $\mu\in\mm_{+}\left(X\right)$ exists iff $k^*$ is $h$-positive, meaning that
for any $\alpha\in\rn$
\[
\sum_{i=1}^{n}\alpha_{i}M_{i}\left(x\right)\geq0\ \forall x\in X\Rightarrow\sum_{i=1}^{n}\alpha_{i}m_{i}\geq0
\]
 The convex cone alternative (see Corollary 5.84 in \cite{Aliprantis2006Hitch}), implies that this condition is equivalent to $m\in\overline{\text{cone}}\left(\left\{ M\left(x\right)\in\rn;\ x\in X\right\} \right)$, where $\overline{\text{cone}}(C)$\index{cone@$\overline{\text{cone}}$ - generated convex cone} is the smallest closed convex cone containing a certain set $C$.
Similar problems are discussed in \cite{akhiezer1962some} and \cite{schmudgen2017moment}. We give two examples:

\begin{example}
Probability measure with given expectation and variance.

For which $m_2\in\r\ ,m_3\geq0$ there exists $\mu\in\mathcal{P}\left(\r\right)$
with $\int_{-\infty}^{\infty}x\d\mu\left(x\right)=m_{2},\ \int_{-\infty}^{\infty}x^{2}\d\mu\left(x\right)=m_{3}$?
A necessary condition is $m_2^2\leq m_3$ since by Jensen's inequality
\[
m_{2}^{2}=\left(\int_{-\infty}^{\infty}x\d\mu\left(x\right)\right)^{2}\leq\int_{-\infty}^{\infty}x^{2}\d\mu\left(x\right)=m_{3}
\]
But we can show it is also sufficient: Take $\left(M_{1}\left(x\right),M_{2}\left(x\right),M_{3}\left(x\right)\right)=\left(1,x,x^{2}\right)$
then
\[
\overline{\text{cone}}\left(\left\{ M\left(x\right);\ x\in\r\right\} \right)=\left\{ \lambda\left(1,\alpha,\beta\right)\in\r^{3};\ \alpha^{2}\leq\beta,\ \lambda\geq0\right\} 
\]
Therefore $(1,m_2,m_3)\in \overline{\text{cone}}\left(\left\{ M\left(x\right);\ x\in\r\right\} \right)$ iff $m_2^2\leq m_3$.

\end{example}

\begin{example}
Constructing a measure from its Fourier coefficients (also known as
the trigonometric moment problem).

Let $P\subset\mathbb{Z}$ be a finite subset and $c\in\mathbb{C}^{P}$. Is
there $\mu\in\mm_{+}\left(\left[0,2\pi\right]\right)$ for which $c_{k}=\frac{1}{2\pi}\int_{0}^{2\pi}e^{-ikx}\d\mu\left(x\right)$
for all $k\in P$? Take $M_{k}\left(x\right)=e^{-ikx}$ for all $k\in P$
then
\[
\overline{\text{cone}}\left(\left\{ M\left(x\right);\ x\in[0,2\pi]\right\} \right)=\overline{\text{cone}}\left(\left\{ (e^{-ikx})_{k\in P};\ x\in\left[0,2\pi\right]\right\} \right)=\overline{\text{cone}}\left(\left\{ (z^{k})_{k\in P};\ z\in\mathbb{C},\ \left|z\right|=1\right\} \right)
\]
So there exists such a measure iff $c\in\overline{\text{cone}}\left(\left\{ (z^{k})_{k\in P};\ z\in\mathbb{C},\ \left|z\right|=1\right\} \right) $.
In the case $P=\left\{ 0,1,\ldots,n\right\} $, we define
\[
C=\left(\begin{array}{cccc}
c_{0} & c_{1} & \cdots & c_{n}\\
\overline{c_{1}} & c_{0} & \cdots & c_{n-1}\\
\vdots & \vdots & \ddots & \vdots\\
\overline{c_{n}} & \overline{c_{n-1}} & \cdots & c_{0}
\end{array}\right)
\]
Such a matrix is called "Hermitian Toeplitz\index{Toeplitz} matrix".
Moreover we define
\[
A_n(z)=\left(\begin{array}{cccc}
1 & z & \cdots & z^{n}\\
\overline{z} & 1 & \cdots & z^{n-1}\\
\vdots & \vdots & \ddots & \vdots\\
\overline{z^{n}} & \overline{z^{n-1}} & \cdots & 1
\end{array}\right)
\]
The condition $c\in\overline{\text{cone}}\left(\left\{ (z^{k})_{k=0}^n;\ z\in\mathbb{C},\ \left|z\right|=1\right\} \right) $ is equivalent to $C\in A_n:=\overline{\text{cone}}(\{A_n(z);\ z\in\mathbb{C},\left|z\right|=1\})$. $A_n(z)$ is positive semi-definite whenever $|z|=1$ since it has an eigenvalue $0$ with $n$ eigenvectors:
\[
\left( \begin{array}{c}
-z\\1\\0\\0\\ \vdots \\0
\end{array} \right)
\left( \begin{array}{c}
-z^2\\0\\1\\0\\\vdots\\0
\end{array} \right)
\ \cdots\ 
\left( \begin{array}{c}
-z^n\\0\\0\\\vdots\\0\\1
\end{array} \right)
\]
and an eigenvalue $n+1$ with an eigenvector
\[
\left( \begin{array}{c}
z^n\\z^{n-1}\\\vdots\\z\\1
\end{array} \right)
\]
Since $A_n$ consists only of positive semi-definite matrices, we conclude that the existence of a measure with Fourier coefficients $c$ implies that $C$ is positive semi-definite. As it turns out, this condition is also sufficient: If $C$ is positive semi-definite, there exists such $\mu\in\mm_{+}\left(\left[0,2\pi\right]\right)$ (see Theorem 11.5 in \cite{schmudgen2017moment}), so we may conclude:
\begin{cor}
The $(n+1)\x (n+1)$ Hermitian Toeplitz positive semi-definite matrices are exactly the elements of $A_n$.
\end{cor}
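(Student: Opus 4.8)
The plan is to prove the two inclusions separately, since the corollary asserts the equality of the set $\mathcal{H}$ of $(n+1)\x(n+1)$ Hermitian Toeplitz positive semi-definite matrices with the closed convex cone $A_n=\overline{\text{cone}}(\{A_n(z);\ |z|=1\})$.

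First I would dispatch $A_n\subseteq\mathcal{H}$, the routine direction. Each generator $A_n(z)$ with $|z|=1$ is Hermitian and Toeplitz: its $(j,k)$ entry equals $z^{k-j}$ for $k\geq j$ and $\overline{z^{j-k}}=z^{k-j}$ for $j>k$ (using $\overline z=z^{-1}$ on the unit circle), so it depends only on the offset $k-j$ and satisfies $A_n(z)_{kj}=\overline{A_n(z)_{jk}}$, and it is positive semi-definite by the explicit eigenvector computation already carried out above (eigenvalue $0$ with multiplicity $n$, eigenvalue $n+1$ with multiplicity $1$). Thus every generator lies in $\mathcal{H}$. Since $\mathcal{H}$ is the intersection of the $\r$-linear subspace of Hermitian Toeplitz matrices with the positive semi-definite cone, it is itself a closed convex cone; being the smallest closed convex cone containing the generators, $A_n$ is therefore contained in $\mathcal{H}$.

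The reverse inclusion $\mathcal{H}\subseteq A_n$ is where the moment machinery enters, and I would route it through the equivalences established in the discussion preceding the corollary. Writing $c=(c_0,\dots,c_n)$ (with $c_0\in\r$) for the first row of a given $C\in\mathcal{H}$, the text records that $C\in A_n$ is equivalent to $c\in\overline{\text{cone}}(\{(z^k)_{k=0}^n;\ |z|=1\})$, and that the latter is, via the convex cone alternative (Corollary 5.84 in \cite{Aliprantis2006Hitch}) applied to the moments $M_k(x)=e^{-ikx}$, exactly the condition for the existence of a representing measure $\mu\in\mm_+([0,2\pi])$ with $c_k=\frac{1}{2\pi}\int_0^{2\pi}e^{-ikx}\d\mu(x)$. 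So it suffices to produce such a $\mu$ from the hypothesis that $C$ is positive semi-definite; this is supplied by the classical trigonometric moment theorem (Theorem 11.5 in \cite{schmudgen2017moment}). Chaining the three equivalences then yields $C\in A_n$, completing the inclusion.

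The genuine analytic content of the reverse inclusion is concentrated entirely in Theorem 11.5, which I would treat as a black box; within the paper's own framework the only thing to verify by hand is that the three reformulations line up, in particular that the \emph{closed} cone appearing in $A_n$ is the same one matching the existence statement for $\mu$ (the cone alternative is stated with the closed generated cone, so no gap arises). The point most worth double-checking, and the main obstacle to a clean write-up, is the bookkeeping of the correspondence $C\leftrightarrow c$ itself: it is precisely the Hermitian Toeplitz structure that lets the single vector $c$ parametrize $C$, so that ``$C$ positive semi-definite'' and ``$c$ admits a representing measure'' are statements about the same data. Once that identification is made explicit, both inclusions close and the stated equality follows.
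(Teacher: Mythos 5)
Your proposal is correct and takes essentially the same route as the paper: the substantive inclusion (Hermitian Toeplitz positive semi-definite $\Rightarrow$ member of $A_n$) is obtained, exactly as in the paper's proof, from Theorem 11.5 in \cite{schmudgen2017moment} chained with the moment-problem equivalence via Lemma \ref{lem:EqCon} and the convex cone alternative, while the converse inclusion rests on the eigenvector computation showing each generator $A_n(z)$ is positive semi-definite. Your write-up is merely more explicit about points the paper leaves implicit---that the generators are Hermitian and Toeplitz, that the Hermitian Toeplitz positive semi-definite matrices form a closed convex cone, and that the real-linear identification $c\leftrightarrow C$ carries $\overline{\text{cone}}\left(\left\{ (z^{k})_{k=0}^{n};\ |z|=1\right\} \right)$ onto $A_n$---but this is the same argument, not a different one.
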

\begin{proof}
Let $C$ be a $(n+1)\x (n+1)$ Hermitian Toeplitz positive semi-definite matrix. By Theorem 11.5 in \cite{schmudgen2017moment} there exists $\mu\in\mm_+([0,2\pi])$ satisfying $c_k=\int_0^{2\pi}e^{-ikx}\d\mu(x)\ \forall i=0,\ldots,n$. By Lemma \ref{lem:EqCon} and the convex cone alternative (Corollary 5.84 in \cite{Aliprantis2006Hitch})  $C\in A_n$.
\end{proof}
\end{example}

\begin{rem}
It is also possible to add marginal constraints to moment problems, see for example
Theorem 4.6.12 in \cite{Rachev2006}.
\end{rem}

\subsection{The Fenchel-Rockafellar duality
theorem}
As an application of Theorem \ref{thm:Convex Abstract}, we will prove
the known \index{Fenchel-Rockafellar}Fenchel-Rockafellar duality
theorem. We will need the following definition:
\begin{defn}
Let $X$ be a tvs. The \emph{convex conjugate}\index{convex conjugate} of $f:X\rightarrow\r\cup\{\pm\infty\}$ is $f^*:X^*\rightarrow\r\cup\{\pm\infty\}$ defined by\index{1@$^*$}
$$
f^*(x^*):=\sup_{x\in X}\left\langle x^*,x\right\rangle-f(x)
$$
\end{defn}
\begin{thm}
\label{thm:FRduality}(Theorem 1.9 in \cite{Villani2003}) Let $E$
be a tvs, $\Theta,\Xi:E\rightarrow\r$
two convex functions and $\Theta$ upper semi-continuous at some point,
then
\[
\max_{x^{*}\in E^{*}}\left\{ -\Theta^{*}\left(-x^*\right)-\Xi^{*}\left(x^{*}\right)\right\} =\inf_{x\in E}\left\{ \Theta\left(x\right)+\Xi\left(x\right)\right\} 
\]
\end{thm}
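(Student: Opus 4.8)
The plan is to apply the convex abstract theorem (Theorem~\ref{thm:Convex Abstract}) after rewriting the primal infimum as a constrained problem over two independent copies of $E$. Concretely, I would take $V=E$, so $V^*=E^*$; let $K=E\x E$ be a convex set carrying the convex functional $k^{*}(x,y):=\Theta(x)+\Xi(y)$; put $\q=\{0\}$, which is convex and closed under addition; and let $h:K\to V$ be the linear map $h(x,y):=x-y$. Since $h$ is linear, the difference $h(tk_1+(1-t)k_0)-th(k_1)-(1-t)h(k_0)$ vanishes and hence lies in $\q$, so $h$ is $\q$-concave, and $k^{*}$ is convex because $\Theta,\Xi$ are. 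With these choices $h^{-1}(\q)=\{(x,x);\ x\in E\}$, so the right-hand side of Theorem~\ref{thm:Convex Abstract} becomes $\inf_{k\in h^{-1}(\q)}k^{*}(k)=\inf_{x\in E}\Theta(x)+\Xi(x)$, matching the right-hand side of the Fenchel--Rockafellar identity.

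The decisive step is to verify Assumption~\ref{assu:convex}, and this is exactly where upper semi-continuity enters. Let $x_0\in E$ be a point at which $\Theta$ is upper semi-continuous; then there is a neighborhood $U$ of $x_0$ and a constant $M\in\r$ with $\Theta\le M$ on $U$. Setting $N:=U-x_0$ (a neighborhood of $0$) and $s:N\to K$ by $s(v):=(x_0+v,\ x_0)$, I get $h(s(v))=(x_0+v)-x_0=v$, so $s(v)\in\Phi(v,h)=h^{-1}(v+\q)$, while $k^{*}(s(v))=\Theta(x_0+v)+\Xi(x_0)\le M+\Xi(x_0)$ is bounded above on $N$. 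Thus Assumption~\ref{assu:convex} holds, and Theorem~\ref{thm:Convex Abstract} applies unconditionally; since its proof already treats the case where the infimum equals $-\infty$, no separate finiteness argument is needed.

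It then remains to identify the left-hand side of the abstract duality with $\max_{x^{*}}\{-\Theta^{*}(-x^{*})-\Xi^{*}(x^{*})\}$. Writing $v^{*}=x^{*}\in E^{*}$ and using $q=0$, the inner functional splits as $x^{*}(q-h(k))+k^{*}(k)=\bigl(\Theta(x)-x^{*}(x)\bigr)+\bigl(\Xi(y)+x^{*}(y)\bigr)$, and since $x$ and $y$ range independently the infimum factorizes, giving $\inf_{x}(\Theta(x)-x^{*}(x))+\inf_{y}(\Xi(y)+x^{*}(y))=-\Theta^{*}(x^{*})-\Xi^{*}(-x^{*})$. Hence the left-hand side equals $\max_{x^{*}\in E^{*}}\{-\Theta^{*}(x^{*})-\Xi^{*}(-x^{*})\}$, which coincides with the claimed expression after the harmless substitution $x^{*}\mapsto-x^{*}$ in the maximization. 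The only subtlety I anticipate is bookkeeping: keeping the sign conventions of the convex conjugate consistent so the two forms line up, and confirming that a genuine maximum (not merely a supremum) is attained — but the latter is delivered directly by Theorem~\ref{thm:Convex Abstract}, whose left-hand side is already a maximum.
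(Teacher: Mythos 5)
Your proposal is correct and takes essentially the same route as the paper: the same substitution into Theorem~\ref{thm:Convex Abstract} with $V=E$, $K=E\times E$, $\q=\{0\}$, a linear difference map $h$, and verification of Assumption~\ref{assu:convex} via the local boundedness of $\Theta$ near the point of upper semi-continuity. The only deviations are cosmetic: the paper takes $h(x,y)=y-x$ and normalizes $k^{*}$ by subtracting the constant $\Theta(x_0)+\Xi(x_0)$, while you take $h(x,y)=x-y$ without the shift and define $s$ only on a neighborhood $N$ rather than all of $V$ --- both of which are immaterial.
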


The above theorem is often used to prove the Kantorovich duality (as
in \cite{Villani2003}) and other duality theorems and its proof
is based on the Theorem \ref{thm:HB Separation}, similarly to the proof of theorem
\ref{thm:Convex Abstract}. We prove Theorem \ref{thm:FRduality}
using Theorem \ref{thm:Convex Abstract}:
\begin{proof}
Assume $\Theta$ is upper semi-continuous at $x_{0}\in E$ and let
\begin{itemize}
\item $V=E$
\item $K=E\times E$
\item $h\left(x,y\right)=y-x$
\item $\q=\left\{ 0\right\} $
\item $k^{*}\left(x,y\right)=\Theta\left(x\right)+\Xi\left(y\right)-\Theta(x_0)-\Xi(x_0)$
\end{itemize}
Define $s:V\rightarrow K$
to be $s\left(v\right):=\left(x_{0}-v,x_{0}\right)\in\Phi\left(v,h\right)$,
then $h\circ s(v)-v=0\in\q$ and by upper semi-continuity of $\Theta$ at $x_0$,
$
k^{*}\circ s\left(v\right)=\Theta\left(x_{0}-v\right)-\Theta(x_0)
$
 is bounded above on a neighborhood of $0$, thus Assumption \ref{assu:convex} holds. We notice that
\[
\inf_{\left(x,y\right)\in h^{-1}(\q)}k^{*}\left(x,y\right)=\inf_{y-x=0}\Theta\left(x\right)+\Xi\left(y\right)-\Theta(x_0)-\Xi(x_0)=\inf_{x\in E}\Theta\left(x\right)+\Xi\left(x\right)-\Theta(x_0)-\Xi(x_0)
\]
 and 
\[
\begin{array}{ccc}
\max_{x^{*}\in E^{*}}\inf_{x,y\in E}\left[-\left\langle x^{*},h\left(x,y\right)\right\rangle +k^{*}\left(x,y\right)\right]
& = &
\max_{x^{*}\in E^{*}}\inf_{x,y\in E}\left[\Theta\left(x\right)+\Xi\left(y\right)-\Theta(x_0)-\Xi(x_0)-\left\langle x^{*},y-x\right\rangle \right]\end{array}
\]
hence by Theorem \ref{thm:Convex Abstract}
\[
\max_{x^{*}\in E^{*}}\inf_{x,y\in E}\left[\Theta\left(x\right)+\Xi\left(y\right)+\left\langle x^{*},x-y\right\rangle \right]=\inf_{x\in E}\Theta\left(x\right)+\Xi\left(x\right)
\]
which by the definition of the convex conjugate is equivalent to
\[
\max_{x^{*}\in E^{*}}\left\{ -\Theta^{*}\left(-x^{*}\right)-\Xi^{*}\left(x^{*}\right)\right\} =\inf_{x\in E}\left\{ \Theta\left(x\right)+\Xi\left(x\right)\right\} 
\]
\end{proof}

\begin{rem}
The Fenchel-Rockafellar theorem in \cite{Villani2003} is proved for $\Theta,\Xi$ which can have infinite values (as opposed to only finite values in the theorem above) and for $E$ which is a normed space (as opposed to a tvs in the theorem above).
\end{rem}

Similarly, one can prove a generalized version of the Fenchel-Rockafellar
duality theorem:
\begin{thm}
\label{thm:GPD}Let $E$ be a tvs, $\Theta_{i}:E\rightarrow\r ,\ i=1,\ldots,n$
convex functions where $\Theta_{1}$ is upper semi-continuous at some
point then 
\[
\left(\sum_{i=1}^{n}\Theta_{i}^{*}\right)^{*}\left(x\right)=\inf\left\{ \sum_{i=1}^{n}\Theta_{i}\left(x_{i}\right);\ \sum_{i=1}^{n}x_{i}=x\right\} 
\]
\end{thm}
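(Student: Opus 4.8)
The plan is to apply the convex abstract duality theorem (Theorem \ref{thm:Convex Abstract}) in direct analogy with the proof of the Fenchel--Rockafellar theorem (Theorem \ref{thm:FRduality}) given above, only now with $n$ summands and an arbitrary target point $x$. Assume $\Theta_1$ is upper semi-continuous at some point $x_0\in E$, fix the point $x\in E$ from the statement, and substitute in Theorem \ref{thm:Convex Abstract}:
\begin{itemize}
\item $V=E$,
\item $K=E^{n}=E\x\cdots\x E$,
\item $h(x_1,\ldots,x_n)=\sum_{i=1}^{n}x_i-x$,
\item $\q=\left\{0\right\}$,
\item $k^{*}(x_1,\ldots,x_n)=\sum_{i=1}^{n}\Theta_i(x_i)$.
\end{itemize}
Here $K$ is a vector space (hence convex), $k^{*}$ is convex as a sum of convex functions of separate coordinates, $\q=\{0\}$ is convex and closed under addition, and $h$ is affine, so it is $\q$-concave (equality holds in the defining inequality and $0\in\q$).

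To verify Assumption \ref{assu:convex} I would concentrate all the variation in the first coordinate, so as to use only the semi-continuity of $\Theta_1$. Choosing points $a_2,\ldots,a_n\in E$ with $\sum_{i=2}^{n}a_i=x-x_0$ (for instance $a_2=x-x_0$ and $a_i=0$ for $i\geq3$), define $s(v):=(x_0+v,a_2,\ldots,a_n)$. Then $h(s(v))=(x_0+v)+\sum_{i\geq2}a_i-x=v$, so $s(v)\in\Phi(v,h)$, while $k^{*}\circ s(v)=\Theta_1(x_0+v)+\sum_{i\geq2}\Theta_i(a_i)$ is bounded above on a neighborhood of $0$ since $\Theta_1$ is upper semi-continuous at $x_0$ and the remaining terms are constant. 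This establishes Assumption \ref{assu:convex}.

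It then remains to read off the two sides of Theorem \ref{thm:Convex Abstract}. On the right, $h^{-1}(\q)=\{(x_1,\ldots,x_n);\ \sum_i x_i=x\}$, so $\inf_{k\in h^{-1}(\q)}k^{*}(k)=\inf\{\sum_{i=1}^{n}\Theta_i(x_i);\ \sum_i x_i=x\}$, the right-hand side of the claim. On the left, since $\q=\{0\}$ forces $q=0$, for each $x^{*}\in E^{*}$ the separability of the objective over the coordinates gives
\begin{align*}
\inf_{(x_1,\ldots,x_n)}\left[-x^{*}\!\left(h(x_1,\ldots,x_n)\right)+\sum_i\Theta_i(x_i)\right]
&=x^{*}(x)+\sum_{i=1}^{n}\inf_{x_i}\left[\Theta_i(x_i)-x^{*}(x_i)\right]\\
&=x^{*}(x)-\sum_{i=1}^{n}\Theta_i^{*}(x^{*}),
\end{align*}
and maximizing over $x^{*}$ yields exactly $\left(\sum_{i=1}^{n}\Theta_i^{*}\right)^{*}(x)$ by the definition of the convex conjugate. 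Theorem \ref{thm:Convex Abstract} equates these two quantities, which is the assertion.

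I do not expect a serious obstacle here: because $E$ is a topological vector space and the $\Theta_i$ are finite-valued, the infimum on the right is automatically $<\infty$ (take $x_1=x$ and $x_i=0$ for $i\geq2$), so the degenerate case $m=-\infty$ is already absorbed into Theorem \ref{thm:Convex Abstract} and need not be treated separately. The only point requiring care is the construction of $s$, which must exploit the upper semi-continuity of $\Theta_1$ \emph{alone}; this is exactly why the perturbation $v$ is placed in the first coordinate while the others are held at fixed points chosen to satisfy $\sum_i s(v)_i=x+v$.
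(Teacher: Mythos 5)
Your proposal is correct and follows essentially the route the paper intends: Theorem~\ref{thm:GPD} is stated there with ``similarly, one can prove,'' deferring to the proof of Theorem~\ref{thm:FRduality}, and your argument is exactly that proof's $n$-fold analogue via Theorem~\ref{thm:Convex Abstract}, with $\q=\{0\}$, affine $h$, the perturbation $v$ placed in the coordinate of the upper semi-continuous $\Theta_1$ to verify Assumption~\ref{assu:convex}, and the coordinatewise separation of the infimum producing $\left(\sum_{i=1}^{n}\Theta_i^{*}\right)^{*}(x)$. Your observations that the finite-valuedness of the $\Theta_i$ makes $m<\infty$ automatic and that the case $m=-\infty$ is absorbed into Theorem~\ref{thm:Convex Abstract} are also accurate.
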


\begin{rem}
The theorem above implies the Fenchel-Rockafellar theorem by taking $n=2$ and $x=0$.
\end{rem}

\begin{rem}
The RHS of the equality above is sometimes called the "infimal convolution":\index{infimal convolution}
\[
\Theta_{1}\square\cdots\square\Theta_{n}\left(x\right):=\inf\left\{ \sum_{i=1}^{n}\Theta_{i}\left(x_{i}\right);\ \sum_{i=1}^{n}x_{i}=x\right\} 
\]
Theorem \ref{thm:GPD} implies (by taking the conjugate of both sides)
the \textquotedbl infimal convolution formula\textquotedbl :
\[
\left(\Theta_{1}\square\cdots\square\Theta_{n}\right)^{*}=\sum_{i=1}^{n}\Theta_{i}^{*}
\]
which is known to hold for any $n$-tuple of functions $\Theta_{1},\ldots,\Theta_{n}$
(not necessarily under the assumptions of Theorem \ref{thm:GPD}) when $E$ is locally convex and Hausdorff
(see Proposition 2.3.8 part b in \cite{Bot2009}). 
\end{rem}

\section{Chain transport}
 Let $X$ be a compact metric space and $c:X\x X\rightarrow\r$  a function representing the cost of moving mass from one point directly to another. When moving mass from some $x\in X$ to $y\in X$, it is possible that there exists some $z\in X$ such that $c(x,z)+c(z,y)<c(x,y)$ (of course such $c$ cannot be a metric). So to minimize the cost, one prefers to move mass from $x$ to $y$ through $z$ and not directly. This idea leads to the definition of the \emph{reduced cost function}\index{reduced cost function}: (see \cite{LEVIN1997} and the references therein)
 \[
c_{0,n}\left(x,y\right)=\min\left\{ \sum_{i=0}^{n}c\left(x_{i},x_{i+1}\right);\ x_{1},\ldots,x_{n}\in X,\ x_{0}=x,\ x_{n+1}=y\right\} 
\]
When passing through $z\in X$, there may be some fine $f(z)$ (where $f:X\rightarrow\r$ is given) which will affect the cost of transporting. We define the \emph{weighted reduced cost function}\index{weighted reduced cost function}:
\[
c_{f,n}\left(x,y\right)=\min\left\{ \sum_{i=0}^{n}c\left(x_{i},x_{i+1}\right)-\sum_{i=1}^{n}f\left(x_{i}\right);\ x_{1},\ldots,x_{n}\in X,\ x_{0}=x,\ x_{n+1}=y\right\} 
\]

\begin{rem}
$c_{f,0}=c$.
\end{rem}

We will use the following recursive relation:
\begin{lem}
\label{lem:cfn}
For any $n\in\mathbb{N}$, $x,y\in X$ and $f:X\rightarrow\r$
$$c_{f,n}(x,y)=\min_{z\in X}c_{f,n-1}(x,z)+c(z,y)-f(z)$$
\end{lem}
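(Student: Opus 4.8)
The plan is to prove the identity by a dynamic-programming decomposition: I would split the minimization defining $c_{f,n}(x,y)$ according to the value of the \emph{last} intermediate point $x_n$, which I rename $z$. Since minimizing a function over a Cartesian product is the same as minimizing iteratively over the factors, the first step is to rewrite
\[
c_{f,n}(x,y)=\min_{z\in X}\ \min_{\substack{x_1,\ldots,x_{n-1}\in X\\ x_0=x,\ x_n=z}}\left(\sum_{i=0}^{n}c(x_i,x_{i+1})-\sum_{i=1}^{n}f(x_i)\right),
\]
where in the inner minimum $x_n$ is held fixed equal to $z$ and $x_{n+1}=y$.

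The second step is to observe that the objective decomposes additively into a part depending only on $x_1,\ldots,x_{n-1}$ and a part depending only on $z$. Isolating the terms carrying index $n$,
\[
\sum_{i=0}^{n}c(x_i,x_{i+1})-\sum_{i=1}^{n}f(x_i)=\left(\sum_{i=0}^{n-1}c(x_i,x_{i+1})-\sum_{i=1}^{n-1}f(x_i)\right)+c(z,y)-f(z).
\]
The quantities $c(z,y)$ and $f(z)$ are constant with respect to the inner minimization, so they pull out of it; what remains inside is exactly the expression whose minimum over $x_1,\ldots,x_{n-1}$ (with $x_0=x$, $x_n=z$) is the definition of $c_{f,n-1}(x,z)$. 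Hence the inner minimum equals $c_{f,n-1}(x,z)+c(z,y)-f(z)$, and minimizing over $z\in X$ yields the claimed recursion. For the base case $n=1$ the index set $x_1,\ldots,x_{n-1}$ is empty, so the inner minimum is just $c(x,z)=c_{f,0}(x,z)$, consistent with the convention $c_{f,0}=c$.

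The argument is essentially bookkeeping, so the only point deserving a word of care is the legitimacy of the iterated-minimum rewriting and the attainment of each minimum. This will not be a genuine obstacle: $X$ is compact and the finite sum $\sum_i c(x_i,x_{i+1})-\sum_i f(x_i)$ inherits lower semicontinuity from $c$ (and $f$), so each minimum over a finite power of $X$ is attained, and the elementary identity $\min_{(a,b)}F=\min_b\min_a F$ applies. If one instead reads the defining expressions with $\inf$, the same regrouping holds verbatim without any compactness assumption. Thus the entire content of the lemma is the additive splitting displayed above, and no real difficulty arises.
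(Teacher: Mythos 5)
Your proof is correct, and in fact the paper states this lemma without any proof at all, so your write-up supplies exactly the standard dynamic-programming argument the paper implicitly relies on: fix the last intermediate point $x_n=z$, pull out the terms $c(z,y)-f(z)$, and recognize the remaining minimization as the definition of $c_{f,n-1}(x,z)$. The one micro-quibble is in your closing remark on attainment: since $f$ enters the objective with a minus sign, lower semicontinuity of the sum requires $f$ \emph{upper} semicontinuous (in the paper's application $f\in C(X)$, so this is automatic), and for a genuinely arbitrary $f:X\rightarrow\r$ the identity should be read with $\inf$ in place of $\min$ exactly as you note --- so your hedge already covers the gap.
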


A similar process can be done with the total cost functional:
$$
c_{\#}(\m,\n):=\inf_{\pi\in\Pi(\m,\n)}\int_{X\x X}c\d\pi
$$
When transporting a measure $\mu\in\mathcal{P}(X)$ another measure $\nu\in\mathcal{P}(X)$, we may restrict the transport to go through a finite sequence of medium measures with average $\lambda$:
\[
\left(c_{\#}\right)_{n}^{\lambda}\left(\m,\n\right):=\min\left\{ \sum_{i=1}^{n}c_{\#}\left(\rho_{i},\ro_{i+1}\right);\ \sum_{i=1}^{n}\ro_{i}=n\lambda,\ \ro_{0}=\m,\ \ro_{n+1}=\n\right\} 
\]

\begin{rem}
Notice that $\inf_{\lambda\in\mathcal{P}\left(X\right)}\left(c_{\#}\right)_{n}^{\lambda}=\left(c_{\#}\right)_{0,n}$.
\end{rem}

In this chapter, we will establish the following connection between $(c_{f,n})_{\#}$ and $\left(c_{\#}\right)_{n}^{\lambda}$ using  Theorem \ref{Thm:AbsDuality}:

\b

\colorbox{thmcolor}{
\parbox{\linewidth}{
\begin{thm}
\label{thm:DiscreteDyn}Let $X$ be a compact metric space, $c\in C\left(X\x X\right)$,
$\mu,\nu,\lambda\in\mathcal{P}\left(X\right)$ and $n\in\mathbb{N}$.
Then
\[
\left(c_{\#}\right)_{n}^{\lambda}\left(\m,\n\right)=\sup_{f\in C\left(X\right)}\left(c_{f,n}\right)_{\#}\left(\m,\n\right)+n\int f\d\lambda
\]
\end{thm}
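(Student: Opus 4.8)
The plan is to apply the abstract duality theorem (Theorem \ref{Thm:AbsDuality}) to the ``layered'' problem in which a competitor to the left-hand side is recorded as a tuple $(\pi_0,\dots,\pi_n)$ of transport plans, one for each leg $\rho_i\rightsquigarrow\rho_{i+1}$ of the chain (with $\rho_0=\mu$, $\rho_{n+1}=\nu$). I would take $V=C(X\x X)^{n+1}$, so $V^*=\mathcal{M}(X\x X)^{n+1}$, with $u=(c,\dots,c)$, let $\q$ be the cone of tuples of non-negative functions, and put $K=C(X)^{n+3}$ with coordinates $(\alpha,\beta,\gamma_1,\dots,\gamma_n,f)$. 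The maps are the \emph{linear} assignments
\[
h(\alpha,\beta,\gamma_1,\dots,\gamma_n,f)_i=
\begin{cases}
\alpha(x)+\gamma_1(y) & i=0,\\
-\gamma_i(x)+f(x)+\gamma_{i+1}(y) & 1\le i\le n-1,\\
-\gamma_n(x)+f(x)+\beta(y) & i=n,
\end{cases}
\]
\[
k^*(\alpha,\beta,\gamma_1,\dots,\gamma_n,f)=\int_X\alpha\,\d\mu+\int_X\beta\,\d\nu+n\int_X f\,\d\lambda .
\]
Since $h,k^*$ are linear they are in particular $\q$-superlinear and sublinear. Assumption \ref{assu:Sublinear} I would verify with an explicit $s$ built from constants: choosing $\alpha,\beta,\gamma_i$ constant and $f=0$, each coordinate of $h(s(v))$ can be made to dominate $v$ while $k^*\circ s(v)$ is a fixed combination of the sup-norms of the coordinates of $v$, hence bounded (in fact tending to $0$) near $0$.

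With this setup, a tuple $v^*=(\pi_0,\dots,\pi_n)\in\Pi(k^*,h)$ satisfies $\pi_i\ge0$ by testing against $\q$, and, because $h,k^*$ are linear, the defining inequality $v^*\circ h\le k^*$ is forced to be an equality for all $k\in K$. Matching the coefficients of $\alpha,\beta,\gamma_i,f$ then gives exactly the chain constraints: the first marginal of $\pi_0$ is $\mu$, the second marginal of $\pi_n$ is $\nu$, the second marginal of $\pi_{i-1}$ equals the first marginal of $\pi_i=:\rho_i$, and $\tfrac1n\sum_{i=1}^n\rho_i=\lambda$. Hence $\Pi(k^*,h)$ is precisely the set of chain transport plans and $v^*(u)=\sum_{i=0}^n\int c\,\d\pi_i$, so $\min_{v^*\in\Pi(k^*,h)}v^*(u)=(c_\#)_n^\lambda(\mu,\nu)$; feasibility (hence finiteness) is clear by taking $\rho_1=\dots=\rho_n=\lambda$. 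The minimum formulation in the remark following Theorem \ref{Thm:AbsDuality} (valid since $K,k^*$ are linear) identifies this value with the dual $\sup\{k^*(k):h(k)\le u\}$.

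It remains to reorganize the dual into the stated form. Fixing $f$ and optimizing over $(\alpha,\beta,\gamma_\bullet)$, the constraints $h(k)\le u$ read $\alpha(x)+\gamma_1(y)\le c$, $-\gamma_i(x)+f(x)+\gamma_{i+1}(y)\le c$, and $-\gamma_n(x)+f(x)+\beta(y)\le c$. The key lemma is that a pair $(\alpha,\beta)\in C(X)^2$ completes to such a feasible tuple iff $\alpha(x)+\beta(y)\le c_{f,n}(x,y)$ for all $x,y$. For the forward direction I would evaluate the three families of inequalities along an arbitrary point-chain $x=x_0,x_1,\dots,x_n,x_{n+1}=y$, add them so that the $\gamma$-terms telescope to zero, and take the minimum over $x_1,\dots,x_n$ to recover the definition of $c_{f,n}$. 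For the converse I would set $\gamma_i(z):=\inf_{x_0}\bigl[c_{f,i-1}(x_0,z)-\alpha(x_0)\bigr]$ (continuous, as a $c$-transform on a compact space); these satisfy the first $n-1$ inequalities by construction, and the terminal inequality collapses, via the recursion $c_{f,n}(x_0,y)=\min_x\bigl[c_{f,n-1}(x_0,x)+c(x,y)-f(x)\bigr]$ of Lemma \ref{lem:cfn}, to exactly $\alpha(x_0)+\beta(y)\le c_{f,n}(x_0,y)$. Given this equivalence, Kantorovich duality (Theorem \ref{thm:Duality}, continuous potentials on compact spaces) for the cost $c_{f,n}$ shows the inner supremum equals $(c_{f,n})_\#(\mu,\nu)$; since the $f$-contribution to $k^*$ is $n\int f\,\d\lambda$, taking $\sup_f$ yields the right-hand side.

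The main obstacle is the completion lemma of the last paragraph, especially the converse construction and the reduction of its terminal inequality to $\alpha+\beta\le c_{f,n}$ through Lemma \ref{lem:cfn}; alongside it one must confirm that the iterated $c$-transforms $\gamma_i$ and the functions $c_{f,n}$ remain continuous, so that they are legitimate elements of $K$ and so that Kantorovich duality applies. The remaining parts — the coefficient matching that identifies $\Pi(k^*,h)$ with the chain transport plans, the explicit $s$ for Assumption \ref{assu:Sublinear}, and the feasibility/finiteness checks — are routine.
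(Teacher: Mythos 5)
Your proposal is correct and takes essentially the same route as the paper's proof: the same decomposition of $\left(c_{\#}\right)_{n}^{\lambda}$ into chains of plans $(\pi_0,\ldots,\pi_n)$, the same application of Theorem \ref{Thm:AbsDuality} with $V=C(X\x X)^{n+1}$, $K=C(X)^{n+3}$ and linear $h,k^*$ (yours differs only by the sign reparametrization $\gamma_i=-f_i$, $\beta=-f_{n+1}$), the same collapse of the $(n+3)$-function dual to three functions via telescoping in one direction and the iterated transforms $\gamma_i(z)=\inf_{x_0}\left[c_{f,i-1}(x_0,z)-\alpha(x_0)\right]$ controlled by Lemma \ref{lem:cfn} in the other, and the same finish via the Kantorovich duality (Theorem \ref{thm:Duality}) applied to the cost $c_{f,n}$. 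The only cosmetic deviations are that the paper defines the intermediate potentials recursively and obtains your closed form by induction, and it verifies Assumption \ref{assu:Sublinear} with a slightly different choice of $s$.
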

}}

\begin{proof}
Along this proof we are using the following notation: For a measure $\pi\in\mm_+(X\x X)$ we denote by $\pi^i,\ i=1,2$ its first/second marginals respectively on $X$. We break the proof into four parts.
\begin{enumerate}
\item We first show the  following equality by proving inequality in both directions:
\begin{equation}
    \label{eq:Dyn1}
    \left(c_{\#}\right)_{n}^{\lambda}\left(\m,\n\right)=
    \min\left\{ \sum_{i=0}^{n}\int_{X\x X}c\d\pi_{i};\ \pi_{0}^{1}=\m,\ \pi_{n}^{2}=\n,\ \sum_{i=1}^{n}\pi_{i}^{1}=n\lambda,\ \pi_{i-1}^{2}=\pi_{i}^{1}\ \forall i=1,\ldots,n\right\}
\end{equation}

\begin{itemize}
    \item For any $\ro_{0},\ldots,\ro_{n+1}$ satisfying $\sum_{i=1}^{n}\ro_{i}=n\lambda,\ \ro_{0}=\m,\ \ro_{n+1}=\n$,
    define $\pi_{i}$ to be the optimal transport plan between $\ro_{i}$
    and $\ro_{i+1}$. Then $\sum_{i=0}^{n}c_{\#}\left(\ro_{i},\ro_{i+1}\right)=\sum_{i=0}^{n}\int_{X\x X}c\d\pi_{i}$
    and $\pi_{0}^{1}=\m,\ \pi_{n}^{2}=\n,\ \sum_{i=1}^{n}\pi_{i}^{1}=\sum_{i=1}^{n}\rho_{i}=n\lambda,\ \pi_{i-1}^{2}=\rho_i=\pi_{i}^{1}\ \forall i=1,\ldots,n$. Therefore there is an inequality $\geq$.
    \item For every $\pi_{0},\ldots,\pi_{n}$ such that $\pi_{0}^{1}=\m,\ \pi_{n}^{2}=\n,\ \sum_{i=1}^{n}\pi_{i}^{1}=n\lambda,\ \pi_{i-1}^{2}=\pi_{i}^{1}\ \forall i=1,\ldots,n$,
    define $\ro_{i}=\pi_{i}^{1}$ for every $i=0,\ldots,n$ and $\rho_{n+1}=\n$.
    Then $\sum_{i=0}^{n}c_{\#}\left(\ro_{i},\ro_{i+1}\right)\leq\sum_{i=0}^{n}\int c\d\pi_{i}$
    and $\sum_{i=1}^{n}\ro_{i}=\sum_{n=1}^n\pi_i^1=n\lambda,\ \ro_{0}=\m,\ \ro_{n+1}=\n$, so we get the reversed inequality as well.
\end{itemize}

\item Now we find the dual problem of the RHS of (\ref{eq:Dyn1}) using Theorem \ref{Thm:AbsDuality}:
\[
\begin{array}{c}
\min\left\{ \sum_{i=0}^{n}\int c\d\pi_{i};\ \pi_{0}^{1}=\m,\ \pi_{n}^{2}=\n,\ \sum_{i=1}^{n}\pi_{i}^{1}=n\lambda,\ \pi_{i-1}^{2}=\pi_{i}^{1}\ \forall i=1,\ldots,n\right\} =\\
\\
\sup\left\{ \int f_{0}\d\m-\int f_{n+1}\d\n+n\int f\d\lambda;\ f_{0}\left(x\right)-f_{1}\left(y\right)\leq c\left(x,y\right),\ f_{i}\left(x\right)-f_{i+1}\left(y\right)+f\left(x\right)\leq c\left(x,y\right)\ \forall i=1,\ldots,n\right\} 
\end{array}
\]
The set of all vector measures $\left(\pi_{0},\ldots,\pi_{n}\right)\in\mathcal{P}(X\x X)^{n+1}$
satisfying $\pi_{0}^{1}=\m,\ \pi_{n}^{2}=\n,\ \sum_{i=1}^{n}\pi_{i}^{1}=n\lambda,\ \pi_{i-1}^{2}=\pi_{i}^{1}\ \forall i=1,\ldots,n$,
are exactly the set of vector measures for which

\[
\begin{array}{cc}

 \int f_{0}(x)-f_{1}(y)\d\pi_0(x,y)+\int f_{1}(x)-f_{2}(y)+f(x)\d\pi_1+\cdots+\int f_{n}(x)-f_{n+1}(y)+f(x)\d\pi_n= \\ \int f_{0}\d\m-\int f_{n+1}\d\n+n\int f\d\lambda
     
\end{array}
\]

for all $f_{0},\ldots,f_{n+1},f\in C\left(X\right)$.
By Theorem \ref{Thm:AbsDuality}, the minimum of $\sum_{i=1}^n\int c\d\pi_i$ over all these measures
equals to the supremum of
$$
\int f_{0}\d\m-\int f_{n+1}\d\n+n\int f\d\lambda
$$
over all $f_{0},\ldots,f_{n+1},f\in C\left(X\right)$ satisfying
\[
\left(f_{0}\left(x\right)-f_{1}\left(y\right),f_{1}\left(x\right)-f_{2}\left(y\right)+f\left(x\right),\ldots,f_{n}\left(x\right)-f_{n+1}\left(y\right)+f\left(x\right)\right)\leq\left(c\left(x,y\right),\ldots,c\left(x,y\right)\right)
\]
(where the inequality is in each component).
To see that just take $V=C(X\x X)^{n+1}$, $\q=C(X\x X,\r_+)^{n+1}$, $K=C(X)^{n+3}$, 
$$
h(f_0,\ldots,f_{n+1},f)(x,y):=\left(f_{0}\left(x\right)-f_{1}\left(y\right),f_{1}\left(x\right)-f_{2}\left(y\right)+f\left(x\right),\ldots,f_{n}\left(x\right)-f_{n+1}\left(y\right)+f\left(x\right)\right),$$
$k^*(f_0,\ldots,f_{n+1},f)=\int f_{0}\d\m-\int f_{n+1}\d\n+n\int f\d\lambda$ and $u=(c,\ldots,c)$. Assumption \ref{assu:Sublinear} holds by taking $s(g_0,\ldots,g_n):=(\|g_0\|_{\infty},0,\ldots,0,\sum_{i=1}^n\|g_i\|_{\infty})$.
\item We show the dual problem can be modified to a maximization problem over 3 functions only (instead of $n+3$ functions):
\[
\begin{array}{c}
\sup\left\{ \int f_{0}\d\m-\int f_{n+1}\d\n+n\int f\d\lambda;\ f_{0}\left(x\right)-f_{1}\left(y\right)\leq c\left(x,y\right),\ f_{i}\left(x\right)-f_{i+1}\left(y\right)+f\left(x\right)\leq c\left(x,y\right)\ \forall i=1,\ldots,n\right\} \\
\\
=\sup\left\{ \int f_{0}\d\m-\int f_{n+1}\d\n+n\int f\d\lambda;\ f_{0}\left(x_{0}\right)-f_{n+1}\left(x_{n+1}\right)+\sum_{i=1}^{n}f\left(x_{i}\right)\leq\sum_{i=0}^{n}c\left(x_{i},x_{i+1}\right)\right\} 
\end{array}
\]
\begin{itemize}
    \item The inequality $\leq$ is immediate since every $f_{0},\ldots,f_{n+1},f$
    satisfying $f_{0}\left(x\right)-f_{1}\left(y\right)\leq c\left(x,y\right),\ f_{i}\left(x\right)-f_{i+1}\left(y\right)+f\left(x\right)\leq c\left(x,y\right)\ \forall i=1,\ldots,n$
    also satisfy $f_{0}\left(x_{0}\right)-f_{n+1}\left(x_{n+1}\right)+\sum_{i=1}^{n}f\left(x_i\right)\leq\sum_{i=0}^{n}c\left(x_{i},x_{i+1}\right)$ (by summing all the inequalities).
    \item  To show the other direction, for any
    $f_{0},f_{n+1},f$ satisfying
    $$
    f_{0}\left(x_{0}\right)-f_{n+1}\left(x_{n+1}\right)+\sum_{i=1}^{n}f\left(x_{i}\right)\leq\sum_{i=0}^{n}c\left(x_{i},x_{i+1}\right)
    $$
    We need to show there exist some $f_1,\ldots,f_n$ satisfying $f_{0}\left(x\right)-f_{1}\left(y\right)\leq c\left(x,y\right),\ f_{i}\left(x\right)-f_{i+1}\left(y\right)+f\left(x\right)\leq c\left(x,y\right)\ \forall i=1,\ldots,n$.
    We define
    \[
    \begin{array}{ccccc}
    f_{1}\left(y\right) & := & \max_{x\in X}\left\{ f_{0}\left(x\right)-c\left(x,y\right)\right\} \\
    f_i\left(y\right) & := & \max_{x\in X}\left\{ f_{i-1}\left(x\right)-c\left(x,y\right)+f\left(x\right)\right\}  & ; & 1<i\leq n
    \end{array}
    \]
    which implies 
    \[  
    \begin{array}{ccccc}
    f_{0}\left(x\right)-f_{1}\left(y\right) & = & f_{0}\left(x\right)-\max_{z\in X}\left\{ f_{0}\left(z\right)-c\left(z,y\right)\right\}  & \leq & c\left(x,y\right)\\
    f_{i}\left(x\right)-f_{i+1}\left(y\right)+f\left(x\right) & = & f_{i}\left(x\right)+f\left(x\right)-\max_{z\in X}\left\{ f_{i}\left(z\right)-c\left(z,y\right)+f\left(z\right)\right\}  & \leq & c\left(x,y\right)
    \end{array}
    \]
    for every $i=1,\ldots,n-1$. As for $i=n$ one can prove using induction arguments that
    \[
    f_n\left(y\right)=\max_{z\in X}\left\{ f_{0}\left(z\right)-c_{f,n-1}\left(z,y\right)\right\} 
    \]
     By the constraint $f_{0}\left(x_{0}\right)-f_{n+1}\left(x_{n+1}\right)+\sum_{i=1}^{n}f\left(x_{i}\right)\leq\sum_{i=0}^{n}c\left(x_{i},x_{i+1}\right)$
    we may assume 
    \[
    f_{n+1}\left(y\right)=\max_{z\in X}\left\{ f_{0}\left(z\right)-c_{f,n}\left(z,y\right)\right\} 
    \]
    hence 
    \[
    \begin{array}{ccccc}
    f_{n}\left(x\right)-f_{n+1}\left(y\right)+f\left(x\right) & = &     \max_{z\in X}\left\{ f_{0}\left(z\right)-c_{f,n-1}\left(z,x\right)\right\} -\max_{z\in X}\left\{ f_{0}\left(z\right)-c_{f,n}\left(z,y\right)\right\}+f\left(x\right) \\
    & \leq & \max_{z\in X}\left\{ c_{f,n}\left(z,y\right)-c_{f,n-1}\left(z,x\right)\right\} +f\left(x\right) \\ & \leq & c\left(x,y\right)
    \end{array}
    \]
    where the last inequality holds since by Lemma \ref{lem:cfn} for any $z\in X$
    \[
    c_{f,n}\left(z,y\right)\leq c_{f,n-1}\left(z,x\right)+c\left(x,y\right)-f\left(x\right)
    \]
\end{itemize}

\item By combining the 3 equalities obtained in the steps above we get
\[
\left(c_{\#}\right)_{n}^{\lambda}\left(\m,\n\right)=\sup\left\{ \int f_{0}\d\m-\int f_{n+1}\d\n+n\int f\d\lambda;\ f_{0}\left(x_{0}\right)-f_{n+1}\left(x_{n+1}\right)\leq\sum_{i=0}^{n}c\left(x_{i},x_{i+1}\right)-\sum_{i=1}^{n}f\left(x_{i}\right)\right\} 
\]
and by definition of $c_{f,n}$ we conclude
\[
\left(c_{\#}\right)_{n}^{\lambda}\left(\mu,\nu\right)=\sup\left\{ \int\psi\d\m-\int\varphi\d\n+n\int f\d\lambda;\ (\psi,\varphi,f)\in C(X)^3,\ \psi\left(x\right)-\varphi\left(y\right)\leq c_{f,n}\left(x,y\right)\right\} 
\]
Hence by the classical Kantorovich duality (Theorem \ref{thm:Duality})
\[
\left(c_{\#}\right)_{n}^{\lambda}\left(\m,\n\right)
=
\sup_{f\in C(X)}\left\{\sup_{\psi-\varphi\leq c_{f,n}} \left\{\int\psi\d\m-\int\varphi\d\n\right\}+n\int f\d\lambda\right\}
=
\sup_{f\in C(X)}\left(c_{f,n}\right)_{\#}\left(\m,\n\right)+n\int f\d\lambda
\]
\end{enumerate}
\end{proof}
\begin{cor}
\label{cor:c=000023n=00003Dcn=000023}$\left(c_{\#}\right)_{0,n}=\left(c_{0,n}\right)_{\#}$.
\end{cor}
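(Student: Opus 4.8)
The plan is to establish the two inequalities separately: the direction $\ge$ will fall out of Theorem~\ref{thm:DiscreteDyn} immediately, while the direction $\le$ requires an explicit construction. Throughout I use the remark preceding Theorem~\ref{thm:DiscreteDyn}, namely $\left(c_{\#}\right)_{0,n}=\inf_{\lambda\in\mathcal{P}(X)}\left(c_{\#}\right)_{n}^{\lambda}$, together with the observation that $c_{0,n}$ is precisely $c_{f,n}$ evaluated at $f\equiv0$, so that $\left(c_{0,n}\right)_{\#}=\left(c_{f,n}\right)_{\#}$ at $f\equiv0$.

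For the inequality $\left(c_{\#}\right)_{0,n}\ge\left(c_{0,n}\right)_{\#}$, I restrict the supremum over $f\in C(X)$ in Theorem~\ref{thm:DiscreteDyn} to the single choice $f\equiv0$. This gives, for every $\lambda\in\mathcal{P}(X)$,
\[
\left(c_{\#}\right)_{n}^{\lambda}(\m,\n)\ge\left(c_{0,n}\right)_{\#}(\m,\n)+n\int 0\,\d\lambda=\left(c_{0,n}\right)_{\#}(\m,\n),
\]
and taking the infimum over $\lambda$ yields the claim.

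For the reverse inequality it suffices to show $\left(c_{\#}\right)_{0,n}(\m,\n)\le\int_{X\x X}c_{0,n}\,\d\pi$ for an arbitrary $\pi\in\Pi(\m,\n)$. Since $X$ is compact and $c$ is continuous, $c_{0,n}$ is continuous (a partial minimum over the compact set $X^n$ of a jointly continuous function), and the correspondence sending $(x,y)$ to the set of minimizing chains $(z_1,\dots,z_n)\in X^n$ (those with $\sum_{i=0}^n c(z_i,z_{i+1})=c_{0,n}(x,y)$, where $z_0=x$, $z_{n+1}=y$) has nonempty compact values and closed graph; hence it admits a Borel measurable selection $(T_1,\dots,T_n):X\x X\to X^n$. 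Let $\gamma:=S_{\#}\pi\in\mathcal{P}(X^{n+2})$ be the pushforward of $\pi$ under the Borel map $S(x,y):=(x,T_1(x,y),\dots,T_n(x,y),y)$, and let $\ro_i$ be the marginal of $\gamma$ on its $i$-th coordinate, so $\ro_0=\m$ and $\ro_{n+1}=\n$. The two-coordinate marginal of $\gamma$ on slots $(i,i+1)$ is a transport plan between $\ro_i$ and $\ro_{i+1}$, whence $c_{\#}(\ro_i,\ro_{i+1})\le\int_{X^{n+2}}c(w_i,w_{i+1})\,\d\gamma$. Summing over $i$ and using that $S$ lands in the minimizing chains,
\[
\left(c_{\#}\right)_{0,n}(\m,\n)\le\sum_{i=0}^{n}c_{\#}(\ro_i,\ro_{i+1})\le\int_{X^{n+2}}\sum_{i=0}^{n}c(w_i,w_{i+1})\,\d\gamma=\int_{X\x X}c_{0,n}\,\d\pi,
\]
where the first inequality also uses that the chain $(\ro_0,\dots,\ro_{n+1})$ is feasible for $\left(c_{\#}\right)_{n}^{\lambda}$ with $\lambda=\tfrac1n\sum_{i=1}^n\ro_i$. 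Taking the infimum over $\pi\in\Pi(\m,\n)$ finishes the proof.

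The main obstacle is the Borel measurable selection of optimal intermediate chains (a Kuratowski--Ryll-Nardzewski type argument); the remaining ingredients — continuity of $c_{0,n}$, the fact that consecutive two-coordinate marginals of $\gamma$ are transport plans, and the elementary chain inequality $\sum_i c(w_i,w_{i+1})\ge c_{0,n}(w_0,w_{n+1})$ — are routine.
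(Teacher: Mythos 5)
Your proof is correct, but it takes a genuinely different route from the paper's. The paper stays entirely inside its duality machinery: it verifies that $f\mapsto\left(c_{f,n}\right)_{\#}(\m,\n)$ is concave, applies the convex abstract theorem (Theorem \ref{thm:Convex Abstract}, with $K=C(X)\x\r$, $h(f,\alpha)=nf-\alpha$) to interchange $\inf_{\lambda}$ and $\sup_{f}$ in the formula of Theorem \ref{thm:DiscreteDyn}, computes the inner infimum explicitly as $n\min f$, and concludes from $\sup_{f\geq0}\left(c_{f,n}\right)_{\#}=\left(c_{0,n}\right)_{\#}$ (since $f\geq0$ forces $c_{f,n}\leq c_{0,n}$, with equality at $f\equiv0$) — both inequalities thus fall out of a single minimax identity. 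You instead split the equality: the direction $\left(c_{\#}\right)_{0,n}\geq\left(c_{0,n}\right)_{\#}$ by specializing $f\equiv0$ in Theorem \ref{thm:DiscreteDyn}, and the direction $\leq$ by a direct primal construction — a Borel measurable selection of optimal intermediate chains (via Berge's maximum theorem plus Kuratowski--Ryll-Nardzewski, which does apply here: $c_{0,n}$ is continuous as a partial minimum over the compact $X^n$, and the argmin correspondence has closed graph into a compact metric space), pushing $\pi\in\Pi(\m,\n)$ forward to $\mathcal{P}(X^{n+2})$ and reading off the chain $\ro_i$ of coordinate marginals, whose consecutive two-coordinate marginals are couplings and whose average gives a feasible $\lambda$. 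Each step of your construction checks out, including the change-of-variables identity $\int_{X^{n+2}}\sum_i c(w_i,w_{i+1})\,\d\gamma=\int_{X\x X}c_{0,n}\,\d\pi$, which holds precisely because the selector lands in minimizing chains. What each approach buys: the paper's argument avoids selection theorems entirely and showcases Theorem \ref{thm:Convex Abstract} once more, at the price of verifying concavity and setting up the minimax; your argument is more constructive — the hard inequality needs no minimax and produces explicit optimal intermediate measures $\ro_i$ — at the price of importing external measurable-selection machinery. Note also that your $\leq$ direction bypasses the duality content of Theorem \ref{thm:DiscreteDyn} altogether (it only uses the trivial feasibility of the constructed chain), so your overall proof leans on less of the abstract framework than the paper's does.
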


\begin{proof}
$f\mapsto(c_{f,n})_\#(\m,\n)$ is concave for each $\m,\n\in\mathcal{P}(X),n\in\mathbb{N}$: Let $f,g\in C(X)$ and $t\in[0,1]$. By the definition of $c_{f,n}$ we get
$$
c_{tf+(1-t)g,n}(x,y)\geq tc_{tf,n}+(1-t)c_{g,n}(x,y)
$$

for all $x,y\in X$. Therefore
\[
\begin{array}{ccc}
   (c_{tf+(1-t)g,n})_\#(\m,\n)
   & = &
   \min_{\pi\in\Pi(\m,\n)}\int c_{tf+(1-t)g,n}\d\pi
   \\
   & \geq &
   \min_{\pi\in\Pi(\m,\n)}t\int c_{f,n}\d\pi+(1-t)\int c_{g,n}\d\pi
   \\
    & \geq &
    t(c_{f,n})_\#(\m,\n)+(1-t)(c_{g,n})_\#(\m,\n)
\end{array}
\]

Hence we may use Theorem \ref{thm:Convex Abstract} with $V=C(X),\ K=C(X)\x\r,\ k^*(f,\alpha)=-(c_{f,n})_\#-\alpha,\ h(f,\alpha)=nf-\alpha$ and $\q=C(X,\r_+)$ and get (together with Remark  \ref{rem:minmaxNonNeg}):
$$
\sup_{\lambda\in\mathcal{M}_+(X)}\inf_{(f,\alpha)\in K}-\left(c_{f,n}\right)_{\#}-n\int f\d\lambda-\alpha\cdot (1-\lambda(X))
=
\inf_{nf-\alpha\geq0}-(c_{f,n})_\#-\alpha
$$
which is equivalent to
$$
\inf_{\lambda\in\mathcal{M}_+(X)}\sup_{(f,\alpha)\in K}\left(c_{f,n}\right)_{\#}+n\int f\d\lambda+\alpha\cdot (1-\lambda(X))
=
\sup_{nf-\alpha\geq0}(c_{f,n})_\#+\alpha
$$
We may restrict the infimum only over probability measures and restrict the supremum on the RHS for $\alpha=n\cdot\min f$ thus
$$
\inf_{\lambda\in\mathcal{P}(X)}\sup_{f\in C(X)}\left(c_{f,n}\right)_{\#}+n\int f\d\lambda
=
\sup_f(c_{f,n})_\#+n\min f
$$
So Theorem \ref{thm:DiscreteDyn} above leads to
\[
\begin{array}{ccccc}
\left(c_{\#}\right)_{0,n}
& = & \inf_{\lambda\in\mathcal{P}(X)}\left(c_{\#}\right)_{n}^{\lambda} \\
& = & \inf_{\lambda\in\mathcal{P}(X)}\sup_{f}\left(c_{f,n}\right)_{\#}+n\int f\d\lambda  \\
& = & \sup_f(c_{f,n})_\#+n\min f \\
& = & \sup_{f\geq0}(c_{f,n})_\#
& = & \left(c_{0,n}\right)_{\#}
\end{array}
\]
\end{proof}

The above corollary implies a result about the Wasserstein metric\index{Wasserstein metric} on Euclidean spaces defined by $\mathcal{W}_p:=(c_\#)^{\frac{1}{p}}$ when $c(x,y)=|x-y|^p$.

\begin{cor}
For $X\subset\r^m$ and $p\geq 1$: $(\mathcal{W}_p^p)_{0,n}=n^{1-p}\mathcal{W}_p^p$.
\end{cor}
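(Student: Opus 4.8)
The plan is to reduce the statement to the previous corollary and then to evaluate the reduced cost function explicitly. Since $c(x,y)=|x-y|^{p}$ gives $\mathcal{W}_p^p=c_{\#}$, Corollary~\ref{cor:c=000023n=00003Dcn=000023} yields $(\mathcal{W}_p^p)_{0,n}=(c_{\#})_{0,n}=(c_{0,n})_{\#}$, so everything comes down to identifying the reduced cost $c_{0,n}$ for the quadratic-type cost $c=|x-y|^{p}$.

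First I would compute $c_{0,n}(x,y)$ directly from its definition as the least value of $\sum_{i}|x_i-x_{i+1}|^{p}$ over all chains from $x$ to $y$ in $X$. Writing $d_i=|x_i-x_{i+1}|$, the lower bound is a clean application of convexity: the triangle inequality gives $\sum_i d_i\ge|x-y|$, and since $t\mapsto t^{p}$ is convex for $p\ge1$, the power-mean (Jensen) inequality gives $\sum_i d_i^{p}\ge N^{1-p}\bigl(\sum_i d_i\bigr)^{p}\ge N^{1-p}|x-y|^{p}$, where $N$ is the number of segments in the chain. For the matching upper bound I would place the intermediate points equally spaced along the segment $[x,y]$, so that each $d_i=|x-y|/N$ and the sum equals exactly $N^{1-p}|x-y|^{p}$. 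This identifies $c_{0,n}$ as a fixed scalar multiple of $c$, namely $c_{0,n}=N^{1-p}\,c$.

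The final step is then immediate, and this is the real payoff of having $c_{0,n}$ proportional to $c$: for every $\pi$ we have $\int c_{0,n}\,d\pi=N^{1-p}\int c\,d\pi$, so the two functionals have the same minimizing plans and
\[
(c_{0,n})_{\#}=N^{1-p}\,c_{\#}=N^{1-p}\,\mathcal{W}_p^p ,
\]
which combined with the reduction above proves the asserted scaling law (the printed exponent $n^{1-p}$ being the number of chain segments $N$ read off consistently with the convention fixing $(c_{\#})_{0,n}=\inf_{\lambda}(c_{\#})_n^{\lambda}$). The main obstacle is precisely the upper bound in the computation of $c_{0,n}$: the convexity lower bound holds on any $X\subset\r^{m}$, but the equal-subdivision argument genuinely requires the straight segment $[x,y]$ to remain inside $X$. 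I would therefore carry out the proof for a convex compact $X\subset\r^{m}$; without convexity one obtains only the inequality $(\mathcal{W}_p^p)_{0,n}\ge N^{1-p}\mathcal{W}_p^p$, and the reverse estimate can fail when geodesics leave $X$.
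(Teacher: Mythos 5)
You take exactly the paper's route: its entire proof is the reduction $(\mathcal{W}_p^p)_{0,n}=(c_{\#})_{0,n}=(c_{0,n})_{\#}$ via Corollary~\ref{cor:c=000023n=00003Dcn=000023}, followed by the claim, left to the reader as ``not too hard to verify,'' that $c_{0,n}$ is the stated scalar multiple of $c$; your Jensen lower bound and equal-subdivision upper bound are precisely that omitted verification, and pulling the scalar through the minimum over $\Pi(\mu,\nu)$ is the same final step. Your two caveats are not defects of your argument but of the statement as printed. First, the equal-subdivision chain must stay in $X$, so the identity $c_{0,n}=\mathrm{const}\cdot c$ requires $X$ convex (or at least each segment $[x,y]\subset X$); the paper's bare hypothesis $X\subset\r^m$ does not ensure this, and on, say, an annulus only the one-sided inequality $(\mathcal{W}_p^p)_{0,n}\geq\mathrm{const}\cdot\mathcal{W}_p^p$ survives, exactly as you observe. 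Second, the constant: under the paper's own definition the chain $x_0=x,x_1,\ldots,x_n,x_{n+1}=y$ has $n+1$ segments, so your computation gives $c_{0,n}=(n+1)^{1-p}c$ on convex $X$ --- for instance $c_{0,1}(x,y)=2^{1-p}\left|x-y\right|^p$ via the midpoint, not $\left|x-y\right|^p$ --- so the printed exponent $n^{1-p}$ is off by one for $p>1$ unless $n$ is reinterpreted as the number of segments, which is the charitable convention you adopt with your $N$. In short: same approach as the paper, with the hand-waved step actually carried out, and two tacit hypotheses (convexity of $X$, and $n$ counting segments rather than intermediate points) correctly made explicit.
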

\begin{proof}
It is not to hard to verify that $c_{0,n}=n^{1-p}c$, so
by the last corollary:
\[
(\mathcal{W}_p^p)_{0,n}=(c_\#)_{0,n}=(c_{0,n})_\#=(n^{1-p}c)_\#=n^{1-p}c_\#=n^{1-p}\mathcal{W}_p^p
\]
\end{proof}

A possible direction for further research is to achieve a continuous result by taking the limit $n\rightarrow\infty$ in Theorem \ref{thm:DiscreteDyn}.  For example when $c(x,y)=d(x,y)^p$ ($d$ is the metric of the space):

\begin{equation}
\label{eq:WeightedBB}
\min_{\rho,v}\int_{0}^{1}\int_{X\x X}\left|v_t(x,y)\right|^{p}\d\rho_t(x,y)\d t =\sup_{f\in C(X)}\left(c_{f}^{p}\right)_{\#}\left(\mu,\nu\right)+\int_X f\d\lambda
\end{equation}

Where the minimum is taken over all continuous $\rho:\left[0,1\right]\rightarrow(\mathcal{P}\left(X\right),w^*)$ such that
$$
\int_{0}^{1}\rho_{t}\text{d}t=\lambda,\ro_{0}=\m,\ro_{1}=\n, \frac{\partial\rho_t}{\partial t}+\nabla\cdot(\rho_tv_t)=0
$$

and
$$
c_{f}^{p}\left(x,y\right):=\inf\left\{ \int_{0}^{1}\left|\dot{\gamma}\left(t\right)\right|^{p}-f\left(\gamma\left(t\right)\right)\mbox{d}t;\ \gamma\in C(\left[0,1\right],X),\ \gamma\left(0\right)=x,\ \gamma\left(1\right)=y\right\} 
$$

\begin{rem}
The meaning of the equation $\frac{\partial\rho_t}{\partial t}+\nabla\cdot(\rho_tv_t)=0$ is in the distributional sense: For each smooth $\phi\in C^{\infty}(X)$ with compact support
$$
\frac{\d}{\d t}\int_X\phi\d\rho_t=\int_X\nabla\phi\cdot v_t\d\rho_t
$$
\end{rem}

By taking the infimum over all $\lambda\in\mathcal{P}(X)$ on both sides of (\ref{eq:WeightedBB}) and using similar arguments as in Corollary \ref{cor:c=000023n=00003Dcn=000023} we get

\[
\begin{array}{ccc}
\min\left\{\int_{0}^{1}\int_{X\x X}\left|v_t(x,y)\right|^{p}\d\rho_t(x,y)\d t;\ \ro_{0}=\m,\ro_{1}=\n, \frac{\partial\rho_t}{\partial t}+\nabla\cdot(\rho_tv_t)=0\right\}
&=\\
\min_{\lambda\in\mathcal{P}(X)}\sup_{f\in C(X)}\left(c_{f}^{p}\right)_{\#}\left(\mu,\nu\right)+\int_X f\d\lambda
&=\\
\sup_{f\in C(X)}\min_{\lambda\in\mathcal{P}(X)}\left(c_{f}^{p}\right)_{\#}\left(\mu,\nu\right)+\int_X f\d\lambda
&=\\
\sup_{f\in C(X)}\left(c_{f}^{p}\right)_{\#}\left(\mu,\nu\right)+\min f
&=\\
\sup_{0\leq f\in C(X)}\left(c_{f}^{p}\right)_{\#}\left(\mu,\nu\right)
&=\\
\left(c_{0}^{p}\right)_{\#}\left(\mu,\nu\right)
&=& \mathcal{W}_p^p(\mu,\nu)
\end{array}
\]

Which is known as the "Benamou-Brenier formula" (see Theorem 8.1 in \cite{Villani2003}).

\section{Conclusion}

The main results of Chapter 2 are Theorems \ref{Thm:AbsDuality} and \ref{thm:Convex Abstract}. As similar results can be found in the literature, my contributions that make Theorems \ref{Thm:AbsDuality} and \ref{thm:Convex Abstract} relevant are the following:
\begin{itemize}
    \item The theorems deal with general tvs, which are not necessary normed/locally convex/Hausdorff.
    \item In Theorem \ref{Thm:AbsDuality} - we have a characterization of the cases in which the duality is finite.
    \item Theorem \ref{thm:ExMin} gives sufficient conditions for the existence of solutions for the dual problem.
    \item In Theorem \ref{thm:Convex Abstract} - our Assumption (\ref{assu:convex}) is weaker than assumptions given in other similar theorems.
\end{itemize}

The diagram below summarizes the connections between some of the different
theorems that appeared in this work.

\begin{tikzpicture}[scale=1]
  \hypersetup{hidelinks}
  \node  (MYCENTER) at (0,0) {\includegraphics[scale=0.5]{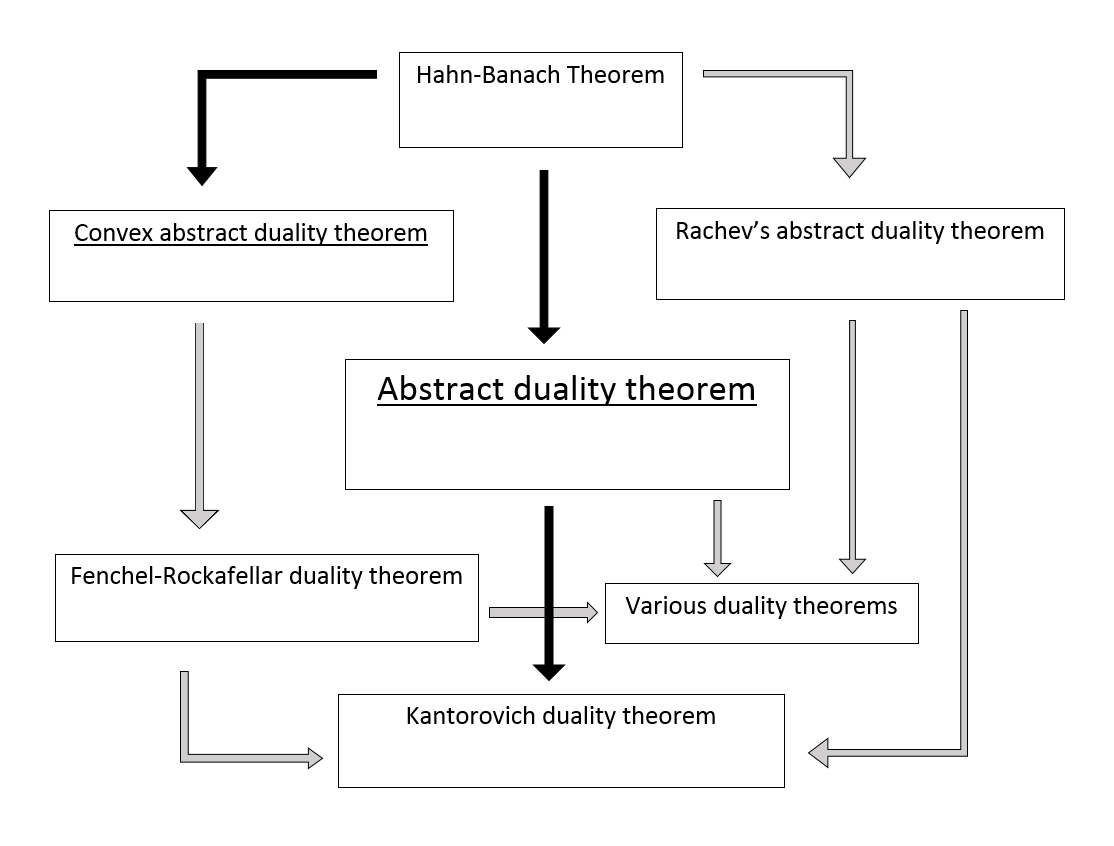}};
  \node  (BLUE)   at (-0.5,4)  {Theorems \ref{thm:Hahn-Banach}/\ref{thm:HB Separation}};
  \node  (GREEN)  at (-4,2)   {Theorem \ref{thm:Convex Abstract}};
  \node  (RED)    at (4,2)  {Theorem \ref{thm:Rachev}};
  \node  (ORANGE) at (0,-0.4)  {Theorem \ref{Thm:AbsDuality}};
  \node  (ORANGE) at (-4,-2.5)  {Theorem \ref{thm:FRduality}};
  \node  (ORANGE) at (0,-4.5)  {Theorem \ref{thm:Duality}};
\end{tikzpicture}

The dark arrows refer to the connections that have been established in Chapter 2. The two underlined theorems are the main results of Chapter 2. Some \textquotedbl various duality theorems\textquotedbl{}
that can be obtained by these theorems are mentioned in Chapters 4,5 and 6.

The main contributions of Chapter 3 are:
\begin{itemize}
    \item The optimal transport problem for vector measures and the corresponding duality theorem (Theorem \ref{thm:GMK}).
    \item Equivalent conditions to the existence of transport plans (Theorem \ref{thm:Blackwell}).
    \item Connection to the Martingale optimal transport problem (Theorem \ref{thm:MartingaleOT}).
    \item  Existence of a solution to the dual problem on a weak* dense subset of measures (Theorems \ref{thm:ExMinVec} and \ref{thm:ExMinCont}).
    \item Existence of optimal transport map in the semi-discrete case (Theorem \ref{thm:ExTransMap}) and some transport map in the general case (Theorem \ref{thm:ExTransMap generalcase}).
\end{itemize}

\newpage{}

\bibliographystyle{plain}
\phantomsection\addcontentsline{toc}{section}{\refname}\bibliography{Ref.bib}

\newpage{}

\printindex

\end{document}